\providecommand{\tabularnewline}{\\}
\numberwithin{equation}{subsection}
\numberwithin{figure}{subsection}
  \theoremstyle{plain}
  \newtheorem*{thm*}{\protect\theoremname}
\theoremstyle{plain}
\newtheorem{thm}{\protect\theoremname}[subsection]
  \theoremstyle{plain}
  \newtheorem{lem}[thm]{\protect\lemmaname}
  \theoremstyle{plain}
  \newtheorem{cor}[thm]{\protect\corollaryname}
  \theoremstyle{definition}
  \newtheorem{defn}[thm]{\protect\definitionname}
  \theoremstyle{remark}
  \newtheorem*{rem*}{\protect\remarkname}
  \theoremstyle{definition}
  \newtheorem*{example*}{\protect\examplename}
  \theoremstyle{plain}
  \newtheorem{prop}[thm]{\protect\propositionname}
  \providecommand{\corollaryname}{Corollary}
  \providecommand{\definitionname}{Definition}
  \providecommand{\examplename}{Example}
  \providecommand{\lemmaname}{Lemma}
  \providecommand{\propositionname}{Proposition}
  \providecommand{\remarkname}{Remark}
  \providecommand{\theoremname}{Theorem}
\providecommand{\theoremname}{Theorem}
\begin{document}
\selectlanguage{english}%
\global\long\def\End{\mathbf{\mathrm{End}}}

\global\long\def\Hom{\mathrm{Hom}}

\global\long\def\div{\mathrm{div}}

\global\long\def\Lie{\mathrm{Lie}}

\global\long\def\ord{\mathrm{ord}}

\global\long\def\no{\mathrm{no}}

\global\long\def\fol{\mathrm{fol}}

\global\long\def\Fr{\mathrm{Fr}}

\global\long\def\Ver{\mathrm{Ver}}

\global\long\def\Spec{\mathrm{Spec}}

\global\long\def\rk{\mathrm{rk}}
\selectlanguage{american}%

\title{Theta operators on unitary Shimura varieties}
\selectlanguage{english}%

\author{Ehud de Shalit and Eyal Z. Goren}

\date{December 11, 2017}

\keywords{Shimura variety, theta operator, modular form}

\subjclass[2000]{11G18, 14G35}

\address{Ehud de Shalit, Hebrew University of Jerusalem, Israel}

\address{ehud.deshalit@mail.huji.ac.il}

\address{Eyal Z. Goren, McGill University, Montr\'eal, Qu\'ebec, Canada}

\address{eyal.goren@mcgill.ca}
\begin{abstract}
We define a theta operator on $p$-adic vector-valued modular forms
on unitary groups of arbitrary signature, over a quadratic imaginary
field in which $p$ is inert. We study its effect on Fourier-Jacobi
expansions and prove that it extends holomorphically beyond the $\mu$-ordinary
locus, when applied to scalar-valued forms.
\end{abstract}

\maketitle

\tableofcontents

\section*{Introduction}

\selectlanguage{american}%
Let $E$ be a quadratic imaginary field and $p$ a prime which is inert in $E$.
The purpose of this article is to define a theta operator $\Theta$
for $p$-adic vector-valued modular forms on unitary Shimura varieties
of arbitrary signature associated with the extension $E/\mathbb{Q}$, and prove some
fundamental results concerning it. Specifically, we prove a formula for the action
of $\Theta$ in terms of Fourier-Jacobi expansions (Theorem \ref{thm:theta on q expansions}).
We also prove that $\Theta$ extends to a holomorphic operator outside
the $\mu$-ordinary locus, when acting on scalar-valued modular forms
in characteristic $p$ (Theorems \ref{thm:holomorphic extension of theta m less n}
and \ref{thm:m=00003Dn_holmorphicity}). 

When the prime $p$ is \emph{split} in $E$, general points on the
special fiber of the Shimura variety parametrize ordinary abelian
varieties. A theta operator, and a whole array of differential operators
derived from it, were defined in this context in Eischen's thesis
\cite{Ei}. Her construction was generalized in \cite{E-F-M-V} to
unitary Shimura varieties associated with a general CM field, but
still under the ordinariness assumption. In their work, these authors
circumvent the study of $\Theta$ on Fourier-Jacobi expansions by
expressing it in Serre-Tate coordinates at CM points. 

``Ordinariness'' is a strong assumption. Over the ordinary locus,
it provides a unit-root splitting of the Hodge filtration in the cohomology
of the universal abelian variety. This allows one to extend Katz's
approach to $\Theta$ \cite{Ka2}. The unit-root splitting serves
as a $p$-adic replacement for the Hodge decomposition over the complex
numbers, which underlies the construction of similar $C^{\infty}$-differential
operators of Ramanujan and Maass-Shimura \cite{Sh}, \S III.

In \cite{dS-G1}, we defined a $\Theta$-operator on unitary modular
forms of signature $(2,1)$ and determined its effect on $q$-expansions,
for $p$ \emph{inert} in the quadratic imaginary field $E$. The main
obstacle in this case was that the abelian variety parametrized by
a general point of the special fiber of the Shimura varietyis is not ordinary 
anymore, but so-called $\mu$-ordinary,
and its cohomology does not admit a unit-root splitting. Our approach
there, adopted also in the present paper, is to make systematic use
of Igusa varieties; we first define the theta operator on them, and
show that it descends to the Shimura variety. 

Recently, we have learned of the work of Ellen Eischen and Elena Mantovan
\cite{E-M} in which they construct the same differential operators
in the $\mu$-ordinary ($p$ inert) case. Their method is closer to
the original idea of Katz, but they replace the unit-root splitting
by slope filtration splitting of $F$-crystals. Their construction
is more general than ours, as it applies to unitary Shimura varieties
associated with a general CM field. They apply their differential
operators to the study of $p$-adic families of modular forms in the
spirit of Serre, Katz and Hida. Their work should have applications
to questions of over-convergence, construction of $p$-adic
$L$-functions and Iwasawa theory.
However, the issues addressed in the present paper,
the effect of $\Theta$ on Fourier-Jacobi expansions and its holomorphic
extension beyond the $\mu$-ordinary locus, are not considered there. 

\medskip{}

We now provide some background and motivation for the study undertaken
in this paper. The theta operator for elliptic modular forms is related
to an operator already defined by Ramanujan. On $q$-expansions it is given by
\[
f=\sum_{n}a_{n}q^{n}\mapsto\Theta(f)=\sum_{n}na_{n}q^{n}.
\]
Over the complex numbers,
this operator does not preserve the space of holomorphic modular forms.
However, viewed at the level of $q$-expansions for $p$-adic, or
mod $p$, modular forms, it does, at least when one has reasonable
demands: in characteristic $p$ one has to multiply $\Theta(f)$ by
$h$, the Hasse invariant, which is a modular form of weight $p-1$
vanishing outside the ordinary locus; $p$-adically one has to be
content with working merely over the ordinary locus. 

These aspects were present from the very start in the work of Swinnerton-Dyer
and Serre \cite{Se1,Se2,Sw-D}. In fact, already in \cite{Se1}, motivated
by relation to Galois representations, Serre investigates the notion
of \emph{filtration}. The filtration of a $q$-expansion
of a mod $p$ modular form is the minimal weight in which one may
find a modular form with that $q$-expansion; one is interested in
its variation under applications of $\Theta$, which at the level
of Galois representations corresponds to a cyclotomic twist. Following
closely on the heels of these developments, Katz gave a geometric
construction of $\Theta$ on (essentially) all modular curves with
good reduction at $p$ in \cite{Ka2}. 

Not much later, Jochnowitz \cite{Joc} studied $\Theta$-\emph{cycles}. The
basic idea is simple. If $g=\Theta(f)$ has filtration $w_{0}$, 
the series of filtrations $w_{i}$ of $\Theta^{i}(g)$, $i=0,1,\dots,p-1$,
is a collection of weights that is generally increasing, but not always,
because $w_{p-1}=w_{0}$. The question of the variation of the filtration
along the cycles is interesting and has important applications. See
\cite{Gr,Joc}. Further deep uses of the $\Theta$-operator to over-convergence
and classicality of $p$-adic modular forms were given in \cite{Col,C-G-J}.

Shortly after \cite{Ka2}, Katz has studied in \cite{Ka3}, \S II, such an
operator for Hilbert modular forms associated to a totally real field
$L$, and in fact enriched the theory by introducing $g=[L:\mathbb{Q}]$
basic theta operators. These operators were instrumental in his construction of $p$-adic $L$-functions for CM fields via the Eisenstein measure. In that work, as in the case of modular curves,
strong use is made of the behaviour of de Rham cohomology and the unit
root splitting over the ordinary locus. The study of these operators
was further developed by Andreatta and the second author \cite{A-G},
who constructed mod~$p$ versions of them by means of the Igusa variety,
and provided some results on filtrations, $\Theta$-cycles and relations
to cyclotomic twists. 

\medskip{}

It seemed a natural idea at that point to extend the theory of the
theta operator to other Shimura varieties of PEL type. However, two obstacles
arise: (i) The abelian variety classified by a general point of the Shimura variety in positive characteristic
may not be ordinary anymore. In particular, its de Rham cohomology
may not admit a unit root splitting. (ii) The natural definition takes
modular forms, even if scalar-valued, to vector-valued modular forms. 

Bearing in mind the Kodaira-Spencer isomorphism, which is involved
in the definition of $\Theta$, the second problem could be anticipated.
In the Hilbert modular case, it is the abundance of endomorphisms
that allows one to return to scalar-valued modular forms. In spite
of these difficulties, progress has been made on other Shimura varieties:
As Eischen had already remarked in her thesis, her construction generalizes
almost immediately to the symplectic case. Panchishkin and Courtieu
discussed similar operators for Siegel modular forms in \cite{Co-Pa} \S\S 2-3,\cite{Pan}.
For different aspects in the symplectic case see the papers by B\"ocherer-Nagaoka
\cite{B-N} and Ghitza-McAndrew \cite{G-M}, and additional references
therein. For other cases, see the work of Johansson \cite{Joh}.

Our construction of the theta operator via the Igusa tower was motivated by
Gross' construction in \cite{Gr}. For an application of the Igusa tower to the study of
vector-valued $p$-adic Siegel modular forms see \cite{Ich}.

\bigskip{}
The contents of this paper are as follows. Let $E$ be a quadratic
imaginary field, $p$ a rational prime that is inert in $E$ and $\kappa=\mathcal{O}_{E}/(p)$
its residue field. Let $n\geq m$ be positive integers. Fixing additional
data, one obtains a scheme $\mathcal{S}$ over $\mathcal{O}_{E,(p)}$ that parametrizes
abelian schemes with $\mathcal{O}_{E}$-action of signature $(n,m)$,
endowed with a principal polarization and level structure. Its complex points
are a union of Shimura varieties associated to
the unitary group $GU(n,m)$. Let $S\rightarrow{\rm Spec(\kappa)}$ denote
its special fiber, and let $S_{s}$ be the base change of $\mathcal{S}$
to $W_{s}=W_{s}(\kappa)$. 

In $\S$\ref{sec:Background} we collect background material and definitions,
and in particular define the type of vector-valued $p$-adic modular
forms that will be considered in this paper. Automorphic vector bundles
over $\mathcal{S}$ correspond to representations of the group $GL_{m}\times GL_{n}$,
and there are two ``basic'' vector bundles, $\mathcal{Q}$ and $\mathcal{P}$,
corresponding to the standard representations of the two blocks, from
which all others are derived.\footnote{As the Levi factor of the appropriate parabolic in $GU(n,m)_\mathbb{C}$ is $\mathbb{G}_m \times GL_{m}\times GL_{n}$ we could, in principle, take also representations that are non-trivial on the first factor. However, we will have no need for this greater generality in this paper and so, here and in the sequel, we will  consider automorphic vector bundles associated to representations of $GL_{m}\times GL_{n}$ only.} Characteristic $p$ holds its own idiosyncrasies
and there are $3$ vector bundles, denoted $\mathcal{Q},\mathcal{P}_{0}$
and $\mathcal{P}_{\mu}$, from which all $p$\emph{-adic} automorphic
vector bundles $\mathcal{E}_{\rho}$ are derived by representation-theoretic
constructions; in particular, $\rho$ refers here to a representation
of $GL_{m}\times GL_{m}\times GL_{n-m}$. We briefly explain the origin
of these vector bundles. The relative cotangent bundle of the universal
abelian variety $\mathcal{A}\rightarrow\mathcal{S}$ decomposes according
to signatures, providing us with vector bundles $\mathcal{P},\mathcal{Q}$
of ranks $n,m$, respectively. Over the $(\mu$-)ordinary locus $S_{s}^{\mathrm{ord}}$
of $S_{s}$, $\mathcal{P}$ admits a filtration $0\rightarrow\mathcal{P}_{0}\rightarrow\mathcal{P}\rightarrow\mathcal{P}_{\mu}\rightarrow0$.
The vector bundle $\mathcal{E}_{\rho}$ lives over $S_{s}^{\mathrm{ord}}$ and
is obtained by ``twisting'' $\rho$ by the triple $(\mathcal{Q},\mathcal{P}_{\mu},\mathcal{P}_0)$ 
(see $\S$\ref{subsec:Twisting} for details). A mod-$p^{s}$
modular form of weight $\rho$ is defined to be a section of $\mathcal{E}_{\rho}$
over $S_{s}^{\mathrm{ord}}$. 

In $\S$\ref{sec:Differential-operators-on} we define the Igusa tower
over $S_{s}^{\mathrm{ord}}$ and study its properties. The key fact
about the Igusa tower is that the vector bundles $\mathcal{P}_{0},\mathcal{P}_{\mu}$
and $\mathcal{Q}$ (unlike $\mathcal{P}$ !) are all \emph{canonically trivialized} over it.
To be precise, much as in Katz \cite{Ka1}, the Igusa tower is a double
limit of schemes $\{T_{t,s}|t,s\geq1\}$, where $T_{t,s}$ is a scheme
over the truncated Witt vectors $W_{s}$ of length $s$, and whenever
$t\geq s$ a trivialization as above is obtained. Consequently,
we are able to propagate, by linear algebra constructions alone, the
trivial connection $d:\mathcal{O}_{T}\rightarrow\Omega_{T/W_{s}}$
for $T=T_{t,s},t\geq s$, to a connection 
\[
\tilde{\Theta}:\mathcal{E}_{\rho}\rightarrow\mathcal{E}_{\rho}\otimes\Omega_{T/W_{s}}\cong\mathcal{E}_{\rho}\otimes\mathcal{P}\otimes\mathcal{Q},
\]
the last isomorphism stemming from the Kodaira-Spencer map. When we
follow this map by the projection $\mathcal{E}_{\rho}\otimes\mathcal{P}\otimes\mathcal{Q}\rightarrow\mathcal{E}_{\rho}\otimes\mathcal{P}_{\mu}\otimes\mathcal{Q}$,
and combine it with pull back of modular forms under $T\rightarrow S_{s}^{\mathrm{ord}}$,
we obtain an operator 
\[
\Theta:H^{0}(S_{s}^{\mathrm{ord}},\mathcal{E}_{\rho})\rightarrow H^{0}(S_{s}^{\mathrm{ord}},\mathcal{E}_{\rho}\otimes\mathcal{P}_{\mu}\otimes\mathcal{Q}).
\]
This operator can be iterated and combined with representation-theoretic
operations as discussed in the end of $\S$\ref{sec:Differential-operators-on},
to produce an array of differential operators $D_{\kappa}^{\kappa^{\prime}}$
as in \cite{E-F-M-V,E-M}. 

The initial parts of $\S$\ref{sec:Toroidal-compactifications-and} are
a review of the theory of toroidal compactifications for the case
at hand. We follow Faltings-Chai \cite{F-C}, that relies on the seminal
work of Mumford and his school, Skinner-Urban \cite{S-U}, and the
definitive volume by Lan \cite{Lan}. In particular, the reader will
find a precise explanation of the meaning of the Fourier-Jacobi expansion
of a vector-valued modular form 
\[
f=\sum_{\check{h}\in\check{H}^{+}}a(\check{h})\,q^{\check{h}}.
\]
 See $\S$\ref{subsec:Fourier-Jacobi-expansions}. In this notation our
first main theorem states the following.
\begin{thm*}
(Theorem \ref{thm:theta on q expansions}) Let $\xi$ be a rank-$m$
cusp. Let $f$ be a global section of $\mathcal{E}_{\rho}$ and $\sum_{h\in\check{H}^{+}}a(\check{h})\,q^{\check{h}}$
its Fourier-Jacobi expansion at $\xi$. Then the section $\Theta(f)$
of $\mathcal{E}_{\rho}\otimes\mathcal{P}_{\mu}\otimes\mathcal{Q}$
has the Fourier-Jacobi expansion 
\[
\Theta(f)=\sum_{\check{h}\in\check{H}^{+}}a(\check{h})\otimes\check{h}\cdot q^{\check{h}}.
\]
\end{thm*}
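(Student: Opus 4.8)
The plan is to reduce the identity to a computation in the formal neighbourhood of the boundary stratum attached to $\xi$, performed inside a sufficiently deep level $T_{t,s}$ ($t\geq s$) of the Igusa tower, where $\mathcal{P}_{0},\mathcal{P}_{\mu}$ and $\mathcal{Q}$ are canonically trivialized and $\tilde{\Theta}$ is, by construction, the trivial connection $d$ followed by the Kodaira--Spencer isomorphism and the projection onto $\mathcal{P}_{\mu}\otimes\mathcal{Q}$. This is the unitary $\mu$-ordinary counterpart of Katz's computation for Hilbert modular forms and of the signature $(2,1)$ case of \cite{dS-G1}, and I would organize it in three parts: (i) describe the toroidal chart at $\xi$ together with the restriction of Kodaira--Spencer to it; (ii) differentiate a single Fourier--Jacobi term $a(\check{h})\,q^{\check{h}}$; (iii) reassemble the expansion and descend from the Igusa tower back to $S_{s}^{\mathrm{ord}}$.

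For (i), I would recall from $\S$\ref{sec:Toroidal-compactifications-and} that the formal completion at a rank-$m$ cusp is (the completion along its boundary of) a torus torsor $\pi\colon T\to B$ of relative dimension $m^{2}$ over an abelian scheme $B$ of relative dimension $(n-m)m$ (the Kuga scheme of the cusp), so that the relative cotangent sequence gives $0\to\pi^{*}\Omega_{B}\to\Omega_{T/W_{s}}\to\Omega_{T/B}\to0$, with $\Omega_{T/B}$ the ``toric'' quotient spanned by the logarithmic differentials $d\log q^{\check{h}}$; and that a Fourier--Jacobi expansion $f=\sum_{\check{h}}a(\check{h})\,q^{\check{h}}$ records the expansion of a section of $\mathcal{E}_{\rho}$ in these $q^{\check{h}}$, the coefficient $a(\check{h})$ being a section over $B$ of the corresponding Jacobi sheaf with values in the fibre of $\mathcal{E}_{\rho}$ (trivialized by the Igusa structure), and in particular independent of the toroidal coordinates. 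The key geometric input to establish is that $\mathrm{KS}\colon\Omega_{T/W_{s}}\xrightarrow{\ \sim\ }\mathcal{P}\otimes\mathcal{Q}$ is \emph{strictly compatible} with the two filtrations: it carries $\pi^{*}\Omega_{B}$ isomorphically onto $\mathcal{P}_{0}\otimes\mathcal{Q}$, hence induces an isomorphism $\Omega_{T/B}\cong\mathcal{P}_{\mu}\otimes\mathcal{Q}$, and under this isomorphism --- read through the Igusa trivializations --- the class of $d\log q^{\check{h}}$ is exactly the element $\check{h}$ of $\mathcal{P}_{\mu}\otimes\mathcal{Q}$, via the natural identification of $\check{H}$ with a lattice there. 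I expect this to be the main obstacle: it amounts to tracing the sub-bundle $\mathcal{P}_{0}$ against the quotient $\mathcal{P}_{\mu}$ through the Mumford construction of the degenerating $\mathcal{O}_{E}$-abelian scheme near the boundary, matching the extension data with the toroidal parameters, and checking that the canonical Igusa trivializations are compatible with the Fourier--Jacobi formalism. This is precisely where the inert, $\mu$-ordinary geometry --- the slope filtration of the relevant $F$-crystal and its behaviour at the cusp --- has to be used in place of the classical unit-root picture.

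Granting (i), step (ii) follows from the Leibniz rule for the trivial connection: on the formal chart one has, for each $\check{h}\in\check{H}^{+}$,
\[
\tilde{\Theta}\bigl(a(\check{h})\,q^{\check{h}}\bigr)=q^{\check{h}}\,\tilde{\Theta}\bigl(a(\check{h})\bigr)+a(\check{h})\otimes q^{\check{h}}\,d\log q^{\check{h}},
\]
where $\tilde{\Theta}\bigl(a(\check{h})\bigr)$, being the trivial derivative of a section pulled back from $B$, has differential part in $\pi^{*}\Omega_{B}$, and where I have used $d(q^{\check{h}})=q^{\check{h}}\,d\log q^{\check{h}}$. Applying $\mathrm{KS}$ and then the projection $\mathcal{E}_{\rho}\otimes\mathcal{P}\otimes\mathcal{Q}\twoheadrightarrow\mathcal{E}_{\rho}\otimes\mathcal{P}_{\mu}\otimes\mathcal{Q}$ that defines $\Theta$: by the strict compatibility of (i), the first summand lands in $\mathcal{E}_{\rho}\otimes\mathcal{P}_{0}\otimes\mathcal{Q}$ and is annihilated, while the second maps to $a(\check{h})\otimes\check{h}\cdot q^{\check{h}}$. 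Summing over $\check{h}$ shows that $\Theta(f)$, pulled back to the chart, has Fourier--Jacobi expansion $\sum_{\check{h}\in\check{H}^{+}}a(\check{h})\otimes\check{h}\cdot q^{\check{h}}$, its coefficients now lying in the Jacobi sheaf tensored with $\mathcal{P}_{\mu}\otimes\mathcal{Q}$.

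Finally, for (iii), since $\Theta(f)$ is defined over $S_{s}^{\mathrm{ord}}$ whereas Fourier--Jacobi expansions are taken relative to a compactification, I would invoke the descent of $\tilde{\Theta}$ from the Igusa tower to $S_{s}^{\mathrm{ord}}$ (already proved in $\S$\ref{sec:Differential-operators-on}), together with the observation that $\mathcal{E}_{\rho}\otimes\mathcal{P}_{\mu}\otimes\mathcal{Q}$ is again an automorphic bundle of the type $\mathcal{E}_{\rho'}$, so that $\Theta(f)$ has a Fourier--Jacobi expansion in the usual sense and this expansion is compatible with pullback to the Igusa chart above $\xi$. The local computation of step (ii) then computes that expansion, and the injectivity of the Fourier--Jacobi expansion map (the $q$-expansion principle, $\S$\ref{subsec:Fourier-Jacobi-expansions}) upgrades the resulting formal identity to the asserted equality of global expansions.
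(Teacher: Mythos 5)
Your proposal follows essentially the same route as the paper: compute in the completed local ring of the toroidal chart above a standard rank-$m$ cusp inside the (partially compactified) Igusa cover, apply the Leibniz rule to $\sum a(\check{h})q^{\check{h}}$, and use the two key facts that $\mathrm{KS}$ carries $\Omega_{C/W_{s}}$ into $\mathcal{P}_{0}\otimes\mathcal{Q}$ and sends $\omega(\check{h})$ to $\check{h}\otimes1$, so that the $da(\check{h})$ terms are killed by the projection $pr_{\mu}$ while the logarithmic terms give the asserted expansion. The only divergence is in how the key input of your step (i) is established: in the paper it follows directly from Lan's Kodaira--Spencer theory for semi-abelian degenerations (\cite{Lan} \S 4.6, in particular Remark 4.6.2.7, Theorem 4.6.3.16 and Definition 4.6.2.12, which identify $\mathrm{KS}_{\mathcal{G}/C}:\mathcal{P}_{0}\otimes\mathcal{Q}\simeq\Omega_{C/W_{s}}$ and send $\chi_{i}\otimes\chi_{j}$ to $d\log q_{ij}$), with no appeal to the slope filtration of an $F$-crystal or to the $\mu$-ordinary structure at the cusp, contrary to what you anticipate; also the final appeal to the $q$-expansion principle in your step (iii) is unnecessary, since the Fourier--Jacobi expansion of $\Theta(f)$ is by definition read off from the completed stalk you have just computed.
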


The analogous result for the Fourier-Jacobi expansion at a non maximally degenrate
cusp (of rank $<m$) should involve also theta operators on lower-rank Shimura varieties
acting on the coefficients. For most practical purposes, however, e.g. for a $q$-expansion
principle, rank $m$ cusps suffice.

In $\S$\ref{sec:Analytic-continuation-of} we consider the extension
of the operator $\Theta$ to the complement of the $\mu$-ordinary
locus. This we are able to do, so far, only for scalar-valued modular
forms. The proof requires a partial compactification of a particular
Igusa variety as in \cite{dS-G1}, and delicate computations with
Dieudonn{\'e} modules in the spirit of our recent work \cite{dS-G2}.
Let $\mathcal{L}=\det\mathcal{Q}$ and $k\ge0.$ 
\begin{thm*}
(Theorems \ref{thm:holomorphic extension of theta m less n}, \ref{thm:m=00003Dn_holmorphicity})
Consider the operator 
\[
\Theta:H^{0}(S^{\mathrm{ord}},\mathcal{L}^{k})\rightarrow H^{0}(S^{\mathrm{ord}},\mathcal{L}^{k}\otimes\mathcal{Q}^{(p)}\otimes\mathcal{Q}).
\]
 Then $\Theta$ extends holomorphically to an operator 
\[
\Theta:H^{0}(S,\mathcal{L}^{k})\rightarrow H^{0}(S,\mathcal{L}^{k}\otimes\mathcal{Q}^{(p)}\otimes\mathcal{Q}).
\]
 
\end{thm*}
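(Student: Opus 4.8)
The plan is to reduce the statement to a local computation with Dieudonné modules on the boundary of a suitable partial compactification of an Igusa variety, exactly as in the signature $(2,1)$ case treated in \cite{dS-G1}. First I would recall that $\Theta(f)$, being a section of $\mathcal{L}^k\otimes\mathcal{Q}^{(p)}\otimes\mathcal{Q}$ over $S^{\mathrm{ord}}$, is \emph{a priori} a rational section over all of $S$, regular away from the non-$\mu$-ordinary locus $Z=S\setminus S^{\mathrm{ord}}$. Since $Z$ has codimension one in $S$ (it is the vanishing locus of a power of the Hasse invariant) and $S$ is smooth, it suffices to check that $\Theta(f)$ has no pole along each component of $Z$; by the normality of $S$ this is a statement that can be checked after passing to the completion at a generic point of $Z$, or — more conveniently — by exhibiting a single modular form whose $q$-expansion at a rank-$m$ cusp records $\Theta(f)$ and then invoking the $q$-expansion principle. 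So the real content is: the expression $\sum_{\check h\in\check H^+} a(\check h)\otimes\check h\, q^{\check h}$ from the Fourier--Jacobi theorem is the $q$-expansion of a \emph{genuine holomorphic} section over $S$ of the stated automorphic bundle.

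To access the behaviour near $Z$, I would work over the Igusa variety used to define $\Theta$, and construct a partial toroidal compactification of it over a formal/infinitesimal neighbourhood of a point of $Z$, following the template of \cite{dS-G1} and the Dieudonné-theoretic machinery of \cite{dS-G2}. The key object is the universal Dieudonné module (or its associated display / $F$-crystal) along a one-parameter family degenerating from the $\mu$-ordinary stratum to the next stratum. Over the Igusa tower, $\mathcal{P}_0$, $\mathcal{P}_\mu$, $\mathcal{Q}$ are canonically trivialized (as recalled in the excerpt), and $\tilde\Theta$ is literally the trivial connection $d$; the Kodaira--Spencer identification $\Omega_{T/W_s}\cong\mathcal{P}\otimes\mathcal{Q}$ and the projection onto $\mathcal{P}_\mu\otimes\mathcal{Q}$ have to be understood in terms of the Dieudonné module. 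The scalar-valued hypothesis enters precisely here: for $f$ a section of $\mathcal{L}^k=(\det\mathcal{Q})^k$, the derivative $df$ pairs against the Kodaira--Spencer class in a way that, after the projection, lands in $\mathcal{Q}^{(p)}\otimes\mathcal{Q}$ — here $\mathcal{Q}^{(p)}$ appears because, in the inert case, $\mathcal{P}_\mu$ is canonically the Frobenius twist $\mathcal{Q}^{(p)}$ (this is where signature and the inertness of $p$ are used, and why no analogue is available for general vector-valued $\rho$).

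The main obstacle, and the technical heart of the argument, is the explicit Dieudonné-module computation showing that the pole that $\Theta(f)$ could \emph{a priori} acquire along $Z$ is in fact cancelled. Concretely: the trivializations of $\mathcal{P}_0,\mathcal{P}_\mu,\mathcal{Q}$ provided by the Igusa structure degenerate in a controlled way as one approaches $Z$, the Hasse invariant (a section of $\mathcal{L}^{1-p}$ up to normalization, or the appropriate power thereof) vanishes to first order, and one must show that the $1/h$-type singularity introduced by passing from the Igusa trivialization back to $S$ is exactly compensated by a zero of the same order coming from the structure of the crystal near the deeper stratum. This is where "delicate computations with Dieudonné modules in the spirit of \cite{dS-G2}" are unavoidable; I would organize them by choosing an adapted basis of the Dieudonné module in which $F$ and $V$ have a normal form along the one-parameter degeneration (a "local model" at the generic point of $Z$), compute the connection matrix of $\tilde\Theta$ in that basis, apply the projection to $\mathcal{P}_\mu\otimes\mathcal{Q}$, and read off that the entries are regular. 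The case $m=n$ (signature $(n,n)$, Theorem \ref{thm:m=00003Dn_holmorphicity}) would be handled separately because then $\mathcal{P}_\mu$ has full rank and $\mathcal{P}_0=0$, so the projection is the identity and the degeneration analysis simplifies, though the normal form of the crystal is different; I expect the $m=n$ argument to be shorter but to require its own bookkeeping. Throughout, I would use the $q$-expansion principle (valid at rank-$m$ cusps, as noted after the Fourier--Jacobi theorem) to upgrade the local regularity statement to a global one, since the Fourier--Jacobi coefficients $a(\check h)\otimes\check h$ visibly have no denominators.
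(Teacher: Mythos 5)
Your route for $m<n$ is essentially the paper's: reduce, using smoothness of $S$, to checking regularity along the codimension-one stratum $S^{\mathrm{ao}}$ (the complement of $S^{\mathrm{ord}}\cup S^{\mathrm{ao}}$ has codimension $\geq 2$), pass to the Kummer-type partial compactification $T'$ of the intermediate Igusa cover obtained by extracting a $(p^{2}-1)$-st root of $h$, expand $f/a^{k}$ in the ramified parameter $w$ with $w^{p^{2}-1}=u$, and observe that after multiplying back by $a^{k}$ the only surviving singularity is a simple pole against $u^{-1}du$, which is killed because $\psi(du)=(pr_{\mu}\otimes 1)\circ\mathrm{KS}^{-1}(du)$ vanishes along $S^{\mathrm{ao}}$ (Theorem \ref{thm: =00005Cpsi(du)_has_a_zero}); your ``zero coming from the structure of the crystal'' is exactly this statement, proved by the explicit deformation-theoretic Dieudonn\'e computation at an almost-ordinary point. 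Two inessential but real confusions in your write-up: the Fourier--Jacobi expansion and the $q$-expansion principle play no role in the holomorphicity proof (poles along $S^{\mathrm{ao}}$ are an interior phenomenon invisible to expansions at the cusps, and the local statement at all codimension-one points already globalizes by normality), and the relevant ``partial compactification'' of the Igusa variety is along the non-ordinary locus, not a toroidal one at the boundary.

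The genuine gap is your treatment of $m=n$. You assert that then $\mathcal{P}_{0}=0$, the projection is the identity, and the analysis simplifies; but $\ker V_{\mathcal{P}}$, which is zero on $S^{\mathrm{ord}}$, jumps to rank one on $S^{\mathrm{ao}}$, so $\mathcal{P}_{0}$ and $\mathcal{P}_{\mu}$ do not even extend as bundles over $S^{\mathrm{ord}}\cup S^{\mathrm{ao}}$, and if $\psi$ were $\mathrm{KS}^{-1}$ followed by the identity then $\psi(du)$ would be nowhere vanishing and the residual simple pole along $S^{\mathrm{ao}}$ would survive: your argument as stated fails precisely in this case. The paper instead defines the extension using $V_{\mathcal{P}}\otimes 1$ in place of $pr_{\mu}\otimes 1$ (the two agree over $S^{\mathrm{ord}}$ in characteristic $p$) and proves by a separate Dieudonn\'e computation that $\mathrm{KS}^{-1}(du)$ lies in $\ker(V_{\mathcal{P}})\otimes\mathcal{Q}$ at almost-ordinary points, i.e.\ the pole is killed exactly because $V_{\mathcal{P}}$ degenerates along $S^{\mathrm{ao}}$ (Theorem \ref{thm:m=00003Dn_holmorphicity}). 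Your auxiliary assumption that the Hasse invariant vanishes to first order also fails here: $h=h_{\mathcal{Q}}^{p+1}$ has non-reduced divisor, the cover $T'$ splits into $p+1$ components, each only tamely ramified of degree $p-1$ along $S^{\mathrm{ao}}$ (so one expands in $w$ with $w^{p-1}=u$), and the irreducibility argument for the Igusa variety used when $n>m$ breaks down; none of this is fatal, but it is additional structure your proposal does not anticipate, and without the $V_{\mathcal{P}}$-degeneration input the $m=n$ half of the theorem is not proved.
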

Finally, in $\S$\ref{sec:Theta-cycles} we introduce the notion of $\Theta$-cycles
and recall interesting phenomena observed in \cite{dS-G1}. 

\medskip{}

Our paper and the work of Eischen-Mantovan suggest several directions
in which the theory can be further developed. In addition to those
mentioned in \cite{E-M} we suggest the following problems. (i) Provide
a formula for the Fourier-Jacobi expansion and the theta operator
$\Theta$ at general cusps. (ii) Study the extension of $\Theta$ to a holomorphic operator for general vector-valued
unitary modular forms. (iii) Develop a theory of mod $p$ operators,
such as $U$ and $V$ and characterize the kernel of $\Theta$ in
terms of $V$, \emph{cf. }\cite{Ka2}. (iv) Study $\Theta$-cycles
in relation to mod $p$ Galois representations. 

\bigskip{}

\textbf{Acknowledgments.} We would like to thank Ellen Eischen and
Elena Mantovan for discussions relating to the contents of both our
papers. It is a pleasure to thank G. Rosso for bringing their work
to our attention, and P. Kassaei for valuable comments. We thank the referees for very useful comments. 

Much of this
paper was written during visits to the Hebrew University and to McGill
University and it is our pleasant duty to thank these institutes for
their hospitality. This research was supported by NSERC grant 223148
and ISF grant 276/17.

\section{\label{sec:Background}Background}

\subsection{The Shimura variety}

\subsubsection{\label{subsec:Linear-algebra}Linear algebra}

We review some background and set up standard notation. Let $E$ be
a quadratic imaginary field, embedded in $\mathbb{C},$ $0\le m\le n$
and $\Lambda=\mathcal{O}_{E}^{n+m}.$ Let
\begin{equation}
I_{n,m}=\left(\begin{array}{ccc}
 &  & I_{m}\\
 & I_{n-m}\\
I_{m}
\end{array}\right)\label{eq:I_(n,m)}
\end{equation}
where $I_{l}$ is the unit matrix of size $l$, and introduce the
perfect hermitian pairing
\begin{equation}
\left(u,v\right)=\,^{t}\overline{u}I_{n,m}v\label{eq:hermitian_pairing}
\end{equation}
on $\Lambda.$ Let
\[
\boldsymbol{G}=GU(\Lambda,(,))
\]
be the group of unitary similitudes of $\Lambda$, regarded as a group
scheme over $\mathbb{Z},$ and denote by $\nu:\boldsymbol{G}\to\mathbb{G}_{m}$
the similitude character. For any commutative ring $R$
\[
\boldsymbol{G}(R)=\left\{ g\in GL_{n+m}(\mathcal{O}_{E}\otimes R)|\,\forall u,v\in\Lambda\otimes R\,\,\,\,\left(gu,gv\right)=\nu(g)(u,v)\right\} .
\]
Then $\boldsymbol{G}(\mathbb{R})=GU(n,m)$ is the general unitary
group of signature $(n,m),$ and $\boldsymbol{G}(\mathbb{C})\simeq GL_{n+m}(\mathbb{C})\times\mathbb{C}^{\times}.$

Let $\delta_{E}$ be the unique generator of the different $\mathfrak{d}_{E}$
of $E$ with $\mathrm{Im}(\delta_{E})>0.$ The \emph{polarization
pairing}
\begin{equation}
\left\langle u,v\right\rangle =Tr_{E/\mathbb{Q}}(\delta_{E}^{-1}(u,v))\label{eq:polarization}
\end{equation}
is then a perfect alternating pairing $\Lambda\times\Lambda\to\mathbb{Z}$
satisfying $\left\langle au,v\right\rangle =\left\langle u,\overline{a}v\right\rangle $
($a\in E$).

Let $p$ be an odd prime which is inert in $E$, and fix once and
for all an embedding $\overline{\mathbb{Q}}\subset\overline{\mathbb{Q}}_{p}$.
Let $E_{p}$ be the completion of $E$ and $\mathcal{O}_{p}$ its
ring of integers. As $-1$ is a norm from $E_{p}$ to $\mathbb{Q}_{p}$,
one easily checks that $\boldsymbol{G}_{/\mathcal{O}_{p}}$ is quasi-split.
In fact, over $\mathcal{O}_{p}$ the lattice $\Lambda_{p}=\mathbb{Z}_{p}\otimes\Lambda=\mathcal{O}_{p}^{n+m}$,
equipped with the hermitian form (\ref{eq:hermitian_pairing}), is
isomorphic to the same lattice equipped with the pairing $^{t}\overline{u}J_{n+m}v$,
where by $J_{l}$ we denote the matrix with $1$'s on the anti-diagonal
and $0$'s elsewhere. This will be useful later.

\medskip{}

If $R$ is an $\mathcal{O}_{E,(p)}$-algebra then any $R$-module
$M$ endowed with a commuting $\mathcal{O}_{E}$- action decomposes
according to types,
\[
M=M(\Sigma)\oplus M(\overline{\Sigma}),
\]
where $M(\Sigma)$ is the $R$-submodule on which $\mathcal{O}_{E}$
acts via the canonical homomorphism $\Sigma:\mathcal{O}_{E}\hookrightarrow\mathcal{O}_{E,(p)}\to R,$
while $M(\overline{\Sigma})$ is the part on which it acts via the
conjugate homomorphism $\overline{\Sigma}$. Indeed, it is enough
to decompose $\mathcal{O}_{E}\otimes R=R(\Sigma)\times R(\overline{\Sigma})$
as an $\mathcal{O}_{E}$-algebra. The same notation will be applied
to coherent sheaves with $\mathcal{O}_{E}$-action on schemes defined
over $\mathcal{O}_{E,(p)}.$

We denote by $\kappa$ the field $\mathcal{O}_{E}/p\mathcal{O}_{E}$
of $p^{2}$ elements. 

\subsubsection{The Shimura variety and the moduli problem}

Fix an integer $N\ge3$ relatively prime to $p.$ Let $\mathbb{A}=\mathbb{R}\times\mathbb{A}_{f}$
be the ad{\'e}le ring of $\mathbb{\mathbb{Q}}$, where $\mathbb{A}_{f}=\mathbb{Q}\cdot\mathbb{\widehat{Z}}$
are the finite ad{\'e}les. Let $K_{f}\subset\boldsymbol{G}(\mathbb{\widehat{Z}})$
be an open subgroup of the form $K_{f}=K^{p}K_{p},$ where $K^{p}\subset\boldsymbol{G}(\mathbb{A}^{p})$
is the principal congruence subgroup of level $N$, and
\[
K_{p}=\boldsymbol{G}(\mathbb{Z}_{p})\subset\boldsymbol{G}(\mathbb{Q}_{p}),
\]
which is a hyperspecial maximal compact subgroup at $p.$ Let $K_{\infty}\subset\boldsymbol{G}(\mathbb{R})$
be the stabilizer of the negative definite subspace spanned by $\{-e_{i}+e_{n+i};\,1\le i\le m\}$
in $\Lambda_{\mathbb{R}}=\mathbb{C}^{n+m}$, where $\left\{ e_{i}\right\} $
stands for the standard basis. This $K_{\infty}$ is a maximal compact-modulo-center
subgroup, isomorphic to $G(U(m)\times U(n)).$ By $G(U(m)\times U(n))$
we mean the pairs of matrices $(g_{1},g_{2})\in GU(m)\times GU(n)$
having the same similitude factor. Let $K=K_{\infty}K_{f}\subset\boldsymbol{G}(\mathbb{A})$
and $\mathfrak{X}=\boldsymbol{G}(\mathbb{R})/K_{\infty}.$

To the Shimura datum $(\boldsymbol{G},\mathfrak{X})$ and the level subgroup $K$ there is associated
a Shimura variety $Sh_{K}$. It is a quasi-projective non-singular
variety of dimension $nm$ defined over $E$. If $m=n$ the Shimura
variety may even be defined over $\mathbb{Q}$, but we still denote
by $Sh_{K}$ its base-change to $E$. The complex points of $Sh_{K}$
are identified, as a complex manifold, with
\[
Sh_{K}(\mathbb{C})=\boldsymbol{G}(\mathbb{Q})\backslash\boldsymbol{G}(\mathbb{A})/K.
\]

Following Kottwitz \cite{Ko} we define a scheme $\mathcal{S}$ over
$\mathcal{O}_{E,(p)}$. This $\mathcal{S}$ is a fine moduli space
whose $R$-points, for every $\mathcal{O}_{E,(p)}$-algebra $R$,
classify isomorphism types of tuples $\underline{A}=(A,\iota,\phi,\eta)$
where
\begin{itemize}
\item $A$ is an abelian scheme of dimension $n+m$ over $R$.
\item $\iota:\mathcal{O}_{E}\hookrightarrow\End(A)$ has signature $(n,m)$
on the Lie algebra of $A$.
\item $\phi:A\overset{\sim}{\to}A^{t}$ is a principal polarization whose
Rosati involution induces $\iota(a)\mapsto\iota(\overline{a})$ on
the image of $\iota$.
\item $\eta$ is an $\mathcal{O}_{E}$-linear full level-$N$ structure
on $A$ compatible with $(\Lambda,\left\langle .,.\right\rangle )$
and $\phi$ (\cite{Lan} 1.3.6).
\end{itemize}
See \cite{Lan} \S1.4  for the comparison of the various languages used to
define the moduli problem. 

The generic fiber $\mathcal{S}_{E}$ of $\mathcal{S}$ is, in general,
a union of \emph{several} Shimura varieties, one of which is $Sh_{K}.$
This is due to the failure of the Hasse principle for $\boldsymbol{G}$,
which can happen when $m+n$ is odd (\cite{Ko} $\S$7). We also remark
that the assumption $N\ge3$ could be avoided if we were willing to
use the language of stacks. As this is not essential to the present
paper, we keep the scope slightly limited for the sake of clarity.

As shown by Kottwitz, $\mathcal{S}$ is \emph{smooth} of relative
dimension $nm$ over $\mathcal{O}_{E,(p)}.$

\subsubsection{The universal abelian variety and its $p$-divisible group\label{subsec:The-universal-abelian}}

By virtue of the moduli problem which it represents, $\mathcal{S}$
carries a universal abelian scheme $\mathcal{A}_{/\mathcal{S}}$ equipped
with a PEL structure as above. Let
\[
S=\mathcal{S}\times_{\Spec(\mathcal{O}_{E,(p)})}\Spec(\kappa)
\]
be the special fiber of $\mathcal{S}.$ Recall that for any geometric
point $x:\Spec(k)\to S$ the $p$-divisible group of $A=\mathcal{A}_{x}$
carries a canonical filtration by $p$-divisible groups
\begin{equation}
\mathrm{Fil^{0}=}A[p^{\infty}]\supset\mathrm{Fil^{1}}=A[p^{\infty}]^{0}\supset\mathrm{Fil}^{2}=A[p^{\infty}]^{\mu}\supset0,\label{eq:ordinary_filtration}
\end{equation}
where $gr^{2}=A[p^{\infty}]^{\mu}$ is multiplicative, $gr^{1}=A[p^{\infty}]^{0}/A[p^{\infty}]^{\mu}$
is local-local and $gr^{0}=A[p^{\infty}]/A[p^{\infty}]^{0}$ is {\'e}tale.
Over $\Spec(k)$ this filtration is even split, i.e. $A[p^{\infty}]$
is uniquely expressible as a product of multiplicative, local-local
and {\'e}tale $p$-divisible groups, but this fact is special for algebraically
closed (or perfect) fields, while a filtration like (\ref{eq:ordinary_filtration})
often exists over more general bases.

The special fiber $S$ contains an open dense subset
called the $\mu$-ordinary locus, \emph{cf. \cite{We1} }and\emph{
}\cite{Mo1}, Theorem 3.2.7, which we denote $S^{\mathrm{ord}}$. It is characterized by the fact that
for any geometric point $x$ of $S,$ $x$ lies in $S^{\mathrm{ord}}$
if and only if the height of $A[p^{\infty}]^{\mu}$ is $2m,$ which
is as large as it can get. Equivalently, the Newton polygon of $A[p^{\infty}]$
has slopes $0,1/2$ and $1$ with horizontal lengths $2m,$ $2(n-m)$
and $2m$ respectively, which is as low as it can get. In fact, Wedhorn
and Moonen show that the isomorphism type of $A[p^{\infty}]$, as
a polarized $\mathcal{O}_{E}$-group, is the same for all $x\in S^{\mathrm{ord}}(k)$:
\[
A[p^{\infty}]\simeq(\mathfrak{d}_{E}^{-1}\otimes\mu_{p^{\infty}})^{m}\times\mathfrak{G}_{k}^{n-m}\times(\mathcal{O}_{E}\otimes(\mathbb{Q}_{p}/\mathbb{Z}_{p}))^{m}.
\]
Here $\mathfrak{G}_{k}$ is the $p$-divisible group denoted by $G_{\text{\textonehalf,\textonehalf}}$ 
in the Dieudonn{\'e}-Manin classification. It is the unique height-2 one-dimensional
connected $p$-divisible group over~$k$. It is well-known that the
ring $\mathcal{O}_{p}$ acts as endomorphisms of $\mathfrak{G}_{k}$.
We normalize this action so that the induced action of $\mathcal{O}_{p}$
on the Lie algebra of $\mathfrak{G}_{k}$ is via $\Sigma:\mathcal{O}_{p}\twoheadrightarrow\kappa\subset k$,
and this pins down $\mathfrak{G}_{k}$ as an $\mathcal{O}_{E}$-group
up to isomorphism. We polarize it fixing an isomorphism of $\mathfrak{G}_{k}$
with its Serre dual. The appearance of the inverse different in the
first factor is a matter of choice, and is meant to allow a more natural
way to write the Weil pairing between the first and last factors,
namely
\[
\left\langle a\otimes x,b\otimes y\right\rangle =Tr_{E/\mathbb{Q}}(\overline{a}b)\left\langle x,y\right\rangle .
\]

Over $S^{\mathrm{ord}}$, a filtration like (\ref{eq:ordinary_filtration})
exists globally, but is far from being split now (\textit{cf.} \cite{dS-G3}, Proposition 2.10). Nevertheless, its
graded pieces are, locally in the pro-{\'e}tale topology, isomorphic to
the constant $p$-divisible groups $(\mathfrak{d}_{E}^{-1}\otimes\mu_{p^{\infty}})^{m},$
$\mathfrak{G}_{k}^{n-m}$ and $(\mathcal{O}_{E}\otimes(\mathbb{Q}_{p}/\mathbb{Z}_{p}))^{m},$
and the isomorphisms can be taken to respect the endomorphisms and
the polarization. This is well-known for $gr^{0}$ and $gr^{2}.$
For $gr^{1}$ it follows from the rigidity of isoclinic Barsotti-Tate
groups with endomorphisms, namely from the fact that the universal
deformation ring of $(\mathfrak{G}_{k},\iota)$ where $\iota:\mathcal{O}_{p}\hookrightarrow\mathrm{End}(\mathfrak{G}_{k})$,
is $W(k)$ (\cite{Mo1}, Corollary 2.1.5, see \emph{loc.cit} $\S$3.3.1
for the polarization). This result implies that for any geometric
point $x\in S^{\mathrm{ord}}(k)$, $gr^{1}\mathcal{A}[p^{\infty}]$
becomes isomorphic over $\mathcal{\hat{O}}_{S,x}$ to $\mathfrak{G}_{k}^{n-m}$,
with its additional structures of endomorphisms and polarization.
By Artin's approximation theorem \cite{Artin} they become isomorphic already over
the strict henselization $\mathcal{O}_{S,x}^{\mathrm{sh}}$, which
means that they are locally isomorphic in the pro-{\'e}tale topology.

\subsubsection{\label{subsec:basic}The basic vector bundles on $S$}

The Hodge bundle $\omega=\omega_{\mathcal{A}/\mathcal{S}}$ is the
pull-back via the zero section $e_{\mathcal{A}}:\mathcal{S}\to\mathcal{A}$
of the relative cotangent sheaf $\Omega_{\mathcal{A}/\mathcal{S}}$
of the universal abelian scheme. It decomposes as
\[
\omega=\omega(\Sigma)\oplus\omega(\overline{\Sigma})=\mathcal{P}\oplus\mathcal{Q}
\]
according to types. Thus, $\mathrm{rk}(\mathcal{P})=n$ and $\mathrm{rk}(\mathcal{Q})=m.$
\begin{lem}
\label{lem:detP=00003DdetQ}The line bundles $\det(\mathcal{P})$
and $\det(\mathcal{Q})$ are isomorphic over $\mathcal{S}$.
\end{lem}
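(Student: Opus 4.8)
The plan is to exploit the principal polarization $\phi: \mathcal{A}\overset{\sim}{\to}\mathcal{A}^t$, which identifies $\omega_{\mathcal{A}/\mathcal{S}}$ with $\omega_{\mathcal{A}^t/\mathcal{S}}$, together with the standard duality $\omega_{\mathcal{A}^t/\mathcal{S}}\cong (\mathrm{Lie}\,\mathcal{A})^\vee$... wait, more precisely $\mathrm{Lie}\,\mathcal{A}^t \cong \omega_{\mathcal{A}/\mathcal{S}}^\vee$ and dually $\omega_{\mathcal{A}^t/\mathcal{S}}\cong (\mathrm{Lie}\,\mathcal{A})^\vee = H^1_{dR}(\mathcal{A}/\mathcal{S})/\omega_{\mathcal{A}/\mathcal{S}}$. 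First I would record that because the Rosati involution attached to $\phi$ sends $\iota(a)$ to $\iota(\bar a)$, the isomorphism $\phi$ is $\mathcal{O}_E$-\emph{conjugate}-linear; hence it does not preserve the type decomposition but \emph{interchanges} the $\Sigma$- and $\overline{\Sigma}$-parts. Consequently, pulling back along $e_{\mathcal{A}}$, one gets an isomorphism $\mathcal{P}=\omega(\Sigma)\cong \omega_{\mathcal{A}^t/\mathcal{S}}(\overline{\Sigma})$ and likewise $\mathcal{Q}=\omega(\overline\Sigma)\cong\omega_{\mathcal{A}^t/\mathcal{S}}(\Sigma)$.

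Next I would bring in the Hodge--de Rham exact sequence $0\to\omega_{\mathcal{A}/\mathcal{S}}\to H^1_{dR}(\mathcal{A}/\mathcal{S})\to (\mathrm{Lie}\,\mathcal{A})\to 0$ together with the Poincaré/autoduality pairing on $H^1_{dR}$ induced by $\phi$, under which $\omega_{\mathcal{A}/\mathcal{S}}$ is isotropic. This exhibits $\mathrm{Lie}\,\mathcal{A}\cong \omega_{\mathcal{A}/\mathcal{S}}^\vee$ after twisting by the (trivializable) line bundle coming from the similitude, but the cleanest route is simply: the pairing identifies $H^1_{dR}$ with its own dual, compatibly with the $\mathcal{O}_E$-action up to conjugation, so $\det H^1_{dR}(\mathcal{A}/\mathcal{S})$ is the trivial bundle (it carries a nondegenerate alternating form), while $\det H^1_{dR} = \det\omega_{\mathcal{A}/\mathcal{S}}\otimes\det(\mathrm{Lie}\,\mathcal{A}) = \det(\mathcal{P})\otimes\det(\mathcal{Q})\otimes\det(\mathrm{Lie}\,\mathcal{A})(\Sigma)\otimes\det(\mathrm{Lie}\,\mathcal{A})(\overline\Sigma)$. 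Using the conjugate-linearity once more, $\det(\mathrm{Lie}\,\mathcal{A})(\Sigma)\cong\det(\mathcal{Q})^\vee$ and $\det(\mathrm{Lie}\,\mathcal{A})(\overline\Sigma)\cong\det(\mathcal{P})^\vee$, so all four factors cancel in pairs and one is left tautologically with a trivial statement — which is not yet what we want.

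The correct extraction is to \emph{not} pass to $H^1_{dR}$ but to argue directly on the polarization pairing restricted to the type pieces: the alternating form $\langle\,,\,\rangle$ on $H^1_{dR}$ pairs the $\Sigma$-part with the $\overline{\Sigma}$-part perfectly (the $\mathcal{O}_E$-action being conjugate-selfadjoint forces the $\Sigma$-part to be isotropic and its annihilator to be exactly the $\Sigma$-part again — here is where I must be careful). Granting the perfect pairing $H^1_{dR}(\Sigma)\times H^1_{dR}(\overline\Sigma)\to\mathcal{O}_{\mathcal{S}}$, and that $\mathcal{P}\subset H^1_{dR}(\Sigma)$, $\mathcal{Q}\subset H^1_{dR}(\overline\Sigma)$ are the Hodge pieces, the short exact sequences on each type give $\det H^1_{dR}(\Sigma)\cong\det\mathcal{P}\otimes\det(\mathrm{Lie}\,\mathcal{A})(\Sigma)$ and the perfect pairing gives $\det H^1_{dR}(\Sigma)\cong\det H^1_{dR}(\overline\Sigma)^\vee$, hence after unwinding, $\det\mathcal{P}\otimes\det\mathcal{Q}^\vee$ is identified with $\det(\mathrm{Lie}\,\mathcal{A})(\overline\Sigma)^\vee\otimes\det(\mathrm{Lie}\,\mathcal{A})(\Sigma)$, and then the conjugate-linear isomorphism $\phi$ between $\mathcal{A}$ and $\mathcal{A}^t$ swaps these two Lie-algebra type pieces so that their quotient is canonically trivial. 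Therefore $\det\mathcal{P}\cong\det\mathcal{Q}$.

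The main obstacle I anticipate is bookkeeping the $\mathcal{O}_E$-equivariance of the polarization pairing correctly — specifically verifying that $H^1_{dR}(\Sigma)$ is exactly the annihilator of $H^1_{dR}(\overline\Sigma)^{\perp}$ rather than being self-paired, which hinges on the identity $\langle \iota(a)x, y\rangle = \langle x,\iota(\bar a)y\rangle$ coming from the Rosati condition. Once that orthogonality is nailed down, everything is a determinant-of-exact-sequence computation and the fact that a line bundle carrying a perfect pairing with itself (or a canonical trivialization via $\phi$) is trivial. I would also remark that the isomorphism produced is canonical up to the similitude line bundle, which is trivial since $N\ge 3$ rigidifies and the relevant character factors through the boundary-trivial part; but for the bare statement of the lemma, canonicity is not needed.
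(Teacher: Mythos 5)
Your plan cannot work as written, and the failure is exactly at the point you flag as delicate. All the identifications you invoke (polarization, Hodge filtration, de Rham duality) are symmetric in a way that makes the determinant computation collapse to a tautology, as you yourself observed midway. Doing the bookkeeping correctly: the quotient of $H^{1}_{dR}(\mathcal{A}/\mathcal{S})(\Sigma)$ by $\mathcal{P}$ is $\mathrm{Lie}(\mathcal{A}^{t})(\Sigma)$, and the polarization pairing (which pairs the $\Sigma$- and $\overline{\Sigma}$-parts and has $\omega$ Lagrangian) identifies it with $\mathcal{Q}^{\vee}$; likewise $H^{1}_{dR}(\overline{\Sigma})/\mathcal{Q}\simeq\mathcal{P}^{\vee}$. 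Feeding these into the perfect pairing $H^{1}_{dR}(\Sigma)\times H^{1}_{dR}(\overline{\Sigma})\to\mathcal{O}_{\mathcal{S}}$ yields $\det\mathcal{P}\otimes\det\mathcal{Q}^{-1}\simeq\det\mathcal{P}\otimes\det\mathcal{Q}^{-1}$, i.e.\ no information. Your ``correct extraction'' does not escape this: the isomorphism $\mathrm{Lie}(\phi)$ swaps types between $\mathrm{Lie}(\mathcal{A})$ and $\mathrm{Lie}(\mathcal{A}^{t})$, not within $\mathrm{Lie}(\mathcal{A})$, so it cannot identify $\mathrm{Lie}(\mathcal{A})(\Sigma)$ with $\mathrm{Lie}(\mathcal{A})(\overline{\Sigma})$ (these have ranks $n$ and $m$ and are not even abstractly isomorphic when $n\neq m$). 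The relation you write down at the end, once the types are tracked, is again either circular or would amount to comparing bundles of different ranks; in particular it does not yield $\det\mathcal{P}\simeq\det\mathcal{Q}$, nor even $(\det\mathcal{P})^{2}\simeq(\det\mathcal{Q})^{2}$.

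This is not an accident of your bookkeeping: the paper explicitly stresses, right after the lemma, that no direct moduli-theoretic proof is known, and the best PEL-linear-algebra substitute it obtains is the weaker Corollary $\det(\mathcal{P})^{p+1}\simeq\det(\mathcal{Q})^{p+1}$ in characteristic $p$, proved with Verschiebung and the Hasse sections $h_{\mathcal{P}},h_{\mathcal{Q}}$ --- genuinely different input from the polarization alone. The actual proof of the lemma is global rather than formal: $\det\mathcal{P}\otimes\det\mathcal{Q}^{-1}$ is the automorphic line bundle attached to the determinant character of the Levi $GL_{m}\times GL_{n}$, and over $\mathbb{C}$, because the level is small so the uniformizing arithmetic group lies in $SU(n,m)$, this character is trivial on the group by which one quotients; hence the line bundle is trivial on each connected component of $\mathcal{S}_{\mathbb{C}}$, and the isomorphism is then transported to the base $\mathcal{O}_{E,(p)}$. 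If you want to salvage your write-up, you must bring in an input of this global, group-theoretic nature (triviality of the determinant character on the relevant congruence group); the polarization and de Rham duality by themselves provably give only the tautologies you encountered.
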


\begin{proof}
The proof is similar to \cite{dS-G3}, Proposition 1.3. Automorphic
vector bundles over the generic fiber $\mathcal{S}_{E}$ correspond
functorially to representations of the group $GL_{m}\times GL_{n},$
as discussed below in $\S$\ref{subsec:auotomorpic _vector_bundles_mod_p}.
The vector bundles $\det(\mathcal{Q})$ and $\det(\mathcal{P})$ correspond
to the determinant of $GL_{m}$ and the \emph{inverse} of the determinant
of $GL_{n}$. Their ratio therefore corresponds to the determinant
of $GL_{m}\times GL_{n}.$ If the level subgroup $K$ is small enough,
as we always assume, then the arithmetic group by which we divide
the symmetric space to get a complex uniformization of every connected
component of $\mathcal{S}_{\mathbb{C}}$ is contained in $SU(n,m).$
This means that over $\mathbb{C},$ the automorphic line bundle corresponding
to $\det$ is trivial, hence $\det(\mathcal{P})\simeq\det(\mathcal{Q}).$
From this it is easy to get the claim even over the base $\mathcal{O}_{E,(p)}.$
We stress that we do not know a direct moduli-theoretic proof of the
claim in the lemma, and we do not know if the particular isomorphism
supplied by the complex analytic uniformization is defined over $\overline{\mathbb{Q}}$.
See however Corollary \ref{cor:detP=00003DdetQ_muduli} below.
\end{proof}
Over the special fiber $S$ we have the Verschiebung homomorphism
$V:\omega\to\omega^{(p)}$ induced by the Verschiebung isogeny $\mathrm{Ver}:\mathcal{A}^{(p)}\to\mathcal{A}.$
As $V$ commutes with the endomorphisms it maps $\mathcal{P}$ to
$\mathcal{Q}^{(p)}$ and $\mathcal{Q}$ to $\mathcal{P}^{(p)}.$ We
denote the restriction of $V$ to $\mathcal{P}$ (resp. $\mathcal{Q})$
by $V_{\mathcal{P}}$ (resp. $V_{\mathcal{Q}}).$ The homomorphism
\[
H=V_{\mathcal{P}}^{(p)}\circ V_{\mathcal{Q}}:\mathcal{Q}\to\mathcal{Q}^{(p^{2})}
\]
is called the \emph{Hasse matrix. }We let $\mathcal{L}=\det(\mathcal{Q})$,
a line bundle. Then
\begin{equation}
h=\det(H):\mathcal{L}\to\mathcal{L}^{(p^{2})}\simeq\mathcal{L}^{p^{2}}\label{eq:hasse_inv}
\end{equation}
is a global section of $\mathcal{L}^{p^{2}-1}$ called the  \emph{$\mu$-ordinary Hasse
invariant} (see \cite{G-N}, Appendix B). Here we used the well-known
fact that for a line bundle $\mathcal{L}$ over a scheme in characteristic~$p$, there is a canonical isomorphism between $\mathcal{L}^{(p)}$
and $\mathcal{L}^{p},$ sending the base-change $s^{(p)}=1\otimes s$
of the section $s$ under the absolute Frobenius of $S$ to $s\otimes\cdots\otimes s.$
It is an important fact that $h\ne0$ precisely on $S^{\mathrm{ord}}.$
If $n>m$ the zero-divisor of $h$ is even \emph{reduced}, so equals
$S^{\mathrm{no}}=S\smallsetminus S^{\mathrm{ord}}$ with its reduced
subscheme structure. A proof of this fact may be found in \cite{Woo},
Proposition 7.2.11, but can also be extracted from the Dieudonn{\'e} module
computations in Theorem \ref{thm: =00005Cpsi(du)_has_a_zero} below. 

If $n=m$ this is not true; $h$ vanishes then on $S^{\mathrm{no}}$
to order $p+1$. There is a variant, though, that will be useful for
us in the study of the holomorphicity of the theta operator.
\begin{lem}
Let $n=m.$ Consider the maps of line bundles
\[
h_{\mathcal{Q}}=\det(V_{\mathcal{Q}}):\det(\mathcal{Q})\to\det(\mathcal{P})^{(p)}=\det(\mathcal{P})^{p},
\]
\[
h_{\mathcal{P}}=\det(V_{\mathcal{P}}):\det(\mathcal{P})\to\det(\mathcal{Q})^{(p)}=\det(\mathcal{Q})^{p}.
\]
Both $h_{\mathcal{P}}$ and $h_{\mathcal{Q}}$ vanish precisely on
$S^{\mathrm{no}}$ with multiplicity 1 and the following relation
holds:
\[
h=h_{\mathcal{P}}^{p}\circ h_{\mathcal{Q}}.
\]
\end{lem}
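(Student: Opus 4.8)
The plan is to reduce both halves of the lemma to the fact, recalled just above, that for $n=m$ the $\mu$-ordinary Hasse invariant $h$ vanishes on $S^{\mathrm{no}}$ to order exactly $p+1$, together with formal properties of the Verschiebung.

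First, the relation $h=h_{\mathcal{P}}^{p}\circ h_{\mathcal{Q}}$. By definition $H=V_{\mathcal{P}}^{(p)}\circ V_{\mathcal{Q}}$, so, since the determinant is multiplicative along compositions and commutes with base change (in particular with the absolute Frobenius of $S$),
\[
h=\det(H)=\det\bigl(V_{\mathcal{P}}^{(p)}\bigr)\circ\det(V_{\mathcal{Q}})=(\det V_{\mathcal{P}})^{(p)}\circ\det(V_{\mathcal{Q}})=h_{\mathcal{P}}^{(p)}\circ h_{\mathcal{Q}}.
\]
Under the canonical isomorphism $\mathcal{N}^{(p)}\simeq\mathcal{N}^{p}$ for a line bundle $\mathcal{N}$ --- the one already used in (\ref{eq:hasse_inv}), which on functions sends $f^{(p)}$ to $f^{p}$ --- the map $h_{\mathcal{P}}^{(p)}$ becomes $h_{\mathcal{P}}^{p}$, and the relation follows. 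The identical computation applied to the conjugate Hasse matrix $H'=V_{\mathcal{Q}}^{(p)}\circ V_{\mathcal{P}}:\mathcal{P}\to\mathcal{P}^{(p^{2})}$ gives $h':=\det(H')=h_{\mathcal{Q}}^{p}\circ h_{\mathcal{P}}$.

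Next I would pin down the zero divisors. The maps $h_{\mathcal{P}}$ and $h_{\mathcal{Q}}$ are nonzero morphisms of line bundles --- nonzero because their composite $h=h_{\mathcal{P}}^{p}\circ h_{\mathcal{Q}}$, the $\mu$-ordinary Hasse invariant, is nonzero --- and hence have well-defined effective zero divisors. Working in a local trivialization of $\det\mathcal{P}$ and $\det\mathcal{Q}$, in which $h_{\mathcal{P}},h_{\mathcal{Q}}$ become functions, the two relations of the previous step read $\div(h)=p\,\div(h_{\mathcal{P}})+\div(h_{\mathcal{Q}})$ and $\div(h')=p\,\div(h_{\mathcal{Q}})+\div(h_{\mathcal{P}})$. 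Since $n=m$, the construction is symmetric under interchanging the two $\mathcal{O}_{E}$-types, i.e. under $\mathcal{P}\leftrightarrow\mathcal{Q}$, $V_{\mathcal{P}}\leftrightarrow V_{\mathcal{Q}}$, $h\leftrightarrow h'$, a symmetry that fixes $S^{\mathrm{no}}$; hence the recalled fact about $h$ applies equally to $h'$, giving $\div(h)=\div(h')=(p+1)\,[S^{\mathrm{no}}]$, where $[S^{\mathrm{no}}]$ denotes the reduced non-$\mu$-ordinary locus. Subtracting the two displayed relations yields $(p-1)\bigl(\div(h_{\mathcal{P}})-\div(h_{\mathcal{Q}})\bigr)=0$, hence $\div(h_{\mathcal{P}})=\div(h_{\mathcal{Q}})$; substituting back gives $(p+1)\div(h_{\mathcal{P}})=(p+1)[S^{\mathrm{no}}]$, so $\div(h_{\mathcal{P}})=\div(h_{\mathcal{Q}})=[S^{\mathrm{no}}]$. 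This is exactly the assertion that $h_{\mathcal{P}}$ and $h_{\mathcal{Q}}$ vanish precisely on $S^{\mathrm{no}}$, each with multiplicity $1$.

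The one genuinely non-formal input is the recalled computation that, for $n=m$, the $\mu$-ordinary Hasse invariant cuts out $S^{\mathrm{no}}$ with multiplicity exactly $p+1$ (so that $S^{\mathrm{no}}$ is in particular a reduced divisor); this is where the local geometry of the non-ordinary locus genuinely enters, and it is the step I expect to be the crux. It is proved in \cite{Woo}, Proposition~7.2.11, and can also be extracted from the Dieudonn{\'e}-module computations of Theorem~\ref{thm: =00005Cpsi(du)_has_a_zero}. Instead of invoking the type-symmetry for $h'$, one could alternatively prove $\div(h_{\mathcal{P}})=\div(h_{\mathcal{Q}})$ directly from the principal polarization $\phi:\mathcal{A}\overset{\sim}{\to}\mathcal{A}^{t}$, which exchanges Frobenius with Verschiebung and --- because its Rosati involution is $a\mapsto\overline{a}$ --- exchanges the $\Sigma$- and $\overline{\Sigma}$-isotypic parts of $H^{1}_{\mathrm{dR}}(\mathcal{A})$, thereby identifying $V_{\mathcal{Q}}$, up to transpose and the isomorphism $\det\mathcal{P}\simeq\det\mathcal{Q}$ of Lemma~\ref{lem:detP=00003DdetQ}, with $V_{\mathcal{P}}$.
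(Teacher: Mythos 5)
The formal half of your argument is fine: $h=h_{\mathcal{P}}^{p}\circ h_{\mathcal{Q}}$ follows, as you say, from multiplicativity of determinants and the identification $\mathcal{N}^{(p)}\simeq\mathcal{N}^{p}$, and this is exactly what the paper means by ``a direct consequence of the definition''. The gap is in the divisor statement. Your ``one genuinely non-formal input'', namely $\div(h)=(p+1)[S^{\mathrm{no}}]$ with $S^{\mathrm{no}}$ reduced, is not available independently of the lemma: in the paper that sentence carries no citation and is justified precisely by the lemma (factorization plus multiplicity one), while the sources that are cited --- \cite{Woo}, Proposition 7.2.11, and the Dieudonn\'e-module computations --- compute $\det V_{\mathcal{P}}$ and $\det V_{\mathcal{Q}}$ directly. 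Moreover you cite the wrong computation: Theorem \ref{thm: =00005Cpsi(du)_has_a_zero} is stated under the hypothesis $n>m$; the relevant deformation computation for $m=n$ is inside Theorem \ref{thm:m=00003Dn_holmorphicity}, where the matrices of $V_{\mathcal{Q}}$ and $V_{\mathcal{P}}$ at an almost-ordinary point are written down and both determinants equal the local equation $u$ of $S^{\mathrm{ao}}$ --- i.e., that computation already \emph{is} the multiplicity-one statement. So your deduction runs backwards: once you invoke the only available justification of your input, the conclusion is in hand and the detour through $h$ and $h'$ is superfluous; without it, the argument is circular.

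Even granting the order-$(p+1)$ statement for $h$, your route needs the same for the conjugate Hasse matrix $h'=h_{\mathcal{Q}}^{p}\circ h_{\mathcal{P}}$, which appears nowhere in the paper. The ``interchange the two types'' symmetry is plausible for signature $(m,m)$ (e.g. because the moduli problem descends to $\mathbb{Z}_{(p)}$ when $m=n$), but it is not a formal triviality: one must exhibit an automorphism of the moduli problem, compatible with the polarization and the prime-to-$p$ level structure, that swaps $\mathcal{P}$ and $\mathcal{Q}$, and you do not supply it. The alternative you sketch via the principal polarization is incorrect: the compatibility $\left\langle Fx,y\right\rangle =\left\langle x,Vy\right\rangle ^{(p)}$ makes the adjoint of $V_{\mathcal{Q}}$ the map induced by $F$ (a Hasse--Witt map) on the appropriate isotypic part of $H^{1}(\mathcal{A},\mathcal{O})=\mathrm{Lie}(\mathcal{A}^{t})$, not $V_{\mathcal{P}}$, which lives on $\omega$; duality does not identify $h_{\mathcal{P}}$ with $h_{\mathcal{Q}}$. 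Indeed, in the paper the equality $\div(h_{\mathcal{P}})=\div(h_{\mathcal{Q}})$, and the normalization $h_{\mathcal{P}}=h_{\mathcal{Q}}$, are obtained only as consequences of the present lemma, in Lemma \ref{lem:h=00003Dh_Q^(p+1)}. The intended proof is simply to quote \cite{Woo}, Proposition 7.2.11, or the explicit computation at almost-ordinary points just mentioned, which gives simple vanishing of each of $h_{\mathcal{P}},h_{\mathcal{Q}}$ along $S^{\mathrm{no}}$ directly.
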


\begin{proof}
The claim concerning the vanishing of $h_{\mathcal{P}}$ and $h_{\mathcal{Q}}$
follows again from \cite{Woo}, Proposition 7.2.11, or from the computations
in Theorem \ref{thm:m=00003Dn_holmorphicity} below. The relation
$h=h_{\mathcal{P}}^{p}\circ h_{\mathcal{Q}}$ is a direct consequence
of the definition.
\end{proof}
Although the following Corollary is weaker than Lemma \ref{lem:detP=00003DdetQ},
it is of interest because its proof is entirely moduli-theoretic.
\begin{cor}
\label{cor:detP=00003DdetQ_muduli}Let $S$ be of arbitrary signature
$(n,m)$. There is an isomorphism 
\[
\det(\mathcal{P})^{p+1}\simeq\det(\mathcal{Q})^{p+1}.
\]
\end{cor}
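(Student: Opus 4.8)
The plan is to compute the line bundle $\det(\mathcal{H}(\Sigma))$ in two ways, where $\mathcal{H}=H^{1}_{\mathrm{dR}}(\mathcal{A}/S)$ is the first de Rham cohomology sheaf of the universal abelian scheme over the special fiber $S$; since both computations use only $\mathcal{A}$, its Hodge and conjugate filtrations, Frobenius, Verschiebung and the polarization pairing, the resulting argument is entirely moduli-theoretic. Recall that $\mathcal{H}$ is locally free of rank $2(n+m)$, decomposes under the $\mathcal{O}_{E}$-action as $\mathcal{H}=\mathcal{H}(\Sigma)\oplus\mathcal{H}(\overline{\Sigma})$ with each summand of rank $n+m$, and carries the Hodge filtration $0\to\omega\to\mathcal{H}\to\mathcal{H}/\omega\to0$ with $\mathcal{P}=\omega(\Sigma)$ and $\mathcal{Q}=\omega(\overline{\Sigma})$; moreover the principal polarization equips $\mathcal{H}$ with a perfect alternating $\mathcal{O}_{S}$-pairing $\langle\cdot,\cdot\rangle$ satisfying $\langle ax,y\rangle=\langle x,\overline{a}y\rangle$ for $a\in\mathcal{O}_{E}$, with respect to which $\omega$ is Lagrangian. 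Because $\kappa=\mathbb{F}_{p^{2}}\neq\mathbb{F}_{p}$, the $\Sigma$- and $\overline{\Sigma}$-parts of $\mathcal{H}$ are isotropic, so the pairing identifies $\mathcal{H}(\overline{\Sigma})\cong\mathcal{H}(\Sigma)^{\vee}$ and $\mathcal{Q}=\mathcal{P}^{\perp}$; hence $\mathcal{H}(\Sigma)/\mathcal{P}\cong\mathcal{Q}^{\vee}$ and $\mathcal{H}(\overline{\Sigma})/\mathcal{Q}\cong\mathcal{P}^{\vee}$, and in particular
\[
\det(\mathcal{H}(\Sigma))\;\cong\;\det(\mathcal{P})\otimes\det(\mathcal{Q})^{-1}.
\]

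For the second computation I would bring in the crystalline Frobenius $\mathbf{F}\colon\mathcal{H}^{(p)}\to\mathcal{H}$ and Verschiebung $\mathbf{V}\colon\mathcal{H}\to\mathcal{H}^{(p)}$, which over $S$ satisfy $\mathbf{F}\mathbf{V}=\mathbf{V}\mathbf{F}=0$ and $\ker(\mathbf{F})=\omega^{(p)}$ (Frobenius annihilates pulled-back differentials, and the ranks force equality). The one ingredient that is not purely formal is that $\mathbf{F}$ and $\mathbf{V}$ have \emph{constant} rank $n+m$ over all of $S$ --- a classical fact, the rank of the mod-$p$ crystalline Frobenius of an abelian variety being its dimension --- which guarantees that $\mathcal{H}^{\mathrm{conj}}:=\mathrm{im}(\mathbf{F})=\ker(\mathbf{V})$ is a subbundle of $\mathcal{H}$ of rank $n+m$, that $\mathbf{F}$ induces an isomorphism $(\mathcal{H}/\omega)^{(p)}\overset{\sim}{\to}\mathcal{H}^{\mathrm{conj}}$, and that $\mathbf{V}$ induces an isomorphism $\mathcal{H}/\mathcal{H}^{\mathrm{conj}}\overset{\sim}{\to}\omega^{(p)}$. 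Since the absolute Frobenius of $S$ interchanges the two embeddings of $\kappa$, one has $\mathcal{H}^{(p)}(\Sigma)=\bigl(\mathcal{H}(\overline{\Sigma})\bigr)^{(p)}$ and $(\omega^{(p)})(\Sigma)=\mathcal{Q}^{(p)}$; taking $\Sigma$-parts in the two isomorphisms above and using $\mathcal{H}(\overline{\Sigma})/\mathcal{Q}\cong\mathcal{P}^{\vee}$ gives $\mathcal{H}^{\mathrm{conj}}(\Sigma)\cong(\mathcal{P}^{\vee})^{(p)}$ and $\mathcal{H}(\Sigma)/\mathcal{H}^{\mathrm{conj}}(\Sigma)\cong\mathcal{Q}^{(p)}$. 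Invoking the canonical isomorphism $\mathcal{N}^{(p)}\cong\mathcal{N}^{p}$ valid for a line bundle $\mathcal{N}$ in characteristic $p$, we obtain
\[
\det(\mathcal{H}(\Sigma))\;\cong\;\det(\mathcal{P})^{-p}\otimes\det(\mathcal{Q})^{p}.
\]

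Comparing the two formulas for $\det(\mathcal{H}(\Sigma))$ yields $\det(\mathcal{P})\otimes\det(\mathcal{Q})^{-1}\cong\det(\mathcal{P})^{-p}\otimes\det(\mathcal{Q})^{p}$, that is, $\det(\mathcal{P})^{p+1}\cong\det(\mathcal{Q})^{p+1}$, which is the assertion. (When $n=m$ the same relation can alternatively be read off from the previous lemma, since the ratio $h_{\mathcal{Q}}\otimes h_{\mathcal{P}}^{-1}$ is then a nowhere-vanishing section of $\det(\mathcal{P})^{p+1}\otimes\det(\mathcal{Q})^{-(p+1)}$.) The main points requiring care are the constancy of the rank of $\mathbf{F}$ --- equivalently, that the conjugate filtration $\mathcal{H}^{\mathrm{conj}}$ is a genuine subbundle even over $S^{\mathrm{no}}$ --- and the bookkeeping of the Frobenius twists of $\mathcal{O}_{E}$-types: the twist $(\cdot)^{(p)}$ sends a bundle of type $\Sigma$ to one of type $\overline{\Sigma}$ and back, which is precisely why the asymmetry between $n$ and $m$ survives into the exponent $p+1$ instead of cancelling.
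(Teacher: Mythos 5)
Your argument is correct, but it takes a genuinely different route from the paper. The paper first treats the quasi-split case $(m,m)$: there $h_{\mathcal{Q}}=\det(V_{\mathcal{Q}})$ and $h_{\mathcal{P}}=\det(V_{\mathcal{P}})$ are global sections of $\det(\mathcal{P})^{p}\otimes\det(\mathcal{Q})^{-1}$ and $\det(\mathcal{Q})^{p}\otimes\det(\mathcal{P})^{-1}$ with the \emph{same reduced divisor} $S^{\mathrm{no}}$, so the two line bundles are isomorphic; the general signature is then deduced by embedding $S$ into a signature $(n+m,n+m)$ Shimura variety via $\underline{A}\mapsto\underline{A}\times\underline{B}_{x}$ and pulling back the relation. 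You instead compute $\det\bigl(H^{1}_{\mathrm{dR}}(\mathcal{A}/S)(\Sigma)\bigr)$ twice: once from the Hodge filtration plus the polarization duality, giving $\det(\mathcal{P})\otimes\det(\mathcal{Q})^{-1}$, and once from the conjugate filtration $\mathrm{im}(\mathbf{F})=\ker(\mathbf{V})$, giving $\det(\mathcal{P})^{-p}\otimes\det(\mathcal{Q})^{p}$; equivalently, $\mathcal{N}=\det(\mathcal{P})\otimes\det(\mathcal{Q})^{-1}$ satisfies $\mathcal{N}\simeq(\mathcal{N}^{(p)})^{-1}$ and is therefore $(p+1)$-torsion in the Picard group. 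This buys uniformity (all signatures at once, no auxiliary larger Shimura variety) and independence from the multiplicity-one statement for the divisors of $h_{\mathcal{P}},h_{\mathcal{Q}}$, which in the paper rests on Wooding's thesis or on the later Dieudonn\'e-module computations; what it costs is the need for the conjugate filtration to be a subbundle over all of $S$, including the non-ordinary locus, which you correctly reduce to the classical fact that $\mathbf{F}$ and $\mathbf{V}$ have constant rank $n+m$ on $H^{1}_{\mathrm{dR}}$ of an abelian scheme. Your type bookkeeping (the twist $(\cdot)^{(p)}$ interchanging $\Sigma$ and $\overline{\Sigma}$, so that $(\omega^{(p)})(\Sigma)=\mathcal{Q}^{(p)}$) agrees with the paper's conventions, and your closing remark about $n=m$ is essentially the paper's own first step; both proofs are moduli-theoretic in the sense required by the remark preceding the corollary, yours being arguably the more self-contained and conceptual explanation of the exponent $p+1$.
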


\begin{proof}
Consider first the case of equal signatures $(m,m).$ By comparing
divisors of global sections, we obtain from the last Lemma an isomorphism
of line bundles $\det(\mathcal{P})^{p}\otimes\det(\mathcal{Q})^{-1}\simeq\det(\mathcal{Q})^{p}\otimes\det(\mathcal{P})^{-1}$
, implying the Corollary in this case.

For $S$ of signature $(n,m)$, and a geometric point $x$ of $S$,
we can embed $S$ in a suitable Shimura variety $\mathbb{S}$ of signature
$(n+m,n+m)$ by a morphism given on objects by $\underline{A}\mapsto\underline{A}\times\underline{B_{x}}$,
where $\underline{B_{x}}$ is the abelian variety corresponding to
$x$ with the twisted $\mathcal{O}_{E}$ structure. One easily checks
that the pull-back of the relation $\det(\mathcal{P})^{p+1}\simeq\det(\mathcal{Q})^{p+1}$
on $\mathbb{S}$ gives the same relation on $S$.
\end{proof}
Coming back to the case $m=n$ we have the following Lemma.
\begin{lem}
\label{lem:h=00003Dh_Q^(p+1)}Over an algebraic closure of $\kappa$,
we may fix the isomorphism $\det(\mathcal{P})\simeq\det(\mathcal{Q})=\mathcal{L}$
so that $h_{\mathcal{P}}=h_{\mathcal{Q}},$ hence $h=h_{\mathcal{Q}}^{p+1}.$
\end{lem}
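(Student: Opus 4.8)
The plan is to use Lemma~\ref{lem:detP=00003DdetQ} to fix \emph{some} isomorphism $\psi:\det(\mathcal P)\overset{\sim}{\to}\det(\mathcal Q)=\mathcal L$, to note that with respect to $\psi$ both $h_{\mathcal P}$ and $h_{\mathcal Q}$ become global sections of $\mathcal L^{p-1}$ sharing the same divisor, and then to correct $\psi$ so that these two sections coincide. Precisely: composing $\det(V_{\mathcal P}):\det(\mathcal P)\to\det(\mathcal Q)^{(p)}$ with $\psi^{-1}$, composing $\det(V_{\mathcal Q}):\det(\mathcal Q)\to\det(\mathcal P)^{(p)}$ with $\psi^{(p)}$, and using the canonical identification $\mathcal L^{(p)}\cong\mathcal L^{p}$, one obtains two sections $h_{\mathcal P},h_{\mathcal Q}\in H^{0}(S,\mathcal L^{p-1})$; by the Lemma above, which records that $\det(V_{\mathcal P})$ and $\det(V_{\mathcal Q})$ vanish precisely on $S^{\mathrm{no}}$ with multiplicity one, these two sections have the same divisor, so $h_{\mathcal P}=u\cdot h_{\mathcal Q}$ for a unique unit $u\in H^{0}(S,\mathcal O_{S}^{\times})$.

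Next I would record how these sections transform when $\psi$ is replaced by $\lambda\psi$ with $\lambda\in H^{0}(S,\mathcal O_{S}^{\times})$. Under the canonical isomorphism $\mathcal O_{S}^{(p)}\cong\mathcal O_{S}$ the Frobenius twist of multiplication by $\lambda$ is multiplication by $\lambda^{p}$, so $h_{\mathcal Q}\mapsto\lambda^{p}h_{\mathcal Q}$ while $h_{\mathcal P}\mapsto\lambda^{-1}h_{\mathcal P}$, whence $u\mapsto\lambda^{-(p+1)}u$. Thus the lemma reduces to extracting a $(p+1)$-st root of the unit $u$; rescaling $\psi$ by such a root gives $h_{\mathcal P}=h_{\mathcal Q}$, and feeding this into the relation $h=h_{\mathcal P}^{p}\circ h_{\mathcal Q}$ established above (in which $h_{\mathcal P}^{p}$ denotes the Frobenius twist $h_{\mathcal P}^{(p)}$) yields, again via $\mathcal L^{(p)}\cong\mathcal L^{p}$, the asserted identity $h=h_{\mathcal Q}^{(p)}\circ h_{\mathcal Q}=h_{\mathcal Q}^{p+1}$.

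It remains to produce the $(p+1)$-st root of $u$, and this is where passing to an algebraic closure $\bar\kappa$ of $\kappa$ is used. I would work on a toroidal compactification $\bar S$ of $S$ as in $\S$\ref{sec:Toroidal-compactifications-and}: the Hodge bundle with its type decomposition $\mathcal P\oplus\mathcal Q$, and the Verschiebung, extend over $\bar S$ by means of the universal semi-abelian scheme, so $h_{\mathcal P}$ and $h_{\mathcal Q}$ extend to sections of $\mathcal L^{p-1}$ over $\bar S$ with no poles along the boundary; since the $\mu$-ordinary Hasse invariant $h=h_{\mathcal P}^{(p)}\circ h_{\mathcal Q}$ is nowhere vanishing on the boundary (the boundary lying in the $\mu$-ordinary locus, as is standard for Hasse invariants on compactified PEL Shimura varieties), comparison of orders along each boundary divisor forces $h_{\mathcal P}$, $h_{\mathcal Q}$, and hence $u$, to be invertible there too. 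Therefore $u$ is a global unit on the proper $\bar\kappa$-scheme $\bar S$, hence locally constant with values in $\bar\kappa^{\times}$; since $\bar\kappa$ is algebraically closed and $p+1$ is prime to~$p$, it admits a locally constant $(p+1)$-st root $\lambda$, and replacing $\psi$ by $\lambda\psi$ finishes the proof. The only non-formal input is this last paragraph — that $h_{\mathcal P},h_{\mathcal Q}$ extend across the boundary without zeros, equivalently that the boundary is $\mu$-ordinary; all the rest is the Frobenius-twist bookkeeping above. (One can also bypass $\bar S$ by the remark that on a smooth toroidal compactification the units modulo $\bar\kappa^{\times}$ embed into the free abelian group on the boundary divisors, so the same vanishing statement already forces $u\in\bar\kappa^{\times}$.)
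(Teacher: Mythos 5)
Your argument is essentially the paper's own proof: extend $h_{\mathcal{P}}$ and $h_{\mathcal{Q}}$ over a smooth toroidal compactification $\overline{S}$, use the non-vanishing of $h$ along boundary components to conclude $\div(h_{\mathcal{P}})=\div(h_{\mathcal{Q}})$ on $\overline{S}$, deduce that the two sections of $\mathcal{L}^{p-1}$ differ by a locally constant unit, and remove it by rescaling the isomorphism $\det(\mathcal{P})\simeq\det(\mathcal{Q})$ after extracting a root over $\overline{\kappa}$. Two small remarks. First, your parenthetical that ``the boundary lies in the $\mu$-ordinary locus'' is stronger than what is true or needed: the closure of $S^{\mathrm{no}}$ may meet $\partial\overline{S}$; the correct statement, which is exactly Corollary \ref{cor:vanishing locus of Hasse} (proved via the Fourier-Jacobi expansion of $h$ being $1$ at rank-$m$ cusps together with the fact that every boundary component contains such a cusp), is that $h$ does not vanish identically on any irreducible component of the boundary, and this divisor-level statement is all your order comparison uses. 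Second, your bookkeeping $h_{\mathcal{P}}\mapsto\lambda^{-1}h_{\mathcal{P}}$, $h_{\mathcal{Q}}\mapsto\lambda^{p}h_{\mathcal{Q}}$ under $\psi\mapsto\lambda\psi$, hence $u\mapsto\lambda^{-(p+1)}u$, appears to be the correct normalization (the paper speaks of extracting a $(p-1)$-st root); the discrepancy is immaterial since over $\overline{\kappa}$ a root of either prime-to-$p$ order exists on each connected component.
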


\begin{proof}
Fix a smooth toroidal compactification $\overline{S}$ of $S$. As the abelian scheme $\mathcal{A/S}$ extends with the $\mathcal{O}_E$-action to a semi-abelian scheme over the toroidal compactification $\overline{S}$ (\cite{Lan}, Theorem 6.4.1.1) the
vector bundles $\mathcal{P}$ and $\mathcal{Q}$, as well as the homomorphisms
$h_{\mathcal{P}}$ and $h_{\mathcal{Q}}$, extend to $\overline{S}$ as well.
In Corollary \ref{cor:vanishing locus of Hasse} below we show that
$h$ does not vanish on any irreducible component of the boundary
$\overline{S}\smallsetminus S$. The same therefore must be true for
$h_{\mathcal{P}}$ and $h_{\mathcal{Q}}$. It follows that
\[
\div(h_{\mathcal{P}})=\div(h_{\mathcal{Q}})
\]
as divisors on the smooth, complete variety $\overline{S}.$ Fix any
isomorphism as in Lemma \ref{lem:detP=00003DdetQ}. Having the same
divisors, the sections $h_{\mathcal{P}}$ and $h_{\mathcal{Q}}$ of
$\mathcal{L}^{p-1}$ are equal up to a multiplication by a nowhere
vanishing function on $\overline{S}$ , hence equal up to a scalar on each connected component of $\bar S$.
By extracting a $p-1$ root from this scalar, we can normalize the
isomorphism $\det(\mathcal{P})\simeq\det(\mathcal{Q})$ so that $h_{\mathcal{P}}=h_{\mathcal{Q}}.$
\end{proof}
We remark that for more general Shimura varieties of PEL type the
construction of the Hasse invariant requires substantial work and
is due to Goldring and Nicole \cite{G-N}.

\subsubsection{The vector bundles $\mathcal{P}_{0}$ and $\mathcal{P}_{\mu}$}

The geometric fibers of the subsheaf
\[
\mathcal{P}_{0}=\ker(V_{\mathcal{P}})\subset\mathcal{P}
\]
have constant rank $n-m$ over an open subset $S_{\sharp}$ containing
the ordinary stratum
\[
S^{\mathrm{ord}}\subset S_{\sharp}\subset S.
\]
As the base is non-singular, this implies that \emph{over} $S_{\sharp}$
this $\mathcal{P}_{0}$ is a vector-sub-bundle of $\mathcal{P},$
hence so is the quotient 
\[
\mathcal{P}_{\mu}=\mathcal{P}/\mathcal{P}_{0}.
\]
In fact, $V_{\mathcal{P}}$ induces there an isomorphism
\begin{equation}
V_{\mathcal{P}}:\mathcal{P}_{\mu}\simeq\mathcal{Q}^{(p)},\label{eq:P_mu and Q^p}
\end{equation}
because as long as its kernel has rank $n-m,$ $V_{\mathcal{P}}$
must be surjective. The open subscheme $S_{\sharp}$ is of much interest,
and was analyzed in \cite{dS-G2}. It is the union of Ekedahl-Oort
strata (\cite{Oo},\cite{V-W}) that can be determined precisely.
When $m=1$, for example, its complement in $S$ is zero-dimensional
(the superspecial points). When $m<n$ this $S_{\sharp}$ contains
a unique Ekedahl-Oort stratum $S^{\mathrm{ao}}$ of dimension $mn-1$.
This will be used later on in our work.

The vector bundles $\mathcal{P}_{0},$ $\mathcal{P}_{\mu}$ and $\mathcal{Q}$
will turn out to be the building blocks of the mod-$p$ automorphic
vector bundles over $S^{\mathrm{ord}}.$ See $\S$\ref{subsec:auotomorpic _vector_bundles_mod_p}
for a discussion why we need to substitute the two subquotients $\mathcal{P}_{0}$
and $\mathcal{P}_{\mu}$ \emph{in lieu }of the classical automorphic
vector bundle $\mathcal{P}.$ 

It is a remarkable fact that $\mathcal{P}_{0}$ and $\mathcal{P}_{\mu}$
can be defined on the ordinary stratum also modulo $p^{s}$ for any
$s\ge1,$ although the Verschiebung isogeny is defined only in characteristic
$p.$ One way to see it is as follows. Let $R=\mathcal{O}_{E,(p)}$
and
\[
W_{s}=W_{s}(\kappa)=W(\kappa)/p^{s}W(\kappa)=R/p^{s}R
\]
(we identify the Witt vectors $W=W(\kappa)$ with the completion $\mathcal{O}_{p}$
of $R$). Denote by $S_{s}^{\mathrm{ord}}$ the open subscheme of
$S_{s}=\mathcal{S}\times_{\mathrm{Spec}(R)}\mathrm{Spec}(R/p^{s}R)$
whose underlying topological space is $S^{\mathrm{ord}}.$ The filtration
of the $p$-divisible group of $\mathcal{A}$ by its connected and
multiplicative parts extends uniquely from $S^{\mathrm{ord}}$ to
$S_{s}^{\mathrm{ord}}$. This is well-known for the connected part,
and by Cartier duality follows also for the multiplicative part. It
is crucial for us that the filtered pieces in (\ref{eq:ordinary_filtration})
have \emph{constant height} along $S_{s}^{\mathrm{ord}}.$ Moreover,
by the same result of Moonen quoted above (\cite{Mo1}, Corollary
2.1.5) the graded pieces of $\mathcal{A}[p^{\infty}]$ with their
additional structures of endomorphisms and polarization become isomorphic,
locally in the pro-{\'e}tale topology on $S_{s}^{\mathrm{ord}}$, to the
constant $p$-divisible groups $(\mathfrak{d}_{E}^{-1}\otimes\mu_{p^{\infty}})^{m},$
$\mathfrak{G}^{n-m}$ and $(\mathcal{O}_{E}\otimes\mathbb{Q}_{p}/\mathbb{Z}_{p})^{m}.$ 
(See $\S$\ref{subsec:bigIg} below for the $p$-divisible group $\mathfrak{G}$ over an
arbitrary base.)
In other words, not only modulo $p$ but modulo $p^{s}$ as well,
we can trivialize $gr^{i}\mathcal{A}[p^{t}]$ with the additional
structures after passing to a finite {\'e}tale covering. This remark will
be instrumental in the construction of the big Igusa tower below.

Coming back to the definition of $\mathcal{P}_{0}$ and $\mathcal{P}_{\mu}$
over $S_{s}^{\mathrm{ord}},$ if $t\ge s$ the exact sequence
\begin{equation}
0\to\mathcal{A}[p^{t}]\to\mathcal{A}\overset{p^{t}}{\to}\mathcal{A}\to0\label{eq:rs-exact-sequence}
\end{equation}
shows that $\mathrm{Lie}(\mathcal{A}[p^{t}]/S_{s})\to\mathrm{Lie}(\mathcal{A}/S_{s})$
is an isomorphism\footnote{For any group scheme $G$ over $T$ we define $\mathrm{Lie}(G/T)$
to be the kernel of the map ``mod$\,\varepsilon$'' from $G(T[\varepsilon])$
to $G(T),$ where $\varepsilon^{2}=0.$}. The filtration of $\mathcal{A}[p^{t}]$ induces (over $S_{s}^{\mathrm{ord}}$
only) a filtration of its Lie algebra by $\mathcal{O}_{S_{s}}$-sub-bundles,
hence a similar filtration of $\mathrm{Lie}(\mathcal{A}/S_{s})$.
By duality we get (again over $S_{s}^{\mathrm{ord}})$ a filtration
of $\omega$ by sub-bundles, which on its $\Sigma$-part yields the
exact sequence
\begin{equation}
0\to\mathcal{P}_{0}\to\mathcal{P}\to\mathcal{P}_{\mu}\to0.\label{eq:P_filtration}
\end{equation}
For future reference we record the fact that
\[
\mathcal{P}_{0}=\omega_{\mathcal{A}[p^{\infty}]^{0}/\mathcal{A}[p^{\infty}]^{\mu}},\,\,\,\,\mathcal{P}_{\mu}=\omega_{\mathcal{A}[p^{\infty}]^{\mu}}(\Sigma),\,\,\,\,\mathcal{Q}=\omega_{\mathcal{A}[p^{\infty}]^{\mu}}(\overline{\Sigma}).
\]

We do not know how to extend (\ref{eq:P_filtration}) in any intelligible
way to the $s$-th infinitesimal thickening of $S_{\sharp},$ as we
did when $s=1$ using Verschiebung.

\subsection{$p$-adic automorphic vector bundles}

\subsubsection{Representations of $GL_{m}$}

We review some well known facts from the representation theory of
$GL_{m}.$ Let $R$ be any ring, and $\mathrm{\underline{Rep}}_{R}(GL_{m})$
the category of algebraic representations of $GL_{m}$ on projective
$R$-modules of finite rank. If $\rho\in\mathrm{\underline{Rep}}_{R}(GL_{m}),$
we denote by $\rho(R)$ the associated projective $R$-module, endowed
with a left $GL_{m}(R)$ action. Given an $R$-scheme $S,$ the functoriality
in $R$ allows us to regard $\rho(\mathcal{O}_{S})=\mathcal{O}_{S}\otimes_{R}\rho(R)$
as a vector bundle with a left $GL_{m}(\mathcal{O}_{S})$ action on~$S$. The category $\mathrm{\underline{Rep}}_{R}(GL_{m})$ is a rigid
tensor category, and if $R$ is a field, it is also abelian. Some
special objects of the category are the standard representation $\mathrm{st}$,
and the symmetric and exterior powers $\mathrm{Sym}^{r}\mathrm{st}$
and $\wedge^{r}\mathrm{st}$ of $\mathrm{st}$, defined as suitable
\emph{quotients} of $\otimes^{r}\mathrm{st}.$

If $R$ is a field of characteristic $0,$ the category is even semisimple.
It is well known that the simple objects are then classified by dominant
weights. If $\lambda=(\lambda_{1}\ge\cdots\ge\lambda_{m})$ ($\lambda_{i}\in\mathbb{Z})$
is a dominant weight of $GL_{m},$ the corresponding object is
\begin{equation}
\rho_{\lambda}=\mathrm{Sym}^{\lambda_{1}-\lambda_{2}}(\mathrm{st})\otimes\mathrm{Sym}^{\lambda_{2}-\lambda_{3}}(\wedge^{2}\mathrm{st})\otimes\cdots\otimes\mathrm{Sym}^{\lambda_{m}}(\wedge^{m}\mathrm{st}).\label{eq:rho_lambda}
\end{equation}
Note that $\wedge^{m}\mathrm{st}$ is of rank 1, so $\mathrm{Sym}^{\lambda_{m}}(\wedge^{m}\mathrm{st})=\otimes^{\lambda_{m}}(\wedge^{m}\mathrm{st})$
makes sense even if $\lambda_{m}$ is negative. In Herman Weyl's construction
of $\rho_{\lambda}$ we assume first that $\lambda_{m}\ge0,$ view
$\lambda$ as a partition (Young tableau) of size $d=\sum_{i=1}^{m}\lambda_{i}$,
project $\otimes^{d}\mathrm{st}$ onto a sub-representation using
the Young symmetrizer $c_{\lambda}=a_{\lambda}b_{\lambda}\in\mathbb{Z}[\mathfrak{S}_{d}]$,
and then the resulting quotient is a model for $\rho_{\lambda}$,
\emph{cf.} \cite{F-H}, Ch.6. When $\lambda$ is not necessarily positive,
one reduces to the positive case by a twist by a power of the determinant
$\wedge^{m}\mathrm{st}$.

Recall, however, that over a field of characteristic $p$ the $\rho_{\lambda}$,
defined directly by (\ref{eq:rho_lambda}), are in general reducible
(e.g. $m=2$ and $\lambda=(p\ge0)$), and the category $\mathrm{\underline{Rep}}_{R}(GL_{m})$
is not semi-simple. As the Young symmetrizers are only quasi-idempotents
(i.e. $c_{\lambda}^{2}=n_{\lambda}c_{\lambda}$ for some integer $n_{\lambda}$
called the hook length of~$\lambda$, which might be divisible by
$p$) using them to study the representations of $GL_{m}$ becomes
tricky.

A more \emph{geometric} construction of $\rho_{\lambda}$ that works
\emph{over any ground ring $R,$ }hence produces an element of $\mathrm{\underline{Rep}}_{R}(GL_{m})$
functorially in $R,$ is via the Borel-Weil theorem 
-- see \cite{F-H} Claim
23.57 over $\mathbb{C}$. \cite{Jantzen}, II \S 5, gives this
construction of $\rho_\lambda$ over an arbitrary field, of any characteristic, and not necessarily algebraically closed. It
is however clear that the construction is valid over any ring $R$, and is furthermore functorial in $R$. 
Let $\overline{\lambda}=(\lambda_{m},\dots,\lambda_{1})$
be the anti-dominant weight for the standard torus of $GL_{m}$, which
is the opposite of $\lambda.$ Let $G=GL_{m}$ and let $B$ be the
standard upper-triangular Borel subgroup. Let $\overline{\lambda}$
denote also the character of $B$ obtained by first projecting modulo
the unipotent radical $U$ to the torus and then applying $\overline{\lambda}.$
On the flag variety $G/B$ define the line bundle $L_{\lambda}$ by
\[
L_{\lambda}=G\times^{B}\overline{\lambda}.
\]
This is the quotient of $G\times\mathbb{A}^{1}$ under the equivalence
relation $(gb,t)\sim(g,\overline{\lambda}(b)t)$ ($b\in B$). A global
section of $L_{\lambda}$ is identified with a map $\sigma:G\to\mathbb{A}^{1}$
satisfying $\sigma(gb)=\overline{\lambda}(b)^{-1}\sigma(g).$ In particular,
letting $w$ be the element of maximal length in the Weyl group (the
matrix with 1's on the anti-diagonal), we may define such a section
on the (open dense) big cell $UwB\subset G$ by
\begin{equation}
\sigma_{0}(uwb)=\overline{\lambda}(b)^{-1}.\label{eq:maxweight}
\end{equation}

The Borel-Weil theorem says that if $\lambda$ is dominant, then (a)
$L_{\lambda}$ is ample and $V_{\lambda}=H^{0}(G/B,L_{\lambda})\ne0$,
(b) if we let $G$ act on $V_{\lambda}$ by left translation, i.e.
$(g\sigma)(g')=\sigma(g^{-1}g'),$ this becomes a model for $\rho_{\lambda}$,
and finally (c) the $\sigma_{0}$ of (\ref{eq:maxweight}) extends
to a regular section on all of $G/B,$ the group $B\subset G$ acts
on it via the character $\lambda,$ and up to a scalar, $\sigma_{0}$
is the unique highest weight vector in $\rho_{\lambda}.$

This geometric formulation makes it evident that $\rho_{\lambda}$
so defined is functorial in~$R.$ Moreover, the linear functional
\begin{equation}
\Psi_{\lambda}:\sigma\mapsto\sigma(w)\in\mathbb{A}^{1}\label{eq:Psi_lambda}
\end{equation}
is easily seen to be in $\mathrm{Hom}_{\overline{B}}(\rho_{\lambda}|_{\overline{B}},\lambda)$
where $\overline{B}$ is the \emph{lower} triangular Borel. What's
more, since $L_{\lambda+\mu}=L_{\lambda}\otimes L_{\mu}$ there is
a canonical map (multiplication of global sections)
\begin{equation}
m_{\lambda,\mu}:\rho_{\lambda}\otimes\rho_{\mu}\to\rho_{\lambda+\mu},\label{eq:weight_multiplication}
\end{equation}
which is compatible with the functionals $\Psi_{\lambda},$$\Psi_{\mu}$
and $\Psi_{\lambda+\mu}$. From now on, whenever we write $\rho_{\lambda}$
or $\Psi_{\lambda}$ we shall have this specific model in mind.

We finally remark that if $R$ is an $\mathbb{F}_{p}$-algebra, and
$\phi:R\to R$ is the absolute Frobenius $\phi(x)=x^{p},$ then every
representation $\rho\in\mathrm{\underline{Rep}}_{R}(GL_{m})$ admits
a \emph{Frobenius twist} $\rho^{(p)}=\phi^{*}(\rho)$. In concrete
terms, locally on $R$ we may write $\rho$ in matrices, using a basis
of the underlying projective module, and $\rho^{(p)}$ is the representation
obtained by raising all the entries of the matrices to power $p.$

\subsubsection{Twisting a representation by a vector bundle\label{subsec:Twisting}}

Let $S$ be a scheme over $R.$ For every vector bundle $\mathcal{F}$
of rank $m$ over $S$ we let $\mathrm{\mathrm{\underline{Isom}}(}\mathcal{O}_{S}^{m},\mathcal{F})$
be the right $GL_{m}$-torsor of isomorphisms between $\mathcal{O}_{S}^{m}$
and $\mathcal{F}$, the group scheme $GL_{m/S}$ acting on the right
by pre-composition. If $\rho\in\mathrm{\underline{Rep}}_{R}(GL_{m})$
we consider the vector bundle
\[
\mathcal{F}_{\rho}=\mathrm{\mathrm{\underline{Isom}}(}\mathcal{O}_{S}^{m},\mathcal{F})\times^{GL_{m}}\rho(\mathcal{O}_{S})
\]
(contracted product). One should think of $\mathcal{F}_{\rho}$ as
``$\rho$ twisted by $\mathcal{F}$''. For example, for a dominant
weight $\lambda,$
\[
\mathcal{F}_{\rho_{\lambda}}=\mathrm{Sym}^{\lambda_{1}-\lambda_{2}}(\mathcal{F})\otimes\mathrm{Sym}^{\lambda_{2}-\lambda_{3}}(\wedge^{2}\mathcal{F})\otimes\cdots\otimes\mathrm{Sym}^{\lambda_{m}}(\wedge^{m}\mathcal{F}).
\]

What we have constructed is a \emph{tensor functor $\rho\rightsquigarrow\mathcal{F}_{\rho}$
}from $\mathrm{\underline{Rep}}_{R}(GL_{m})$ into the category $\underline{\mathrm{Vec}}_{S}$
of vector bundles over $S.$ These functors are compatible with base-change
of the underlying scheme $S,$ and with isomorphisms $\mathcal{\mathcal{F}}_{1}\simeq\mathcal{\mathcal{F}}_{2}$
between rank $m$ vector bundles. Thus if over $S'\to S$ the pull-backs
of two vector bundles $\mathcal{\mathcal{F}}_{i}$ become isomorphic
via an isomorphism $\varepsilon$, this $\varepsilon$ induces, over
$S'$, functorial isomorphisms $\varepsilon_{\rho}:\mathcal{F}_{1,\rho}\simeq\mathcal{F}_{2,\rho}$
for every $\rho\in\mathrm{\underline{Rep}}_{R}(GL_{m}).$

Note that if $R$ is an $\mathbb{F}_{p}$-algebra, then $\mathcal{F}_{\rho^{(p)}}=\mathcal{F}_{\rho}^{(p)}$,
where for any sheaf $\mathcal{F}$ over $S$ we denote by $\mathcal{F}^{(p)}=\Phi_{S}^{*}\mathcal{F}$
its pull-back by the absolute Frobenius of $S$. By $\mathcal{F}_{\rho}^{(p)}$
we mean either $(\mathcal{F}_{\rho})^{(p)}$ or $(\mathcal{F}^{(p)})_{\rho},$
the two being canonically identified.

The above generalizes to representations of a product of any number
of linear groups, say $M=\prod_{i=1}^{r}GL_{m_{i}}.$ Given $\rho\in\mathrm{\underline{Rep}}_{R}(M)$
and vector bundles $\mathcal{F}_{i}$ of ranks $m_{i}$ we let
\begin{equation}
\mathcal{E}_{\rho}=\mathrm{\mathrm{\prod_{i=1}^{r}\underline{Isom}}(}\mathcal{O}_{S}^{m_{i}},\mathcal{F}_{i})\times^{M}\rho(\mathcal{O}_{S}).\label{eq:E_rho}
\end{equation}
We call it the vector bundle obtained by \emph{twisting} $\rho$ \emph{by
the vector bundles} $\mathcal{F}_{i}.$

\subsubsection{\label{subsec:auotomorpic _vector_bundles_mod_p}$p$-adic automorphic
vector bundles over $S_{s}^{\mathrm{ord}}$}

Classically, automorphic vector bundles on $\mathcal{S}_{\mathbb{C}}$
are defined in the following way. Every connected component $\mathcal{S}_{\mathbb{C}}^{0}$
is of the form $\Gamma\setminus\boldsymbol{G}(\mathbb{R})/K_{\infty}$
where $K_{\infty}$ is a maximal compact-modulo-center subgroup, and
$\Gamma$ an arithmetic subgroup of $\boldsymbol{G}(\mathbb{R}).$
By a standard procedure due to Harish-Chandra one may embed the symmetric
space $\mathfrak{X}=\boldsymbol{G}(\mathbb{R})/K_{\infty}$ as an
open subset of its \emph{compact dual $\check{\mathfrak{X}}$.} In
our case the compact dual happens to be the Grassmannian $GL_{n+m}(\mathbb{C})/P_{\mathbb{C}}$,
where $P_{\mathbb{C}}$ is the standard maximal parabolic of type
$(m,n).$ (The change of variables involved in the Harish-Chandra
embedding for $U(n,m)$ is called the Cayley transform, as it generalizes
the well-known embedding of the upper half plane as the open unit
disk in $\mathbb{P}_{\mathbb{C}}^{1}$ when $n=m=1.$) The Levi quotient
of $P_{\mathbb{C}}$ is $M_{\mathbb{C}}= GL_{m}(\mathbb{C})\times GL_{n}(\mathbb{C}),$
and the automorphic vector bundles we consider are attached to representations
$\rho\in\mathrm{\underline{Rep}}_{\mathbb{C}}(M).$ 

Let such a representation $\rho$ be given. Let $P_{\mathbb{C}}$
act on $\rho(\mathbb{C})$ via its quotient $M_{\mathbb{C}}$, consider
the vector bundle
\[
GL_{n+m}(\mathbb{C})\times^{P_{\mathbb{C}}}\rho(\mathbb{C})
\]
on $\check{\mathfrak{X}}=GL_{n+m}(\mathbb{C})/P_{\mathbb{C}},$ and
denote by $\mathcal{\widetilde{E}}_{\rho}$ its restriction to $\mathfrak{X}$.
Since left multiplication by $\Gamma$ commutes with right multiplication
by $P_{\mathbb{C}}$, this vector bundle descends to a vector bundle
$\mathcal{E}_{\rho}$ on $\mathcal{S}_{\mathbb{C}}^{0}=\Gamma\setminus\mathfrak{X}.$
Using the complex analytic description of the universal abelian variety
over $\Gamma\setminus\mathfrak{X}$ one checks that the standard representations
of the two blocks in $M$ yield the vector bundles $\mathcal{Q}$
and $\mathcal{P}^{\vee}$. Easy group theory shows then that this
complex analytic construction gives, for any $\rho\in\mathrm{\underline{Rep}}_{\mathbb{C}}(M),$
a vector bundle which may be canonically identified with the $\mathcal{E}_{\rho}$
obtained by twisting $\rho$ by the pair of vector bundles $\mathcal{Q}$
and $\mathcal{P}^{\vee}$, as in the preceding paragraph.

This suggests to adopt the construction outlined in $\S$\ref{subsec:Twisting}
as an algebraic construction of automorphic vector bundles that works
equally well over the arithmetic scheme $\mathcal{S}$, hence also
over its special fiber $S$.

For the purpose of studying $p$\emph{-adic vector-valued modular
forms} this is however not always sufficient. In the classical complex
setting, a great advantage of the construction is that $\mathcal{\widetilde{E}}_{\rho}$
becomes trivial on $\mathfrak{X}$, hence may be described by matrix-valued
factors of automorphy. In the mod-$p$ or $p$-adic theory we need
an analogous covering of $S^{\mathrm{ord}}$ (or $S_{s}^{\mathrm{ord}}$),
over which our basic building blocks, hence all the $\mathcal{E}_{\rho}$,
will be trivialized. This is crucial both for Katz's theory of $p$-adic
modular forms, and for the construction of Maass-Shimura-like differential
operators below. This analogue of $\mathfrak{X}$ is th\emph{e (big)
Igusa tower, }to be described\emph{ }in $\S$\ref{subsec:bigIg}.

\emph{At this point the $\mu$-ordinary case becomes fundamentally
different from the ordinary one. }If $p$ is split in $E$, or if
$p$ is inert but $m=n,$ then both $\mathcal{P}$ and $\mathcal{Q}$
are trivialized over the Igusa tower and everything works well with
the usual automorphic vector bundles. However, if $p$ is inert and
$m<n$ then $\mathcal{P}$ can not be trivialized over the Igusa tower,
nor on any other pro-{\'e}tale cover. The best we can do is to trivialize
its subquotients $\mathcal{P}_{0}$ and $\mathcal{P_{\mu}}$ separately.
This explains why we need to start with \emph{three} basic bundles
$\mathcal{Q},\mathcal{P}_{\mu}$ and $\mathcal{P}_{0}$ over $S_{s}^{\mathrm{ord}},$
and why our $\rho$ will be an element of $\mathrm{\underline{Rep}}_{R}(M)$
with
\[
M=GL_{m}\times GL_{m}\times GL_{n-m}
\]
rather than $GL_{m}\times GL_{n}$ as over $\mathbb{C}$. 

After this long discussion, we can finally make the following definition.
\begin{defn}
\label{def:p_adic automorphic vector bundle}Let $\rho\in\mathrm{\underline{Rep}}_{R}(M)$
where $M=GL_{m}\times GL_{m}\times GL_{n-m},$ and define $\mathcal{E}_{\rho}$
on $S_{s}^{\mathrm{ord}}$ by (\ref{eq:E_rho}) with $(\mathcal{Q},\mathcal{P}_{\mu},\mathcal{P}_{0})$
replacing the $\mathcal{F}_{i}$. We call $\mathcal{E}_{\rho}$ the
$p$\emph{-adic automorphic vector bundle of weight} $\rho$ (mod
$p^{s}),$ and $\underset{\leftarrow s}{\lim}\,H^{0}(S_{s}^{\mathrm{ord}},\mathcal{E}_{\rho})$
the space of \emph{$p$-adic (vector-valued) }modular forms\emph{
of weight} $\rho.$
\end{defn}

\begin{rem*}
(i) Note that by our convention the standard representations of the
second and third factors of $M$ correspond to $\mathcal{P}_{\mu}$
and $\mathcal{P}_{0}$, while the complex analytic standard representation
of $GL_{n}$ corresponded to $\mathcal{P}^{\vee}.$ 

(ii) A $p$-adic modular form need not come from a global section
over $\mathcal{S}.$ It is a rigid analytic object, defined over the
affinoid which is the generic fiber of the formal completion of $\mathcal{S}$
along $S^{\mathrm{ord}}.$ In fact, if $\mathcal{P}_{\mu}$ and $\mathcal{P}_{0}$
are ``involved'' in $\mathcal{E_{\rho}}$ (in the precise sense
that $\rho$ does not come from a representation of the first simple
factor of $M$) then it does not even make sense to ask whether the
modular form extends to a global section over $\mathcal{S}$, because
the $p$-adic automorphic vector bundle does not extend there. In
order to compare classical and $p$-adic modular forms we make the
following definition.
\end{rem*}
\begin{defn}
Let $\rho\in\mathrm{\underline{Rep}}_{R}(M).$ We say that the $p$-adic
automorphic vector bundle $\mathcal{E}_{\rho}$ is \emph{of classical
type} if $\rho$ factors through the first factor of $M.$
\end{defn}

A $p$-adic automorphic vector bundle of classical type is the restriction
to $S_{s}^{\mathrm{ord}}$ of a classical automorphic vector bundle.
Note however that $\mathcal{P}$, an honest automorphic vector bundle
on $S_{s}$, is not a $p$-adic automorphic vector bundle on $S_{s}^{\mathrm{ord}}$
(if $m<n$), as it can not be reconstructed from its graded pieces
$\mathcal{P}_{0}$ and $\mathcal{P}_{\mu}$.

\section{\label{sec:Differential-operators-on}Differential operators on $p$-adic
modular forms}

\subsection{\label{subsec:bigIg}The big Igusa tower}

\subsubsection{The $p$-divisible group $\mathfrak{G}$}

Following a long-standing tradition going back to Katz in the \emph{ordinary
}case, we want to describe a certain \emph{tower} of (big) Igusa varieties
$T_{t,s},$ for all $t,s\ge1.$ The variety $T_{t,s}$ will be an
Igusa variety of level $p^{t}$ over $\mathcal{O}_{E,(p)}/p^{s}\mathcal{O}_{E,(p)}$.
By ``tower'' we mean that the reduction of $T_{t,s+1}$ modulo $p^{s}$
will be identified with $T_{t,s}$, and that for a fixed $s$ there
will be compatible morphisms from level $p^{t'}$ to level $p^{t}$
for all $t'\ge t.$ This ``big Igusa tower'' has been defined and
studied, in much greater generality, in E. Mantovan's work \cite{Man}.

To describe it, we shall have to choose a model $\mathfrak{G}$ over
$W=W(\kappa)=\mathcal{O}_{p}$ of the $p$-divisible group that becomes,
over $\overline{\kappa}$, the group $\mathfrak{G}_{\overline{\kappa}}$
introduced in $\S$\ref{subsec:The-universal-abelian}. This choice results
in freedom, which grows with $t$ and $s$, and prevents the $T_{t,s}$
(unlike the \emph{small} Igusa varieties, see below) from being canonically
defined. This problem will nevertheless disappear over $W(\overline{\kappa}),$
so the reader interested in the construction over $W(\overline{\kappa})/p^{s}W(\overline{\kappa})$
only, can happily ignore the issue.

The easiest way to fix our model is to choose an elliptic curve $\mathcal{\mathscr{C}}$
defined over $W$, with complex multiplication by $\mathcal{O}_{E}$
and CM type $\Sigma$. The theory of complex multiplication guarantees
that such an elliptic curve exists, and has supersingular reduction.
We then let $\mathfrak{G=\mathscr{C}}[p^{\infty}]$ be its $p$-divisible
group. Its special fiber $\mathfrak{G}_{\kappa}$ is of local-local
type, height 2 and dimension 1. The canonical polarization of the
elliptic curve supplies an isomorphism of $\mathfrak{G}$ with its
Serre dual, hence a compatible system of perfect alternating Weil
pairings
\[
\left\langle ,\right\rangle :\mathfrak{G}[p^{t}]\times\mathfrak{G}[p^{t}]\to\mu_{p^{t}}
\]
($t\ge1$)\emph{.}

The completion $\mathcal{O}_{p}$ of $\mathcal{O}_{E}$ maps isomorphically
onto $\mathrm{End}(\mathfrak{G}_{/W})\subset\mathrm{End}(\mathfrak{G_{/\kappa}})$.
Furthermore, for any $W$-algebra $R$
\[
\mathrm{End}_{\mathcal{O}_{E}}(\mathfrak{G}[p^{t}]_{/R})=\mathcal{O}_{p}/p^{t}\mathcal{O}_{p}.
\]
We have $\left\langle \iota(a)u,v\right\rangle =\left\langle u,\iota(\overline{a})v\right\rangle $
for every $a\in\mathcal{O}_{E}$.

\subsubsection{The Igusa moduli problem}

If $R$ is a $W_{s}(\kappa)$-algebra and $A_{/R}$ is fiber-by-fiber
$\mu$-ordinary, then its $p$-divisible group admits a filtration
like (\ref{eq:ordinary_filtration}) whose graded pieces we label
$gr^{i}A[p^{\infty}].$ We choose the indices in such a way that locally
in the pro-{\'e}tale topology on $\mathrm{Spec}(R)$ there exist isomorphisms
\begin{equation}
\epsilon^{0}:(\mathcal{O}_{E}\otimes\mathbb{Q}_{p}/\mathbb{Z}_{p})_{R}^{m}\simeq gr^{0},\,\,\,\,\epsilon^{1}:\mathfrak{G}_{R}^{n-m}\simeq gr^{1},\,\,\,\,\,\epsilon^{2}:(\mathfrak{d}_{E}^{-1}\otimes\mu_{p^{\infty}})_{R}^{m}\simeq gr^{2},\label{eq:Igusa structures}
\end{equation}
respecting the action of $\mathcal{O}_{E}$ and the pairings. Note
that $gr^{1}$ is self-dual, while $\epsilon^{0}$ and $\epsilon^{2}$
determine each other. For future reference we want to make the pairings
on these ``model group schemes'' explicit. If 
\[
\alpha=(x_{1},\dots,x_{m},y_{1},\dots,y_{n-m},z_{1},\dots,z_{m})\in(\mathcal{O}_{E}\otimes\mathbb{Q}_{p}/\mathbb{Z}_{p})_{R}^{m}\times\mathfrak{G}_{R}^{n-m}\times(\mathfrak{d}_{E}^{-1}\otimes\mu_{p^{\infty}})_{R}^{m},
\]
and similarly $\alpha'=(x'_{1},\dots,x'_{m},y'_{1},\dots,y'_{n-m},z'_{1},\dots,z'_{m}),$
we define
\begin{equation}
\left\langle \alpha,\alpha'\right\rangle =\prod_{i=1}^{m}\left\langle x_{i},z'_{m+1-i}\right\rangle \prod_{j=1}^{n-m}\left\langle y_{j},y'_{n-m+1-j}\right\rangle \prod_{i=1}^{m}\left\langle z_{i},x'_{m+1-i}\right\rangle .\label{eq:model_pairing}
\end{equation}
In matrix form, writing $\mu_{p^{\infty}}$ additively, we take, $\,^{t}\alpha J_{n+m}\alpha'$
where $J_{l}$ is the anti-diagonal matrix of size $l$, and \emph{not
$\,^{t}\alpha I_{n,m}\alpha'$ }where $I_{n,m}$ is the matrix (\ref{eq:I_(n,m)}).
As remarked in $\S$\ref{subsec:Linear-algebra}, these two pairings produce
\emph{isomorphic }polarized $\mathcal{O}_{E}$-groups. Thus, there
is no real difference which pairing we take at this point, but for
later book-keeping purposes, we prefer the one with $J_{n+m}.$

We call $\epsilon=(\epsilon^{0},\epsilon^{1},\epsilon^{2})$ a \emph{graded
symplectic trivialization }of the $p$-divisible group. A \emph{graded
symplectic trivialization} of $A[p^{t}]$ is a similar system of isomorphisms
of the $p^{t}$-torsion in the $p$-divisible groups, defined over
$R,$ which is locally {\'e}tale liftable to a graded symplectic trivialization
of the whole $p$-divisible group.
\begin{defn}
The \emph{big Igusa moduli problem of level} $p^{t}$ over $W_{s}(\kappa),$
denoted $T_{t,s}$, classifies tuples
\[
(\underline{A},\epsilon)_{/R/W_{s}},
\]
where $\underline{A}\in S_{s}^{\mathrm{ord}}(R)$ and $\epsilon$
is a graded symplectic trivialization of $A[p^{t}]$ as in (\ref{eq:Igusa structures}),
up to isomorphism. 
\end{defn}

The representability of this moduli problem by a scheme, denoted also
$T_{t,s}$, is standard. One only has to check that it is \emph{relatively
representable} over $S_{s}^{\mathrm{ord}}$ (see \cite{Ka-M}, Chapter
4). The maps between the levels are self-evident. The morphism
\[
\tau:T_{t,s}\to S_{s}^{\mathrm{ord}}
\]
is a Galois {\'e}tale covering of $S_{s}^{\mathrm{ord}}$ (\cite{Man},
Proposition 4).

The \emph{small Igusa variety }of the same level classifies tuples
$(\underline{A},\epsilon^{2})$ of the same nature. There is an obvious
morphism from the big tower to the small one: ``forget $\epsilon^{1}$''.
Since $\epsilon^{0}$ is determined by $\epsilon^{2}$ we do not have
to forget anything more.

\subsubsection{The Galois group}

The Galois group $\Delta_{t}$ of the covering $\tau:T_{t,s}\to S_{s}^{\mathrm{ord}}$
is isomorphic to $GL_{m}(\mathcal{O}_{E}/p^{t}\mathcal{O}_{E})\times U_{n-m}(\mathcal{O}_{E}/p^{t}\mathcal{O}_{E})$
under 
\[
\Delta_{t}\ni\gamma\mapsto[\gamma]=(\gamma{}_{2},\gamma{}_{1})\in GL_{m}(\mathcal{O}_{E}/p^{t}\mathcal{O}_{E})\times U_{n-m}(\mathcal{O}_{E}/p^{t}\mathcal{O}_{E}),
\]
 where
\begin{equation}
\gamma(\underline{A},\epsilon)=(\underline{A},\epsilon\circ[\gamma]^{-1}).\label{eq:Galois action on Igusa level}
\end{equation}
Here by $U_{n-m}(\mathcal{O}_{E}/p^{t}\mathcal{O}_{E})$ we mean the
quasi-split unitary group, consisting of matrices $g$ of size $n-m$
satisfying the relation $^{t}\overline{g}J_{n-m}g=J_{n-m}.$ As explained
before, it is isomorphic to the group of matrices satisfying $^{t}\overline{g}g=I.$
By $\epsilon\circ[\gamma]^{-1}$ we mean that we compose $\epsilon^{1}$
with $\gamma{}_{1}^{-1}$ and $\epsilon^{2}$ with $\gamma{}_{2}^{-1}$
(the action on $\epsilon^{0}$ being determined by the one on $\epsilon^{2}$).
As usual, the group $\Delta_{t}$ acts simply transitively on the geometric
fibers of the morphism $\tau$.

\subsubsection{Trivializing the three basic vector bundles over the Igusa tower}

For simplicity write $T=T_{t,s}$, $\Delta=\Delta_{t},$ and assume
that $t\ge s.$ There is enough level structure then to ``see''
the relative Lie algebra of $\mathcal{A}_{/S_{s}}$ on $\mathcal{A}[p^{t}]_{/S_{s}},$
as explained in the paragraph following (\ref{eq:rs-exact-sequence}).

As the cotangent space at the origin of $\mathfrak{d}_{E}^{-1}\otimes\mu_{p^{t}/W_{s}}$
is canonically identified with $\mathcal{O}_{E}\otimes W_{s}=W_{s}(\Sigma)\oplus W_{s}(\overline{\Sigma})$,
the isomorphism $\epsilon^{2}$ induces canonical trivializations
of $\mathcal{O}_{E}$-vector bundles over $T$
\[
\varepsilon^{2}=((\epsilon^{2})^{-1})^{*}:\mathcal{O}_{E}\otimes\mathcal{O}_{T}^{m}\simeq\mathcal{Q}\oplus\mathcal{P}_{\mu}
\]
(we write $\mathcal{Q}$ for $\tau^{*}\mathcal{Q}$ etc. as $\tau^{*}\mathcal{Q}$
is ``the'' $\mathcal{Q}$ of $\mathcal{A}/T$), or
\[
\varepsilon^{2}(\overline{\Sigma}):\mathcal{O}_{T}^{m}\simeq\mathcal{Q},\,\,\,\varepsilon^{2}(\Sigma):\mathcal{O}_{T}^{m}\simeq\mathcal{P}_{\mu}.
\]
Similarly fix, once and for all, an isomorphism of the cotangent space
at the origin of $\mathfrak{G}[p^{t}]_{/W_{s}}$ (as an $\mathcal{O}_{E}$-module)
with $W_{s}(\Sigma).$ The isomorphism $\epsilon^{1}$ induces then
also a canonical trivialization over $T$
\[
\varepsilon^{1}:\mathcal{O}_{T}^{n-m}\simeq\mathcal{P}_{0}.
\]

The action (\ref{eq:Galois action on Igusa level}) of $\gamma\in\Delta$
on $T$ induces the following action on the trivializations

\begin{equation}
\gamma(\varepsilon^{i})=\varepsilon^{i}\circ\,^{t}\gamma{}_{i}.\label{eq:Gamma_action}
\end{equation}
 ($i=1,2$). Let us check the last formula, dropping the index $i$:
\[
\gamma(\varepsilon)=(\gamma(\epsilon)^{-1})^{*}=(\gamma\circ\epsilon^{-1})^{*}=(\epsilon^{-1})^{*}\circ\gamma{}^{*}=\varepsilon\circ\,^{t}\gamma,
\]
because the matrix representing $[\gamma]^{*}$ on the cotangent space
is the transpose of the matrix representing $[\gamma]_{*}$ on the
Lie algebra, which is simply $[\gamma].$

\subsection{The theta operator}

\subsubsection{Pre-theta}

Let $\rho$ be a representation of $GL_{m}\times GL_{m}\times GL_{n-m}$
over $W_{s},$ and let $\mathcal{E}_{\rho}$ be the automorphic vector
bundle on $S_{s}^{\mathrm{ord}}$ defined above. We define a connection
\[
\widetilde{\Theta}:\mathcal{E}_{\rho}\to\mathcal{E_{\rho}}\otimes\Omega_{S_{s}/W_{s}}
\]
over $S_{s}^{\mathrm{ord}}.$

Let $t\ge s.$ Denote by $\mathcal{O}_{\rho}=\rho(\mathcal{O}_{T})$
the vector bundle over $T=T_{t,s}$ obtained by twisting the representation
$\rho$ by the trivial vector bundles $\mathcal{O}_{T}^{m},\,\mathcal{O}_{T}^{m}$
and $\mathcal{O}_{T}^{n-m}$ as in Definition \ref{def:p_adic automorphic vector bundle}.
The trivial connection on the structure sheaf $\mathcal{O}_{T}$ induces,
by the usual rules, a connection
\[
d_{\rho}:\mathcal{O}_{\rho}\to\mathcal{O}_{\rho}\otimes\Omega_{T/W_{s}}.
\]
For example, if $\rho=\rho_{\lambda}$ where $\lambda=(\lambda_{1},\dots,\lambda_{m})$
is a dominant weight depending only on the first $GL_{m}$ factor,
so that $\mathcal{O}_{\rho}$ is given by (\ref{eq:rho_lambda}),
then $d_{\rho}$ is given by the usual rules of differentiation of
symmetric powers, exterior powers and duals.

On the other hand the trivializations $\varepsilon^{1}$ and $\varepsilon^{2}$
constructed above yield a trivialization
\[
\varepsilon_{\rho}:\mathcal{O}_{\rho}\simeq\tau^{*}\mathcal{E}_{\rho}
\]
over $T.$ To get the action of 
\[
\gamma=(\gamma_{2},\gamma_{1})\in\Delta=GL_{m}(\mathcal{O}_{E}/p^{t}\mathcal{O}_{E})\times U_{n-m}(\mathcal{O}_{E}/p^{t}\mathcal{O}_{E})
\]
on $\varepsilon_{\rho}$ we first map $\gamma$ to $GL_{m}(W_{s})\times GL_{m}(W_{s})\times GL_{n-m}(W_{s})$
via
\[
\gamma\mapsto\iota(\gamma)=(\overline{\gamma_{2}},\gamma_{2},\gamma_{1})
\]
(well defined because $t\ge s$) and let $^{t}[\gamma]_{\rho}=\rho(\,^{t}\iota(\gamma)).$
Then from (\ref{eq:Gamma_action}) we get
\begin{equation}
\gamma(\varepsilon_{\rho})=\varepsilon_{\rho}\circ\,^{t}[\gamma]_{\rho}.\label{eq:gamma_rho_action}
\end{equation}

Let $U\subset S_{s}^{\mathrm{ord}}$ be Zariski open. For $f\in H^{0}(U,\mathcal{E}_{\rho})$
define
\begin{equation}
\widetilde{\Theta}(f)=(\varepsilon_{\rho}\otimes1)\circ d_{\rho}\circ\varepsilon_{\rho}^{-1}(\tau^{*}f)\in H^{0}(\tau^{-1}(U),\tau^{*}\mathcal{E_{\rho}}\otimes_{\mathcal{O}_{T}}\Omega_{T/W_{s}}).\label{eq:pre_theta}
\end{equation}
Since $\tau$ is {\'e}tale, $\Omega_{T/W_{s}}=\mathcal{O}_{T}\otimes_{\mathcal{O}_{S_{s}}}\Omega_{S_{s}/W_{s}},$
so
\[
\widetilde{\Theta}(f)\in H^{0}(\tau^{-1}(U),\tau^{*}\mathcal{E_{\rho}}\otimes_{\mathcal{O}_{S_{s}}}\Omega_{S_{s}/W_{s}}).
\]
We have to show that $\widetilde{\Theta}(f)\in H^{0}(U,\mathcal{E}_{\rho}\otimes_{\mathcal{O}_{S_{s}}}\Omega_{S_{s}/W_{s}}),$
and for that it would suffice to show that it is invariant under $\Delta.$
Let $\gamma\in\Delta.$ Then by (\ref{eq:gamma_rho_action})
\[
\gamma(\widetilde{\Theta}(f))=(\varepsilon_{\rho}\otimes1)\circ\,^{t}[\gamma]_{\rho}\circ d_{\rho}\circ\,^{t}[\gamma]_{\rho}^{-1}\circ\varepsilon_{\rho}^{-1}(\tau^{*}f)=\widetilde{\Theta}(f).
\]
Here we used that (a) $\tau^{*}f$ is Galois invariant, (b) $d_{\rho}$
is Galois invariant since $\tau$ is {\'e}tale, and (c) $d_{\rho}$ commutes
with the scalar matrices $^{t}[\gamma]_{\rho}.$ We summarize our
construction in the following theorem.
\begin{thm}
Let $U\subset S_{s}^{\mathrm{ord}}$ be an open set and $f\in H^{0}(U,\mathcal{E}_{\rho}).$
Then
\[
\widetilde{\Theta}(f)=(\varepsilon_{\rho}\otimes1)\circ d_{\rho}\circ\varepsilon_{\rho}^{-1}(\tau^{*}f)\in H^{0}(U,\mathcal{E}_{\rho}\otimes_{\mathcal{O}_{S_{s}}}\Omega_{S_{s}/W_{s}})
\]
yields a well-defined connection on $\mathcal{E}_{\rho}.$ The connection
defined on $\mathcal{E}\otimes\mathcal{F},$ $\mathcal{E}^{\vee}$
etc. is the tensor product, dual etc. of the connections defined on
the individual sheaves. If $s=1$ (i.e. we are in characteristic $p$),
then the connection defined on $\mathcal{E}^{(p)}$ is trivial. Hence,
if $f$ and $g$ are sections of $\mathcal{E}$ and $\mathcal{F},$
respectively, then on $\mathcal{E}^{(p)}\otimes\mathcal{F}$ we have
$\widetilde{\Theta}(f^{(p)}\otimes g)=f^{(p)}\otimes\widetilde{\Theta}(g).$
\end{thm}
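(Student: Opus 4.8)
The construction of $\widetilde{\Theta}$ depends a priori on the choice of level $t \ge s$ and on the trivializations $\varepsilon^1, \varepsilon^2$, so the first task is to verify that $\widetilde{\Theta}$ is independent of these choices; but this follows from the same $\Delta$-invariance computation already carried out before the statement, applied to the transition maps between different Igusa levels (these are morphisms of $\Delta$-torsors, compatible with the trivial connection $d$ on the structure sheaves since all relevant maps are \'etale). So one may work at a single convenient level $T = T_{t,s}$ throughout.

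For the compatibility with tensor products, duals, etc., I would argue as follows. The trivialization $\varepsilon_\rho$ of $\tau^*\mathcal{E}_\rho$ is itself functorial in $\rho$ (this is the content of the tensor-functor discussion in $\S$\ref{subsec:Twisting}): for $\rho, \sigma$ representations of $M = GL_m \times GL_m \times GL_{n-m}$ one has $\varepsilon_{\rho \otimes \sigma} = \varepsilon_\rho \otimes \varepsilon_\sigma$ under the canonical identification $\mathcal{O}_{\rho \otimes \sigma} \cong \mathcal{O}_\rho \otimes \mathcal{O}_\sigma$, and similarly $\varepsilon_{\rho^\vee} = (\varepsilon_\rho^\vee)^{-1}$ (or rather the contragredient). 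Likewise the connection $d_\rho$ on $\mathcal{O}_\rho$ is, by construction, nothing but the one induced by the trivial connection $d$ on $\mathcal{O}_T$ via the usual Leibniz rules, so $d_{\rho \otimes \sigma}$ is the tensor-product connection $d_\rho \otimes 1 + 1 \otimes d_\sigma$ and $d_{\rho^\vee}$ the dual connection. Conjugating these identities by $\varepsilon_\rho$, $\varepsilon_\sigma$ and pulling back sections from $S_s^{\mathrm{ord}}$ gives exactly the asserted compatibility of $\widetilde{\Theta}$ with tensor, dual, and the other Schur-type operations. (The identification $\mathcal{E}_\rho \otimes \Omega_{S_s/W_s} \cong \mathcal{E}_\rho \otimes \mathcal{P} \otimes \mathcal{Q}$ via Kodaira--Spencer plays no role here; $\widetilde{\Theta}$ is a connection valued in $\Omega_{S_s/W_s}$ and the Kodaira--Spencer identification is invoked only afterwards.)

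For the characteristic $p$ statement, set $s = 1$. The point is that the Frobenius twist of a vector bundle carries a \emph{canonical} connection, namely $d \otimes 1$ on $\mathcal{F}^{(p)} = \mathcal{O}_S \otimes_{\Phi, \mathcal{O}_S} \mathcal{F}$, whose kernel (flat sections) is exactly $\Phi^*\mathcal{F}$; concretely, if $e$ is a local section of $\mathcal{F}$ then $e^{(p)} = 1 \otimes e$ is flat. On the Igusa tower, $\mathcal{O}_{\rho^{(p)}} = \mathcal{O}_\rho^{(p)}$ as in the remark after $\S$\ref{subsec:Twisting}, and the connection $d_{\rho^{(p)}}$ induced by the trivial connection: on matrix entries of $\rho$ raised to the $p$-th power, $d(x^p) = p x^{p-1} dx = 0$ in characteristic $p$, so $d_{\rho^{(p)}}$ is identically zero on the trivialized bundle. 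Conjugating by $\varepsilon_{\rho^{(p)}} = \varepsilon_\rho^{(p)}$ shows the induced connection on $\mathcal{E}_\rho^{(p)}$ is the zero connection. Combining this with the tensor-product compatibility already established yields, for $f \in H^0(U, \mathcal{E})$ and $g \in H^0(U, \mathcal{F})$,
\[
\widetilde{\Theta}(f^{(p)} \otimes g) = \widetilde{\Theta}(f^{(p)}) \otimes g + f^{(p)} \otimes \widetilde{\Theta}(g) = f^{(p)} \otimes \widetilde{\Theta}(g).
\]

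The only genuinely non-routine point, and the one I would spend care on, is the well-definedness: checking that the transition maps between Igusa levels $T_{t',s} \to T_{t,s}$ and between the auxiliary choices are compatible with the $d_\rho$'s and with $\varepsilon_\rho$. Everything else is a formal manipulation with the tensor functor $\rho \rightsquigarrow \mathcal{O}_\rho$ and the trivial connection, plus the elementary characteristic-$p$ identity $d(x^p) = 0$. In fact the $\Delta$-invariance argument preceding the statement already contains the essential mechanism, so the proof amounts to observing that that argument, and the Leibniz rule, interact correctly with the standard functorialities.
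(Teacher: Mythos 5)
Your proposal is correct and takes essentially the same route as the paper, whose own proof consists of the assertion that functoriality under the linear-algebra operations (including Frobenius twist) is clear, together with the general fact that \emph{any} connection on $\mathcal{E}$ pulls back under absolute Frobenius to the canonical connection $\nabla^{\mathrm{can}}$ on $\mathcal{E}^{(p)}$, characterized by $\nabla^{\mathrm{can}}(f^{(p)})=0$; your explicit verification on the Igusa tower (including the remark on independence of the level $t\ge s$, which the paper leaves implicit) is a concrete instance of exactly this. One wording repair: $d_{\rho^{(p)}}$ is not ``identically zero'' and the induced connection on $\mathcal{E}^{(p)}$ is the canonical one rather than a zero map --- the correct statement is that under $\varepsilon_{\rho^{(p)}}=\varepsilon_{\rho}^{(p)}$ the coordinate vector of a section $f^{(p)}$ has $p$-th-power entries, which $d$ annihilates; the ``matrix entries of $\rho$ raised to the $p$-th power'' describe the group action and play no role in this step.
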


\begin{proof}
The functoriality with respect to linear-algebra operations (including
Frobenius twist in characteristic $p$) is clear. The last remark
is a general fact about modules with connection. For any vector bundle
$\mathcal{E}$ over a base $S$ in characteristic~$p$ there is a
canonical connection $\nabla^{\mathrm{can}}$ on $\mathcal{E}^{(p)},$
characterized by $\nabla^{\mathrm{can}}(f^{(p)})=0$ for any section
$f$ of $\mathcal{E},$ and if $\nabla$ is \emph{any }connection
on $\mathcal{E},$ then its pull-back $\nabla^{(p)}$ to $\mathcal{E}^{(p)}$
is canonically identified with $\nabla^{\mathrm{can}}.$
\end{proof}

\subsubsection{Theta}

Using the inverse of the Kodaira-Spencer isomorphism
\[
\mathrm{KS}:\mathcal{P}\otimes\mathcal{Q}\simeq\Omega_{S_{s}/W_{s}}
\]
we may view $\widetilde{\Theta}$ as a map from $\mathcal{E}_{\rho}$
to $\mathcal{E_{\rho}}\otimes\mathcal{P}\otimes\mathcal{Q}.$ We emphasize
that this map is not a sheaf homomorphism, as it is only $\kappa$-linear
and not $\mathcal{O}_{S_{s}}$-linear. It is better, however, to consider
the operator
\begin{equation}
\Theta=(1\otimes pr_{\mu}\otimes1)\circ(1\otimes\mathrm{KS^{-1})}\circ\widetilde{\Theta}:\mathcal{E}_{\rho}\to\mathcal{E}_{\rho}\otimes\mathcal{P_{\mu}}\otimes\mathcal{Q}.\label{eq:Theta}
\end{equation}
Here $pr_{\mu}:\mathcal{P}\to\mathcal{P}/\mathcal{P}_{0}=\mathcal{P}_{\mu}$
is the canonical projection.

If $s=1$, in characteristic $p$ over $S$, we may replace $\mathcal{P}_{\mu}$
by $\mathcal{Q}^{(p)}$ and $pr_{\mu}$ by $V$. From the point of
view of connections, dividing $\Omega_{S/\kappa}$ by $\ker(V\otimes1)=\mathcal{P}_{0}\otimes\mathcal{Q}$
means that we restrict the connection to the foliation $TS^{+}\subset TS$
which has been introduced and studied in \cite{dS-G2}, i.e. use it
to differentiate sections of $\mathcal{\mathcal{E}_{\rho}}$ only
in the direction of $TS^{+}.$ Although this voluntarily gives up
information encoded in~$\widetilde{\Theta}$, when restricted to characteristic
$p$, the operator $\Theta$ has \emph{four} advantages over its predecessor:
\begin{enumerate}
\item While $\widetilde{\Theta}$ has poles along the complement of $S^{\mathrm{ord}}$
in $S,$ we shall see that $\Theta$ may be analytically continued
everywhere, at least when applied to scalar modular forms.
\item The effect of $\Theta$ on Fourier-Jacobi expansions is particularly
nice, while the formulae for $\widetilde{\Theta}$ contain unpleasant
terms. 
\item Restricting the connection to the foliation $TS^{+}$ should also
result in a nice expansion of $\Theta$ at a $\mu$-ordinary point
in terms of Moonen's generalized Serre-Tate coordinates \cite{Mo1}.
This is the approach taken in \cite{E-M}. For the relation between
$TS^{+}$ and Moonen's generalized Serre-Tate coordinates, see \cite{dS-G2},
$\S$3.3, Theorem 13.
\item Unlike $\widetilde{\Theta}$, the operator $\Theta$ lands back in
a sheaf which is obtained ``by linear algebra operations'' from
$\mathcal{Q},\mathcal{P}_{\mu}$ and $\mathcal{P}_{0}$. This will
allow us to \emph{iterate} $\Theta$, something which we were prohibited
from doing with $\widetilde{\Theta}$ due to the presence of $\mathcal{P}.$
\end{enumerate}

\subsection{Higher order differential operators $D_{\kappa}^{\kappa'}$}

For the sake of completeness we indicate how one gets, by iterating
$\Theta,$ a whole array of differential operators $D_{\kappa}^{\kappa'}.$
We follow, with minor modifications, Eischen's thesis \cite{Ei}.
If $\kappa=(a,b,c)$ is a dominant weight of $M=GL_{m}\times GL_{m}\times GL_{n-m}$
we denote the vector bundle $\mathcal{E_{\rho}}$ associated with
the representation $\rho=\rho_{\kappa}$ by $\mathcal{E}_{\kappa}.$

Let $\mathrm{st}$ be the standard representation of $GL_{m}$ over
$W$, let $a'$ be a positive dominant weight $a'_{1}\ge\cdots\ge a'_{m}\ge0$
and $e=\Sigma_{i=1}^{m}a'_{i}$. Then in $\underline{\mathrm{Rep}}_{W}(GL_{m})$
there exists a distinguished homomorphism, unique up to a $W^{\times}$-multiple,
\[
\pi_{a'}:\mathrm{st}^{\otimes e}\to\rho_{a'}.
\]
One simply has to normalize the homomorphism resulting from the Young
symmetrizer $c_{a'}$ so that it is integral, but not divisible by
$p.$ Whether $\pi_{a'}$ can be further normalized to eliminate the
$W^{\times}$-ambiguity depends on which model we take for $\rho_{a'},$
as two such models are canonically isomorphic only up to multiplication
by a scalar. Since we agreed to take the models given by the Borel-Weil
theorem over $W$, we do not know how to normalize $\pi_{a'}$ any further or whether it is surjective before inverting $p$.

Let $\kappa'=(a',b',0)$ be a dominant weight with $a'$ and $b'$
positive, such that 
\[
e=\sum_{i=1}^{m}a'_{i}=\sum_{i=1}^{m}b'_{i}.
\]
In \cite{E-F-M-V} such a $\kappa'$ is called sum-symmetric.

We twist $\rho_{\kappa'}=\rho_{a'}\otimes\rho_{b'}\otimes1$ by the
vector bundles $\mathcal{Q}$ and $\mathcal{P}_{\mu}$. Recall that
$\mathcal{Q}$ is used to twist $\rho_{a'}$ and $\mathcal{P}_{\mu}$
is used for $\rho_{b'}$, while twisting by $\mathcal{P}_{0}$ is
not needed, as the representation associated with $GL_{n-m}$ is the
trivial one. We get
\[
\pi_{\kappa'}=\pi_{b'}\otimes\pi_{a'}:(\mathcal{P}_{\mu}\otimes\mathcal{Q})^{\otimes e}\to\mathcal{E}_{\kappa'}.
\]

Let $\kappa=(a,b,c)$ be a dominant weight of $M.$ Consider the $e$-th
iteration of the derivation $\Theta$. It maps the sheaf $\mathcal{E}_{\kappa}$
to $\mathcal{E}_{\kappa}\otimes(\mathcal{P}_{\mu}\otimes\mathcal{Q})^{\otimes e}.$
We may now use $\pi_{\kappa'}$ to map $(\mathcal{P}_{\mu}\otimes\mathcal{Q})^{\otimes e}$
to $\mathcal{E}_{\kappa'}$ and finally apply the homomorphism $m_{\kappa,\kappa'}:\mathcal{E}_{\kappa}\otimes\mathcal{E}_{\kappa'}\to\mathcal{E}_{\kappa+\kappa'}$
of (\ref{eq:weight_multiplication}) to get the differential operator
\begin{equation}
D_{\kappa}^{\kappa'}=m_{\kappa,\kappa'}\circ(1\otimes\pi_{\kappa'})\circ\Theta^{e}:\mathcal{E}_{\kappa}\to\mathcal{E}_{\kappa+\kappa'}.\label{eq:D_=00005Ckappa^kappa'}
\end{equation}
As $m_{\kappa,\kappa'}\circ(1\otimes\pi_{\kappa'})$ is a sheaf homomorphism
this $D_{\kappa}^{\kappa'}$ is a differential operator of order $e$.
It is well-defined only up to a scalar from $W^{\times}.$ The operators
$D_{\kappa}^{\kappa'}$ allow us to increase the weight by any $\kappa'$
as long as
\[
\kappa'=(a',b',0),\,\,\,\,a'_{1}\ge\cdots\ge a'_{m}\ge0,\,\,\,b'_{1}\ge\cdots\ge b'_{m}\ge0,\,\,\,\,\sum_{i=1}^{m}a'_{i}=\sum_{i=1}^{m}b'_{i}.
\]
\begin{example*}
\textit{Scalar-valued modular forms. }If $\kappa=(k,\dots,k;0,\dots,0;0,\dots,0)$
then 
\[
\mathcal{E}_{\kappa}=\det(\mathcal{Q})^{k}=\mathcal{L}^{k}.
\]
In this case, global sections of $\mathcal{E}_{\kappa}$ are scalar-valued
modular forms on $\boldsymbol{G}$ of weight~$k$. If we take $\kappa'=(k',\dots,k';k',\dots,k';0,\dots,0)$
then $D_{\kappa}^{\kappa'}$ maps $\mathcal{L}^{k}$ to $\mathcal{L}^{k+k'}\otimes\det(\mathcal{P}_{\mu})^{k'}.$
If $s=1,$ in characteristic $p$, we may identify $\det(\mathcal{P}_{\mu})$
with $\mathcal{L}^{p}$ (\ref{eq:P_mu and Q^p}), so $D_{\kappa}^{\kappa'}$
maps $\mathcal{L}^{k}$ to $\mathcal{L}^{k+(p+1)k'}.$ In these cases
$D_{\kappa}^{\kappa'}$ is obtained by applying $\Theta$ iteratively
$mk$ times and projecting. If $m=1$ then $D_{\kappa}^{\kappa'}$
is simply $\Theta^{k'}.$
\end{example*}

\section{\label{sec:Toroidal-compactifications-and}Toroidal compactifications
and Fourier-Jacobi expansions}

\subsection{Toroidal compactifications and logarithmic differentials\label{subsec:Toroidal-compactifications-and}}

\subsubsection{Generalities}

Our goal in this section is to show that the operator $\Theta$, defined
so far on $S_{s}^{\mathrm{ord}}$, extends to a partial compactification
$\overline{S}_{s}^{\mathrm{ord}}$, obtained by fixing a smooth toroidal
compactification $\overline{S}_{s}$ of $S_{s}$, and removing from
it the closure of $S_{s}^{\mathrm{no}}=S_{s}\smallsetminus S_{s}^{\mathrm{ord}}.$
Thus
\[
\overline{S}_{s}^{\mathrm{ord}}=\overline{S}_{s}\smallsetminus\{\text{Zariski closure of }S_{s}^{\mathrm{no}}\}
\]
is an open subset of $\overline{S}_{s}$. Note that in general the
closure of $S_{s}^{\mathrm{no}}$ may meet the boundary of $\overline{S}_{s}$,
although in some special cases, e.g. whenever $m=1$, $S_{s}^{\mathrm{no}}$
is proper and does not reach the cusps. For a characterization of
$\overline{S}_{s}^{\mathrm{ord}}$ as the non-vanishing locus of the
Hasse invariant see $\S$\ref{subsec:FJ hasse}. Once we extend $\Theta$,
we shall calculate its effect on Fourier-Jacobi expansions and show
that, as in the classical case of $GL_{2}$, it is \emph{morally}
given by ``$q\cdot d/dq$''.

The toroidal compactifications $\overline{\mathcal{S}}$ of $\mathcal{S}$
considered below are smooth over $\mathcal{O}_{E,(p)}$ and their
boundary $\partial\mathcal{S}=\overline{\mathcal{S}}\smallsetminus\mathcal{S}$
is a divisor with normal crossing. However, they depend on auxiliary
combinatorial data, and are not unique. As such, one can not expect
$\overline{\mathcal{S}}$ to solve a moduli problem anymore. The universal
abelian scheme $\mathcal{A}$ nevertheless \emph{extends} \emph{canonically}
to a semi-abelian scheme $\mathcal{G}$ with $\mathcal{O}_E$-action over $\overline{\mathcal{S}}$.
We say that a geometric point $x$ of $\partial\mathcal{S}$ is of
rank $1\le r\le m$ if the toric part of $\mathcal{G}_{x}$ has dimension
$2r,$ i.e. $\mathcal{O}_{E}$-rank $r$. Skinner and Urban \cite{S-U}
call such a point ``a point of genus $n+m-2r$'', referring to the
dimension of the abelian part of $\mathcal{G}_{x}$ instead.

Constructing the toroidal compactifications, even if all proofs are
omitted, requires several pages of definitions and notation. Lan's
book \cite{Lan} is an exhaustive, extremely careful and precise reference.
Unfortunately, some notation introduced there is too long to fit in
a single line. Following \cite{F-C}, Skinner and Urban gave a very
readable account of the compactification in $\S$5.4 of \cite{S-U}, which
we will follow closely. It is set for signature $(n,n)$, but the
modifications needed to treat an arbitrary signature $(n,m)$ are
minor. Yet, this forces us to review everything from scratch, rather
than use \cite{S-U} blindly.

We shall content ourselves with the arithmetical compactification
of $Sh_{K/W}$ (several copies of which comprise $\mathcal{S}_{/W})$.
In $\S$\ref{subsec:Toroidal-compactifications-and} only we will write
$S$ for $Sh_{K/W}$ or for its base-change to $W_{s}$ (rather than
to $\kappa=W_{1}$ as before). As smaller Shimura varieties will show
up in the process, we shall write 
\[
S=S_{G}=S_{G,K}
\]
whenever we need to emphasize the dependence on $G$ or $K$.

\medskip{}

Let $\{e_{i}\}$ denote the standard basis of $V=E^{n+m}$ and consider,
for $0\le r\le m,$
\[
0\subset V_{r}=\mathrm{Span}_{E}\{e_{1},\dots,e_{r}\}\subset V_{r}^{\perp}=\mathrm{Span}_{E}\{e_{1},\dots,e_{n},e_{n+r+1},\dots,e_{n+m}\}\subset V.
\]
If we regard $V=Res_{\mathbb{Q}}^{E}\mathbb{A}^{n+m}$ as a $\mathbb{Q}$-vector
group, whose $\mathbb{Q}$-rational points are $E^{n+m},$ this is
a $\mathbb{Q}$-rational filtration. The quotient $V(r)=V_{r}^{\perp}/V_{r}$
becomes a hermitian space of signature $(n-r,m-r)$ at infinity, and
$\Lambda\cap V_{r}^{\perp}$ projects to a self-dual lattice $\Lambda(r)\subset V(r),$
defining a smaller general unitary group $\boldsymbol{G}_{r}.$ If
$n=m=r$ we understand by $\boldsymbol{G}_{r}$ the group $\mathbb{G}_{m}$
(accounting for the similitude factor, which is present even if $V(r)=0$).

The subgroup
\[
P_{r}=\mathrm{Stab}_{\boldsymbol{G}}(V_{r})
\]
stabilizes also $V_{r}^{\perp}$, and is a maximal $\mathbb{Q}$-rational
parabolic subgroup of $\boldsymbol{G}$. Its unipotent radical is
\[
U_{r}=\{g\in P_{r}|\,g\text{ acts trivially on \ensuremath{V_{r},\,V(r),\text{and }V/V_{r}^{\perp}\}.}}
\]
Its Levi quotient, $L_{r}=P_{r}/U_{r}$, is identified with $Res_{\mathbb{Q}}^{E}GL_{r}\times\boldsymbol{G}_{r}$
under the map $g\mapsto(g|_{V_{r}},g|_{V(r)}).$ The center $Z_{r}=Z(U_{r})$
of $U_{r}$ turns out to be
\[
Z_{r}=\{g\in U_{r}|\,(g-1)(V_{r}^{\perp})=0,\,\,(g-1)(V)\subset V_{r}\}.
\]
In matrix block form
\begin{equation}
P_{r}=\left\{ g=\left(\begin{array}{ccc}
A & C & B\\
 & D & C'\\
 &  & \nu\,^{t}\overline{A}^{-1}
\end{array}\right)\in\boldsymbol{G}\right\} ,\label{eq:Parabolic}
\end{equation}
where $A$ is a square matrix of size $r,$ and $D$ is a square matrix
of size $(n+m-2r)$. The group $U_{r}$ is characterized by $\nu=1,\,A=1,\,D=1,$
and $Z_{r}$ by the additional properties $C=0,\,C'=0.$ When this
is the case, $B=-\,^{t}\overline{B}$. We regard $L_{r}$ also as
a subgroup of $P_{r}$, mapping $(g,h)$ to the matrix which in a diagonal block
form is $(g,h,\nu(h)\,^{t}\overline{g}^{-1})$. Thus $P_{r}=L_{r}U_{r}.$

Every maximal $\mathbb{Q}$-rational parabolic subgroup of $\boldsymbol{G}$
is conjugate to $P_{r}$ for some~$r$.

\subsubsection{Cusp labels, the minimal compactification and the toroidal compactifications}

Let $1\le r\le m.$ The set of cusp labels of level $K$ and rank
$r$ (\cite{S-U} $\S$5.4.2) is the finite set
\[
\mathscr{C}_{r}=[GL_{r}(E)\cdot\boldsymbol{G}_{r}(\mathbb{A}_{f})]\cdot U_{r}(\mathbb{A}_{f})\backslash\boldsymbol{G}(\mathbb{A}_{f})/K.
\]
As before, the rank $r$ will be the $\mathcal{O}_{E}$-rank of the
toric part of the universal semi-abelian variety over the corresponding
cuspidal component. If $g\in\boldsymbol{G}(\mathbb{A}_{f})$ we denote
by $[g]=[g]_{r}=[g]_{r,K}\in\mathscr{C}_{r}$ the corresponding double
coset. The minimal (Baily-Borel) compactification $S^{*}$ of $S$
is discussed in \cite{Lan} $\S$7.2.4 and, when $n=m$, in \cite{S-U}
$\S$5.4.4. It is a singular compactification admitting a stratification
by finitely many locally closed strata
\[
S^{*}=\bigsqcup_{r=0}^{m}\bigsqcup_{[g]_{r}\in\mathscr{C}_{r}}S_{\boldsymbol{G}_{r},K_{r,g}},
\]
where $K_{r,g}=\boldsymbol{G}_{r}(\mathbb{A}_{f})\cap gKg^{-1}.$
Each $S_{\boldsymbol{G}_{r},K_{r,g}}$ is an $(n-r)(m-r)$-dimensional
Shimura variety, so when $r$ attains its maximal value $m$, it is
$0$-dimensional. When $r=0$ we get one stratum, which is the open
dense $S$. The closure of $S_{\boldsymbol{G}_{r},K_{r,g}}$ is the
union of $S_{\boldsymbol{G}_{r'},K_{r',g'}}$for $r\le r'$ and $g'$
such that the cusp label $[g']_{r'}$ is a specialization of $[g]_{r}$
in an appropriate sense (\cite{Lan}, Definition 5.4.2.13). We call
each $S_{\boldsymbol{G}_{r},K_{r,g}}$ a \emph{rank $r$ cuspidal
component of $S^{*}.$}

Any toroidal compactification that we consider will be a smooth scheme
$\overline{S}_{/W}$ endowed with a proper morphism
\[
\pi:\overline{S}\to S^{*}.
\]
Moreover, it will come equipped with a stratification
\[
\overline{S}=\bigsqcup_{r=0}^{m}\bigsqcup_{[g]_{r}\in\mathscr{C}_{r}}\bigsqcup_{\sigma\in\Sigma_{H_{g,\mathbb{R}}^{++}}/\Gamma_g}Z([g]_{r},\sigma)
\]
by finitely many smooth, locally closed $W$-subschemes $Z([g]_{r},\sigma).$
The indexing set $\Sigma_{H_{g,\mathbb{R}}^{++}}/\Gamma_g$ will become
clear shortly. The morphism $\pi$ will respect the stratifications.

Every $Z([g]_{r},$$\sigma)$ is constructed in three steps, related
to the structure of the semi-abelian scheme $\mathcal{G}$ over it,
as follows.
\begin{itemize}
\item First, $S_{\boldsymbol{G}_{r},K_{r,g}}$ is the moduli space of the
abelian part of $\mathcal{G}$ (with the associated PEL structure),
which is of signature $(n-r,m-r)$, hence is a smooth Shimura variety
of dimension $(n-r)(m-r)$ over $W$. Let $\mathcal{A}_{r}$ denote the universal
abelian scheme over it. In contrast to the abelian part, the toric
part of $\mathcal{G}$ is \emph{fixed} by the cusp label $[g]_r$, and
is given by
\[
T_{X}=\mathrm{Hom}_{\mathcal{O}_{E}}(X,\mathfrak{d}_{E}^{-1}\otimes\mathbb{G}_{m})
\]
where $X=X_{g}$ is a rank-$r$ projective $\mathcal{O}_{E}$-module
determined by $g.$ Thus $\dim(T_{X})=2r$ and $\dim(\mathcal{A}_r)=n+m-2r$.  For example, if $g=1$
(the ``standard cusp of rank $r$'') then $X=\mathrm{Hom}(\Lambda\cap V_{r},\mathbb{Z}).$
\item The second step in the construction of $Z([g]_{r},\sigma)$ is the
construction of an abelian scheme $C$ which classifies the extensions
of $\mathcal{A}_{r}$ by $T_{X}$. Let $X^{*}=\Hom_{\mathcal{O}_{E}}(X,\mathcal{O}_{E})$
and
\[
C=C([g]_{r}):=\mathrm{Ext}_{\mathcal{O}_{E}}^{1}(\mathcal{A}_{r},T_{X}).
\]
This can be written also as
\[
C=X^{*}\otimes_{\mathcal{O}_{E}}\mathrm{Ext}_{\mathcal{O}_{E}}^{1}(\mathcal{A}_{r},\mathfrak{d}_{E}^{-1}\otimes\mathbb{G}_{m})=X^{*}\otimes_{\mathcal{O}_{E}}\mathcal{A}_{r}^{t}=\Hom_{\mathcal{O}_{E}}(X,\mathcal{A}_{r}^{t}),
\]
using the fact that $Tr_{E/\mathbb{Q}}\otimes1:\mathfrak{d}_{E}^{-1}\otimes\mathbb{G}_{m}\to\mathbb{G}_{m}$
induces an isomorphism
\[
\mathrm{Ext}_{\mathcal{O}_{E}}^{1}(\mathcal{A}_{r},\mathfrak{d}_{E}^{-1}\otimes\mathbb{G}_{m})\simeq\mathrm{Ext}^{1}(\mathcal{A}_{r},\mathbb{G}_{m})=\mathcal{A}_{r}^{t}.
\]
The relative dimension of $C$ over $S_{\boldsymbol{G}_{r},K_{r,g}}$
is $r(n+m-2r)$, so its total dimension is
\[
(n-r)(m-r)+r(n+m-2r)=nm-r^{2}.
\]
\item In the last and final step one uses auxiliary combinatorial data and
the theory of toroidal embeddings \cite{Fu} to construct the $Z([g]_{r},\sigma).$
Each of them is a torus torsor over $C([g]_{r})$. For details, see
the next subsection.
\end{itemize}

\vspace{0.2cm}

\noindent The stratification by disjoint locally closed strata does not shed
any light on the way these strata are glued together, even if the
closure relations between them are given. However, each stratum $Z=Z([g]_r,\sigma)$
is actually the underlying reduced scheme (the ``support'') of a
formal scheme $\mathfrak{\mathfrak{Z}=Z}([g]_r,\sigma)$ whose over-all
dimension (counting the ``formal parameters'' too) is $mn.$ The
semi-abelian scheme together with the PEL structure extend from $Z$
to $\mathfrak{Z}$ ``in the infinitesimal directions'' to give a
structure called \emph{degeneration data}. As described originally
in \cite{Mu} in the totally degenerate setting, and later on in \cite{A-M-R-T},
\cite{F-C} and \cite{Lan}, this allows one to use\emph{ Mumford's
construction} to glue all the pieces together. We do not reproduce
this construction, but remark that the key to it
is the presence of a polarization, which allows, at a crucial step,
to use Grothendieck's algebraization theorem.

\subsubsection{The torsor $\Xi$}

As our purpose is to establish just enough notation to be able to
study $\Theta$ at the cusps, and as this will be done only at the
\emph{standard} cusps, we shall explain now the third and final step
in the construction of $\mathfrak{Z}([g]_{r},\sigma)$ under the assumption
that $g=1.$ The general case can be treated in a similar manner,
transporting all structures by $g.$ While necessary for applications,
it does not add much conceptually.

Assume therefore that the cusp label is $[g]_{r}=[1]_{r}$ and drop
the $g$ from the notation. Let 
\[
X=\mathrm{Hom}(\Lambda\cap V_{r},\mathbb{Z}),\,\,\,\,Y=\Lambda/(\Lambda\cap V_{r}^{\perp}).
\]
 Let $\phi_{X}:Y\simeq X$ be the isomorphism given by $\phi_{X}(u)(v)=\left\langle u,v\right\rangle $.
It satisfies $\phi_{X}(au)=\overline{a}\phi_{X}(u).$ If $c\in C=\mathrm{Hom}_{\mathcal{O}_{E}}(X,\mathcal{A}_{r}^{t})$
we denote by $c^{t}\in\mathrm{Hom}_{\mathcal{O}_{E}}(Y,\mathcal{A}_{r})$
the unique homomorphism satisfying $\phi_{r}\circ c^{t}=c\circ\phi_{X},$
where $\phi_{r}:\mathcal{A}_{r}\simeq\mathcal{A}_{r}^{t}$ is the
tautological principal polarization of the abelian scheme $\mathcal{A}_{r}$
over $S_{\boldsymbol{G}_{r},K_{r,g}}.$

\[ \xymatrix{X \ar[r]^c & \mathcal{A}_r^t\\ Y \ar[r]^{c^t}\ar[u]^{\phi_X} & \mathcal{A}_r\ar[u]_{\phi_r}}\]

\medskip{}

We construct a torus $T_{H}$ and use it to define a $T_{H}$-torsor
$\Xi$ over $C$ which will be basic for the construction of the local
charts below. Let
\[
H=Z_{r}(\mathbb{Q})\cap K
\]
where $Z_{r}$, as before, is the center of the unipotent radical
of $P_{r}$, and $K$ the level subgroup. Let $\check{H}=\mathrm{Hom}_{\mathbb{Z}}(H,\mathbb{Z})$
and
\[
T_{H}=H\otimes\mathbb{G}_{m/W}=\mathrm{Spec}(W[\check{H}]),
\]
the split torus over the Witt vectors with character group\footnote{{[}S-U{]} denote $\check{H}$ by $S$.}
$\check{H}$ and cocharacter group~$H$. There is another useful way
to think of $H,$ as a rank-$r^{2}$ lattice of hermitian bilinear
forms on $Y$ (the lattice shrinking as the level increases), \emph{cf.}
\cite{S-U} $\S$5.4.1. Simply attach to $h\in H$ the hermitian form
$b_{h}:Y\times Y\to\mathfrak{d}_{E}^{-1}$ defined by
\begin{equation}
b_{h}(y,y')=\delta_{E}^{-1}((h-1)y,y').\label{eq:b_h}
\end{equation}
Here $(,)$ is the pairing on $V_{r}\times(V/V_{r}^{\perp})$ induced
from (\ref{eq:hermitian_pairing}). Using the description of $Z_{r}$
in (\ref{eq:Parabolic}) we may regard $h\mapsto b_{h}$ as assigning
to $h\in H$ the matrix $\delta_{E}^{-1}B.$

\medskip{}

We denote by $\Xi$ the $T_{H}$-torsor over $C=\mathrm{Hom}_{\mathcal{O}_{E}}(X,\mathcal{A}_{r}^{t})$
constructed in \cite{S-U}, smooth of total dimension $mn.$ Recall
that given such a torsor, every character $\chi\in\check{H}$ of $T_{H}$
determines, by push-out, a $\mathbb{G}_{m}$-torsor $\Xi_{\chi}$
over $C,$ and the resulting map
\[
\chi\mapsto[\Xi_{\chi}]
\]
from $\check{H}$ to the group of $\mathbb{G}_{m}$-torsors over $C$
is a homomorphism. Conversely, $\Xi$ is uniquely determined by giving
such a homomorphism. We proceed to describe $\Xi$ in this way. 

If $y,y'\in Y$ let $\chi=[y\otimes y']$ denote the element of $\check{H}$
which sends
\[
H\ni h\mapsto Tr_{E/\mathbb{Q}}b_{h}(y,y')=\left\langle (h-1)y,y'\right\rangle \in\mathbb{Z}.
\]
Then we require $\Xi_{\chi}|_{c},$ the fiber at $c\in C$ of $\Xi_{\chi},$
to be
\[
\mathcal{P}|_{c(\phi_{X}(y))\times c^{t}(y')}^{\times},
\]
where $\mathcal{P}$ is the Poincar{\'e} bundle over $\mathcal{A}_{r}^{t}\times\mathcal{A}_{r}.$
The superscript $\times$ means ``the associated $\mathbb{G}_{m}$-bundle'',
obtained by removing the zero section. It can be checked that this
extends to a homomorphism from $\check{H}$ to the group of $\mathbb{G}_{m}$-torsors
over $C.$ For any $\chi\in\check{H}$ we let $\mathcal{L}(\chi)$
be the line bundle on $C$ whose associated $\mathbb{G}_{m}$-bundle
is $\Xi_{\chi}$. Over the complex numbers, sections of $\mathcal{L}(\chi)$
are classical theta functions on the abelian scheme $C$. We shall
often denote elements of $\check{H}$ also by $\check{h}$. We have
a canonical identification $\mathcal{L}(\check{h}_{1}+\check{h}_{2})=\mathcal{L}(\check{h}_{1})\otimes\mathcal{L}(\check{h}_{2})$.

\medskip{}

Having constructed $\Xi$ we proceed to study its equivariance properties
under the group
\[
\Gamma=GL(V_{r})(\mathbb{Q})\cap K.
\]
Using (\ref{eq:Parabolic}), this is the group of rational matrices
$A$ that also lie in $K.$ Since the action of $P_{r}$ on $Z_{r}$
by conjugation factors through $P_{r}/U_{r}=L_{r},$ the group $\Gamma\subset L_{r}$
acts on $Z_{r}.$ Using (\ref{eq:Parabolic}) again, $A$ sends $B$
to $AB\,^{t}\overline{A}.$ In particular $\Gamma$ acts on $H$,
hence it acts on $T_{H}$ by automorphisms of the torus. 

We also have an action of $\Gamma$ on $C=\mathrm{Hom}_{\mathcal{O}_{E}}(X,\mathcal{A}_{r}^{t})$
induced from its action on~$X.$ Any $\gamma\in\Gamma$ maps $\mathcal{L}(\check{h})|_{c}$
to $\mathcal{L}(\gamma(\check{h}))|_{\gamma(c)}.$ If $\Gamma(\check{h})$
is the stabilizer of $\check{h}\in\check{H}$ then (\cite{S-U} Lemma
5.1) $\Gamma(\check{h})$ acts trivially on the global sections
of $\mathcal{L}(\check{h})$ over $C$. 

Finally, as the push-out of $\Xi|_{\gamma(c)}$ by $[\gamma(y)\otimes\gamma(y')]$
is identically the same as the push out of $\Xi|_{c}$ by $[y\otimes y']$,
or equivalently
\[
\Xi|_{\gamma(c)}=(\Xi\times^{T_{H},\gamma}T_{H})|_{c}
\]
the isomorphism $1\times\gamma:\Xi=\Xi\times^{T_{H}}T_{H}\to\Xi\times^{T_{H},\gamma}T_{H}$
of torsors \emph{over $C$}, yields an action of $\Gamma$ on $\Xi$
which \emph{covers} its action on $C$, and is compatible with the
$\Gamma$-action on $T_{H}$. In short, all the constructions so far
are equivariant under $\Gamma$.

\subsubsection{The local charts\label{subsec:The-local-charts}}

Now comes the choice of the auxiliary data involved in the toroidal
compactification. Let
\[
H_{\mathbb{R}}^{+}\subset H_{\mathbb{R}}
\]
be the cone of positive semi-definite hermitian bilinear forms on
$Y_{\mathbb{R}}$ whose radical is a subspace defined over $\mathbb{Q}$
(i.e. the $\mathbb{R}$-span of a subspace of $Y_{\mathbb{Q}}).$
Let $\Sigma=\{\sigma\}$ be a $\Gamma$-admissible (infinite) rational
polyhedral cone decomposition of $H_{\mathbb{R}}^{+}$ (\cite{Lan},
Definition 6.1.1.10). Admissibility means that the action of $\Gamma$
on $H_{\mathbb{R}}$ permutes the $\sigma$'s, and that modulo $\Gamma$
there are only finitely many cones in $\Sigma$. By convention, the
cones $\sigma$ do not contain their proper faces, and every face
of a cone in $\Sigma$ also belongs to $\Sigma$. In particular, $\Sigma$
contains the origin as its unique 0-dimensional cone. When we treat
all cusp labels, and not only one at a time, an additional assumption
has to be imposed about the compatibility of the polyhedral cone decompositions
associated with a cusp $\xi$ and with a higher rank cusp to which
$\xi$ specializes. It is a non-trivial fact that such polyhedral
cone decompositions exist, see Chapter 2 of \cite{A-M-R-T}.
Moreover, every two $\Gamma$-admissible rational polyhedral cone
decompositions of $H_{\mathbb{R}}^{+}$ have a common refinement of
the same sort. One can even find such a polyhedral cone decomposition
in which every $\sigma$ is spanned by a part of a basis of $H$.
The $T_{H,\sigma}$ defined below will then be smooth over $W$, and
from now on we assume that this is the case. Lan \cite{Lan} calls
such a $\Sigma$ a $\Gamma$\emph{-admissible} \emph{smooth rational
polyhedral cone decomposition of $H_{\mathbb{R}}^{+}$.} If $K$ is
small enough so that $\Gamma$ is \emph{neat}, refinements exist such
that, in addition, the closures of $\sigma$ and $\gamma(\sigma)$,
for $\sigma\in\Sigma$ and $1\ne\gamma\in\Gamma$, meet only at the
origin.

Each cone $\sigma\in\Sigma$ defines a torus embedding
\[
T_{H}\hookrightarrow T_{H,\sigma}=\mathrm{Spec}(W[\check{H}\cap\sigma^{\vee}])
\]
where $\sigma^{\vee}\subset\check{H}_{\mathbb{R}}$ is the dual cone
and $W=W(\kappa)$ as before. By definition
\[
\sigma^{\vee}=\{v\in\check{H}_{\mathbb{R}}|\,v(u)\ge0,\,\forall u\in\sigma\},
\]
so, unlike $\sigma,$ $\sigma^{\vee}$ contains its faces. Observe
that $T_{H}$ naturally acts on $T_{H,\sigma}$. Since $\sigma$ does
not contain a line, $\sigma^{\vee}$ has a non-empty interior. 

Let
\[
\sigma^{\perp}=\{v\in\check{H}_{\mathbb{R}}|\,v(u)=0,\,\forall u\in\sigma\}.
\]
When $d_{\sigma}=\dim(\sigma)<r^{2}$, $\sigma^{\vee}\supset\sigma^{\perp}\ne0$.
Then $Z_{H,\sigma}=\mathrm{Spec}(W[\check{H}\cap\sigma^{\perp}])$
is a torus, $\dim Z_{H,\sigma}=r^{2}-d_{\sigma}$. In fact, $Z_{H,\sigma}$
is the unique minimal orbit of $T_{H}$ in its action on $T_{H,\sigma},$
an orbit which lies in the closure of any other orbit. There is an
obvious surjection $T_{H,\sigma}\twoheadrightarrow Z_{H,\sigma}.$
This surjection admits a section $Z_{H,\sigma}\hookrightarrow T_{H,\sigma}$,
corresponding to $W[\check{H}\cap\sigma^{\perp}]\simeq W[\check{H}\cap\sigma^{\vee}]/I_{\sigma}$,
where $I_{\sigma}$ is the ideal generated by $\check{H}\cap\sigma^{\vee}\smallsetminus\check{H}\cap\sigma^{\perp}.$
Another way to think of $Z_{H,\sigma}$ is as
\[
Z_{H,\sigma}=T_{H,\sigma}\smallsetminus\bigcup_{\tau<\sigma}T_{H,\tau}
\]
where $\tau$ runs over all the \emph{proper} faces of $\sigma$.

The $T_{H,\sigma}$ glue to form a toric variety (locally of finite
type, but not of finite type in general) $T_{H,\Sigma},$ in which
each $T_{H,\sigma}$ is open and dense:
\[
T_{H,\Sigma}=\bigcup_{\sigma\in\Sigma}T_{H,\sigma}.
\]
This $T_{H,\Sigma}$ is stratified by the disjoint union of the $Z_{H,\sigma}$.
The actions of $\Gamma$ on $H$ and $\Sigma$ induce an action of
$\Gamma$ on $T_{H}$ and a compatible action on $T_{H,\Sigma}.$
By our assumption on $\Sigma,$ $T_{H,\Sigma}$ is smooth over $W$.

We ``spread'' this construction over $C=\mathrm{Hom}_{\mathcal{O}_{E}}(X,\mathcal{A}_{r}^{t})$,
twisting it by the torsor $\Xi$, namely we consider
\begin{equation}
\overline{\Xi}_{\Sigma}=\Xi\times^{T_{H}}T_{H,\Sigma}.\label{eq:Xi_bar}
\end{equation}
The group $\Gamma$ acts on each of the three symbols on the right
in a compatible way, so we get an action of $\Gamma$ on $\overline{\Xi}_{\Sigma}$. 

\bigskip{}

Let us bring back the reference to the cusp label $[g]_{r}$, although
in the above we tacitly assumed $[g]_{r}=1$ and dropped $g$ from
the notation. See \cite{S-U}, $\S$5.4.1 for the precise definition of
$H_{g},\Sigma_{g}$ etc. Denote by $H_{g,\mathbb{R}}^{++}$ the set
of \emph{positive-definite} hermitian bilinear forms in $H_{g,\mathbb{R}}^{+}.$
For $\sigma\in\Sigma_{g}$ such that $\sigma\subset H_{g,\mathbb{R}}^{++}$
we let 
\[
Z([g]_{r},\sigma)=\Xi\times^{T_{H}}Z_{H,\sigma},
\]
 and let 
\[
\mathfrak{Z}([g]_{r},\sigma)
\]
be the formal completion of $\overline{\Xi}_{\Sigma}$ or, what amounts
to be the same, of its open subset $\Xi\times^{T_{H}}T_{H,\sigma}$,
along $Z([g]_{r},\sigma)$. These are the local charts at the cuspidal
component labeled by $[g]_{r}$. There is a smooth morphism 
\[
\mathfrak{Z}([g]_{r},\sigma)\to C([g]_{r})
\]
whose fibers are isomorphic to the completion of $T_{H,\sigma}$ along
$Z_{H.\sigma}$. The $\mathfrak{Z}([g]_{r},\sigma)$ are $nm$-dimensional
and smooth over $W$. Each such local chart has $nm-d_{\sigma}$ ``algebraic
dimensions'' and $d_{\sigma}$ ``formal dimensions''. Specializing
the formal variables to $0,$ one gets the support $Z([g]_{r},\sigma)$
of $\mathfrak{Z}([g]_{r},\sigma)$, whose dimension is $nm-d_{\sigma}.$
The action of $\gamma\in\Gamma$ on $\overline{\Xi}_{\Sigma}$ induces
an isomorphism $\gamma_{*}$ between $\mathfrak{Z}([g]_{r},\sigma)$
and $\mathfrak{Z}([g]_{r},\gamma(\sigma))$. For comparison, we remark
that in \cite{Lan} $\S$6.2.5 the $\mathfrak{Z}([g]_{r},\sigma)$ are
denoted $\mathfrak{X}_{\Phi_{\mathcal{H}},\delta_{\mathcal{H}},\sigma}$
and $Z([g]_{r},\sigma)$ are denoted $\Xi_{\Phi_{\mathcal{H}},\delta_{\mathcal{H}},\sigma}$.
Also, under our assumptions the stabilizers denoted in \cite{Lan}
by $\Gamma_{\Phi_{\mathcal{H}},\sigma}$ are trivial.

Once we have described the local charts, it remains to construct on
each of them the \emph{degeneration data} which allows one to carry
on the \emph{Mumford construction}. This results in gluing the various
charts together, and at the same time constructing $\mathcal{G}$
with the accompanying PEL structure over the glued scheme. Care has
to be taken not only to glue pieces labeled by the same cusp label
$[g]_{r}$, but also to respect the way cusp labels specialize. In
the process of gluing, one has to divide by the action of $\Gamma$
on the formal completion of (\ref{eq:Xi_bar}) along the complement
of $\Xi=\Xi\times^{T_{H}}T_{H}$. Note that it does not make sense
to divide $\overline{\Xi}_{\Sigma}$ by $\Gamma$, just as it did
not make sense to divide $\Xi,$ or the abelian scheme $C$ over which
it lies, by the action of $\Gamma$. For the gluing of the local charts,
that we do not review here, see \cite{Lan} $\S$6.3. The final result
is \cite{Lan} Theorem 6.4.1.1.

\subsubsection{Logarithmic differentials}

We construct certain formal differentials on the local chart $\mathfrak{Z}([g]_{r},\sigma),$
relative to $C([g]_{r})$, with logarithmic poles along $Z([g]_{r},\sigma)$.
We shall denote the module of these differentials
\[
\Omega_{\mathfrak{Z}/C}[d\log\infty].
\]
They will play an important role in our formulae for $\Theta$.

Notation as above, consider a cone $\sigma\subset H_{g,\mathbb{R}}^{++}$
and let $h_{1},\dots,h_{d_{\sigma}}$ be positive semi-definite, part
of a basis of $H=H_{g}$, such that
\[
\sigma=Cone(h_{1},\dots,h_{d_{\sigma}}).
\]
Complete the $h_{i}$ to a basis $h_{1},\dots,h_{r^{2}}$ of $H$,
let $\{\check{h}_{i}\}$ be the dual basis of $\check{H}=\mathrm{Hom}(H,\mathbb{Z})$
and introduce formal variables $q_{i}=q^{\check{h}_{i}}$ (to be able
to write the group structure on $\check{H}$ multiplicatively rather
than additively). Then
\[
T_{H,\sigma}=\mathrm{Spec}(W[q_{1},\dots,q_{d_{\sigma}},q_{d_{\sigma}+1}^{\pm1},\dots,q_{r^{2}}^{\pm1}])
\]
and
\[
Z_{H,\sigma}=\mathrm{Spec}(W[q_{d_{\sigma}+1}^{\pm1},\dots,q_{r^{2}}^{\pm1}]).
\]
Locally on $\mathfrak{Z}([g]_{r},\sigma)$ we use as coordinates the
pull-back of any system of $nm-r^{2}$ local coordinates on the base
$C=\mathrm{Hom}_{\mathcal{O}_{E}}(X,\mathcal{A}_{r}^{t}),$ together
with the ``algebraic'' coordinates $q_{d_{\sigma}+1},\dots,q_{r^{2}}$,
and the ``formal'' coordinates $q_{1},\dots,q_{d_{\sigma}}.$ We
emphasize that because of the twist by the torsor $\Xi$ in the construction
of the local charts, the $q_{i}$ are not global coordinates. The
correct way to think of them is as local sections of the line bundles
$\mathcal{L}(-\check{h}_{i})$ on $C$. If the $h_{i}$ are positive
definite, these line bundles will be anti-ample, and the $q_{i}$
will not globalize. 

If $\check{h}\in\check{H}$ is of the form $\check{h}=\sum n_{i}\check{h}_{i}$
we write $q^{\check{h}}=\prod q_{i}^{n_{i}}$ and define
\[
\omega(\check{h})=\frac{dq^{\check{h}}}{q^{\check{h}}}=\sum_{i=1}^{r^{2}}n_{i}\frac{dq_{i}}{q_{i}}\in\Omega_{\mathfrak{Z}/C}[d\log\infty].
\]
This $\omega(\check{h})$ is invariant under the action of $T_{H}$,
essentially since $d\log(q_{0}q)=d\log q$. Hence, despite the fact
that the $q_{i}$ were only \emph{local} coordinates, $\omega(\check{h})$
defines a relative differential on all of $\Xi\times^{T_{H}}T_{H,\sigma}$,
as well as on its completion $\mathfrak{Z}([g]_{r},\sigma)$ along
$Z([g]_{r},\sigma)=\Xi\times^{T_{H}}Z_{H,\sigma}$, with logarithmic
poles along $Z([g]_{r},\sigma)$. The following proposition is an
immediate by-product of the theory of toroidal compactifications.
\begin{prop}
(i) The differentials $\omega(\check{h})$ are well-defined formal
differentials on $\mathfrak{Z}([g]_{r},\sigma)$, relative to $C([g]_{r}),$
with logarithmic poles along $Z([g]_{r},\sigma)$. They are independent
of the choice of bases and depend only on $\check{h}$.

(ii) $\omega(\check{h}_{1}+\check{h}_{2})=\omega(\check{h}_{1})+\omega(\check{h}_{2}).$

(iii) The differentials $\omega(\check{h})$ are compatible with gluing
of the local charts. If $\gamma\in\Gamma$ then the induced isomorphism
between the local charts $\mathfrak{Z}([g]_{r},\sigma)$ and $\mathfrak{Z}([g]_{r},\gamma(\sigma))$
carries $\omega(\check{h})$ to $\omega(\gamma(\check{h}))$.

(iv) The differentials $\omega(\check{h})$ are compatible with the
maps between toroidal compactifications obtained from refinements
of the admissible smooth rational polyhedral cone decompositions,
as in \cite{Lan} $\S$6.4.2.
\end{prop}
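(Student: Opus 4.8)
The plan is to observe that every assertion follows formally from a single fact: the assignment $\check h\mapsto d\log q^{\check h}=dq^{\check h}/q^{\check h}$ is a $T_H$-invariant section of the relative logarithmic-differentials module which depends additively and functorially on $\check h\in\check H$.

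First I would pin down (i). For a cone $\sigma$ spanned by part of a basis of $H$, the torus embedding $T_H\hookrightarrow T_{H,\sigma}=\Spec(W[\check H\cap\sigma^\vee])$ is smooth over $W$, and the module of relative differentials with at worst logarithmic poles along the toric boundary $T_{H,\sigma}\smallsetminus T_H$ is free with basis $\{d\log q_i\}_{i=1}^{r^2}$; equivalently it is canonically $\check H\otimes_{\mathbb Z}\mathcal O_{T_{H,\sigma}}$ with $\check h\mapsto d\log q^{\check h}$, a description visibly independent of the chosen basis $h_1,\dots,h_{r^2}$ of $H$. Each $d\log q^{\check h}$ is $T_H$-invariant, since $d\log(q_0 q)=d\log q$. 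Now $\overline{\Xi}_\Sigma=\Xi\times^{T_H}T_{H,\Sigma}$ is, locally over $C([g]_r)$, a product of an open of $C([g]_r)$ with $T_{H,\Sigma}$, the local trivializations of the torsor $\Xi$ being glued by $T_H$-translations; the $T_H$-invariant differentials $d\log q^{\check h}$ on these charts therefore patch to a global section $\omega(\check h)$ of $\Omega_{\overline{\Xi}_\Sigma/C}[d\log\infty]$, which on the completion $\mathfrak Z([g]_r,\sigma)$ along $Z([g]_r,\sigma)$ yields the desired formal differential with logarithmic poles; its dependence on $\check h$ alone is the basis-independence just noted. Then (ii) is immediate: $\omega(\check h_1+\check h_2)=d\log(q^{\check h_1}q^{\check h_2})=d\log q^{\check h_1}+d\log q^{\check h_2}$.

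For (iii) I would use that the action of $\gamma\in\Gamma$ on $H$ induces the contragredient action on $\check H$ together with compatible actions on $T_H$, $T_{H,\Sigma}$, $\Xi$, $C([g]_r)$ and hence on $\overline{\Xi}_\Sigma$, under which the monomial $q^{\check h}$ is carried to $q^{\gamma(\check h)}$ (with the conventions of $\S$\ref{subsec:The-local-charts}); applying $d\log$ gives $\gamma_*\omega(\check h)=\omega(\gamma(\check h))$ as sections over $\mathfrak Z([g]_r,\gamma(\sigma))$. Compatibility with the gluing of the local charts into $\overline S$ holds because $\omega(\check h)$ was constructed intrinsically (basis-freely and $T_H$-invariantly) and the Mumford construction glues the formal completions of the $\overline{\Xi}_\Sigma$ along $T_H$-equivariant degeneration data, so the local pieces agree on overlaps; the further division by $\Gamma$ in the gluing is accounted for precisely by the first clause of (iii). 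Finally, (iv): a refinement $\Sigma'\succ\Sigma$ of the admissible cone decomposition induces, on local charts, toric morphisms $T_{H,\sigma'}\to T_{H,\sigma}$ over $\Spec W[\check H]$ for $\sigma'\subset\sigma$, under which $q^{\check h}$ pulls back to $q^{\check h}$; hence $d\log q^{\check h}$, and so $\omega(\check h)$, pulls back to itself, which is the stated compatibility with the morphisms of \cite{Lan} $\S$6.4.2.

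The one point requiring any care — and what I would regard as the main (if modest) obstacle — is the passage from a $T_H$-invariant differential defined on the trivializing charts of the torsor $\Xi$ to a genuinely global relative differential on $\overline{\Xi}_\Sigma$, and the verification that it survives unchanged under the identifications of the Mumford construction and the quotient by $\Gamma$; all of this reduces to the functoriality and $T_H$-equivariance of the logarithmic-differentials module under the toric morphisms at hand, which is why the proposition is, as stated, an immediate by-product of the toroidal theory.
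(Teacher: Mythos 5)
Your argument is correct and is essentially the paper's own: the paper offers no separate proof, relying precisely on the preceding remark that $d\log(q_{0}q)=d\log q$ makes $\omega(\check{h})$ a $T_{H}$-invariant, hence globally defined, relative logarithmic differential, from which (i)--(iv) follow formally. Your write-up simply fills in those routine verifications (basis-independence via $\check{H}\otimes\mathcal{O}$, additivity of $d\log$, $\Gamma$-equivariance, and invariance under toric refinement maps) in the same spirit.
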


\subsubsection{Fourier-Jacobi expansions\label{subsec:Fourier-Jacobi-expansions}}

Let $\overline{S}$ be a fixed smooth toroidal compactification of
$S$ over $W_{s}$ ($1\le s$) as a base ring. Let $\mathcal{G}$
be the universal semi-abelian scheme over $\overline{S}$ and $e_{\mathcal{G}}:\overline{S}\to\mathcal{G}$
its zero section. Then $\omega=e_{\mathcal{G}}^{*}\Omega_{\mathcal{G}/\overline{S}}^{1}$
defines an extension of the Hodge bundle to a rank $n+m$ vector bundle
with $\mathcal{O}_{E}$-action on $\overline{S}$. We continue to
denote by $\mathcal{P}$ and $\mathcal{Q}$ its sub-bundles of type
$\Sigma$ and $\overline{\Sigma}$, of ranks $n$ and $m$ respectively.

Let $\overline{S}^{\mathrm{ord}}$ denote the complement in $\overline{S}$
of the Zariski closure of $S\smallsetminus S^{\mathrm{ord}}.$ Over
this open subset of $\overline{S}$ the semi-abelian variety $\mathcal{G}$
is $\mu$-ordinary in the sense that the connected part of its $p$-divisible
group at every geometric point $x:\mathrm{Spec}(k)\to\overline{S}^{\mathrm{ord}}$
satisfies
\[
\mathcal{G}_{x}[p^{\infty}]^{0}\simeq(\mathfrak{d}_{E}^{-1}\otimes\mu_{p^{\infty}})^{m}\times\mathfrak{G}_{k}^{n-m}.
\]
To see this, assume that $x$ lies on a rank $r$ cuspidal component,
but that the abelian part $\mathcal{A}_{x}$ of $\mathcal{G}_{x}$
is not $\mu$-ordinary, i.e. the multiplicative part of $\mathcal{A}_{x}[p^{\infty}]$
has height strictly less than $2(m-r).$ Mumford's construction shows
that we may deform $\mathcal{G}$ into an abelian variety $\mathcal{A}_{y}$
($y$ signifying a point on the base of the deformation ``near''
$x$) so that the multiplicative part of $\mathcal{A}_{y}[p^{\infty}]$
has height strictly less than $2m.$ But such a point $y$ being non-$\mu$-ordinary,
we conclude that $x$ lies in the closure of $S\smallsetminus S^{\mathrm{ord}}$,
contrary to our assumption.

It follows that the filtration
\[
0\to\mathcal{P}_{0}\to\mathcal{P}\to\mathcal{P}_{\mu}\to0
\]
extends to a filtration by a sub-vector bundle over $\overline{S}^{\mathrm{ord}}$.
Thus the automorphic vector bundles $\mathcal{E}_{\rho}$ defined
in $\S$\ref{subsec:auotomorpic _vector_bundles_mod_p} extend to $\overline{S}^{\mathrm{ord}}$
too. In the following discussion fix the representation $\rho.$

Let $[g]_{r}\in\mathscr{C}_{r}$ be a cusp label of rank $0<r\le m$
and let $Z=Z([g]_{r})$ be the corresponding cuspidal component of
$\partial S$ obtained by ``gluing'' the $Z([g]_{r},\sigma)$ for
$\sigma\in\Sigma_{g},\,\,\sigma\subset H_{g,\mathbb{R}}^{++}$, and
dividing by $\Gamma$. Let $\mathfrak{Z}([g]_{r})$ be the formal
completion of $\overline{S}$ along $Z([g]_{r}).$ Let $\xi\in S_{\boldsymbol{G}_{r},K_{r,g}}$
be a geometric point, and let $Z_{\xi}$ be the pre-image of $\xi$
in $Z$. Then $Z_{\xi}$ is obtained by ``gluing'' the pre-image
$Z([g]_{r},\sigma)_{\xi}$ of $\xi$ in $Z([g]_{r},\sigma$) for all
$\sigma$ as above, dividing by the action of $\Gamma$. Observe that
the toric part $T_{X}$ and the abelian part $\mathcal{A}_{r,\xi}$
of $\mathcal{G}$ are constant over each $Z([g]_{r},\sigma)_{\xi}$.
Thus $\mathcal{P}_{0},\mathcal{P}_{\mu}$ and $\mathcal{Q}$ are trivialized
over the pre-image $\mathfrak{Z}([g]_{r},\sigma)_{\xi}$ of $\xi$
in the local chart $\mathfrak{Z}([g]_{r},\sigma)$, hence so is $\mathcal{E}_{\rho}$.
In general, however, $\mathcal{E}_{\rho}$ will not be trivial over
$\mathfrak{Z}([g]_{r})_{\xi}$.

Our $\xi$ is a point of the minimal compactification $S^{*}$ (over
$W_{s}$). The completed local ring $\widehat{\mathcal{O}}_{S^{*},\xi}$
is described in \cite{S-U} Theorem 5.3 and \cite{Lan} Proposition
7.2.3.16. In the following, let $\check{H}^{+}$ be the set of elements
of $\check{H}$ which are non-negative on $H_{\mathbb{R}}^{+}.$
\begin{prop}
\label{prop:FJ_expansions_functions}There is a canonical isomorphism
between $\widehat{\mathcal{O}}_{S^{*},\xi}$ and the ring $\mathcal{FJ}_{\xi}$
of all formal power series
\[
f=\sum_{\check{h}\in\check{H}^{+}}a(\check{h})q^{\check{h}}
\]
which are invariant under $\Gamma$. Here $a(\check{h})\in H^{0}(C_{\xi},\mathcal{L}(\check{h}))$
where $C_{\xi}\mathrm{=Hom}_{\mathcal{O}_{E}}(X,\mathcal{A}_{r,\xi}^{t})$
is the abelian variety which is the fiber of $C$ over $\xi$.
\end{prop}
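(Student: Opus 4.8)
The plan is to obtain this as a special case of the standard description of the completed local ring of the minimal compactification in terms of a toroidal one, following \cite{Lan} \S7.2 (in particular Proposition 7.2.3.16) and \cite{S-U} Theorem 5.3, making the coefficient modules explicit via the torsor $\Xi$ of \S\ref{subsec:The-local-charts}. First I would fix a smooth toroidal compactification $\overline{S}$ over $W_{s}$ with structural morphism $\pi\colon\overline{S}\to S^{*}$, and recall the two inputs that drive everything: $S^{*}$ is normal, and $\pi_{*}\mathcal{O}_{\overline{S}}=\mathcal{O}_{S^{*}}$ (this is part of the construction of $S^{*}$ --- e.g.\ realizing it as $\mathrm{Proj}\bigoplus_{k\ge 0}H^{0}(\overline{S},\det(\omega)^{\otimes k})$ followed by Stein factorization --- and is independent of the chosen cone decomposition). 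Granting these, the theorem on formal functions applied to $\pi$ and $\mathcal{O}_{\overline{S}}$ identifies $\widehat{\mathcal{O}}_{S^{*},\xi}$ canonically with the ring of global sections of the structure sheaf of the formal completion of $\overline{S}$ along the fibre of $\pi$ over $\xi$.

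Next I would identify that fibre. By the stratification recalled above, it is the union $Z_{\xi}=\bigcup_{\sigma}Z([g]_{r},\sigma)_{\xi}$ over the cones $\sigma\subset H_{g,\mathbb{R}}^{++}$, glued and then divided by $\Gamma$; so the ring we want is the ring of $\Gamma$-invariant global functions on the formal completion $\mathfrak{Z}([g]_{r})_{\xi}$ of $\overline{S}$ along $Z_{\xi}$, which is glued from the formal completions $\mathfrak{Z}([g]_{r},\sigma)_{\xi}$ of the individual local charts. For a single cone $\sigma$, this chart is the formal completion of $\Xi_{\xi}\times^{T_{H}}T_{H,\sigma}$ along $\Xi_{\xi}\times^{T_{H}}Z_{H,\sigma}$, where $\Xi_{\xi}$ is the $T_{H}$-torsor over the abelian variety $C_{\xi}$ obtained by restricting $\Xi$. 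A function on $\Xi_{\xi}\times^{T_{H}}T_{H,\sigma}$ is a $T_{H}$-equivariant function on $\Xi_{\xi}\times T_{H,\sigma}$; decomposing by the $T_{H}$-action and using $\mathcal{O}(T_{H,\sigma})=W_{s}[\check{H}\cap\sigma^{\vee}]$ together with the defining property of $\Xi$ (its weight-$\check{h}$ functions over $C_{\xi}$ being exactly $H^{0}(C_{\xi},\mathcal{L}(\check{h}))$), such a function is a series $\sum_{\check{h}\in\check{H}\cap\sigma^{\vee}}a(\check{h})\,q^{\check{h}}$ with $a(\check{h})\in H^{0}(C_{\xi},\mathcal{L}(\check{h}))$, and passing to the completion along $Z_{H,\sigma}$ (the $I_{\sigma}$-adic completion on the torus factor) turns the sum into the corresponding $I_{\sigma}$-adically convergent product.

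Then I would glue and pass to invariants. Gluing intersects the conditions $\check{h}\in\sigma^{\vee}$ over all cones $\sigma\subset H_{g,\mathbb{R}}^{++}$; since the closures of these cones cover $\overline{H_{g,\mathbb{R}}^{+}}$, an element $\check{h}\in\check{H}$ lies in every $\sigma^{\vee}$ precisely when it is non-negative on $H_{\mathbb{R}}^{+}$, i.e.\ when $\check{h}\in\check{H}^{+}$, which gives the index set of the statement; the coefficient space $H^{0}(C_{\xi},\mathcal{L}(\check{h}))$ is canonically the same in every chart. The $\Gamma$-equivariance of the whole construction --- of $\Xi$, of the embeddings $T_{H}\hookrightarrow T_{H,\sigma}$, of $\overline{\Xi}_{\Sigma}$ and of the gluing, exactly as for the differentials $\omega(\check{h})$ in the preceding Proposition --- then shows that the $\Gamma$-invariant global functions on $\mathfrak{Z}([g]_{r})_{\xi}$ are precisely the $\Gamma$-invariant formal series $\sum_{\check{h}\in\check{H}^{+}}a(\check{h})\,q^{\check{h}}$ with $a(\check{h})\in H^{0}(C_{\xi},\mathcal{L}(\check{h}))$, that is, the ring $\mathcal{FJ}_{\xi}$. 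Independence of the identification from the auxiliary choice of $\Sigma$, hence canonicity, follows from part (iv) of the preceding Proposition together with the compatibility of the toroidal compactifications under refinement (\cite{Lan} \S6.4.2).

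\textbf{Main obstacle.} The combinatorial bookkeeping above is routine; the substance lies in the two facts it rests on: $\pi_{*}\mathcal{O}_{\overline{S}}=\mathcal{O}_{S^{*}}$ (equivalently, normality of $S^{*}$ together with geometric connectedness of the fibres of $\pi$), and the verification that forming $I_{\sigma}$-adic completions, gluing the infinitely many charts, and passing to $\Gamma$-invariants may be carried out in the above order without interference --- so that the resulting ring of global functions is governed by the single cone $H_{g,\mathbb{R}}^{+}$ and by $\Gamma$ rather than by the individual $\sigma$. Both are provided by the general theory of arithmetic compactifications (\cite{F-C}, and in the present generality \cite{Lan} \S7.2 and \cite{S-U} \S5.4); the Proposition is that theory specialized to our situation, with the coefficient sheaves recognized as the theta bundles $\mathcal{L}(\check{h})$ on $C_{\xi}$.
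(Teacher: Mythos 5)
Your outline is essentially the argument underlying the results the paper itself invokes: the paper gives no proof of this Proposition, simply citing \cite{S-U} Theorem 5.3 and \cite{Lan} Proposition 7.2.3.16, and your reconstruction (normality of $S^{*}$ and $\pi_{*}\mathcal{O}_{\overline{S}}=\mathcal{O}_{S^{*}}$, the theorem on formal functions, the $T_{H}$-weight decomposition of functions on the charts built from $\Xi$, gluing over the cones $\sigma\subset H_{g,\mathbb{R}}^{++}$ to cut out $\check{H}^{+}$, and finally $\Gamma$-invariants) is exactly how those cited results are proved, so this is the same approach rather than a new one. One caveat: your identification of the chart completion with the completion of $\Xi_{\xi}\times^{T_{H}}T_{H,\sigma}$ along $\Xi_{\xi}\times^{T_{H}}Z_{H,\sigma}$ (hence coefficients literally in $H^{0}(C_{\xi},\mathcal{L}(\check{h}))$) is accurate only at maximally degenerate cusps $r=m$, where the stratum $S_{\boldsymbol{G}_{r},K_{r,g}}$ is $0$-dimensional; for $r<m$ the completion of $\overline{S}$ along $\pi^{-1}(\xi)$ also involves formal directions along the stratum and along $C$ transverse to $C_{\xi}$, so the coefficients must carry those formal parameters --- an imprecision already present in the literal statement of the Proposition, and harmless for the paper, which only uses rank-$m$ cusps in the sequel.
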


Recall that $\pi:\overline{S}\to S^{*}$ was the map between the toroidal
compactification and the minimal one. There is a similar description
of the completion of the stalk of $\pi_{*}\mathcal{E}_{\rho}$ at
$\xi$ (\cite{S-U} Proposition 5.5).
\begin{prop}
\label{prop:FJ_expansions}The completion of $(\pi_{*}\mathcal{E}_{\rho})_{\xi}$
is canonically isomorphic to the $\widehat{\mathcal{O}}_{S^{*},\xi}$-module
of formal power series
\[
f=\sum_{\check{h}\in\check{H}^{+}}a(\check{h})q^{\check{h}}
\]
which are invariant under $\Gamma$. Here $a(\check{h})\in H^{0}(C_{\xi},\mathcal{L}(\check{h})\otimes{\mathcal{E}_{\rho}})$.
\end{prop}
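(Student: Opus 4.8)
\emph{Proof plan.} This is \cite{S-U}, Proposition 5.5; I indicate the argument, which runs in parallel with that of Proposition \ref{prop:FJ_expansions_functions}, carrying the bundle $\mathcal{E}_{\rho}$ along. Since $\pi\colon\overline{S}\to S^{*}$ is proper, the $\mathfrak{m}_{\xi}$-adic completion of the stalk $(\pi_{*}\mathcal{E}_{\rho})_{\xi}$ is computed, by the theorem on formal functions, as the module of global sections of $\mathcal{E}_{\rho}$ on the formal completion of $\overline{S}$ along $\pi^{-1}(\xi)$. By the construction of the toroidal compactification via Mumford's method (\cite{Lan}, Sections 6.3--6.4; \cite{S-U}, Section 5.4), this formal scheme is the quotient by $\Gamma$ of the formal completion of $\overline{\Xi}_{\Sigma,\xi}:=\Xi_{\xi}\times^{T_{H}}T_{H,\Sigma}$ along $\Xi_{\xi}\times^{T_{H}}(T_{H,\Sigma}\smallsetminus T_{H})$, where $\Xi_{\xi}$ denotes the restriction of the $T_{H}$-torsor $\Xi\to C$ to the fibre $C_{\xi}$ of $C$ over $\xi$, so that $\Xi_{\xi}$ is a $T_{H}$-torsor over the abelian variety $C_{\xi}$. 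Thus it suffices to compute $H^{0}(\mathcal{E}_{\rho})$ on each local chart $\mathfrak{Z}([g]_{r},\sigma)_{\xi}$ (for $\sigma\subset H_{g,\mathbb{R}}^{++}$), glue these over $\sigma\in\Sigma$, and pass to $\Gamma$-invariants.

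The one input beyond the function case is the behaviour of $\mathcal{E}_{\rho}$ at the cusp: I claim that over each chart $\mathfrak{Z}([g]_{r},\sigma)_{\xi}$ the bundle $\mathcal{E}_{\rho}$ is the pull-back, along the structure morphism $\mathfrak{Z}([g]_{r},\sigma)_{\xi}\to C_{\xi}$, of a vector bundle $\mathcal{E}_{\rho}|_{C_{\xi}}$ on $C_{\xi}$, and that this pull-back is moreover equivariant for $\Gamma$. This is because $\mathcal{E}_{\rho}$ is built by linear-algebra operations from $\mathcal{Q},\mathcal{P}_{\mu},\mathcal{P}_{0}$, which are the graded pieces of the Hodge filtration of the universal semi-abelian scheme $\mathcal{G}$; passing from the Raynaud extension $\widetilde{\mathcal{G}}$ to $\mathcal{G}$ by Mumford's construction does not alter the cotangent bundle at the identity, and $\omega_{\widetilde{\mathcal{G}}}$ is pulled back from the base $C$ of the torus torsor, hence from $C_{\xi}$ over the fibre; the $\Gamma$-equivariance is inherited from that of the construction of $\overline{S}$. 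Granting this, and using --- as recalled above --- that $\mathcal{O}_{T_{H,\sigma}}=\bigoplus_{\check h\in\check H\cap\sigma^{\vee}}W\cdot q^{\check h}$ and that, spread over $C_{\xi}$ via $\Xi_{\xi}$, each monomial $q^{\check h}$ becomes a section of $\mathcal{L}(-\check h)$, completing along $\Xi_{\xi}\times^{T_{H}}Z_{H,\sigma}$ identifies $H^{0}(\mathcal{E}_{\rho})$ on the chart with the $\check h$-adically convergent sums $\sum_{\check h\in\check H\cap\sigma^{\vee}}a(\check h)\,q^{\check h}$ with $a(\check h)\in H^{0}\bigl(C_{\xi},\mathcal{L}(\check h)\otimes\mathcal{E}_{\rho}\bigr)$ --- exactly the function-case formula with $\mathcal{O}_{C_{\xi}}$ replaced by $\mathcal{E}_{\rho}|_{C_{\xi}}$.

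Gluing over the cones $\sigma\in\Sigma$ and descending through the $\Gamma$-quotient then impose precisely the two conditions of Proposition \ref{prop:FJ_expansions_functions} --- that the support of the coefficients lie in $\check H^{+}$, and that the series be $\Gamma$-invariant --- the only difference being that $\Gamma$ now acts on the coefficient spaces $H^{0}(C_{\xi},\mathcal{L}(\check h)\otimes\mathcal{E}_{\rho})$ through $\check h\mapsto\gamma(\check h)$ together with its given equivariant action on $\mathcal{E}_{\rho}|_{C_{\xi}}$. This yields the asserted canonical isomorphism, compatible with the $\widehat{\mathcal{O}}_{S^{*},\xi}=\mathcal{FJ}_{\xi}$-module structures, which come from $\mathcal{L}(\check h_{1})\otimes\mathcal{L}(\check h_{2})\simeq\mathcal{L}(\check h_{1}+\check h_{2})$. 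The step to watch is the cusp-degeneration statement for $\mathcal{E}_{\rho}$ in the middle paragraph, i.e. making the $\Gamma$-equivariant identification of $\mathcal{E}_{\rho}$ with a pull-back from $C_{\xi}$ precise; everything else is the argument of Proposition \ref{prop:FJ_expansions_functions} verbatim.
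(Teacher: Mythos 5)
Your proposal is correct and follows essentially the same route as the paper, which for this statement simply cites \cite{S-U}, Proposition 5.5: the theorem on formal functions together with the description of the formal fibre of $\pi$ over $\xi$ as the $\Gamma$-quotient of the completed charts $\Xi_{\xi}\times^{T_{H}}T_{H,\sigma}$, computing sections chart by chart via $\mathcal{O}_{T_{H,\sigma}}=\bigoplus_{\check h\in\check H\cap\sigma^{\vee}}W\cdot q^{\check h}$ and then gluing and taking $\Gamma$-invariants. Your careful formulation that $\mathcal{E}_{\rho}$ is only \emph{pulled back} from $C_{\xi}$ over each chart (via $\omega_{\mathcal{G}}=\omega_{\widetilde{\mathcal{G}}}$), rather than constant along $C_{\xi}$, is precisely the correction the paper itself makes to \cite{S-U} in the remark following the proposition, so your sketch is consistent with the intended reading.
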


The action of $\Gamma$ on $a(\check{h})$ demands an explanation,
%\footnote{Missing in {[}S-U{]}; {[}Lan $\S$7.1{]} only treats scalar-valued modular forms.} 
and for that we must bring back the dependence on $[g]_{r}\in\mathscr{C}_{r}$
and even on $g$ itself. Still assuming that we are at the standard
cusp, i.e. $[g]_{r}=[1]_{r},$ we may replace the representative $g=1$
by $g=\gamma\in\Gamma=GL(V_{r})\cap K=GL_{r}(E)\cap K.$ The following
changes then take place. The lattice $\Lambda\cap V_{r}$ is replaced
by $\gamma(\Lambda\cap V_{r})=\Lambda\cap V_{r},$ so does not change.
The subgroups $X$ and $Y$ therefore remain the same, but $\gamma$
acts on them non-trivially. This induces an action of $\gamma$ on
the abelian variety $C_{\xi}=\mathrm{Hom}_{\mathcal{O}_{E}}(X,\mathcal{A}_{r,\xi}^{t})$
classifying the extensions $\mathcal{G}$ of $\mathcal{A}_{r,\xi}$
by $T_{X}$, as well as an action on the torus $T_{X}$. Thus $\gamma$
induces an isomorphism
\[
\gamma_{*}:\mathcal{G}_{c}\simeq\mathcal{G}_{\gamma(c)}
\]
($c\in C_{\xi}),$ which on the toric part is the given automorphism
of $T_{X}$, and on the abelian part induces the identity. This induces
isomorphisms $\gamma_{*}=(\gamma^{*})^{-1}$ from the fibers of $\mathcal{P}_{0},\mathcal{P}_{\mu}$
and $\mathcal{Q}$ at $c$ to the corresponding fibers at $\gamma(c)$.
As $\mathcal{P}_{0}$ depends only on the abelian part, the action
of $\gamma_{*}$ on it is trivial. Assume, for simplicity, that $\mathcal{E}_{\rho}=\mathcal{P}_{\mu}.$
Then $a(\check{h})$ is a section (over $C_{\xi})$ of $\mathcal{L}(\check{h})\otimes\mathcal{P}_{\mu}$
and $\gamma(a(\check{h}))$ will be the section of $\mathcal{L}(\gamma\check{h})\otimes\mathcal{P}_{\mu}$
satisfying
\[
\gamma(a(\check{h}))|_{\gamma(c)}=\gamma_{*}(a(\check{h})|_{c}).
\]

We also remark that in (\cite{S-U} Proposition 5.5) the automorphic vector bundle is incorrectly assumed to be constant
along $C_{\xi}$. We thank one of the referees for pointing this out to us. However, in one important case, that will be needed
below, this is true. If $\xi$ is a rank $m$ cusp, the three basic automorphic vector bundles $\mathcal{Q}, \mathcal{P}_{\mu}$
and $\mathcal{P}_0$  depend only on the toric part and the abelian part of the universal semi-abelian scheme separately, and (unlike
$\mathcal{P}$) do not depend on the extension class parametrized by $C_{\xi}$. This implies that they are constant along $C_{\xi}$ and so is
every $p$-adic automorphic vector bundle generated by them.

In the sequel we shall only need the case of the \emph{maximally degenerate}
cusps, i.e. $r=m.$ Now the Shimura variety $S_{\boldsymbol{G}_{m},K_{m,g}}$
is $0$-dimensional, and $\xi$ is one of its (schematic) points.
The abelian variety $C_{\xi}$ is $m(n-m)$-dimensional. In this case
$\mathcal{P}_{\mu}$ is the $\Sigma$-part of the cotangent space
at the origin to 
\[
T_{X}=\mathrm{Hom}_{\mathcal{O}_{E}}(X,\mathfrak{d}_{E}^{-1}\otimes\mathbb{G}_{m})
\]
(if $r<m$ it also captures part of the cotangent space at the origin
of $\mathcal{A}_{r,\xi}$). In other words, we may identify
\[
\mathcal{P}_{\mu}\simeq\mathcal{O}_{C_{\xi}}\otimes_{\Sigma,\mathcal{O}_{E}}X
\]
and $\gamma_{*}:\mathcal{P}_{\mu}|_{c}\to\mathcal{P}_{\mu}|_{\gamma(c)}$
with $\gamma_{*}\otimes\gamma_{*}.$ Similarly we may identify $\mathcal{Q}\simeq\mathcal{O}_{C_{\xi}}\otimes_{\overline{\Sigma},\mathcal{O}_{E}}X$.
As the action of $\gamma$ on $X$ is via the contragredient $\mathrm{st}^{\vee}$ of
the standard representation, it follows that to obtain the action
of $\gamma\in\Gamma$ on $\rho(W)$ in general, we have first to embed
$\gamma$ as $\iota^{\vee}(\gamma):=(\,^{t}\overline{\gamma}^{-1},\,^{t}\gamma^{-1},1)$
in $GL_{m}\times GL_{m}\times GL_{n-m}$. (Recall that these three
factors correspond to $\mathcal{Q},\mathcal{P}_{\mu}$ and $\mathcal{P}_{0}$
in this order, see \S\ref{subsec:auotomorpic _vector_bundles_mod_p}.)
The action of $\gamma$ on $a(\check{h})\in H^{0}(C_{\xi},\mathcal{L}(\check{h}))\otimes_{W}\rho(W)$
will then be via $\gamma_{*}\otimes\rho(\iota^{\vee}(\gamma)).$

We remark that when $n=m$ the Fourier-Jacobi expansions are in fact
Fourier expansions in the naive sense, and the $a(\check{h})$ are
scalars.

\subsubsection{The Fourier-Jacobi expansion of the Hasse invariant\label{subsec:FJ hasse}}

Assume now that $s=1,$ i.e. we are again over the special fiber in
characteristic $p,$ and the automorphic vector bundle is the line
bundle $\mathcal{L}^{p^{2}-1}$ where $\mathcal{L}=\det(\mathcal{Q}).$
Let $h\in H^{0}(S,\mathcal{L}^{p^{2}-1})$ be the Hasse invariant,
previously denoted $h$ (\ref{eq:hasse_inv}).
\begin{prop}
The Fourier-Jacobi expansion of $h$ at a rank-$m$ cusp is $1$.
\end{prop}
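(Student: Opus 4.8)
The plan is to reduce the computation of the Fourier-Jacobi expansion of $h$ at a rank-$m$ cusp $\xi$ to a computation of the Hasse invariant of the semi-abelian scheme $\mathcal{G}$ over the formal completion $\mathfrak{Z}([1]_m)_\xi$, and then to identify that Hasse invariant as a constant section equal to $1$. First I would recall that the Hasse invariant is defined entirely in terms of Verschiebung, $h = \det(H)$ with $H = V_{\mathcal{P}}^{(p)}\circ V_{\mathcal{Q}}$, so it extends to the toroidal compactification (the semi-abelian scheme $\mathcal{G}$ with its $\mathcal{O}_E$-action extends by \cite{Lan} Theorem 6.4.1.1, hence so do $\mathcal{P},\mathcal{Q}$, the Verschiebung maps, and $h=\det(H)$). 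Over $\mathfrak{Z}([1]_m)_\xi$ the semi-abelian scheme $\mathcal{G}$ is an extension
\[
0\to T_X\to\mathcal{G}\to\mathcal{A}_{m,\xi}\to 0
\]
where $T_X = \mathrm{Hom}_{\mathcal{O}_E}(X,\mathfrak{d}_E^{-1}\otimes\mathbb{G}_m)$ is a torus of $\mathcal{O}_E$-rank $m$ and $\mathcal{A}_{m,\xi}$ is the abelian part, which (being attached to a point of the $0$-dimensional Shimura variety $S_{\boldsymbol{G}_m,K_{m,g}}$ of signature $(n-m,n-m)$) is itself $\mu$-ordinary, in fact $\mathcal{O}_E$-linearly isogenous to a product of the standard objects. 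The key point is that as $\xi$ is a rank-$m$ cusp, the bundles $\mathcal{Q},\mathcal{P}_\mu,\mathcal{P}_0$ (and hence $\mathcal{L}$) are constant along $C_\xi$ and come from the toric and abelian parts separately, so $h$ restricted to $\mathfrak{Z}([1]_m)_\xi$ is the pullback of a section over the base, i.e. its Fourier-Jacobi expansion has only the $\check h = 0$ term: $h = a(0)$ with $a(0)$ a scalar.

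It then remains to compute $a(0)$, the constant term. I would do this by restricting to the closed stratum $Z([1]_m,\sigma)$, or more simply to the ``toric degeneration'' point itself: there $\mathcal{G}$ becomes the extension of $\mathcal{A}_{m,\xi}$ by $T_X$, and by functoriality of Verschiebung the Hodge bundle $\omega_{\mathcal{G}}$ acquires a filtration whose toric-part graded piece is $\omega_{T_X}$ and abelian-part graded piece is $\omega_{\mathcal{A}_{m,\xi}}$; $V$ respects this. On the multiplicative part $T_X$, Verschiebung is an isomorphism on cotangent spaces (equivalently, the $p$-divisible group $(\mathfrak{d}_E^{-1}\otimes\mu_{p^\infty})^m$ is of multiplicative type, so $\mathrm{Fr}$ is zero on its Lie algebra, $V$ is an isomorphism), contributing a factor $1$ to $\det H$. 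On the abelian part, $\mathcal{A}_{m,\xi}$ has signature $(n-m,n-m)$ and is $\mu$-ordinary — in the equal-signature case $p$ inert means this abelian variety is, up to $\mathcal{O}_E$-isogeny, a product whose relevant $p$-divisible factors are again of multiplicative/étale/$\mathfrak{G}_\kappa$ type. More directly: the rank-$m$ cusp of $S_{\boldsymbol{G}}$ specializes, under Mumford's construction, cusps of the smaller Shimura variety, and iterating, the constant term of the Hasse invariant at the \emph{most} degenerate cusp of the equal-signature variety is visibly $1$ because $\mathcal{G}$ there is a pure torus; the $\mathcal{A}_{m,\xi}$-contribution is handled by an inductive comparison (pull back along the embedding $S_{\boldsymbol{G}_m}\hookrightarrow$ a product of copies of $GL_1$-Shimura data, exactly as in the proof of Corollary \ref{cor:detP=00003DdetQ_muduli}, or invoke that $h$ restricted to the rank-$m$ boundary stratum \emph{is} the Hasse invariant of $\mathcal{A}_{m,\xi}$ times the torus contribution $1$, and that the Hasse invariant of the $0$-dimensional Shimura variety is, by an explicit CM computation, a unit which one normalizes to $1$). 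Concretely, since $h$ is nowhere vanishing on $\overline{S}^{\mathrm{ord}}$ (this is Corollary \ref{cor:vanishing locus of Hasse}, which the excerpt promises), its Fourier-Jacobi expansion is a unit in $\mathcal{FJ}_\xi$; being a constant (no higher $q^{\check h}$ terms), it is a nonzero scalar in $\kappa$, which after the standard normalization of the Hasse invariant (\cite{G-N}, Appendix B) equals $1$.

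The main obstacle I expect is the identification of the constant term as exactly $1$ rather than merely a nonzero scalar: this requires pinning down the normalization of the $\mu$-ordinary Hasse invariant against the explicit trivialization of $\mathcal{L}=\det\mathcal{Q}$ at the cusp coming from $\mathcal{Q}\simeq\mathcal{O}_{C_\xi}\otimes_{\overline\Sigma,\mathcal{O}_E}X$, and checking that under this identification $\det(H)$ acts as the identity. In practice this is a direct Dieudonné-module computation for the standard $p$-divisible groups $(\mathfrak{d}_E^{-1}\otimes\mu_{p^\infty})^m$ and $(\mathcal{O}_E\otimes\mathbb{Q}_p/\mathbb{Z}_p)^m$ appearing in the degeneration — on each, $V$ is either an isomorphism or zero on the relevant $\Sigma$/$\overline\Sigma$ pieces in a way that makes $\det H = 1$ on the nose — combined with the fact (Lemma \ref{lem:detP=00003DdetQ}) fixing the isomorphism $\det\mathcal{P}\simeq\det\mathcal{Q}$. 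I would organize the argument so that all of this normalization bookkeeping is quarantined into the toric/étale computation at the closed boundary stratum, where it is genuinely elementary, and then propagate the answer $h\equiv 1$ off the stratum by the constancy of the bundles along $C_\xi$ established above.
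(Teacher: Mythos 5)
Your core idea—reduce to the toric part $T_X$ at the cusp, where Verschiebung behaves trivially—is the same as the paper's, but as written the argument has genuine gaps. First, the abelian part $\mathcal{A}_{m,\xi}$ at a rank-$m$ cusp has signature $(n-m,0)$, not $(n-m,n-m)$: the quotient $V(r)=V_r^{\perp}/V_r$ has signature $(n-r,m-r)$, and indeed a positive-dimensional signature $(a,a)$ could not give a $0$-dimensional Shimura variety. This is not a harmless slip, because it is precisely the vanishing of the $\overline{\Sigma}$-part of $\omega_{\mathcal{A}_{m,\xi}}$ that makes $\mathcal{Q}$, hence $\mathcal{L}=\det\mathcal{Q}$ and the Hasse matrix, governed by $T_X$ alone; the entire second half of your proposal, which tries to dispose of an ``abelian contribution'' to $\det H$ by induction, CM computations, or an unspecified normalization, is chasing a contribution that does not exist and, as written, never actually computes anything. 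Second, your appeal to Corollary \ref{cor:vanishing locus of Hasse} is circular: in the paper that corollary is deduced from this very proposition, so nonvanishing of $h$ on $\overline{S}^{\mathrm{ord}}$ may not be used here. Third, even where you do look at the torus, you only argue that $V$ is an \emph{isomorphism} on the cotangent space of the multiplicative part, which gives a unit constant term; the statement requires the value $1$, and ``after the standard normalization'' is not an argument. The paper's point is sharper: the trivializing section $\ell$ of $\mathcal{L}$ over the local chart is induced by the canonical trivialization of $T_X[p^{\infty}]$, on which $\mathrm{Ver}$ is literally the \emph{identity}, so the Hasse map carries $\ell$ to $\ell^{(p^{2})}$ and therefore $h=1$ in the basis $\ell^{p^{2}-1}$; it is the identity, not mere invertibility, that pins the constant to $1$.

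A further logical gap is your claim that constancy of $\mathcal{Q},\mathcal{P}_{\mu},\mathcal{P}_0$ along $C_{\xi}$ forces the Fourier-Jacobi expansion of $h$ to have only the $\check{h}=0$ term: a section of a constant bundle need not be constant, and $H=V_{\mathcal{P}}^{(p)}\circ V_{\mathcal{Q}}$ passes through $\mathcal{P}^{(p)}$, which does depend on the extension class parametrized by $C_{\xi}$. The correct route, as in the paper, is the direct computation of $h$ in the toric trivialization, which produces the constant expansion $1$ outright and makes both the ``no higher terms'' step and the normalization step unnecessary.
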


\begin{proof}
Let us check the claim at the standard cusp. Fix a local chart $\mathfrak{Z}([1]_{m},\sigma)$
as above. As we have seen, $\mathcal{Q}$, hence also $\mathcal{L}$,
are trivialized there. The trivialization is obtained from a similar
trivialization of the $p$-divisible group of the toric part $T_{X}$
of the semi-abelian variety $\mathcal{G}$. As the isogeny $\mathrm{Ver}$
acts like the identity on $T_{X}[p^{\infty}],$ the Hasse invariant
maps a trivializing section $\ell$ of $\mathcal{L}$ over $\mathfrak{Z}([1]_{m},\sigma)$
to $\ell^{(p^{2})}$. It follows that in terms of the basis $\ell^{p^{2}-1}$
of $\mathcal{L}^{p^{2}-1}$, its FJ expansion is simply $1$. Note
that a choice of another $\kappa$-rational section $\ell$ will result
in the same value for $h$. 
\end{proof}
\begin{cor}
\label{cor:vanishing locus of Hasse}The open set $\overline{S}^{\mathrm{ord}}\subset\overline{S}$
is the non-vanishing locus of $h$.
\end{cor}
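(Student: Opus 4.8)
The plan is to prove the two inclusions between the non-vanishing locus $\{h\neq0\}$ and $\overline{S}^{\mathrm{ord}}$ separately, and only the second of them will require real work. Since $\mathcal{G}$ is semi-abelian over $\overline{S}$, the bundles $\mathcal{P},\mathcal{Q}$ and the Verschiebung map $V$ extend to $\overline{S}$, so $h$ is a global section of $\mathcal{L}^{p^{2}-1}$ over $\overline{S}$ and $\{h=0\}$ is a closed subscheme. By the interior fact that $h$ vanishes on $S$ exactly along $S^{\mathrm{no}}$ (\cite{Woo}, Proposition 7.2.11, or Theorem \ref{thm: =00005Cpsi(du)_has_a_zero}), the closed set $\{h=0\}$ meets $S$ in $S^{\mathrm{no}}$, hence contains its Zariski closure; therefore $\{h\neq0\}\subseteq\overline{S}\smallsetminus\overline{S^{\mathrm{no}}}=\overline{S}^{\mathrm{ord}}$. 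This inclusion is formal.

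The content is the reverse inclusion: $h(x)\neq0$ for every $x\in\overline{S}^{\mathrm{ord}}$. If $x\in S^{\mathrm{ord}}$ this is again the interior fact, so suppose $x$ lies on a cuspidal stratum of rank $r$, with image $\xi$ in the boundary Shimura variety $S_{\boldsymbol{G}_{r},K_{r,g}}$ and semi-abelian structure $0\to T_{X}\to\mathcal{G}_{x}\to\mathcal{A}_{r,\xi}\to0$. As recalled in $\S$\ref{subsec:Fourier-Jacobi-expansions}, over $\overline{S}^{\mathrm{ord}}$ the scheme $\mathcal{G}$ is $\mu$-ordinary, so $\mathcal{G}_{x}[p^{\infty}]^{0}$ is the standard $\mu$-ordinary connected $p$-divisible group $(\mathfrak{d}_{E}^{-1}\otimes\mu_{p^{\infty}})^{m}\times\mathfrak{G}_{k}^{n-m}$. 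Since $T_{X}[p^{\infty}]$ is multiplicative of height $2r$ and sits inside the multiplicative part of $\mathcal{G}_{x}[p^{\infty}]^{0}$, a height and multiplicative-rank count forces the abelian part $\mathcal{A}_{r,\xi}$, which has signature $(n-r,m-r)$, to be $\mu$-ordinary, i.e.\ $\xi\in S_{\boldsymbol{G}_{r},K_{r,g}}^{\mathrm{ord}}$.

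Next I would compute $h(x)$ using the filtration $0\to\omega_{\mathcal{A}_{r}}\to\omega_{\mathcal{G}}\to\omega_{T_{X}}\to0$ of the Hodge bundle obtained from $q\colon\mathcal{G}\twoheadrightarrow\mathcal{A}_{r}$ and $i\colon T_{X}\hookrightarrow\mathcal{G}$. Functoriality of Verschiebung ($\mathrm{Ver}_{\mathcal{A}_{r}}\circ q^{(p)}=q\circ\mathrm{Ver}_{\mathcal{G}}$ and $\mathrm{Ver}_{\mathcal{G}}\circ i^{(p)}=i\circ\mathrm{Ver}_{T_{X}}$) shows that on $\omega$ the operator $V_{\mathcal{G}}$ carries the sub-bundle $\omega_{\mathcal{A}_{r}}$ into $\omega_{\mathcal{A}_{r}}^{(p)}$ (acting there as $V_{\mathcal{A}_{r}}$) and induces $V_{T_{X}}$ on the quotient $\omega_{T_{X}}$. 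This triangularity is $\mathcal{O}_{E}$-equivariant, so the Hasse matrix $H=V_{\mathcal{P}}^{(p)}\circ V_{\mathcal{Q}}\colon\mathcal{Q}\to\mathcal{Q}^{(p^{2})}$ respects the induced filtration $\omega_{\mathcal{A}_{r}}(\overline{\Sigma})\subseteq\mathcal{Q}$, and passing to determinants gives $h=h_{\mathcal{A}_{r}}\otimes h_{T_{X}}$ under $\mathcal{L}^{p^{2}-1}\cong\det(\omega_{\mathcal{A}_{r}}(\overline{\Sigma}))^{p^{2}-1}\otimes\det(\omega_{T_{X}}(\overline{\Sigma}))^{p^{2}-1}$. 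Since $\mathrm{Ver}$ is the identity on the split torus $T_{X}$ (exactly as in the proof that the Fourier--Jacobi expansion of $h$ at a rank-$m$ cusp is $1$), $h_{T_{X}}$ is a trivializing section, so $h(x)=h_{\mathcal{A}_{r}}(\xi)$, which is nonzero because $\xi\in S_{\boldsymbol{G}_{r},K_{r,g}}^{\mathrm{ord}}$ by the interior fact for the smaller Shimura variety. When $r=m$ this recovers the preceding Fourier--Jacobi computation.

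The main obstacle I expect is the bookkeeping in this last step: one must check carefully that the filtration $\omega_{\mathcal{A}_{r}}\subseteq\omega_{\mathcal{G}}$ is stable under both $V_{\mathcal{Q}}$ and $V_{\mathcal{P}}^{(p)}$ and is compatible with the $\mathcal{O}_{E}$-type decomposition, so that ``$h=h_{\mathcal{A}_{r}}\otimes h_{T_{X}}$'' is an honest identity of sections of $\mathcal{L}^{p^{2}-1}$ over the cuspidal stratum. One can sidestep this altogether by the remark that $h$ depends only on $\omega_{\mathcal{G}_{x}}$ together with its $\mathcal{O}_{E}$-action and the operator $V$ --- equivalently, only on $\mathcal{G}_{x}[p^{\infty}]^{0}$ --- which over $\overline{S}^{\mathrm{ord}}$ is pointwise isomorphic, with all these structures, to the fixed standard $\mu$-ordinary connected $p$-divisible group, on which $h$ is invertible.
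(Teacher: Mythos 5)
Your proof is correct, but it takes a genuinely different route from the paper's. The paper argues divisor-theoretically: since $h$ is a section of a line bundle on the smooth $\overline{S}$, any irreducible component of its zero locus that is not contained in the closure of $S^{\mathrm{no}}$ would have to be an irreducible component of the boundary $\partial S$; by \cite{Lan2} every such component contains a rank-$m$ cusp, and the previously computed Fourier--Jacobi expansion of $h$ there equals $1$, so no boundary component can lie in the zero locus. You instead prove the pointwise statement $h(x)\neq0$ at every boundary point of $\overline{S}^{\mathrm{ord}}$, of arbitrary rank $r$: you invoke the fact, established in the paper just before the corollary via a Mumford-construction deformation argument, that $\mathcal{G}$ is $\mu$-ordinary over $\overline{S}^{\mathrm{ord}}$, deduce by a height count that the abelian part at $x$ is $\mu$-ordinary, and then perform a d\'evissage of the Hasse matrix along the toric--abelian filtration of $\omega_{\mathcal{G}}$ (your closing remark, that $h$ depends only on the connected $p$-divisible group with its $\mathcal{O}_{E}$-structure, which over $\overline{S}^{\mathrm{ord}}$ is everywhere the standard $\mu$-ordinary one, gives the same conclusion with less bookkeeping, and the triangularity you worry about does check out, since Verschiebung is functorial for $T_{X}\hookrightarrow\mathcal{G}\twoheadrightarrow\mathcal{A}_{r}$). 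What your argument buys: it avoids the appeal to \cite{Lan2} on boundary components containing rank-$m$ cusps, and it yields the finer statement that along a rank-$r$ boundary stratum $h$ is, up to a unit coming from the toric part, the Hasse invariant of the rank-$r$ boundary Shimura variety. What the paper's argument buys: given the already-proved Fourier--Jacobi computation it is very short, needing only generic nonvanishing on each boundary component rather than a pointwise analysis. Both arguments rest only on material established earlier in the paper, so neither is circular.
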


\begin{proof}
By definition, $\overline{S}^{\mathrm{ord}}$ is the complement of
the Zariski closure of $S^{\mathrm{no}},$ which is the vanishing
locus of $h$ in $S^{\mathrm{ord}}.$ It is therefore clear that $h$
vanishes on its complement, and to prove the corollary it is enough
to check that $h$ does not vanish on any irreducible component of
$\partial S=\overline{S}\smallsetminus S$. But by \cite{Lan2} any such irreducible
component contains a rank $m$ cusp, so the claim follows from the
previous Proposition. 
\end{proof}

\subsection{Analytic continuation of $\Theta$ to the boundary and its effect
on Fourier-Jacobi expansions}

\subsubsection{The partial toroidal compactification of the Igusa scheme}

Fix $s\ge1$ and work over $W_{s}$ as a base ring. Since the semi-abelian
scheme $\mathcal{G}$ over $\overline{S}_{s}^{\mathrm{ord}}$ is $\mu$-ordinary,
the \emph{relative} moduli problem defining the big Igusa scheme of
level $p^{t}$ makes sense over $\overline{S}_{s}^{\mathrm{ord}}$.
More precisely, for an $R$-valued point of $\overline{S}_{s}^{\mathrm{ord}}$
denote by $\mathcal{G}_{R}$ the pull-back of $\mathcal{G}$ to $\mathrm{Spec}(R).$
Then $\mathcal{G}_{R}[p^{\infty}]^{0}$ is still isomorphic, locally
in the pro-$\S$tale topology on $\mathrm{Spec}(R),$ to an extension
of $\mathfrak{G}{}^{n-m}$ by $(\mathfrak{d}_{E}^{-1}\otimes\mu_{p^{\infty}})^{m}.$
The relative moduli problem $\overline{T}_{t,s}$ classifies Igusa
structures $(\epsilon^{1},\epsilon^{2})$ on $\mathcal{G}_{R}$ as
in (\ref{eq:Igusa structures}). The compatibility with Weil pairings
is imposed on $\epsilon^{1}$ only, as there is no $\epsilon^{0}$
to pair with $\epsilon^{2}.$ This makes sense even if $\mathcal{G}_{R}$
is not an abelian scheme, while when it is, $\epsilon^{0}$ is determined
by $\epsilon^{2}$. We call the resulting scheme $\overline{T}_{t,s}.$
The following proposition is then obvious.
\begin{prop}
(i) The partially compactified Igusa scheme $\overline{T}_{t,s}$
is a finite {\'e}tale Galois cover of $\overline{S}_{s}^{\mathrm{ord}}$
with Galois group $\Delta_{t}.$

(ii) If $t\ge s$ then the basic vector bundles $\mathcal{P}_{0},\mathcal{P}_{\mu}$
and $\mathcal{Q}$ are canonically trivialized over $\overline{T}_{t,s}$.
\end{prop}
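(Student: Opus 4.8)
The plan is to observe that both assertions reduce \emph{verbatim} to their counterparts over the open locus $S_{s}^{\mathrm{ord}}$, already established in $\S$\ref{subsec:bigIg}, once one knows that the semi-abelian scheme $\mathcal{G}$ behaves over $\overline{S}_{s}^{\mathrm{ord}}$ exactly as $\mathcal{A}$ does over $S_{s}^{\mathrm{ord}}$. The key input, supplied by the discussion preceding Proposition \ref{prop:FJ_expansions_functions}, is that over $\overline{S}_{s}^{\mathrm{ord}}$ the $p$-divisible group $\mathcal{G}[p^{\infty}]$ is $\mu$-ordinary: it carries the three-step filtration (\ref{eq:ordinary_filtration}), its graded pieces have \emph{constant height}, and they are, locally in the pro-\'etale topology, isomorphic as $\mathcal{O}_{E}$-group schemes (respecting the pairings) to the model groups $(\mathcal{O}_{E}\otimes\mathbb{Q}_{p}/\mathbb{Z}_{p})^{m}$, $\mathfrak{G}^{n-m}$ and $(\mathfrak{d}_{E}^{-1}\otimes\mu_{p^{\infty}})^{m}$. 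For $gr^{0}$ and $gr^{2}$ this is standard, and for $gr^{1}$ it is Moonen's rigidity (\cite{Mo1}, Corollary 2.1.5), exactly as in $\S$\ref{subsec:The-universal-abelian}.

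For (i): first, $\overline{T}_{t,s}$ is relatively representable over $\overline{S}_{s}^{\mathrm{ord}}$ by the Katz--Mazur criterion (\cite{Ka-M}, Chapter 4), just as $T_{t,s}$ was over $S_{s}^{\mathrm{ord}}$. Next, after the pro-\'etale local trivialization of the $gr^{i}$, the functor of graded symplectic trivializations of $\mathcal{G}[p^{t}]$ becomes a torsor over $\overline{S}_{s}^{\mathrm{ord}}$ under the automorphism group scheme of the fixed models; this group is the finite constant group $\Delta_{t}=GL_{m}(\mathcal{O}_{E}/p^{t}\mathcal{O}_{E})\times U_{n-m}(\mathcal{O}_{E}/p^{t}\mathcal{O}_{E})$ (only these two factors occur, since $\epsilon^{0}$ is determined by $\epsilon^{2}$), so $\tau:\overline{T}_{t,s}\to\overline{S}_{s}^{\mathrm{ord}}$ is finite \'etale. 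Finally the action (\ref{eq:Galois action on Igusa level}) of $\Delta_{t}$ is simply transitive on geometric fibers, so the cover is Galois with group $\Delta_{t}$. Alternatively one invokes \cite{Man}, Proposition 4, whose proof applies unchanged to the relative situation over $\overline{S}_{s}^{\mathrm{ord}}$.

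For (ii): one repeats the construction of $\S$\ref{subsec:bigIg} with $\mathcal{G}$ in place of $\mathcal{A}$. Since $t\ge s$ and $p^{s}$ annihilates the base, the exact sequence $0\to\mathcal{G}[p^{t}]\to\mathcal{G}\overset{p^{t}}{\to}\mathcal{G}\to0$ shows that $\mathrm{Lie}(\mathcal{G}[p^{t}]/\overline{S}_{s}^{\mathrm{ord}})\overset{\sim}{\to}\mathrm{Lie}(\mathcal{G}/\overline{S}_{s}^{\mathrm{ord}})$, so the filtration of $\mathcal{G}[p^{t}]$ by its connected and multiplicative parts induces the filtration (\ref{eq:P_filtration}) of $\omega_{\mathcal{G}}$, with $\mathcal{P}_{0}=\omega_{\mathcal{G}[p^{\infty}]^{0}/\mathcal{G}[p^{\infty}]^{\mu}}$, $\mathcal{P}_{\mu}=\omega_{\mathcal{G}[p^{\infty}]^{\mu}}(\Sigma)$ and $\mathcal{Q}=\omega_{\mathcal{G}[p^{\infty}]^{\mu}}(\overline{\Sigma})$. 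The trivialization $\epsilon^{2}$ then induces, via the canonical identification of the cotangent space at the origin of $\mathfrak{d}_{E}^{-1}\otimes\mu_{p^{t}/W_{s}}$ with $\mathcal{O}_{E}\otimes W_{s}=W_{s}(\Sigma)\oplus W_{s}(\overline{\Sigma})$, trivializations $\varepsilon^{2}(\overline{\Sigma}):\mathcal{O}_{\overline{T}_{t,s}}^{m}\simeq\mathcal{Q}$ and $\varepsilon^{2}(\Sigma):\mathcal{O}_{\overline{T}_{t,s}}^{m}\simeq\mathcal{P}_{\mu}$; likewise $\epsilon^{1}$, together with a fixed identification of the cotangent space at the origin of $\mathfrak{G}[p^{t}]_{/W_{s}}$ with $W_{s}(\Sigma)$, induces $\varepsilon^{1}:\mathcal{O}_{\overline{T}_{t,s}}^{n-m}\simeq\mathcal{P}_{0}$. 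These are the asserted canonical trivializations. The only point genuinely demanding care is the $\mu$-ordinarity of $\mathcal{G}[p^{\infty}]$ over the boundary divisor $\overline{S}_{s}^{\mathrm{ord}}\smallsetminus S_{s}^{\mathrm{ord}}$; granting that, which was checked above via Mumford's construction, the proposition is indeed immediate, as claimed.
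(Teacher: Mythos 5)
Your proposal is correct and follows exactly the route the paper has in mind: the paper declares the proposition ``obvious'' after noting that $\mathcal{G}$ is $\mu$-ordinary over $\overline{S}_{s}^{\mathrm{ord}}$, and your argument simply transports the constructions of $\S$2.1 (relative representability, the $\Delta_{t}$-torsor structure via pro-\'etale local constancy of the graded pieces, and the cotangent-space trivializations for $t\ge s$) from $\mathcal{A}$ over $S_{s}^{\mathrm{ord}}$ to $\mathcal{G}$ over $\overline{S}_{s}^{\mathrm{ord}}$. The only cosmetic slip is the parenthetical ``since $\epsilon^{0}$ is determined by $\epsilon^{2}$'': over the boundary there is no $\epsilon^{0}$ at all (the moduli problem imposes the pairing condition on $\epsilon^{1}$ only), but this does not affect the identification of the Galois group with $\Delta_{t}$.
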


We continue to denote by $\tau:\overline{T}_{t,s}\to\overline{S}_{s}^{\mathrm{ord}}$
the covering map and by $\varepsilon^{1},\varepsilon^{2}$ the resulting
trivializations over $\overline{T}_{t,s}$. The definition of $\widetilde{\Theta}$
over $\overline{S}_{s}^{\mathrm{ord}}$ is then precisely the same
as over the open ordinary stratum $S_{s}^{\mathrm{ord}}$, \emph{cf.
}(\ref{eq:pre_theta}).

\subsubsection{The extended $\Theta$ operator}

To extend the definition of $\Theta$ we need to recall how the Kodaira-Spencer
isomorphism extends to the toroidal compactification. The answer is
given by \cite{Lan}, Theorem 6.4.1.1, part 4. See also \cite{F-C},
Ch. III, Corollary~9.8. In our case (see \cite{Lan}, Definition 6.3.1)
it translates into the following.
\begin{prop}
The Kodaira Spencer isomorphism extends to an isomorphism
\[
\mathrm{KS}:\mathcal{P}\otimes\mathcal{Q}\simeq\Omega_{\overline{S}/W}^{1}[d\log\infty]
\]
over $\overline{S}$.
\end{prop}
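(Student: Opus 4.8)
The plan is to deduce this from the general theory of toroidal compactifications of PEL moduli spaces — concretely from \cite{Lan}, Theorem 6.4.1.1(4) (equivalently \cite{F-C}, Ch.~III, Corollary~9.8) — by translating that statement to the unitary signature-$(n,m)$ situation with the type decomposition fixed in $\S$\ref{subsec:Linear-algebra}. First recall how $\mathrm{KS}$ arises over the open $S$. Write $\mathcal{H}=H^{1}_{\mathrm{dR}}(\mathcal{A}/S)$, a rank-$2(n+m)$ bundle with $\mathcal{O}_{E}$-action, equipped with its Hodge filtration $0\to\omega\to\mathcal{H}\to\Lie(\mathcal{A}^{t}/S)\to 0$ and its Gauss--Manin connection $\nabla:\mathcal{H}\to\mathcal{H}\otimes\Omega^{1}_{S/W}$, which commutes with $\mathcal{O}_{E}$ and hence respects the splitting $\mathcal{H}=\mathcal{H}(\Sigma)\oplus\mathcal{H}(\overline{\Sigma})$. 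The principal polarization makes $\mathcal{H}(\Sigma)$ and $\mathcal{H}(\overline{\Sigma})$ dual, and, the Hodge filtration being isotropic, identifies $\mathcal{H}(\overline{\Sigma})/\mathcal{Q}\cong\mathcal{P}^{\vee}$; composing $\mathcal{Q}\hookrightarrow\mathcal{H}(\overline{\Sigma})\xrightarrow{\nabla}\mathcal{H}(\overline{\Sigma})\otimes\Omega^{1}_{S/W}$ with the projection to $(\mathcal{H}(\overline{\Sigma})/\mathcal{Q})\otimes\Omega^{1}_{S/W}=\mathcal{P}^{\vee}\otimes\Omega^{1}_{S/W}$ produces the $\mathcal{O}_{S}$-linear map $\mathrm{KS}\in\Hom(\mathcal{P}\otimes\mathcal{Q},\Omega^{1}_{S/W})$ already used in $\S$\ref{subsec:bigIg}.

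To extend this I would replace $\mathcal{H}$ by its canonical extension $\mathcal{H}^{\mathrm{can}}$ over $\overline{S}$ — the relative logarithmic de Rham cohomology of the Mumford family $\mathcal{G}/\overline{S}$, see \cite{F-C}, Ch.~III, and \cite{Lan}, $\S$6.3 — which again carries a Hodge sub-bundle $\omega\subset\mathcal{H}^{\mathrm{can}}$ whose quotient is, via the extended polarization, dual to $\omega$ on the complementary type, and whose Gauss--Manin connection acquires at worst logarithmic poles along $\partial S$, i.e. $\nabla^{\mathrm{can}}:\mathcal{H}^{\mathrm{can}}\to\mathcal{H}^{\mathrm{can}}\otimes\Omega^{1}_{\overline{S}/W}[d\log\infty]$. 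Since the identification $\mathcal{H}^{\mathrm{can}}(\overline{\Sigma})/\mathcal{Q}\cong\mathcal{P}^{\vee}$ persists over all of $\overline{S}$, the same recipe produces a map $\mathrm{KS}:\mathcal{P}\otimes\mathcal{Q}\to\Omega^{1}_{\overline{S}/W}[d\log\infty]$ restricting to the old one on $S$.

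It then remains to check that the extended map is an isomorphism. Both source and target are locally free of rank $nm$ on the smooth scheme $\overline{S}$, and the map is already an isomorphism over the dense open $S$, so it suffices to verify surjectivity along $\partial S$, which is local on $\overline{S}$. On a local chart $\mathfrak{Z}([g]_{r},\sigma)$, where $\mathcal{G}$ is obtained from the torus torsor $\Xi$ over $C([g]_{r})$ by Mumford's construction, an explicit computation — the one carried out in \cite{Lan}, $\S$6.4.1, or \cite{F-C}, Ch.~III, $\S$9 — shows that in the toric directions $\mathrm{KS}$ sends the rank-one subspace of $\mathcal{P}\otimes\mathcal{Q}$ attached to $\check h$ to the logarithmic differential $\omega(\check h)=dq^{\check h}/q^{\check h}$ of the preceding Proposition, while in the remaining directions it restricts to the Kodaira--Spencer isomorphism of the abelian scheme $\mathcal{A}_{r}$ over $S_{\boldsymbol{G}_{r},K_{r,g}}$; together these span $\Omega^{1}_{\overline{S}/W}[d\log\infty]$ locally, which gives the claim.

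The only genuine work here is bookkeeping: matching the conventions of \cite{Lan} and \cite{F-C} (and of \cite{S-U}, which treats signature $(n,n)$) with the type decomposition $\omega=\mathcal{P}\oplus\mathcal{Q}$, the normalizations of the pairings, and the parametrization of the cusps adopted earlier in this section — in particular checking that the polarization really does extend and make $\mathcal{H}^{\mathrm{can}}(\Sigma),\mathcal{H}^{\mathrm{can}}(\overline{\Sigma})$ dual over the boundary, so that the quotient bundle used above is the one appearing in the cited local computation. This is the same kind of translation already performed above for the minimal and toroidal compactifications, so it involves no new idea; it is, however, where all the care is required, and I would regard it as the main obstacle to a self-contained proof.
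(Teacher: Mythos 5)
Your proposal is correct and follows essentially the same route as the paper, which simply invokes \cite{Lan}, Theorem 6.4.1.1(4) (and \cite{F-C}, Ch.~III, Corollary 9.8) and translates it into the unitary setting via the type decomposition. The extra detail you supply — the canonical extension of $H^{1}_{\mathrm{dR}}$ with its logarithmic Gauss--Manin connection and the chart-by-chart identification of the toric directions with the $\omega(\check h)$ — is exactly the content behind that citation (and reappears in the paper's proof of Theorem \ref{thm:theta on q expansions}), so there is no genuine divergence.
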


The inverse isomorphism $\mathrm{KS}^{-1}$ therefore maps $\Omega_{\overline{S}/W}^{1}$
to sections of $\mathcal{P}\otimes \mathcal{Q}$ \emph{vanishing along the boundary
}$\partial S$. We deduce the following.
\begin{prop}
The formula
\[
\Theta=(1\otimes pr_{\mu}\otimes1)\circ(1\otimes\mathrm{KS^{-1})}\circ\widetilde{\Theta}:\mathcal{E}_{\rho}\to\mathcal{E}_{\rho}\otimes\mathcal{P_{\mu}}\otimes\mathcal{Q}
\]
defines an extension of $\Theta$ over $\overline{S}^{\mathrm{ord}}$.
For any section $f$ of $\mathcal{E}_{\rho}$, $\Theta(f)$ vanishes
along $\partial\overline{S}^{\mathrm{ord}}.$
\end{prop}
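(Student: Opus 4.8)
The plan is to observe that, once the partially compactified Igusa scheme $\overline{T}_{t,s}$ is available, every ingredient in the displayed formula already makes sense over $\overline{S}^{\mathrm{ord}}$, and then to read off the boundary vanishing from the fact that $\mathrm{KS}^{-1}$ carries pole-free differentials into sections vanishing at the boundary. First I would extend the pre-theta connection. By the Proposition recording the properties of $\overline{T}_{t,s}$, the map $\tau\colon\overline{T}_{t,s}\to\overline{S}^{\mathrm{ord}}$ is finite étale and Galois with group $\Delta_{t}$, and for $t\ge s$ the bundles $\mathcal{P}_{0},\mathcal{P}_{\mu},\mathcal{Q}$, hence $\mathcal{E}_{\rho}$, are canonically trivialized over $\overline{T}_{t,s}$, giving $\varepsilon_{\rho}\colon\mathcal{O}_{\rho}\simeq\tau^{*}\mathcal{E}_{\rho}$. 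One then repeats verbatim the definition $(\ref{eq:pre_theta})$: for $f\in H^{0}(U,\mathcal{E}_{\rho})$ with $U\subset\overline{S}^{\mathrm{ord}}$ open, put $\widetilde{\Theta}(f)=(\varepsilon_{\rho}\otimes1)\circ d_{\rho}\circ\varepsilon_{\rho}^{-1}(\tau^{*}f)$, with $d_{\rho}$ induced by the trivial connection $d\colon\mathcal{O}_{\overline{T}_{t,s}}\to\Omega_{\overline{T}_{t,s}/W}^{1}$. The same three facts as in the interior case --- $\tau^{*}f$ is $\Delta_{t}$-invariant, $d_{\rho}$ is $\Delta_{t}$-invariant because $\tau$ is étale, and $d_{\rho}$ commutes with the scalar matrices ${}^{t}[\gamma]_{\rho}$ --- together with $(\ref{eq:gamma_rho_action})$ show $\widetilde{\Theta}(f)$ is $\Delta_{t}$-invariant, so by étale descent it is a section of $\mathcal{E}_{\rho}\otimes_{\mathcal{O}_{\overline{S}}}\Omega_{\overline{S}^{\mathrm{ord}}/W}^{1}$ over $U$. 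The point to be exploited below is that this lands in the sheaf of \emph{honest} differentials, with \emph{no} logarithmic poles along the boundary: indeed $d$ is pole-free on $\overline{T}_{t,s}$ and, $\tau$ being étale, $\Omega_{\overline{T}_{t,s}/W}^{1}=\tau^{*}\Omega_{\overline{S}^{\mathrm{ord}}/W}^{1}$.

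Next, $\mathrm{KS}^{-1}$ is available over $\overline{S}^{\mathrm{ord}}$ by restriction of the extended Kodaira--Spencer isomorphism $\mathrm{KS}\colon\mathcal{P}\otimes\mathcal{Q}\simeq\Omega_{\overline{S}/W}^{1}[d\log\infty]$, and $pr_{\mu}\colon\mathcal{P}\to\mathcal{P}_{\mu}$ is available over $\overline{S}^{\mathrm{ord}}$ because the filtration $0\to\mathcal{P}_{0}\to\mathcal{P}\to\mathcal{P}_{\mu}\to0$ extends there by a sub-vector bundle, as recorded in $\S$\ref{subsec:Fourier-Jacobi-expansions}. Hence the formula $\Theta=(1\otimes pr_{\mu}\otimes1)\circ(1\otimes\mathrm{KS}^{-1})\circ\widetilde{\Theta}$ defines a map $\mathcal{E}_{\rho}\to\mathcal{E}_{\rho}\otimes\mathcal{P}_{\mu}\otimes\mathcal{Q}$ over $\overline{S}^{\mathrm{ord}}$; and it extends the operator $(\ref{eq:Theta})$ on $S^{\mathrm{ord}}$, since $\overline{T}_{t,s}$ restricts to $T_{t,s}$ over $S^{\mathrm{ord}}$ with the same trivializations $\varepsilon^{1},\varepsilon^{2}$ (hence the same $\widetilde{\Theta}$), and $\mathrm{KS}$ and $pr_{\mu}$ restrict to the maps used there.

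Finally, the vanishing. By the first step $\widetilde{\Theta}(f)\in H^{0}(U,\mathcal{E}_{\rho}\otimes\Omega_{\overline{S}^{\mathrm{ord}}/W}^{1})$, and by the remark preceding the statement $\mathrm{KS}^{-1}$ maps the subsheaf $\Omega_{\overline{S}/W}^{1}\subset\Omega_{\overline{S}/W}^{1}[d\log\infty]$ into the subsheaf of $\mathcal{P}\otimes\mathcal{Q}$ of sections vanishing along $\partial\overline{S}^{\mathrm{ord}}$, i.e.\ into $\mathcal{I}\cdot(\mathcal{P}\otimes\mathcal{Q})$ where $\mathcal{I}$ is the ideal sheaf of the boundary. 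Therefore $(1\otimes\mathrm{KS}^{-1})(\widetilde{\Theta}(f))$ is a section of $\mathcal{I}\cdot(\mathcal{E}_{\rho}\otimes\mathcal{P}\otimes\mathcal{Q})$; and since $1\otimes pr_{\mu}\otimes1$ is $\mathcal{O}_{\overline{S}^{\mathrm{ord}}}$-linear, and any $\mathcal{O}$-linear map sends sections vanishing along a closed subscheme to sections vanishing along it, $\Theta(f)$ is a section of $\mathcal{I}\cdot(\mathcal{E}_{\rho}\otimes\mathcal{P}_{\mu}\otimes\mathcal{Q})$, i.e.\ it vanishes along $\partial\overline{S}^{\mathrm{ord}}$. (This is the sheaf-theoretic shadow of the fact that, on Fourier--Jacobi expansions, $\Theta$ acts by $q^{\check{h}}\mapsto\check{h}\cdot q^{\check{h}}$, which kills the $\check{h}=0$ term.)

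The one point that is not purely formal --- and the reason the remark about $\mathrm{KS}^{-1}$ is isolated --- is that $\widetilde{\Theta}(f)$ genuinely has \emph{no} logarithmic pole along $\partial\overline{S}^{\mathrm{ord}}$; this is precisely what separates $\Theta$ from $\widetilde{\Theta}$ at the boundary, and it hinges on $\overline{T}_{t,s}\to\overline{S}^{\mathrm{ord}}$ being \emph{finite étale}, so that the trivial connection pulls back to the pole-free differential on the cover. Were the Igusa cover merely log-étale or smooth, $\widetilde{\Theta}(f)$ could a priori acquire a simple pole and $\Theta(f)$ would only be regular, not vanishing, at the cusps. Everything else is either formal or imported from the toroidal-compactification input cited above --- the extension of $\mathcal{A}$ to the semi-abelian $\mathcal{G}$ over $\overline{S}$, the logarithmic extension of $\mathrm{KS}$, and the extension of the Igusa scheme to $\overline{T}_{t,s}$.
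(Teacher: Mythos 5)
Your first two steps are exactly the paper's route and are fine: the pre-theta connection extends verbatim over $\overline{S}^{\mathrm{ord}}$ via the finite \'etale cover $\overline{T}_{t,s}$ and its canonical trivializations, and $(1\otimes pr_{\mu}\otimes 1)\circ(1\otimes\mathrm{KS}^{-1})$ makes sense there because the filtration of $\mathcal{P}$ and the logarithmic Kodaira--Spencer isomorphism both extend. The gap is in your vanishing step. The containment you invoke --- that $\mathrm{KS}^{-1}$ carries $\Omega_{\overline{S}/W}^{1}\subset\Omega_{\overline{S}/W}^{1}[d\log\infty]$ into $\mathcal{I}\cdot(\mathcal{P}\otimes\mathcal{Q})$, $\mathcal{I}$ the ideal of the boundary --- is false whenever $nm\ge 2$. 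It is equivalent to $\Omega_{\overline{S}}^{1}\subseteq\mathcal{I}\cdot\Omega_{\overline{S}}^{1}[d\log\infty]$, which fails at any boundary point admitting coordinates along the boundary: if locally the divisor is $q=0$ and $u$ is a coordinate along it, then $\Omega^{1}[d\log\infty]$ is spanned by $dq/q$ and $du$, so $\mathcal{I}\cdot\Omega^{1}[d\log\infty]$ is spanned by $dq$ and $q\,du$, whereas $\Omega^{1}$ contains $du$. Concretely, at a rank-$m$ cusp the pole-free differentials pulled back from the abelian scheme $C$ are sent by $\mathrm{KS}^{-1}$ to \emph{nowhere vanishing} sections of $\mathcal{P}_{0}\otimes\mathcal{Q}$ (Lan's $\mathrm{KS}_{\mathcal{G}/C}$ is an isomorphism $\mathcal{P}_{0}\otimes\mathcal{Q}\simeq\Omega_{C/W}$), so ``no logarithmic pole'' for $\widetilde{\Theta}(f)$, which you correctly establish, does not by itself force $(1\otimes\mathrm{KS}^{-1})\widetilde{\Theta}(f)$ to vanish along $\partial\overline{S}^{\mathrm{ord}}$.

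What actually produces the vanishing is the projection $pr_{\mu}$, which in your write-up is treated as a formal afterthought but carries the real content. Along a boundary component whose generic point lies over a rank-$m$ cusp, $\mathrm{KS}^{-1}$ of the differentials coming from $C$ lands in $\mathcal{P}_{0}\otimes\mathcal{Q}=\ker(pr_{\mu}\otimes 1)$, while the remaining pole-free differentials $dq^{\check{h}}=q^{\check{h}}\,\omega(\check{h})$ are divisible by the local boundary equation; these are precisely claims (1) and (2) in the proof of Theorem \ref{thm:theta on q expansions}, and combined with the observation that only $\check{h}=0$ survives restriction to a component attached to a ray spanned by a positive-definite form (its coefficient being $a(0)\otimes 0=0$), they give the asserted vanishing there --- this is the sheaf-theoretic content of the formula $\Theta(f)=\sum a(\check{h})\otimes\check{h}\,q^{\check{h}}$. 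Note also that along components of rank $r<m$ the projection $pr_{\mu}$ does \emph{not} kill derivatives in the directions of the lower-rank Shimura variety (there $\mathcal{P}_{\mu}$ captures part of $\omega_{\mathcal{A}_{r}}$, and the restriction of $\Theta(f)$ involves theta operators of the smaller group acting on the leading Fourier--Jacobi coefficient), so the vanishing at such components requires a separate argument; the paper's own one-sentence justification preceding the statement is no more detailed than yours here, but your argument, as literally written, rests on the false containment above.
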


\subsubsection{The isomorphism between $\mathcal{P}_{\mu}\otimes\mathcal{Q}$ and
$\check{H}\otimes\mathcal{O}_{\overline{S}}$ when $r=m$}

We now turn to determining the effect of $\Theta$ on Fourier-Jacobi
expansions. This will be done at \emph{maximally degenerate} cusps
only. We therefore take $r=m$ and denote by $\xi\in S^{*}$ a cusp
of rank $m$. Note that there are only finitely many such cusps. Nevertheless,
there are sufficiently many of them to lie in every irreducible component
of $\overline{S}$ \cite{Lan2}. This will allow us to apply the $q$-expansion
principle with these cusps only, not having to worry about expansions
at lower rank cusps, where the formulae are not as nice.
\begin{lem}
\label{lem:Identification}Let $x\in\overline{S}$ be any point lying
above $\xi$. Let $g$ be a representative of the cusp label $[g]_{m}$
to which $\xi$ belongs, $H=H_{g}$ the rank-$m^{2}$ lattice of hermitian
bilinear forms on $Y=Y_{g}$ as in $\S$\ref{subsec:The-local-charts},
and $\check{H}$ its $\mathbb{Z}$-dual. Then there is a canonical
identification of the completed stalk $(\mathcal{P}_{\mu}\otimes\mathcal{Q})_{x}^{\wedge}$
with $\check{H}\otimes\mathcal{\widehat{O}}_{\overline{S},x}$,
\begin{equation}
(\mathcal{P}_{\mu}\otimes\mathcal{Q})_{x}^{\wedge}\simeq\check{H}\otimes\mathcal{\widehat{O}}_{\overline{S},x}.\label{eq:P_m_tensorQ}
\end{equation}
This identification is compatible with the natural action of $\Gamma$
on both sides.
\end{lem}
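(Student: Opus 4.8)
The plan is to build the identification from the explicit descriptions of $\mathcal{P}_{\mu}$ and $\mathcal{Q}$ at a rank-$m$ cusp that were recorded in $\S$\ref{subsec:Fourier-Jacobi-expansions}. Since we may, as noted there, transport all structures by $g$, I would first reduce to the standard cusp $[g]_m=[1]_m$, and then work on a fixed local chart $\mathfrak{Z}([1]_m,\sigma)$ so that everything is trivialized. Over such a chart the semi-abelian scheme $\mathcal{G}$ has toric part $T_X=\mathrm{Hom}_{\mathcal{O}_E}(X,\mathfrak{d}_E^{-1}\otimes\mathbb{G}_m)$ with $X$ a rank-$m$ projective $\mathcal{O}_E$-module, and (because $r=m$) the three basic bundles depend only on the toric and abelian parts, so they are constant along $C_\xi$; in particular we have the canonical identifications $\mathcal{P}_\mu\simeq\mathcal{O}\otimes_{\Sigma,\mathcal{O}_E}X$ and $\mathcal{Q}\simeq\mathcal{O}\otimes_{\overline{\Sigma},\mathcal{O}_E}X$ coming from the cotangent space at the origin of $T_X$, decomposed according to type.

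Next I would assemble these. Tensoring, $(\mathcal{P}_\mu\otimes\mathcal{Q})_x^{\wedge}\simeq(X\otimes_{\Sigma,\mathcal{O}_E}\widehat{\mathcal{O}}_{\overline S,x})\otimes(X\otimes_{\overline{\Sigma},\mathcal{O}_E}\widehat{\mathcal{O}}_{\overline S,x})$; viewing both tensor factors over $\widehat{\mathcal{O}}_{\overline S,x}$ this is $(X\otimes_{\mathbb{Z}}X)\otimes_{\mathcal{O}_E\otimes\mathcal{O}_E}\widehat{\mathcal{O}}_{\overline S,x}$, where the two copies of $\mathcal{O}_E$ act through $\Sigma$ and $\overline{\Sigma}$ respectively. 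The key observation is that $\check{H}$ is, by construction (see (\ref{eq:b_h}) and the discussion of $H$ as a lattice of hermitian forms on $Y$), naturally identified with a lattice of $\mathcal{O}_E$-sesquilinear forms, i.e. with a summand of $X\otimes_{\mathcal{O}_E}X$ where one factor is conjugated — precisely matching the $(\Sigma,\overline{\Sigma})$-bilinear object just produced. Concretely, to $\check h\in\check H$ one associates, via the pairing $\phi_X\colon Y\simeq X$, an element of $X\otimes_{\Sigma}X\otimes_{\overline\Sigma}\cdots$; I would make this pairing explicit and check it is an isomorphism onto the relevant type-component after $\otimes_{\mathbb{Z}_p}\mathcal{O}_p$, using that over $\mathcal{O}_p$ the hermitian form on $\Lambda_p$ is isomorphic to the $J_{n+m}$-form (the remark at the end of $\S$\ref{subsec:Linear-algebra}), so no subtlety from the different $\mathfrak{d}_E$ intervenes beyond a harmless twist that $\phi_X$ absorbs.

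Finally I would check $\Gamma$-equivariance. By $\S$\ref{subsec:Fourier-Jacobi-expansions}, $\gamma\in\Gamma=GL_r(E)\cap K$ acts on $X$ through the contragredient $\mathrm{st}^\vee$ of the standard representation, hence on $\mathcal{P}_\mu$ via $\gamma_*$ and on $\mathcal{Q}$ via $\gamma_*$ (with the appropriate conjugation on the $\mathcal{O}_E$-action built in), so on $\mathcal{P}_\mu\otimes\mathcal{Q}$ it acts by $\gamma_*\otimes\gamma_*$; on the other side $\Gamma$ acts on $\check H$ exactly by the induced action on hermitian forms, $b\mapsto {}^t\!\bar A\,b\,A$ in the notation of (\ref{eq:Parabolic}). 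Matching these two actions through the pairing of the previous paragraph is then a direct unwinding. The identification so obtained on each chart $\mathfrak{Z}([1]_m,\sigma)$ is manifestly independent of $\sigma$ and of the choice of basis of $H$ (it only used the canonical cotangent-space descriptions and $\phi_X$), so it glues to the completed stalk at $x$, giving (\ref{eq:P_m_tensorQ}).

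The main obstacle I anticipate is bookkeeping rather than conceptual: getting the conjugations, the roles of $\Sigma$ versus $\overline\Sigma$, and the inverse-different twist in $T_X$ to line up so that the pairing lands in the correct $\mathcal{O}_E\otimes\mathcal{O}_E$-isotypic piece and is an isomorphism on the nose (not just up to a fixed scalar). Here the choice, made back in $\S$\ref{subsec:Linear-algebra} and $\S$\ref{subsec:bigIg}, to normalize the model pairings with $J_{n+m}$ rather than $I_{n,m}$ pays off, and I would lean on it to keep the identification canonical.
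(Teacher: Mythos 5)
Your proposal is correct and follows essentially the same route as the paper's proof: identify $\mathcal{P}_{\mu,x}^{\wedge}$ and $\mathcal{Q}_{x}^{\wedge}$ with $X\otimes_{\mathcal{O}_{E},\Sigma}R$ and $X\otimes_{\mathcal{O}_{E},\overline{\Sigma}}R$ via the cotangent space of the toric part $T_{X}$, use the pairing $Y\otimes Y\to\check{H}$ from the construction of $\Xi$ together with $\phi_{X}$ to get (\ref{eq:P_m_tensorQ}), and match the contragredient action $^{t}\gamma^{-1}\times{}^{t}\overline{\gamma}^{-1}$ on $\mathcal{P}_{\mu}\otimes\mathcal{Q}$ against the action $h\mapsto\gamma h\,^{t}\overline{\gamma}$ on $H$. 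The bookkeeping you flag (conjugations, $\Sigma$ versus $\overline{\Sigma}$, the inverse different) is exactly what the paper's short verification carries out.
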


\begin{proof}
Let $R=\mathcal{\widehat{O}}_{\overline{S},x}$. It is enough to deal
with the standard cusp. When $r=m$ the stalks of the vector bundles
$\mathcal{P}_{\mu}$ and $\mathcal{Q}$ are the $\Sigma$ and $\overline{\Sigma}$-parts
of $\omega_{T_{X}}$, the cotangent bundle of the toric part of $\mathcal{G}$.
Since $T_{X}=\mathrm{Hom}_{\mathcal{O}_{E}}(X,\mathfrak{d}_{E}^{-1}\otimes\mathbb{G}_{m}),$
it follows that there are canonical identifications
\[
\mathcal{P}_{\mu,x}^{\wedge}=X\otimes_{\mathcal{O}_{E},\Sigma}R,\,\,\mathcal{Q}_{x}^{\wedge}=X\otimes_{\mathcal{O}_{E},\overline{\Sigma}}R.
\]
The map $Y\otimes Y\to\check{H}$ described in the course of the construction
of the torsor $\Xi$ in $\S$\ref{subsec:The-local-charts} yields an
isomorphism
\[
(Y\otimes_{\mathcal{O}_{E},\Sigma}R)\otimes_{R}(Y\otimes_{\mathcal{O}_{E},\overline{\Sigma}}R)\simeq\check{H}\otimes R=\mathrm{Hom}(H,R).
\]
Explicitly, $(y\otimes1)\otimes(y'\otimes1)$ goes to the map sending
$h\in H$ to $((h-1)y',y)$. Using the isomorphism $\phi_{X}:Y\simeq X$
we get the isomorphism (\ref{eq:P_m_tensorQ}).

Let us verify that the isomorphism given in the lemma is compatible
with the natural actions of our group $\Gamma$ on $\mathcal{P}_{\mu}\otimes\mathcal{Q}$
and $\check{H}$. At the end of $\S$\ref{subsec:Fourier-Jacobi-expansions}
we computed the action of $\gamma\in\Gamma$ on $(\mathcal{P}_{\mu}\otimes\mathcal{Q})_{x}^{\wedge}$
to be through $^{t}\gamma^{-1}\times\,{}^{t}\overline{\gamma}^{-1}\in GL_{m}\times GL_{m}.$
On the other hand, $\gamma$ acts on $h\in H$ via $h\mapsto\gamma h\,^{t}\overline{\gamma}$.
As $\check{H}$ is the $\mathbb{Z}$-dual of $H$, these actions match
each other.
\end{proof}

\subsubsection{The main theorem}
\begin{thm}
\label{thm:theta on q expansions}Let $\xi$ be a rank-$m$ cusp.
Let $f$ be a section of $\mathcal{E}_{\rho}$ and
\[
f=\sum_{\check{h}\in\check{H}^{+}}a(\check{h})\cdot q^{\check{h}}
\]
its Fourier-Jacobi expansion at $\xi$, as in Proposition \ref{prop:FJ_expansions}.
Then the section $\Theta(f)$ of $\mathcal{E}_{\rho}\otimes\mathcal{P}_{\mu}\otimes\mathcal{Q}$
has the Fourier-Jacobi expansion
\[
\Theta(f)=\sum_{\check{h}\in\check{H}^{+}}a(\check{h})\otimes\check{h}\cdot q^{\check{h}},
\]
using the identification from Lemma \ref{lem:Identification}.
\end{thm}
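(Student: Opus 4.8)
The plan is to reduce the statement to a purely local computation on the formal local chart $\mathfrak{Z}([g]_m,\sigma)_\xi$, using the fact that over the Igusa cover all three basic vector bundles $\mathcal{Q}$, $\mathcal{P}_\mu$, $\mathcal{P}_0$ are canonically trivialized, so that the ``pre-theta'' operator $\widetilde\Theta$ becomes literally the termwise differential $d$ applied to power series. First I would pull everything back to the partial compactification $\overline{T}_{t,s}$ of the Igusa scheme (with $t\ge s$), where by the extended version of the construction in $\S$\ref{subsec:bigIg} the trivialization $\varepsilon_\rho:\mathcal{O}_\rho\simeq\tau^*\mathcal{E}_\rho$ is available. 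On the formal fiber over $\xi$ the toric part $T_X$ and abelian part $\mathcal{A}_{r,\xi}$ of $\mathcal{G}$ are constant, hence (as noted after Proposition \ref{prop:FJ_expansions}) $\mathcal{E}_\rho$ itself is constant there; so the Fourier--Jacobi expansion $f=\sum a(\check h)q^{\check h}$ lives in $H^0(C_\xi,\mathcal{E}_\rho)\otimes \widehat{\mathcal{O}}$, and by definition of $\widetilde\Theta$ we get $\widetilde\Theta(f)=\sum a(\check h)\otimes d(q^{\check h})$, since $a(\check h)$, being a section over the constant $C_\xi$, is a ``constant'' as far as the formal/algebraic $q$-directions are concerned.

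Next I would identify $d(q^{\check h})$ with $\omega(\check h)=dq^{\check h}/q^{\check h}$ times $q^{\check h}$, i.e. $d(q^{\check h})=q^{\check h}\cdot\omega(\check h)$, using the logarithmic differentials of $\S$\ref{subsec:Fourier-Jacobi-expansions}; this is where the sections $\omega(\check h)$ and their additivity (part (ii) of the Proposition on logarithmic differentials) enter. So $\widetilde\Theta(f)=\sum_{\check h} a(\check h)\otimes\omega(\check h)\cdot q^{\check h}$ as a section of $\mathcal{E}_\rho\otimes\Omega_{\overline{S}/W}^1[d\log\infty]$. Now I apply $\mathrm{KS}^{-1}$: by the extended Kodaira--Spencer isomorphism $\mathrm{KS}:\mathcal{P}\otimes\mathcal{Q}\simeq\Omega^1[d\log\infty]$, so $\mathrm{KS}^{-1}(\omega(\check h))$ is a section of $\mathcal{P}\otimes\mathcal{Q}$, and after applying $1\otimes pr_\mu\otimes 1$ it becomes a section of $\mathcal{P}_\mu\otimes\mathcal{Q}$. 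The key geometric input, and the heart of the proof, is to show that \emph{under the identification of Lemma \ref{lem:Identification}}, the image of $\omega(\check h)$ in $\mathcal{P}_\mu\otimes\mathcal{Q}$ is exactly the element $\check h\in\check H\otimes\mathcal{O}$. This is essentially the compatibility of the Kodaira--Spencer map at the cusp with the description of the torsor $\Xi$: the pairing $Y\otimes Y\to\check H$ sending $(y,y')\mapsto(h\mapsto((h-1)y',y))$ that was used to define $\Xi_\chi=\mathcal{P}^\times|_{c(\phi_X y)\times c^t y'}$ is the \emph{same} pairing that, cotangent-space-theoretically, computes $\mathrm{KS}$ on the toric part $T_X=\mathrm{Hom}_{\mathcal{O}_E}(X,\mathfrak{d}_E^{-1}\otimes\mathbb{G}_m)$; tracing through the definitions, the monomial $q^{\check h}$ is a section of $\mathcal{L}(-\check h)$ and $d\log q^{\check h}$ corresponds, via $\mathrm{KS}^{-1}$ followed by $pr_\mu\otimes 1$, to $\check h$ viewed inside $(Y\otimes_{\overline\Sigma}\mathcal{O})\otimes(Y\otimes_\Sigma\mathcal{O})\simeq\check H\otimes\mathcal{O}$, precisely the identification of the Lemma.

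The remaining work is bookkeeping: checking $\Gamma$-equivariance so that the formal computation on a single chart $\mathfrak{Z}([g]_m,\sigma)_\xi$ descends to the actual cuspidal component $Z_\xi$ (this is already built into part (iii) of the log-differential Proposition, which sends $\omega(\check h)$ to $\omega(\gamma\check h)$, matched against the action of $\Gamma$ on $\check H$ computed at the end of $\S$\ref{subsec:Fourier-Jacobi-expansions} and in Lemma \ref{lem:Identification}); and checking that the identification is independent of the choice of chart $\sigma$ and of the point $x$ above $\xi$, which again follows from the corresponding independence statements for $\omega(\check h)$ and for Lemma \ref{lem:Identification}. I expect the main obstacle to be the explicit matching of the Kodaira--Spencer map on the semi-abelian degeneration with the bilinear pairing defining $\Xi$ — that is, verifying that ``$d\log$ of the Fourier--Jacobi variable $q^{\check h}$'' is literally $\check h$ under $\mathrm{KS}^{-1}\circ(pr_\mu\otimes 1)$ and the Lemma's identification. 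This requires carefully invoking the description of the extended Kodaira--Spencer map at the boundary (\cite{Lan}, Theorem 6.4.1.1(4), or \cite{F-C}, Ch.~III, Cor.~9.8) and unwinding how it acts on the cotangent space of the toric part $T_X$; everything else is formal manipulation of power series and torsor-twisted trivializations.
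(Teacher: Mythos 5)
Your overall strategy is the paper's: pass to the partial compactification $\overline{T}_{t,s}$ of the Igusa cover, compute in the completed local ring at a point above the rank-$m$ cusp, and reduce everything to the identification of $\mathrm{KS}^{-1}(\omega(\check h))$, projected to $\mathcal{P}_{\mu}\otimes\mathcal{Q}\simeq\check H\otimes\mathcal{O}$, with $\check h$ itself, via Lan's explicit description of the extended Kodaira--Spencer map at the boundary. That second half of your argument matches the paper's ``Claim (2)'' and is correctly sourced.

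However, there is a genuine gap in the first half: your assertion that $\widetilde\Theta(f)=\sum a(\check h)\otimes d(q^{\check h})$, on the grounds that $a(\check h)$ is ``constant'', is false when $m<n$. The coefficients $a(\check h)$ are sections of $\mathcal{L}(\check h)\otimes\mathcal{E}_{\rho}$ over the abelian variety $C_{\xi}$, which has positive dimension $m(n-m)$; they are theta functions in the coordinates $u_i$ of $C_{\xi}$, not constants, and constancy of the bundle $\mathcal{E}_{\rho}$ along $C_{\xi}$ does not make its sections constant. The full differential in the definition of $\widetilde\Theta$ therefore produces the extra terms $\sum da(\check h)\cdot q^{\check h}$, with $da(\check h)\in\Omega_{C/W_s}$, and these do \emph{not} vanish at the level of $\widetilde\Theta$. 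The reason they disappear from $\Theta(f)$ is the paper's ``Claim (1)'': the Kodaira--Spencer map for the semi-abelian scheme $\mathcal{G}/C$ identifies $\mathcal{P}_{0}\otimes\mathcal{Q}$ with $\Omega_{C/W_s}$, and this is compatible (via Lan, Remark 4.6.2.7 and Theorem 4.6.3.16) with the extended isomorphism $\mathrm{KS}:\mathcal{P}\otimes\mathcal{Q}\simeq\Omega_{\overline S/W_s}[d\log\infty]$; hence the $da(\check h)$ terms land in $\mathcal{P}_{0}\otimes\mathcal{Q}$ and are annihilated exactly by the projection $1\otimes pr_{\mu}\otimes 1$ in the definition of $\Theta$. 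Without this step your computation only works when $n=m$ (where $C_{\xi}$ is a point), so you need to add it; with it, your argument coincides with the paper's proof.
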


\begin{proof}
We may work over $W_{s}$ as a base ring. Fix any $t\ge s$ and let
$\overline{T}=\overline{T}_{t,s}.$ We assume that $\xi$ is the standard
cusp of rank $m$ (regarded as a $k$-valued point of $S^{*},$ where
$k$ is algebraically closed and contains $\kappa$), and fix a geometric
point $x$ in the toroidal compactification lying above it. Fix a
local chart $\mathfrak{Z}([g]_{m},\sigma)$ containing $x$ (where
$[g]_{m}=[1]_{m}$ by our assumptions) and let $\mathfrak{Z}([g]_{m},\sigma)_{\xi}$
be the pre-image of $\xi$ in it. As the abelian and toric parts of
$\mathcal{G}$ are constant over $\mathfrak{Z}([g]_{m},\sigma)_{\xi}$
we may fix admissible trivializations $\epsilon^{1}$ and $\epsilon^{2}$
of the graded pieces $gr^{1}$ and $gr^{2}$ of $\mathcal{G}[p^{t}]^{0}$,
over the complete local ring at $x$. Indeed, the point $\xi$ on
the $0$-dimensional Shimura variety $S_{\boldsymbol{G}_{m},K_{m,g}}$
corresponds to an $n-m$ dimensional abelian variety $\mathcal{A}_{m}$
over the algebraically closed field $k,$ with associated PEL structure
of signature $(n-m,0)$. Fix a symplectic trivialization
\[
\epsilon^{1}:\mathfrak{G}[p^{t}]^{n-m}\simeq\mathcal{A}_{m}[p^{t}]=gr^{1}.
\]
Similarly, using the standard basis of $\Lambda\cap V_{m}$ we get
a standard basis on $X$, which gives us a trivialization
\[
\epsilon^{2}:(\mathfrak{d}_{E}^{-1}\otimes\mu_{p^{t}})^{m}\simeq T_{X}[p^{t}]=gr^{2}.
\]
As usual, since $t\ge s$, these trivializations induce trivializations
of $\mathcal{P}_{0},\mathcal{P}_{\mu}$ and $\mathcal{Q}$, hence
of $\mathcal{E}_{\rho}$. They also determine a choice of a point
$\overline{x}$ in $\overline{T}$ above $x$. (If $\sigma$ is replaced
by a $\Gamma$-equivalent cone $\gamma(\sigma)$ the trivialization
$\epsilon^{2}$ is twisted by the action of $\gamma$ on $X$, and
this results in a different $\overline{x}$. The choice of $\epsilon^{1}$
was also arbitrary, and effects the point $\overline{x}$ in a similar
way.) 

We use $R=\mathcal{\widehat{O}}_{\overline{T},\overline{x}}\simeq\mathcal{\widehat{O}}_{\overline{S},x}$
as the ring in which we compute $\Theta$. Recall that the Fourier-Jacobi
coefficient $a(\check{h})$ is a section of the vector bundle $\mathcal{L}(\check{h})\otimes\mathcal{E}_{\rho}$
over the abelian scheme $C$ of relative dimension $m(n-m)$  and that $\mathcal{E}_{\rho}$
has already been trivialized by our choices. Trivializing also the
pull-back of the line bundles $\mathcal{L}(\check{h})$ to $\mathrm{Spec(}R)$,
we may write the ring $R$ as
\[
R=W_{s}(\overline{\kappa})[[u_{1},\dots,u_{m(n-m)},q_{1},\dots,q_{m^{2}}]],
\]
where the $u_{i}$ are pull-backs of local coordinates on $C$ at
the image of $x$, and we may assume that the $a(\check{h})$ are
(vector-valued) functions of the $u_{i}$. We now have
\[
d(\tau^{*}f)=\sum_{\check{h}\in\check{H}^{+}}da(\check{h})\cdot q^{\check{h}}+\sum_{h\in\check{H}^{+}}a(\check{h})\cdot\frac{dq^{\check{h}}}{q^{\check{h}}}\cdot q^{\check{h}}.
\]
Recall that the image of $dq^{\check{h}}/q^{\check{h}}$ modulo $\Omega_{C/W_{s}}$
is $\omega(\check{h})\in\Omega_{\mathfrak{Z}/C}[d\log\infty].$ To
complete the proof of the theorem we shall show the following two
claims.
\begin{enumerate}
\item For any $\eta\in\Omega_{C/W},$ we have $\eta\in\mathrm{KS}(\mathcal{P}_{0}\otimes\mathcal{Q}).$ 
\item The resulting isomorphism $\mathrm{KS}^{-1}:\Omega_{\mathfrak{Z}/C}[d\log\infty]\simeq\mathcal{P}_{\mu}\otimes\mathcal{Q}\simeq\check{H}\otimes R$
(see Lemma \ref{lem:Identification}) carries $\omega(\check{h})$
to $\check{h}\otimes1$.
\end{enumerate}
Indeed, by (1), when we follow the definition of $\Theta$ and mod
out by $\mathcal{P}_{0}\otimes\mathcal{Q}$, the first sum, containing
the $da(\check{h})$'s, disappears. The second sum provides the desired
formula, by (2).

\medskip{}
\textbf{Proof of (1):} This follows from the discussion of the Kodaira-Spencer
map for semi-abelian schemes in \cite{Lan} $\S$4.6.1. Let $C$ assume
the role of the base-scheme denoted there by $S$, and $\mathcal{G}$
the semi-abelian scheme denoted there by $G^{\natural}.$ Then Lan
constructs a Kodaira-Spencer map for semi-abelian schemes $\mathrm{KS}_{\mathcal{G}/C},$
which in our case is an isomorphism
\[
\mathrm{KS}_{\mathcal{G}/C}:\mathcal{P}_{0}\otimes\mathcal{Q}\simeq\Omega_{C/W_{s}}.
\]
Note that Lan allows the abelian part to deform as well, but in our
case $\mathcal{A}=\mathcal{A}_{m}$ is constant. This implies that
the Kodaira-Spencer map, which is a-priori defined on $\omega_{\mathcal{A}}\otimes\omega_{\mathcal{G}},$
factors through its quotient $\omega_{\mathcal{A}}\otimes\omega_{T}$.
In addition, because of the constraints imposed by the endomorphisms,
we may restrict it to $\omega_{\mathcal{A}}(\Sigma)\otimes\omega_{T}(\overline{\Sigma})=\mathcal{P}_{0}\otimes\mathcal{Q}$
without losing any information. Finally, \cite{Lan} Remark 4.6.2.7
and Theorem 4.6.3.16 imply that the diagram
\[
\begin{array}{ccc}
\mathcal{P}_{0}\otimes\mathcal{Q} & \overset{\mathrm{KS}_{\mathcal{G}/C}}{\longrightarrow} & \Omega_{C/W_{s}}\\
\cap &  & \cap\\
\mathcal{P}\otimes\mathcal{Q} & \overset{\mathrm{KS}}{\longrightarrow} & \Omega_{\overline{S}/W_{s}}[d\log\infty]
\end{array}
\]
 is commutative, and this proves (1).

\medskip{}
\textbf{Proof of (2):} The second claim goes to the root of how $\mathrm{KS}$
is defined on $\overline{S}$. See \cite{Lan} $\S$4.6.2, especially
the discussion on p. 269, preceding Definition 4.6.2.10. Fix a basis
$y_{1},\dots,y_{m}$ of $Y$ and let $\chi_{i}=\phi_{X}(y_{i})$ be
the corresponding basis of $X$. Then as a basis of $\check{H}$ we
may take the elements $\check{h}_{ij}=[y_{i}\otimes y_{j}]$ (\emph{cf.
 }the proof of Lemma \ref{lem:Identification}). The corresponding
element of the stalk of $\mathcal{P}_{\mu}\otimes\mathcal{Q}$ at
$x$ is $\chi_{i}\otimes\chi_{j}.$ The variable $q_{ij}=q^{\check{h}_{ij}}$
is then a generator of the invertible $R$-module denoted in \cite{Lan}
by $I(y_{i},\chi_{j}),$ and the extended Kodaira-Spencer homomorphism
is defined in \cite{Lan}, Definition 4.6.2.12 so that it takes $\chi_{i}\otimes\chi_{j}$
to $d\log(q_{ij})=\omega(\check{h}_{ij}).$ The base schemes $S$
and $S_{1}$ in \cite{Lan} are in our case $\mathrm{Spec}(R)$ and
its generic point.
\end{proof}
\begin{cor}
Let $f\in H^{0}(S_{s}^{\mathrm{ord}},\mathcal{E}_{\rho})$ and let
$h$ be the Hasse invariant (\ref{eq:hasse_inv}). Then $\Theta(hf)=h\Theta(f).$
\end{cor}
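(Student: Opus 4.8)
The plan is to deduce this directly from Theorem \ref{thm:theta on q expansions} together with the fact, established just above, that the Fourier-Jacobi expansion of $h$ at any rank-$m$ cusp is $1$, via a $q$-expansion principle at rank-$m$ cusps. The statement only makes sense in characteristic $p$, so I take $s=1$ throughout.

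First I would reduce the corollary to the assertion that $\Theta(h)=0$. Since $h$ does not vanish on $\overline{S}^{\mathrm{ord}}$ by Corollary \ref{cor:vanishing locus of Hasse}, it extends to a section over $\overline{S}^{\mathrm{ord}}$ of $\mathcal{L}^{p^{2}-1}=\det(\mathcal{Q})^{p^{2}-1}$, which is an automorphic line bundle of classical type; by the extension of $\Theta$ to the boundary constructed above, $\Theta(h)$ likewise extends to a section of $\mathcal{L}^{p^{2}-1}\otimes\mathcal{P}_{\mu}\otimes\mathcal{Q}$ over $\overline{S}^{\mathrm{ord}}$. Because $\widetilde{\Theta}$ is a connection, the connection it induces on a tensor product is the tensor-product connection, and the maps $pr_{\mu}$ and $\mathrm{KS}^{-1}$ occurring in the definition of $\Theta$ are $\mathcal{O}_{S}$-linear and act only on the differential factor, the operator $\Theta$ obeys the Leibniz rule
\[
\Theta(hf)=\Theta(h)\cdot f+h\cdot\Theta(f)
\]
as sections of $\mathcal{E}_{\rho}\otimes\mathcal{L}^{p^{2}-1}\otimes\mathcal{P}_{\mu}\otimes\mathcal{Q}$ over $\overline{S}^{\mathrm{ord}}$. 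Hence it suffices to show $\Theta(h)=0$.

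To prove $\Theta(h)=0$ I would apply Theorem \ref{thm:theta on q expansions} with $f$ replaced by $h$: at a rank-$m$ cusp $\xi$, writing the Fourier-Jacobi expansion of $h$ as $\sum_{\check h\in\check H^{+}}a(\check h)\,q^{\check h}$, the section $\Theta(h)$ has expansion $\sum_{\check h\in\check H^{+}}a(\check h)\otimes\check h\cdot q^{\check h}$. But by the Proposition preceding Corollary \ref{cor:vanishing locus of Hasse}, $a(0)$ is the trivializing section and $a(\check h)=0$ for $\check h\neq 0$, so the only surviving term is $a(0)\otimes 0=0$; thus the Fourier-Jacobi expansion of $\Theta(h)$ vanishes at every rank-$m$ cusp. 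Now invoke the $q$-expansion principle: every irreducible component of $\overline{S}$ contains a rank-$m$ cusp (\cite{Lan2}); such cusps lie in $\overline{S}^{\mathrm{ord}}$, since the expansion of $h$ there is $1\neq 0$; and $\overline{S}^{\mathrm{ord}}$ is smooth, hence reduced with irreducible connected components, so a global section of a locally free sheaf on it whose completed stalk vanishes at one such cusp in each component is identically zero. Applying this to $\Theta(h)$ yields $\Theta(h)=0$ on $\overline{S}^{\mathrm{ord}}$, hence on $S^{\mathrm{ord}}$, and the corollary follows. Alternatively one may dispense with the Leibniz rule and simply compare the Fourier-Jacobi expansions of $\Theta(hf)$ and $h\Theta(f)$ directly, using the multiplicativity of Fourier-Jacobi expansions and the fact that that of $h$ is $1$. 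There is no genuine difficulty here; the only points requiring care are the justification of the $q$-expansion principle — that rank-$m$ cusps meet every component of $\overline{S}^{\mathrm{ord}}$ — and the fact, recorded above, that $\Theta(h)$ extends holomorphically over the boundary.
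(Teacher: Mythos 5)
Your proof is correct and is essentially the argument the paper has in mind: the paper's own proof is literally ``Obvious,'' meaning exactly the comparison of Fourier--Jacobi expansions via Theorem \ref{thm:theta on q expansions}, the fact that the expansion of $h$ at rank-$m$ cusps is $1$, and the $q$-expansion principle --- i.e.\ the ``alternative'' route you mention at the end. Your main route, reducing via the Leibniz rule to $\Theta(h)=0$, is a harmless elaboration of the same idea and is also valid.
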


\begin{proof}
Obvious.
\end{proof}
\begin{cor}
(i) Let $f\in H^{0}(\mathcal{S}^{\mathrm{ord}},\mathcal{E}_{\rho}).$
Then $\Theta(f)=0$ if and only if the FJ expansion of $f$ at every
rank $m$ cusp is constant.

(ii) $f\in\ker(\Theta)$ if and only if its Fourier-Jacobi
expansion at every rank $m$ cusp is supported on $\check{h}\in p\check{H}^{+}.$ 
\end{cor}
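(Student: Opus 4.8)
The plan is to deduce both assertions directly from the Fourier--Jacobi formula of Theorem \ref{thm:theta on q expansions}, together with the $q$-expansion principle, the only extra ingredient being an elementary observation about when an element of the form $a\otimes\check h$ vanishes. So first I would isolate that observation. Fix a rank-$m$ cusp $\xi$ and a nonzero $\check h\in\check H^{+}$, and write $\check h=p^{k}d\,\check h_{0}$ with $k\ge 0$, $d$ prime to $p$, and $\check h_{0}$ a primitive vector of the free $\mathbb{Z}$-module $\check H$. Choosing a splitting $\check H=\mathbb{Z}\check h_{0}\oplus\check H'$, the map $a\mapsto a\otimes\check h$ from $M:=H^{0}(C_{\xi},\mathcal{L}(\check h)\otimes\mathcal{E}_{\rho})$ into $M\otimes_{\mathbb{Z}}\check H$ becomes, on the direct summand $M\otimes\mathbb{Z}\check h_{0}\cong M$, multiplication by the integer $p^{k}d$. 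Since $C_{\xi}$ is flat over the base and $\mathcal{L}(\check h)\otimes\mathcal{E}_{\rho}$ is locally free, $M$ is $p$-adically torsion-free over $W$; hence, working $p$-adically (where $d$ is a unit), $a\otimes\check h=0$ forces $a=0$, whereas in characteristic $p$ one has $p=0$ on $M$, so $a\otimes\check h=0$ holds exactly when $a=0$ or $k\ge 1$, i.e.\ when $a=0$ or $\check h\in p\check H$. The identification of Lemma \ref{lem:Identification} matches this up, $\Gamma$-equivariantly, with the coefficient of $q^{\check h}$ appearing in Theorem \ref{thm:theta on q expansions}.

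Granting this, Theorem \ref{thm:theta on q expansions} says the Fourier--Jacobi expansion of $\Theta(f)$ at $\xi$ is $\sum_{\check h\in\check H^{+}}a(\check h)\otimes\check h\cdot q^{\check h}$, so it vanishes identically precisely when $a(\check h)=0$ for every $\check h\ne 0$ --- i.e.\ the expansion of $f$ is constant --- in the $p$-adic setting of (i), and precisely when $a(\check h)=0$ for every $\check h\in\check H^{+}\smallsetminus p\check H$ --- i.e.\ the expansion of $f$ is supported on $p\check H^{+}$ --- in the characteristic-$p$ setting of (ii). It therefore remains only to see that $\Theta(f)=0$ if and only if $\Theta(f)$ has vanishing Fourier--Jacobi expansion at every rank-$m$ cusp. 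One direction is trivial; for the other I would invoke the $q$-expansion principle, together with the facts that the rank-$m$ cusps lie in $\overline{S}^{\mathrm{ord}}$ (Corollary \ref{cor:vanishing locus of Hasse}) and, by \cite{Lan2}, already meet every irreducible component of $\overline{S}$.

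The one point requiring genuine care --- and the step I expect to be the main obstacle --- is that $\Theta(f)$ is only a section over the \emph{non-proper} partial compactification $\overline{S}^{\mathrm{ord}}$, so one cannot quote a ``sections inject into their $q$-expansions'' statement off the shelf. Instead one argues that the formal vanishing of $\Theta(f)$ at a rank-$m$ cusp $\xi$ propagates: it forces $\Theta(f)$ to vanish in a Zariski neighbourhood of $\xi$, hence on the whole connected component of $\overline{S}^{\mathrm{ord}}$ through $\xi$ (in characteristic $p$ this follows from smoothness, hence reducedness and irreducibility, of that component; over $W_{s}$ with $s>1$ one reduces to $s=1$ by induction along $0\to\mathcal{F}\overset{p}{\to}\mathcal{F}\to\mathcal{F}/p\to 0$). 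Since the connected components of $\overline{S}^{\mathrm{ord}}$ are exactly the dense ordinary parts of those of $\overline{S}$, each contains such a cusp, and one concludes $\Theta(f)=0$ everywhere. All remaining verifications --- the $\Gamma$-equivariance bookkeeping and the finiteness and torsion-freeness of the coefficient modules --- are routine given the machinery set up in Section \ref{sec:Toroidal-compactifications-and}.
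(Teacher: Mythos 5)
Your proof is correct and follows essentially the same route as the paper: combine Theorem \ref{thm:theta on q expansions} with the Fourier--Jacobi expansion principle (rank-$m$ cusps meeting every irreducible component, via Corollary A.2.3 of \cite{Lan2}), plus the elementary observation that $a(\check h)\otimes\check h$ vanishes iff $a(\check h)=0$ or (in characteristic $p$) $\check h\in p\check H$. The only difference is one of economy: where you re-derive the expansion principle by propagating formal vanishing at a cusp across each (irreducible, and for $s>1$ d\'evissage-reduced) component, the paper simply cites \cite{Lan}, Proposition 7.1.2.14, so your extra care about the non-proper $\overline{S}^{\mathrm{ord}}$ is sound but not needed beyond that citation.
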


\begin{proof}
(i) This follows from our theorem and the FJ-\emph{expansion principle}:
a $p$-adic modular form vanishes if and only if its FJ expansion
at every rank $m$ cusp vanishes. This principle was proved in \cite{Lan},
Proposition 7.1.2.14, under the assumption that every irreducible
component of $\overline{S}$ contains at least one rank $m$ cuspidal
stratum $Z([g]_{m},\sigma).$ This assumption was later verified,
for our Shimura variety among others, in Corollary A.2.3 of \cite{Lan2}.
(ii) Follows by the same argument, noticing that for $a(\check{h})\otimes\check{h}$
to vanish, it is necessary and sufficient that either $a(\check{h})=0$
or $\check{h}\in p\check{H}^{+}.$
\end{proof}

\section{\label{sec:Analytic-continuation-of}Analytic continuation of $\Theta$
to the non-ordinary locus}

\subsection{The almost ordinary locus}

\subsubsection{The stratum $S^{\mathrm{ao}}$}

In this section we assume that $n>m,$ as the question we are about
to discuss requires different considerations when $n=m,$ which will
be handled separately. Let $S$ denote, as in the beginning, the special
fiber of the Shimura variety $\mathcal{S}$. Thus $S$ is $nm$-dimensional,
smooth over $\kappa=\mathbb{F}_{p^{2}}$, and is stratified by the
Ekedahl-Oort strata \cite{Oo}, \cite{Mo2}, \cite{V-W}. The ($\mu$-)ordinary
stratum $S^{\mathrm{ord}}$ is open and dense, and the operator $\Theta$
acts on sections of the automorphic vector bundle $\mathcal{E}_{\rho}$
over it, sending them to sections of $\mathcal{E}_{\rho}\otimes\mathcal{P}_{\mu}\otimes\mathcal{Q}\simeq\mathcal{E}_{\rho}\otimes\mathcal{Q}^{(p)}\otimes\mathcal{Q},$
\[
\Theta:H^{0}(S^{\mathrm{ord}},\mathcal{E}_{\rho})\to H^{0}(S^{\mathrm{ord}},\mathcal{E}_{\rho}\otimes\mathcal{Q}^{(p)}\otimes\mathcal{Q}).
\]
Here we have used the fact that in characteristic $p$ the vector
bundle homomorphism $V_{\mathcal{P}}:\mathcal{P}\to\mathcal{Q}^{(p)}$
is surjective with kernel $\mathcal{P}_{0}$, so induces an isomorphism
$\mathcal{P}_{\mu}=\mathcal{P}/\mathcal{P}_{0}\simeq\mathcal{Q}^{(p)}.$
Our goal in this section is to study the analytic continuation of
$\Theta$ to all of $S$. This is reminiscent of the fact that the
theta operator on $GL_{2}$ (denoted by $A\theta$ in \cite{Ka2})
extends holomorphically across the supersingular points of the modular
curve.
\begin{prop}
There exists a unique EO stratum $S^{\mathrm{ao}}$ of dimension $nm-1.$
The homomorphism $V_{\mathcal{P}}$ is still surjective in every geometric
fiber over $S^{\mathrm{ao}}$, so $\mathcal{P}_{0}=\mathcal{P}[V_{\mathcal{P}}]$
extends to a rank $n-m$ vector bundle over $S^{\mathrm{ord}}\cup S^{\mathrm{ao}}.$
The same applies to $\mathcal{P}_{\mu}$ and of course to $\mathcal{Q}$,
hence every $p$-adic automorphic vector bundle $\mathcal{E}_{\rho}$
extends canonically to the open set $S^{\mathrm{ord}}\cup S^{\mathrm{ao}}$.
\end{prop}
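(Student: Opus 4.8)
The plan is to deduce the proposition from the structure of the Ekedahl--Oort (EO) stratification of $S$ near its top stratum, almost all of which is available in \cite{dS-G2} (see also \cite{Oo}, \cite{V-W}, \cite{Woo}); the only point requiring real work is the identification of the codimension-one stratum together with its basic invariants, and that is exactly what is carried out there, the rest being formal. Concretely: (i) recall the relevant EO facts; (ii) show that $S^{\mathrm{ord}}\cup S^{\mathrm{ao}}$ is open by an argument with closure relations; (iii) invoke the already-established behaviour of $\mathcal{Q},\mathcal{P}_{0},\mathcal{P}_{\mu}$ over $S_{\sharp}$ together with the twisting construction of $\S$\ref{subsec:Twisting}.

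For (i), I would recall that the EO strata of $S$ (with $n>m$) are finitely many, locally closed, with closure equal to a union of strata, and with a well-defined dimension function; $S^{\mathrm{ord}}$ is the unique open dense stratum. The first assertion of the proposition --- that there is a \emph{unique} stratum $S^{\mathrm{ao}}$ of dimension $nm-1$ --- is proved in \cite{dS-G2}; the second input I would quote from \cite{dS-G2} is that $S^{\mathrm{ao}}\subseteq S_{\sharp}$, i.e.\ the $\mathcal{O}_{E}$-linear truncated Barsotti--Tate group occurring along $S^{\mathrm{ao}}$ (obtained from the $\mu$-ordinary one by the minimal degeneration) still has $V_{\mathcal{P}}$ of full rank $m$ in every geometric fibre. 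Recovering these two statements from scratch would amount to classifying the $\mathcal{O}_{E}$-linear $F$-zips of signature $(n,m)$, computing the dimension function on the resulting poset, and reading off that the codimension-one type has $\ker V_{\mathcal{P}}$ of rank $n-m$; this is the genuinely computational heart of the matter, and it is precisely what \cite{dS-G2} supplies. Granting it, $V_{\mathcal{P}}$ is surjective in every geometric fibre over $S^{\mathrm{ord}}\cup S^{\mathrm{ao}}$, so, exactly as over all of $S_{\sharp}$, the kernel $\mathcal{P}_{0}=\ker V_{\mathcal{P}}$ is locally free of rank $n-m$ there, $\mathcal{P}_{\mu}=\mathcal{P}/\mathcal{P}_{0}$ is a rank-$m$ bundle, and $V_{\mathcal{P}}$ induces an isomorphism $\mathcal{P}_{\mu}\simeq\mathcal{Q}^{(p)}$ as in (\ref{eq:P_mu and Q^p}).

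For (ii), since $n>m$ the zero divisor of the Hasse invariant (\ref{eq:hasse_inv}) is reduced and equals $S^{\mathrm{no}}=S\smallsetminus S^{\mathrm{ord}}$; hence $S^{\mathrm{no}}$ is an effective Cartier divisor on the smooth variety $S$, so it is equidimensional of dimension $nm-1$. Each irreducible component $C$ of $S^{\mathrm{no}}$ is a union of EO strata, and its generic point lies in some stratum $S_{w}\subseteq S^{\mathrm{no}}$ with $\dim S_{w}\ge\dim C=nm-1$; since $\dim S_{w}\le\dim S^{\mathrm{no}}=nm-1$ this forces $S_{w}=S^{\mathrm{ao}}$ by uniqueness. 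Thus $S^{\mathrm{ao}}$ meets every component of $S^{\mathrm{no}}$ in a dense open subset, so $\overline{S^{\mathrm{ao}}}=S^{\mathrm{no}}$ and $S^{\mathrm{ao}}$ is open in $S^{\mathrm{no}}$. Consequently $S^{\mathrm{no}}\smallsetminus S^{\mathrm{ao}}$ is closed in $S^{\mathrm{no}}$, hence closed in $S$, and $S^{\mathrm{ord}}\cup S^{\mathrm{ao}}=S\smallsetminus(S^{\mathrm{no}}\smallsetminus S^{\mathrm{ao}})$ is open. (Alternatively, purely from EO combinatorics: the union of all EO strata of codimension $\ge 2$ equals the finite union of their closures, hence is closed, and it is the complement of $S^{\mathrm{ord}}\cup S^{\mathrm{ao}}$ once one knows $S^{\mathrm{ord}}$ and $S^{\mathrm{ao}}$ are the only strata of codimension $0$ and $1$.)

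For (iii), applying the twisting construction of $\S$\ref{subsec:Twisting} to the triple $(\mathcal{Q},\mathcal{P}_{\mu},\mathcal{P}_{0})$ over the open set $S^{\mathrm{ord}}\cup S^{\mathrm{ao}}$ produces, functorially in the representation $\rho$ of $M=GL_{m}\times GL_{m}\times GL_{n-m}$, a vector bundle $\mathcal{E}_{\rho}$ on that set whose restriction to $S^{\mathrm{ord}}$ is the $p$-adic automorphic vector bundle of Definition \ref{def:p_adic automorphic vector bundle}; this is the desired canonical extension. As indicated, the one substantial input is the EO classification of step (i), taken over from \cite{dS-G2}; everything else is bookkeeping with closures and the functoriality of the twist.
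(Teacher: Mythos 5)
Your proposal is correct and takes essentially the paper's route: the two substantive inputs (uniqueness of the codimension-one EO stratum, and surjectivity of $V_{\mathcal{P}}$ in every geometric fibre along it) are quoted from the literature --- the paper points to \cite{Woo} (Corollary 3.4.5, relying on \cite{Mo2}, and Proposition 3.5.6, reviewed as Proposition \ref{prop:DM_at_ao_points}) rather than \cite{dS-G2} --- after which constancy of the fibral rank over the non-singular base gives local freeness of $\mathcal{P}_{0}$ and $\mathcal{P}_{\mu}$, and twisting by $(\mathcal{Q},\mathcal{P}_{\mu},\mathcal{P}_{0})$ extends every $\mathcal{E}_{\rho}$. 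Your explicit verification that $S^{\mathrm{ord}}\cup S^{\mathrm{ao}}$ is open (via the divisor of $h$, or via EO closure relations) is a correct detail that the paper leaves implicit.
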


We call $S^{\mathrm{ao}}$ the \emph{almost-ordinary }locus. It is
the divisor of the Hasse invariant $h$ on $S^{\mathrm{ord}}\cup S^{\mathrm{ao}}$,
and, like any other EO stratum, is non-singular.
\begin{proof}
The uniqueness of the EO stratum in codimension 1 is proved in \cite{Woo},
Corollary 3.4.5, where it is deduced from the classification of the
EO strata by Weyl group elements and the calculation of their dimensions
in \cite{Mo2}. The assertion on $V_{\mathcal{P}}$ being surjective
in every geometric fiber follows from the computation of the Dieudonn{\'e}
space at geometric points of $S^{\mathrm{no}}$ (\cite{Woo}, Proposition
3.5.6), reviewed below. Since the base scheme is a non-singular variety,
constancy of the fibral rank of $V_{\mathcal{P}}$ suffices to conclude
that $\mathcal{P}_{0}$ and $\mathcal{P}_{\mu}$ are locally free
sheaves. Finally, $\mathcal{E}_{\rho}$ is constructed by twisting
the representation $\rho$ of $GL_{m}\times GL_{m}\times GL_{n-m}$
(with values in $\kappa$) by the vector bundles $\mathcal{Q},\mathcal{P}_{\mu}$
and $\mathcal{P}_{0}$ as in $\S$\ref{subsec:Twisting}.
\end{proof}

\subsubsection{Dieudonn{\'e} spaces}

Let $k$ be a perfect field of characteristic $p.$ For the following
see \cite{Oda}, \cite{B-W} and \cite{We2} (5.3). A polarized Dieudonn{\'e}
space over $k$ is a finite dimensional $k$-vector space $D$ equipped
with a non-degenerate skew-symmetric pairing $\left\langle ,\right\rangle $
and two linear maps $F:D^{(p)}\to D$ and $V:D\to D^{(p)}$ such that
$\mathrm{Im}(F)=\ker(V)$ and $\mathrm{Im}(V)=\ker(F),$ and such
that $\left\langle Fx,y\right\rangle =\left\langle x,Vy\right\rangle ^{(p)}$
for every $x\in D^{(p)}$ and $y\in D.$ It follows immediately from
the definition that $\dim D=2g$ and $F$ and $V$ have rank $g.$
If $M$ is a principally polarized Dieudonn{\'e} module over $W(k)$ then
$D=M/pM$ is a polarized Dieudonn{\'e} space. If $A$ is a principally
polarized abelian variety over $k$ then its de Rham cohomology $D=H_{dR}^{1}(A/k)$
is equipped with a canonical structure of a Dieudonn{\'e} space, which
may also be identified with the (contravariant) Dieudonn{\'e} module of
$A[p].$ The Hodge filtration is then related to $F$ via
\[
\omega=H^{0}(A,\Omega^{1})=(D^{(p)}[F])^{(p^{-1})}.
\]
It is essential for this that we work over a perfect base.

A polarized $\mathcal{O}_{E}$-Dieudonn{\'e} space is a polarized Dieudonn{\'e}
space admitting, in addition, endomorphisms by $\mathcal{O}_{E}$,
for which $F$ and $V$ are $\mathcal{O}_{E}$-linear and $\left\langle ax,y\right\rangle =\left\langle x,\overline{a}y\right\rangle $
($a\in\mathcal{O}_{E}$). Assume that $k$ contains $\kappa.$ Then
$D(\Sigma)$ and $D(\overline{\Sigma})$ are set in duality by the
pairing, hence are each of dimension $g,$ $V$ maps $D(\Sigma)$
to $D(\overline{\Sigma})^{(p)}$ and $D(\overline{\Sigma})$ to $D(\Sigma)^{(p)}$
and a similar statement, going backwards, holds for $F$. The type
$(n,m)$ of $\omega$ ($n=\dim\omega(\Sigma),$ $m=\dim\omega(\overline{\Sigma})$)
is called the \emph{type}, or \emph{signature}, of $D.$

Over a non-perfect base $\mathrm{Spec}(R)$ (in characteristic $p$,
say, as this is all that we need) one can still associate to a principally
polarized abelian scheme $A/R,$ or to its $p$-divisible group, a
Dieudonn{\'e} \emph{crystal} as in \cite{Gro}, and when \emph{evaluated}
at $\mathrm{(Spec}(R)\subset\mathrm{Spec}(R))$ it yields a polarized
$R$-module $D(A/R)$ with an $F$ and a $V$ as before, which may
be identified with $H_{dR}^{1}(A/R).$ If $R$ is an equi-characteristic
PD-thickening of $k$ then in fact $D(A/R)=R\otimes_{k}D(A_{k}/k)$
with the polarization, $F$ and $V$ extended $R$-linearly. The Hodge
filtration can not be read from $D(A/R)$ any more. In fact, Grothendieck's
theorem asserts that giving $(D(A/R),\omega)$ is tantamount to giving
the deformation of $A$ from $\mathrm{Spec}(k)$ to $\mathrm{Spec}(R).$
We shall apply these remarks later on when $k$ is algebraically closed,
$x\in S(k)$ is a geometric point, and $R=\mathcal{O}_{S,x}/\mathfrak{m}_{S,x}^{2}$
is its first infinitesimal neighborhood.

\medskip{}

Let $k$ be an algebraically closed field containing $\kappa$. Consider
the following polarized $\mathcal{O}_{E}$-Dieudonn{\'e} spaces. We use
the convention that $\mathcal{O}_{E}$ acts on the $e_{i}$ via $\Sigma$
and on the $f_{i}$ via $\overline{\Sigma}$. Write $\mathfrak{G}_\Sigma$ for the
$p$-divisible group $\mathfrak{G}_k$ equipped with the $\mathcal{O}_E$-action
inducing $\Sigma$ on the tangent space, and likewise $\mathfrak{G}_{\overline{\Sigma}}$.

\medskip{}

$(i)$ $D(\mathfrak{G}_{\Sigma}[p])=\mathrm{Span}_{k}\{e_{1},f_{1}\},$
$\left\langle e_{1},f_{1}\right\rangle =1,$ $Ff_{1}^{(p)}=e_{1},$
$Fe_{1}^{(p)}=0,$ $Vf_{1}=e_{1}^{(p)},$ $Ve_{1}=0$. Here $\omega=ke_{1}$
and the signature is $(1,0).$

\medskip{}

$(i)'$ $D(\mathfrak{G}_{\overline{\Sigma}}[p])=\mathrm{Span}_{k}\{e_{2},f_{2}\},$
$\left\langle f_{2},e_{2}\right\rangle =1,$ $Fe_{2}^{(p)}=f_{2},$
$Ff_{2}^{(p)}=0,$ $Ve_{2}=f_{2}^{(p)},$ $Vf_{2}=0.$ Note that $D(\mathfrak{G}_{\overline{\Sigma}}[p])=D(\mathfrak{G}_{\Sigma}[p])^{(p)},$
$\omega=kf_{2}$ and the signature is $(0,1).$

\medskip{}

$(ii)$ $AO(2,1)=\mathrm{Span}_{k}\{e_{i},f_{i}|\,1\le i\le3\},$
$\left\langle e_{1},f_{3}\right\rangle =\left\langle f_{2},e_{2}\right\rangle =\left\langle f_{1},e_{3}\right\rangle =1$
(and $\left\langle e_{i},f_{j}\right\rangle =0$ if $i+j\ne4$); $F$
and $V$ are given by the following table, where to ease notation
the $^{(p)}$ is left out.

\begin{center}%
\begin{tabular}{|c|c|c|c|c|c|c|}
\hline 
 & $e_{1}$ & $e_{2}$ & $e_{3}$ & $f_{1}$ & $f_{2}$ & $f_{3}$\tabularnewline
\hline 
\hline 
$F$ & $0$ & $f_{1}$ & $0$ & $e_{1}$ & $0$ & $e_{2}$\tabularnewline
\hline 
$V$ & $0$ & $0$ & $f_{2}$ & $0$ & $e_{1}$ & $e_{3}$\tabularnewline
\hline 
\end{tabular}\end{center}

\medskip{}

This is the Dieudonn{\'e} space denoted by $\overline{B}(3)$ in \cite{B-W}.
Here $\omega=\mathrm{Span}_{k}\{e_{1},e_{3},f_{2}\}$ and $\mathcal{P}_{0}=\omega(\Sigma)[V]=ke_{1}.$

\medskip{}

$(iii)$ $AO(3,1)=\mathrm{Span}_{k}\{e_{i},f_{i}|\,1\le i\le4\},$
$\left\langle e_{1},f_{4}\right\rangle =\left\langle e_{2},f_{3}\right\rangle =\left\langle f_{2},e_{3}\right\rangle =\left\langle f_{1},e_{4}\right\rangle =1$
(and $\left\langle e_{i},f_{j}\right\rangle =0$ if $i+j\ne5$); $F$
and $V$ are given by the following table, where to ease notation
the $^{(p)}$ is left out.

\begin{center}%
\begin{tabular}{|c|c|c|c|c|c|c|c|c|}
\hline 
 & $e_{1}$ & $e_{2}$ & $e_{3}$ & $e_{4}$ & $f_{1}$ & $f_{2}$ & $f_{3}$ & $f_{4}$\tabularnewline
\hline 
\hline 
$F$ & $0$ & $f_{1}$ & $0$ & $0$ & $e_{1}$ & $e_{2}$ & $0$ & $e_{3}$\tabularnewline
\hline 
$V$ & $0$ & $0$ & $0$ & $f_{3}$ & $0$ & $e_{1}$ & $e_{3}$ & $e_{4}$\tabularnewline
\hline 
\end{tabular}\end{center}

\medskip{}

This is the Dieudonn{\'e} space denoted by $\overline{B}(4)$ in \cite{B-W}.
Here $\omega$ is equal to $\mathrm{Span}_{k}\{e_{1},e_{3},e_{4},f_{3}\}$ and $\mathcal{P}_{0}=\omega(\Sigma)[V]=\mathrm{Span}_{k}\{e_{1},e_{3}\}.$
\begin{prop}
\label{prop:DM_at_ao_points}(\cite{Woo}, Proposition 3.5.6) Let
$x\in S^{\mathrm{ao}}(k)$ be an almost-ordinary geometric point.
Then $D_{x}=D(\mathcal{A}_{x}/k)$ is isomorphic to the following.

(i) $n=m+1:$
\[
D=AO(2,1)\oplus\bigoplus_{i=1}^{m-1}\mathrm{Span}_{k}\{e_{i}^{\mu},e_{i}^{et},f_{i}^{\mu},f_{i}^{et}\}
\]
where $\left\langle e_{i}^{\mu},f_{i}^{et}\right\rangle =\left\langle f_{i}^{\mu},e_{i}^{et}\right\rangle =1$
(and $\left\langle e_{i}^{\mu},f_{i}^{\mu}\right\rangle =\left\langle e_{i}^{et},f_{i}^{et}\right\rangle =0$),
and $F$ and $V$ are given by the following table

\begin{center}%
\begin{tabular}{|c|c|c|c|c|}
\hline 
 & $e_{i}^{\mu}$ & $e_{i}^{et}$ & $f_{i}^{\mu}$ & $f_{i}^{et}$\tabularnewline
\hline 
\hline 
$F$ & $0$ & $f_{i}^{et}$ & $0$ & $e_{i}^{et}$\tabularnewline
\hline 
$V$ & $f_{i}^{\mu}$ & $0$ & $e_{i}^{\mu}$ & $0$\tabularnewline
\hline 
\end{tabular}\end{center}

(ii) $n\ge m+2:$
\[
D=AO(3,1)\oplus\bigoplus_{i=1}^{m-1}\mathrm{Span}_{k}\{e_{i}^{\mu},e_{i}^{et},f_{i}^{\mu},f_{i}^{et}\}\oplus D(\mathfrak{G}_{\Sigma}[p])^{n-m-2}.
\]
\end{prop}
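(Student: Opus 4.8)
Although this is \cite{Woo}, Proposition~3.5.6, let me indicate how the proof goes. The plan is to pass from geometry to the linear algebra of Dieudonné spaces by Ekedahl--Oort theory, and then to pin down the single Dieudonné space attached to the codimension-one stratum. Over an algebraically closed $k\supseteq\kappa$, the Ekedahl--Oort stratification sorts the points $x\in S(k)$ by the isomorphism class of the polarized $\mathcal{O}_{E}$-group scheme $\mathcal{A}_{x}[p]$, equivalently (by contravariant Dieudonn\'e theory) by the isomorphism class of the polarized $\mathcal{O}_{E}$-Dieudonn\'e space $D_{x}=H^{1}_{dR}(\mathcal{A}_{x}/k)$ of signature $(n,m)$ (\cite{Oo},\cite{Mo2},\cite{V-W}). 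These classes form a finite poset indexed by certain Weyl-group cosets, with the dimension of a stratum equal to the length of the corresponding element; the $\mu$-ordinary stratum is the unique one of top dimension $nm$, and --- by Moonen's dimension formula, see also \cite{Woo}, Corollary~3.4.5, which is the uniqueness assertion of the previous proposition --- there is a \emph{unique} stratum of dimension $nm-1$, namely $S^{\mathrm{ao}}$. Hence it suffices to produce one polarized $\mathcal{O}_{E}$-Dieudonn\'e space $D^{\mathrm{ao}}$ of signature $(n,m)$ that (a) is a valid such object, (b) is not $\mu$-ordinary, and (c) realizes the length-$(nm-1)$ stratum; by uniqueness, $D_{x}\cong D^{\mathrm{ao}}$ for every $x\in S^{\mathrm{ao}}(k)$.

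For (a) and (b) I would simply take $D^{\mathrm{ao}}$ to be $AO(2,1)$ (when $n=m+1$), resp.\ $AO(3,1)$ (when $n\ge m+2$), in direct sum with $m-1$ copies of the ordinary block $\mathrm{Span}_{k}\{e^{\mu}_{i},e^{et}_{i},f^{\mu}_{i},f^{et}_{i}\}$ and, in the second case, with $n-m-2$ further copies of $D(\mathfrak{G}_{\Sigma}[p])$, equipped with the pairings and the maps $F,V$ written out in $(i)$--$(iii)$. Reading $F$ and $V$ off the tables one checks the Dieudonn\'e-space axioms ($\mathrm{Im}\,F=\ker V$, $\mathrm{Im}\,V=\ker F$, skew-symmetry and non-degeneracy of $\langle\,,\,\rangle$, $\langle Fx,y\rangle=\langle x,Vy\rangle^{(p)}$, $\mathcal{O}_{E}$-linearity with $\langle ax,y\rangle=\langle x,\overline{a}y\rangle$); computing $\omega=(D^{\mathrm{ao}\,(p)}[F])^{(p^{-1})}$ from the tables gives $\dim\omega(\Sigma)=n$ and $\dim\omega(\overline{\Sigma})=m$, so the signature is $(n,m)$, and $V_{\mathcal{P}}=V|_{\omega(\Sigma)}$ is surjective with kernel of rank $n-m$ --- consistent with the previous proposition. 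Since $AO(2,1)$, resp.\ $AO(3,1)$, admits no $\mathcal{O}_{E}$-, $F$- and $V$-stable non-degenerate proper subspace it is indecomposable, and it is not isomorphic to $D(\mathfrak{G}_{\Sigma}[p])$, to $D(\mathfrak{G}_{\overline{\Sigma}}[p])$, or to an ordinary block (already by a dimension count); so by Krull--Schmidt $D^{\mathrm{ao}}$ is not isomorphic to the $\mu$-ordinary $D^{\mathrm{ord}}=D(\mathfrak{G}_{\Sigma}[p])^{n-m}\oplus\bigoplus_{i=1}^{m}\mathrm{Span}_{k}\{e^{\mu}_{i},e^{et}_{i},f^{\mu}_{i},f^{et}_{i}\}$ of $\S$\ref{subsec:The-universal-abelian}, which proves (b).

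The genuine work is (c): that the stratum of $D^{\mathrm{ao}}$ has codimension \emph{exactly} one. Here one attaches to $D^{\mathrm{ao}}$ its canonical filtration in the sense of Kraft, Oort and Moonen, extracts the associated Weyl-group element $w$, and verifies $\ell(w)=nm-1$; equivalently, one checks that $D^{\mathrm{ao}}$ is a minimal, codimension-one degeneration of $D^{\mathrm{ord}}$ and that no competing degeneration occurs. This is also where the dichotomy $n=m+1$ versus $n\ge m+2$ is forced: the cheapest degeneration of the $\mu$-ordinary group fuses the boundary of the \'etale/multiplicative part with the local-local part, and when $n-m=1$ the local-local part is a single $D(\mathfrak{G}_{\Sigma}[p])$, so fusing it with one ordinary block yields the rank-$6$, signature-$(2,1)$ indecomposable $AO(2,1)$, whereas when $n-m\ge2$ the minimal indecomposable that appears fuses \emph{two} copies of $D(\mathfrak{G}_{\Sigma}[p])$ with one ordinary block --- the rank-$8$, signature-$(3,1)$ indecomposable $AO(3,1)$ --- leaving the remaining $n-m-2$ copies of $D(\mathfrak{G}_{\Sigma}[p])$ untouched. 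This combinatorial step, the length computation together with the exclusion of competing degenerations, is the only non-formal ingredient and is carried out in \cite{Woo}, Proposition~3.5.6; everything else reduces to finite linear algebra over $k$.
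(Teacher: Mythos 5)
Your proposal is correct and matches the paper's treatment: the paper gives no argument beyond citing \cite{Woo}, Proposition 3.5.6, and your outline likewise defers the one genuinely non-formal ingredient (that the exhibited $D^{\mathrm{ao}}$ realizes the length-$(nm-1)$ Weyl element, i.e.\ the codimension-one stratum) to that same reference, while the surrounding reductions and checks you supply (EO classification, signature count $(n,m)$, surjectivity of $V_{\mathcal{P}}$ with kernel of rank $n-m$, non-$\mu$-ordinarity) are all correct. The only cosmetic remark is that non-$\mu$-ordinarity is seen more cheaply by noting the Hasse matrix $H=V_{\mathcal{P}}^{(p)}\circ V_{\mathcal{Q}}$ is singular on the candidate, so no appeal to Krull--Schmidt is needed.
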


\subsubsection{The Kodaira-Spencer isomorphism along the almost ordinary stratum}

The following result is the key to the analytic continuation of the
theta operator, which will be proved in the next section.
\begin{thm}
\label{thm: =00005Cpsi(du)_has_a_zero}Let
\[
\psi=(pr_{\mu}\otimes1)\circ\mathrm{KS}^{-1}:\Omega_{S/\kappa}\to\mathcal{P}_{\mu}\otimes\mathcal{Q}
\]
be the composition of the inverse of the Kodaira-Spencer isomorphism
and the projection from $\mathcal{P}$ to $\mathcal{P}_{\mu}=\mathcal{P}/\mathcal{P}_{0}$
(well-defined over $S^{\mathrm{ord}}\cup S^{\mathrm{ao}}$). Let $u=0$
be a local equation of the divisor $S^{\mathrm{ao}}$ in a Zariski
open set $U\subset S^{\mathrm{ord}}\cup S^{\mathrm{ao}}.$ Then $\psi(du)$
vanishes along $S^{\mathrm{ao}}\cap U.$
\end{thm}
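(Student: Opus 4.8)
The plan is to reduce the assertion to an explicit computation with the Dieudonn\'e spaces of Proposition~\ref{prop:DM_at_ao_points}. First, since $n>m$, the Hasse invariant $h$ has a simple zero along $S^{\mathrm{ao}}$ (cf.\ the discussion after~(\ref{eq:hasse_inv})), so after trivializing $\mathcal{L}^{p^{2}-1}$ Zariski-locally on $U$ it is a local equation of $S^{\mathrm{ao}}\cap U$; as any two local equations differ by a unit and $S^{\mathrm{ao}}$ is reduced, it suffices to show that $\psi(dh)_{x}=0$ in $(\mathcal{P}_{\mu}\otimes\mathcal{Q})_{x}$ for every geometric point $x\in S^{\mathrm{ao}}(k)$, $k=\bar{k}$. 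Write $TS^{+}\subset TS$ for the subbundle annihilating $\mathrm{KS}(\mathcal{P}_{0}\otimes\mathcal{Q})\subset\Omega_{S/\kappa}$, the $+$-foliation of \cite{dS-G2}. Since $\ker\psi=\mathrm{KS}(\mathcal{P}_{0}\otimes\mathcal{Q})$, and $TS^{\mathrm{ao}}=\ker(dh)$ along $S^{\mathrm{ao}}$ (both of codimension $1$), the assertion is equivalent to $TS^{+}|_{S^{\mathrm{ao}}}\subseteq TS^{\mathrm{ao}}$: the $+$-foliation is tangent to the almost-ordinary stratum. Under the Kodaira--Spencer identification, a vector $\partial\in TS^{+}_{x}$ is a first-order deformation of $\underline{\mathcal{A}}_{x}$ whose Kodaira--Spencer class $\phi_{\partial}\colon\mathcal{P}\to H^{1}_{dR}(\mathcal{A}_{x})(\Sigma)/\mathcal{P}$ kills $\mathcal{P}_{0}$; by the polarization constraint the companion class $\phi^{\bar{\Sigma}}_{\partial}\colon\mathcal{Q}\to H^{1}_{dR}(\mathcal{A}_{x})(\bar{\Sigma})/\mathcal{Q}$ then lands in $\mathcal{P}_{0}^{\perp}/\mathcal{Q}$, where $\mathcal{P}_{0}^{\perp}$ is the orthogonal of $\mathcal{P}_{0}$ in $H^{1}_{dR}(\mathcal{A}_{x})(\bar{\Sigma})$ for the polarization pairing.

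Next I would compute $\partial(h)$. On $S^{\mathrm{ord}}\cup S^{\mathrm{ao}}$ the Verschiebung induces an isomorphism $V_{\mathcal{P}}^{(p)}\colon\mathcal{P}_{\mu}^{(p)}\xrightarrow{\sim}\mathcal{Q}^{(p^{2})}$, so $H=V_{\mathcal{P}}^{(p)}\circ\bar{V}_{\mathcal{Q}}$ with $\bar{V}_{\mathcal{Q}}=(pr_{\mu})^{(p)}\circ V_{\mathcal{Q}}\colon\mathcal{Q}\to\mathcal{P}_{\mu}^{(p)}$, and $h$ equals $\det(\bar{V}_{\mathcal{Q}})$ up to a nowhere-vanishing factor; by Proposition~\ref{prop:DM_at_ao_points}, $\bar{V}_{\mathcal{Q}}$ has corank exactly $1$ at $x$. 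Choose a frame of $\mathcal{Q}$ near $x$ whose first member $\tilde{f}$ reduces at $x$ to a vector $f$ spanning $\ker(\bar{V}_{\mathcal{Q},x})$, and a frame of $\mathcal{P}_{\mu}^{(p)}$; with $B$ the matrix of $\bar{V}_{\mathcal{Q}}$ in these frames, $h=(\text{unit})\cdot\det B$, and the adjugate formula for $d(\det B)$ at a corank-one point gives
\[
\partial(h)=\langle v,\ \nabla^{(p)}_{\partial}\bigl(\bar{V}_{\mathcal{Q}}(\tilde{f})\bigr)_{x}\rangle ,
\]
where $v$ spans the annihilator of $\mathrm{Im}(\bar{V}_{\mathcal{Q},x})$ in $(\mathcal{P}_{\mu,x}^{(p)})^{\vee}$ and $\nabla^{(p)}$ is the canonical flat connection on the Frobenius twist $\mathcal{P}_{\mu}^{(p)}$. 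Now $\bar{V}_{\mathcal{Q}}$ is the restriction to $\mathcal{Q}\subset H^{1}_{dR}(\mathcal{A}/S)$ of the Verschiebung $V\colon H^{1}_{dR}(\mathcal{A}/S)\to H^{1}_{dR}(\mathcal{A}/S)^{(p)}$ followed by $(pr_{\mu})^{(p)}$; since $V$ is $\mathcal{O}_{S}$-linear and horizontal for the Gauss--Manin connection (Cartier), with $\omega^{(p)}=\ker F$ stable under the canonical connection and $(pr_{\mu})^{(p)}$ horizontal, one gets
\[
\nabla^{(p)}_{\partial}\bigl(\bar{V}_{\mathcal{Q}}(\tilde{f})\bigr)_{x}=(pr_{\mu})^{(p)}\bigl(V(\nabla_{\partial}\tilde{f})_{x}\bigr).
\]
Decomposing $(\nabla_{\partial}\tilde{f})_{x}=q+\eta$ with $q\in\mathcal{Q}_{x}$ and $\eta$ a representative of $\phi^{\bar{\Sigma}}_{\partial}(f)$ in $H^{1}_{dR}(\mathcal{A}_{x})(\bar{\Sigma})$, the term $(pr_{\mu})^{(p)}(Vq)=\bar{V}_{\mathcal{Q},x}(q)$ lies in $\mathrm{Im}(\bar{V}_{\mathcal{Q},x})$ and is killed by $v$; thus $\partial(h)=\langle v,(pr_{\mu})^{(p)}(V\eta)\rangle$, and it remains to check that $V\eta\in\mathrm{Im}(V_{\mathcal{Q},x})+\mathcal{P}_{0,x}^{(p)}$ when $\partial\in TS^{+}_{x}$.

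This last point is the short explicit heart of the argument. Since $\partial\in TS^{+}_{x}$ we may take $\eta\in\mathcal{P}_{0,x}^{\perp}$, and, modifying $\eta$ by an element of $\mathcal{Q}_{x}$ (which changes $V\eta$ only by $\mathrm{Im}(V_{\mathcal{Q},x})$), we may take $\eta$ inside a fixed complement of $\mathcal{Q}_{x}$ in $H^{1}_{dR}(\mathcal{A}_{x})(\bar{\Sigma})$. In the notation of Proposition~\ref{prop:DM_at_ao_points}, inspection of the tables for $AO(2,1)$ and $AO(3,1)$ together with the descriptions of $\mathcal{P}_{0}$ and $\omega$ shows that $\mathcal{P}_{0,x}^{\perp}\cap(\text{that complement})$ is spanned by $f_{1}$ and the $f_{i}^{et}$ ($1\le i\le m-1$) -- in particular it excludes the vector ($f_{3}$ in the $AO(2,1)$ case, likewise in $AO(3,1)$) on which $V$ is nonzero -- and $V$ annihilates each of $f_{1}$ and $f_{i}^{et}$. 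Hence $V\eta=0$, so $\partial(h)=0$; as $\partial\in TS^{+}_{x}$ was arbitrary, $dh_{x}$ annihilates $TS^{+}_{x}$, i.e.\ $\psi(dh)_{x}=0$. I expect the main obstacle to be the bookkeeping leading to the two displayed identities -- keeping the various Frobenius twists straight, checking that the adjugate formula yields exactly that pairing, and verifying the corank-one statement for $\bar{V}_{\mathcal{Q}}$ and the intersection computation from Proposition~\ref{prop:DM_at_ao_points} -- rather than any conceptual difficulty. The equisignature case $n=m$ is genuinely different and is handled separately (Theorem~\ref{thm:m=00003Dn_holmorphicity}).
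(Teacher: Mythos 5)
Your argument is correct, but it takes a noticeably different route from the paper's proof. The paper works over the first infinitesimal neighborhood $R=\mathcal{O}_{S,x}/\mathfrak{m}_{S,x}^{2}$ of an almost-ordinary point, writes down (via Grothendieck's theorem) the \emph{universal} first-order deformation of the Hodge filtration inside $R\otimes_{k}D_{x}$ in explicit coordinates, and reads off two things simultaneously: that the coordinate $u$ cutting out $S^{\mathrm{ao}}$ is the (trivialized) Hasse invariant -- which re-proves reducedness of $\div(h)$ -- and that $du$ lies in $\mathrm{KS}(\mathcal{P}_{0}\otimes\mathcal{Q})|_{x}=\mathrm{Span}_{k}\{du,du_{i}\}$, i.e. $\psi(du)|_{x}=0$. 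You instead import the reducedness of $\div(h)$ from \cite{Woo} (legitimate and non-circular, since the paper itself cites it as an independent reference), reformulate the statement as tangency of the foliation $TS^{+}$ of \cite{dS-G2} to $S^{\mathrm{ao}}$, and kill $\partial(h)$ for $\partial\in TS^{+}_{x}$ by combining the adjugate formula for the derivative of a determinant at a corank-one point, horizontality of $V=\mathrm{Ver}^{*}$ for the Gauss--Manin connection and the canonical connection on the Frobenius twist, and the polarization duality which characterizes $TS^{+}_{x}$ by $\nabla_{\partial}\mathcal{Q}\subset\mathcal{P}_{0,x}^{\perp}$; the only input from Proposition \ref{prop:DM_at_ao_points} is then the \emph{pointwise} Dieudonn\'e structure, not its deformation. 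What each buys: the paper's computation is longer but self-contained (it does not need \cite{Woo}) and yields the full span of $\mathrm{KS}(\mathcal{P}_{0}\otimes\mathcal{Q})$ and of the deformation coordinates, which are reused elsewhere; your argument is leaner at the point, more conceptual about why the foliation is tangent (the kernel vector of $\bar{V}_{\mathcal{Q}}$ has its KS class trapped in $\mathcal{P}_{0}^{\perp}$, where $V$ dies modulo $\mathrm{Im}(V_{\mathcal{Q}})+\mathcal{P}_{0}^{(p)}$), and its intermediate identities (horizontality of $V$, the pairing formula for $\partial h$) are standard, though they do require the Frobenius-twist bookkeeping you acknowledge. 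Two small corrections: the identity $\partial(h)=\langle v,\nabla^{(p)}_{\partial}(\bar{V}_{\mathcal{Q}}(\tilde{f}))_{x}\rangle$ holds only up to a nonzero scalar (adjugate normalization, the unit relating $h$ to $\det\bar{V}_{\mathcal{Q}}$, and the trivialization of $\mathcal{L}^{p^{2}-1}$), which of course does not affect vanishing; and in the $AO(3,1)$ case the vectors excluded from $\mathcal{P}_{0}^{\perp}$ are $f_{2}$, $f_{4}$ and the $f_{j}^{\sharp}$ (not $f_{3}$, which lies in $\mathcal{Q}_{x}$ and spans $\ker\bar{V}_{\mathcal{Q},x}$) -- your substantive claim, that $\mathcal{P}_{0,x}^{\perp}$ intersected with a complement of $\mathcal{Q}_{x}$ is spanned by $f_{1}$ and the $f_{i}^{et}$, on which $V$ vanishes, is nevertheless correct in both cases.
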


\begin{rem*}
Compare with \cite{dS-G1} Proposition 3.11. In terms of the foliation
$\mathcal{T}S^{+}$ introduced in \cite{dS-G2} the theorem asserts
that at any point $x\in S^{\mathrm{ao}}$ this foliation is tangential
to $S\mathrm{^{ao}},$ i.e $\mathcal{T}S^{+}|_{x}\subset\mathcal{T}S^{\mathrm{ao}}|_{x}.$
In \cite{dS-G2} we studied a certain open subset $S_{\sharp}\subset S$
which was a union of Ekedahl-Oort strata, including $S^{\mathrm{ord}},$
$S^{\mathrm{ao}}$ and a unique minimal EO stratum denoted there $S\mathrm{^{fol}}$,
of dimension $m^{2}.$ The subset $S_{\sharp}$ and the foliation
$TS^{+}$ are related to the geometry of auxiliary Shimura varieties
of parahoric level structure at $p,$ and seem to play an important
role. In \emph{loc. cit.} Theorem 25, it was proved that $\mathcal{T}S^{+}$
is tangential to $S^{\mathrm{fol}}.$ In view of these two results,
claiming tangentiality to $S^{\mathrm{ao}}$ and $S\mathrm{^{fol}}$,
it is reasonable to expect that $\mathcal{T}S^{+}$ is tangential
to \emph{every} EO strata in $S_{\sharp}.$ The proofs of the known
cases, whether in \emph{loc. cit.} or here, invoke delicate computations
with Dieudonn{\'e} modules, and at present we see no conceptual reason
justifying our expectation, which could avoid such computations.
\end{rem*}
\begin{proof}
Let $k$ be an algebraically closed field containing $\kappa,$ $x\in S^{\mathrm{ao}}(k)$
a geometric point and $D_{x}=D(\mathcal{A}_{x}/k).$ Let $R=\mathcal{O}_{S,x}/\mathfrak{m}_{S,x}^{2}$
and $d:R\to\Omega_{S/k}|_{x}=\mathfrak{m}_{S,x}/\mathfrak{m}_{S,x}^{2}$
the canonical derivation $df=(f-f(x))\mod\mathfrak{m}_{S,x}^{2}.$
Let $D=H_{dR}^{1}(\mathcal{A}/R).$ The Gauss-Manin connection on
$H_{dR}^{1}(\mathcal{A}/S)$ induces a map
\[
\nabla:D\to D_{x}\otimes_{k}\Omega_{S/k}|_{x}
\]
satisfying $\nabla(r\alpha)=r(x)\nabla(\alpha)+\alpha\otimes dr,$
which by abuse of language we call the Gauss-Manin connection on $D$.
It is easy to see that every $\alpha\in D_{x}$ has a unique extension
to a horizontal section $\alpha\in D$, i.e. a section satisfying
$\nabla(\alpha)=0$. Thus, we may identify $D$ with $R\otimes_{k}D_{x},$
the horizontal sections being $D_{x}$. Since the Gauss-Manin connection
commutes with isogenies, $V:D\to D^{(p)}$ and $F:D^{(p)}\to D$ map
horizontal sections to horizontal sections. For the same reason, if
$x,y$ are horizontal sections of $D$, their pairing $\left\langle x,y\right\rangle $
is horizontal for $d,$ i.e. lies in $k$.

\medskip{}

We now distinguish between two cases. 

I. Assume that $n=m+1$. Then
\[
D_{x}=\mathrm{Span}_{k}\{\underline{e_{1}},e_{2},\underline{e_{3}},f_{1},\underline{f_{2}},f_{3},\underline{e_{i}^{\mu}},e_{i}^{et},\underline{f_{i}^{\mu}},f_{i}^{et}\}_{1\le i\le m-1}
\]
where the first six vectors span $AO(2,1),$ as in Proposition \ref{prop:DM_at_ao_points}(i).
For the convenience of the reader we have underlined the vectors spanning
$\omega_{x}$. The module~$D$ is spanned by the same vectors over
$R$, and the pairings and the tables giving $F$ and~$V$ remain
the same over $R$. 

We now write the most general deformation of $\omega_{x}$ to a projective
submodule of~$D$ which is invariant under the endomorphisms and isotropic.
An easy check yields that it is given by
\[
\omega=\mathrm{Span}_{R}\{\widetilde{e}_{1},\widetilde{e}_{3},\widetilde{f}_{2},\widetilde{e}_{i}^{\mu},\widetilde{f}_{i}^{\mu}\}_{1\le i\le m-1},
\]
where
\begin{itemize}
\item $\widetilde{e}_{1}=e_{1}+ue_{2}+\sum_{i=1}^{m-1}u_{i}e_{i}^{et}$\medskip{}
\item $\widetilde{e}_{3}=e_{3}+ve_{2}+\sum_{i=1}^{m-1}v_{i}e_{i}^{et}$\medskip{}
\item $\widetilde{f}_{2}=f_{2}-vf_{1}+uf_{3}+\sum_{i=1}^{m-1}w_{i}f_{i}^{et}$\medskip{}
\item $\widetilde{e}_{i}^{\mu}=e_{i}^{\mu}+w_{i}e_{2}+\sum_{j=1}^{m-1}w_{ij}e_{j}^{et}$\medskip{}
\item $\widetilde{f}_{i}^{\mu}=f_{i}^{\mu}-v_{i}f_{1}+u_{i}f_{3}+\sum_{j=1}^{m-1}w_{ji}f_{j}^{et}$.
\end{itemize}

\vspace{0.1cm}

\noindent
The $mn$ parameters $u,u_{i},v,v_{i},w_{i},w_{ij}$ are, according
to Grothendieck, the local parameters of $R,$ serving as a basis
of $\mathfrak{m}_{R}$ over $k.$ It follows that $\mathcal{P}_{0}$
is indeed of rank 1, as claimed before, spanned over $R$ by $\widetilde{e}_{1}$,
while $\mathcal{Q}$ is spanned over $R$ by the $m$ vectors $\widetilde{f}_{2},\widetilde{f}_{i}^{\mu}.$
Furthermore, computing the Hasse matrix $H=V_{\mathcal{P}}^{(p)}\circ V_{\mathcal{Q}}$
in the bases $(\widetilde{f}_{2},\widetilde{f}_{i}^{\mu})$ and $(\widetilde{f}_{2}^{(p^{2})},\widetilde{f}_{i}^{\mu(p^{2})})$
of $\mathcal{Q}$ and $\mathcal{Q}^{(p^{2})}$ we get
\[
H=\left(\begin{array}{ccccc}
u & u_{1} & u_{2} & \cdots & u_{m-1}\\
0 & 1 & 0 &  & 0\\
0 & 0 & 1 &  & 0\\
\vdots &  &  & \ddots & \vdots\\
0 & \cdots &  & \cdots & 1
\end{array}\right),
\]
so the (trivialized) Hasse invariant $h$ is simply $u$. Since we
know that $S^{\mathrm{ao}}$ is the zero divisor of $h,$ $S^{\mathrm{ao}}\cap\mathrm{Spec}(R)$
is given by the equation $u=0.$ Note, in passing, that this proves
that the zero divisor of the Hasse invariant is reduced and equal
to the non-ordinary locus.

To compute $\mathrm{KS}(\mathcal{P}_{0}\otimes\mathcal{Q})$ recall
how it is defined. From the Gauss-Manin connection we get a homomorphism
of sheaves
\[
\overline{\nabla}:\omega\to\Omega_{S/\kappa}\otimes(H_{dR}^{1}(\mathcal{A}/S)/\omega)
\]
which induces a homomorphism $\mathcal{P}\to\Omega_{S/\kappa}\otimes\mathcal{Q}^{\vee}.$
This induces the map
\[
\mathrm{KS}:\mathcal{P}\otimes\mathcal{Q}\to\Omega_{S/\kappa}
\]
which happens to be an isomorphism. We begin by computing the Gauss-Manin connection on $\mathcal{P}_{0}$:
\[
\nabla(\widetilde{e}_{1})=e_{2}\otimes du+\sum_{i=1}^{m-1}e_{i}^{et}\otimes du_{i}.
\]
Projecting $D_{x}$ to $D_{x}/\omega_{x}=H^{1}(\mathcal{A}_{x},\mathcal{O})$
and noting that $e_{2},e_{i}^{et}$ modulo $\omega_{x}$ are a basis
for the dual $\mathcal{Q^{\vee}}$ of $\mathcal{Q}$, equipped with
the conjugate action of $\mathcal{O}_{E}$ via $\Sigma,$ we find
out that
\[
\mathrm{KS}(\mathcal{P}_{0}\otimes\mathcal{Q})|_{x}=\mathrm{Span}_{k}\{du,du_{i}\}.
\]
From the definition of $\psi$ it follows that $\psi(du)|_{x}=0.$
Now assume that $u$ is a \emph{global} generator of the ideal of
$S^{\mathrm{ao}}$ in a Zariski open set $U$. Then we conclude that
$\psi(du)=0$ along $S^{\mathrm{ao}}\cap U$ as claimed.\medskip{}

II. The proof of the theorem in the case $n-m\ge2$ is similar, using
Proposition \ref{prop:DM_at_ao_points}(ii). Here
\[
D_{x}=\mathrm{Span}_{k}\{\underline{e_{1}},e_{2},\underline{e_{3}},\underline{e_{4}},f_{1},f_{2},\underline{f_{3}},f_{4},\underline{e_{i}^{\mu}},e_{i}^{et},\underline{f_{i}^{\mu}},f_{i}^{et},\underline{e_{j}^{\sharp}},f_{j}^{\sharp}\}_{1\le i\le m-1,\,1\le j\le n-m-2}
\]
where the first eight vectors span $AO(3,1)$ and for every $j$ the
vectors $e_{j}^{\sharp},f_{j}^{\sharp}$ span a copy of $D(\mathfrak{G}_{\Sigma}[p]).$
For convenience we have again underlined the vectors spanning $\omega_{x}$.
The most general deformation of $\omega_{x}$ in $D=R\otimes_{k}D_{x}$
is spanned by the following vectors:
\begin{itemize}
\item $\widetilde{e}_{1}=e_{1}-ue_{2}+\sum_{i=1}^{m-1}u_{i}e_{i}^{et}$\medskip{}
\item $\widetilde{e}_{3}=e_{3}+ve_{2}+\sum_{i=1}^{m-1}v_{i}e_{i}^{et}$\medskip{}
\item $\widetilde{e}_{4}=e_{4}+we_{2}+\sum_{i=1}^{m-1}w_{i}e_{i}^{et}$\medskip{}
\item $\widetilde{f}_{3}=f_{3}+wf_{1}+vf_{2}+uf_{4}-\sum_{l=1}^{m-1}x_{l}f_{l}^{et}-\sum_{k=1}^{n-m-2}y_{k}f_{k}^{\sharp}$\medskip{}
\item $\widetilde{e}_{i}^{\mu}=e_{i}^{\mu}+x_{i}e_{2}+\sum_{l=1}^{m-1}x_{il}e_{l}^{et}$\medskip{}
\item $\widetilde{f}_{i}^{\mu}=f_{i}^{\mu}-w_{i}f_{1}-v_{i}f_{2}+u_{i}f_{4}+\sum_{l=1}^{m-1}x_{li}f_{l}^{et}+\sum_{k=1}^{n-m-2}y_{ki}f_{k}^{\sharp}$\medskip{}
\item $\widetilde{e}_{j}^{\sharp}=e_{j}^{\sharp}+y_{j}e_{2}+\sum_{l=1}^{m-1}y_{jl}e_{l}^{et}$.
\end{itemize}

\vspace{0.1cm}

\noindent
The $nm$ parameters $u,v,w,u_{i},v_{i},w_{i},x_{i},x_{il},y_{k},y_{kj}$
form a basis of $\mathfrak{m}_{R}$ over $k$. Calculating the Hasse
matrix $H$ yields exactly the same $m\times m$ matrix as above,
hence $u=0$ is again the local infinitesimal equation of $S^{\mathrm{ao}}.$
The submodule $\mathcal{P}_{0}$ is $n-m$ dimensional, and is spanned
by $\widetilde{e}_{1},\widetilde{e}_{3}$ and the $\widetilde{e}_{j}^{\sharp}$.
Calculating $\mathrm{KS}$ we find that
\[
\mathrm{KS}(\mathcal{P}_{0}\otimes\mathcal{Q})|_{x}=\mathrm{Span}_{k}\{du,du_{i},dv,dv_{i},dy_{j},dy_{jl}\}
\]
($1\le i,l\le m-1,\,\,1\le j\le n-m-2)$ so as before $\psi(du)|_{x}=0.$
We conclude the proof as in the first case.
\end{proof}

\subsection{Analytic continuation of $\Theta$ ($m<n$)}

\subsubsection{Compactification of a certain intermediate Igusa cover}

Recall the Igusa tower $T_{t,s}$ over $S_{s}^{\mathrm{ord}}$ that
has been constructed in $\S$\ref{subsec:bigIg}. Let
\[
\Delta_{t}^{1}=SL_{m}(\mathcal{O}_{E}/p^{t}\mathcal{O}_{E})\times U_{n-m}(\mathcal{O}_{E}/p^{t}\mathcal{O}_{E})\vartriangleleft\Delta_{t}
\]
and denote by $T_{t,s}^{1}$ the intermediate covering of $S_{s}^{\mathrm{ord}}$
fixed by $\Delta_{t}^{1}.$ It is a Galois {\'e}tale cover of $S_{s}^{\mathrm{ord}}$
with Galois group $(\mathcal{O}_{E}/p^{t}\mathcal{O}_{E})^{\times}.$
In this section let $T=T_{1,1}^{1},$ and let $\tau:T\to S^{\mathrm{ord}}$
be the covering map, whose Galois group is identified with $\kappa^{\times}.$

Let $\mathcal{L}=\det(\mathcal{Q})$ and recall that the Hasse invariant
$h\in H^{0}(S,\mathcal{L}^{p^{2}-1})$ (\ref{eq:hasse_inv}).
\begin{lem}
(i) The line bundle $\mathcal{L}$ is canonically trivialized over
$T,$ i.e. there is a canonical isomorphism $\varepsilon:\mathcal{O}_{T}\simeq\tau^{*}\mathcal{L}.$

(ii) Denoting by $a$ the global section of $\tau^{*}\mathcal{L}$
corresponding to the section ``$1$'' under the trivialization,
we have $a^{p^{2}-1}=\tau^{*}h.$
\end{lem}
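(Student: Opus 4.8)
The plan is to read off both statements directly from the definition of the intermediate Igusa variety $T = T^1_{1,1}$ and the construction of $\mathcal{L} = \det(\mathcal{Q})$. First I would recall that over the big Igusa tower the isomorphism $\epsilon^2$ of $p$-divisible groups induces, via the canonical identification of the cotangent space at the origin of $\mathfrak{d}_E^{-1}\otimes\mu_{p^t}$ with $\mathcal{O}_E\otimes W_s$, the trivialization $\varepsilon^2(\overline{\Sigma}):\mathcal{O}_T^m \simeq \mathcal{Q}$ described in \S\ref{subsec:bigIg}. Taking determinants, $\varepsilon := \det(\varepsilon^2(\overline{\Sigma})):\mathcal{O}_T \simeq \tau^*\mathcal{L}$. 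The point of passing to $T^1_{1,1}$ rather than the full Igusa variety $T_{1,1}$ is precisely that, by the Galois action formula (\ref{eq:Gamma_action}), $\gamma \in \Delta_1$ acts on $\varepsilon^2$ by $\varepsilon^2 \circ {}^t\gamma_2$, hence on $\det(\varepsilon^2(\overline{\Sigma}))$ by multiplication by $\det(\overline{\gamma_2}) \in \kappa^\times$; restricting to $\Delta_1^1 = SL_m(\kappa)\times U_{n-m}(\kappa)$ makes this scalar trivial, so $\varepsilon$ descends to a trivialization over $T = T^1_{1,1}$. (One only uses $m<n$ is irrelevant here; the statement holds in the stated generality because $\mathcal{Q}$ has rank $m$ and its determinant is what is being trivialized, with the $SL_m$-quotient killing the ambiguity.) This gives part (i), with the canonicity coming from the canonical nature of $\epsilon^2$-induced cotangent trivializations and the fact that the residual $\kappa^\times$-action is now the \emph{full} Galois action of $T/S^{\mathrm{ord}}$.

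For part (ii) I would compute the Hasse invariant $h = \det(H)$, $H = V_{\mathcal{P}}^{(p)}\circ V_{\mathcal{Q}}:\mathcal{Q}\to\mathcal{Q}^{(p^2)}$, in the trivialization $\varepsilon$. The key input is that over the Igusa variety the graded piece $gr^2 = \mathcal{A}[p^\infty]^\mu$ is identified via $\epsilon^2$ with the multiplicative group $(\mathfrak{d}_E^{-1}\otimes\mu_{p^\infty})^m$, on which Verschiebung acts invertibly; more precisely, on the $\overline{\Sigma}$-part of its cotangent space the composite $V_{\mathcal{P}}^{(p)}\circ V_{\mathcal{Q}}$ corresponds, under $\varepsilon$, to the canonical identification $\mathcal{Q}^{(p^2)}\simeq \mathcal{Q}^{p^2}$ applied to the trivializing section. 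In other words, writing $a\in H^0(T,\tau^*\mathcal{L})$ for the section $\varepsilon(1)$, the Hasse matrix in the basis $a$ (resp.\ $a^{(p^2)}$) of $\mathcal{L}$ (resp.\ $\mathcal{L}^{(p^2)}$) is the identity, so $\tau^* h = \det(\tau^* H)$ sends $a$ to $a^{(p^2)} = a^{p^2}$, i.e.\ under the identification $\tau^*h\in H^0(T,\tau^*\mathcal{L}^{p^2-1})$ we get $\tau^*h = a^{p^2-1}$. This is the same mechanism used in \S\ref{subsec:FJ hasse} to show the Fourier--Jacobi expansion of $h$ at a rank-$m$ cusp is $1$: there too the trivialization comes from the multiplicative (toric) part on which $\mathrm{Ver}$ is essentially the identity.

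The only real content — and the step I expect to require the most care — is pinning down \emph{exactly} how $V_{\mathcal{P}}$ and $V_{\mathcal{Q}}$ act on the Igusa-trivialized $\mathcal{Q}$ and $\mathcal{P}_\mu$, i.e.\ checking that under $\varepsilon^2$ and the chosen cotangent identification for $\mathfrak{G}[p^t]$, the Verschiebung on the multiplicative piece is normalized so that $H$ becomes the tautological map $\mathcal{Q}\to\mathcal{Q}^{(p^2)}$ with no extra scalar. This is a local Dieudonné-module computation on the model $p$-divisible group $(\mathfrak{d}_E^{-1}\otimes\mu_{p^\infty})^m$, where Frobenius is zero and Verschiebung is an isomorphism; since everything is reduced to the multiplicative group, it is a standard but slightly fiddly unwinding of the conventions fixed in \S\ref{subsec:bigIg} (particularly the normalization of the $\mathcal{O}_E$-action and of the cotangent trivializations). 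Once that normalization is confirmed, both (i) and (ii) follow formally, and in particular (ii) shows that $a$ is a $(p^2-1)$-st root of $\tau^*h$, which is what will be needed for the partial compactification of $T$ and the analytic continuation of $\Theta$ in the next subsection.
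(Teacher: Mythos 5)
Your proposal is correct and follows essentially the same route as the paper: part (i) is obtained by taking the determinant of $\varepsilon^{2}(\overline{\Sigma})$ on the big Igusa variety and descending to $T^{1}_{1,1}$ via $\Delta_{1}^{1}$-invariance (your $\det(\overline{\gamma_{2}})$ computation is exactly why the paper can assert this invariance), and part (ii) is the paper's argument that $\mathrm{Ver}$ is the identity on $\mu_{p}$, which the paper encodes as $\mathrm{Ver}\circ\mathrm{Ver}^{(p)}\circ(\epsilon^{2})^{(p^{2})}=\epsilon^{2}$, giving $(\varepsilon^{2}(\overline{\Sigma}))^{(p^{2})}=H\circ\varepsilon^{2}(\overline{\Sigma})$ and hence $\varepsilon^{(p^{2})}=h\circ\varepsilon$, i.e.\ your statement that the Hasse matrix is the identity in the trivialized basis. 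The "fiddly normalization" you flag is precisely this relation, so no gap remains.
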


\begin{proof}
(i) The canonical trivialization $\varepsilon^{2}(\overline{\Sigma}):\mathcal{O}_{T_{1,1}}^{m}\simeq\tau_{1,1}^{*}\mathcal{Q}$
over the big Igusa variety $T_{1,1}$ induces a canonical trivialization
on the determinants $\varepsilon:\mathcal{O}_{T_{1,1}}\simeq\tau_{1,1}^{*}\mathcal{L}$.
The latter descends to $T$ because it is invariant under $\Delta_{1}^{1}.$

(ii) Since $\Ver$ is the identity on $\mu_{p},$ the trivialization
$\epsilon^{2}$ of $gr^{2}\mathcal{A}[p]$ satisfies
\[
\Ver\circ\Ver^{(p)}\circ(\epsilon^{2})^{(p^{2})}=\epsilon^{2}.
\]
Passing to cohomology (recall $\varepsilon^{2}=((\epsilon^{2})^{-1})^{*}$)
yields the relation $(\varepsilon^{2}(\overline{\Sigma}))^{(p^{2})}=H\circ\varepsilon^{2}(\overline{\Sigma})$
where $H,$ recall, is $V_{\mathcal{P}}^{(p)}\circ V_{\mathcal{Q}}.$
Taking determinants we get
\[
\varepsilon^{(p^{2})}=h\circ\varepsilon
\]
and evaluating at ``$1$'' gives the desired relation.
\end{proof}
The following Kummer-type result was proved in \cite{dS-G1} $\S$2.4.2
for signature (2,1) and the proof easily generalizes. See also \cite{Go}.
Let 
\[
S'=S^{\mathrm{ord}}\cup S^{\mathrm{ao}}.
\]
 Consider the fiber product
\begin{equation}
T'=\mathcal{L}\times_{\mathcal{L}^{p^{2}-1}}S'\label{eq:T'}
\end{equation}
where the two maps to $\mathcal{L}^{p^{2}-1}$ are $\lambda\mapsto\lambda^{p^{2}-1}$
and $h$. Note that the pull-back of $\mathcal{L}$ from $S'$ to
$T'$ admits a tautological $p^{2}-1$ root of $h$ extending $a$,
which we still call $a.$ Then $T'\to S'$ is finite flat of degree
$p^{2}-1,$ is Galois {\'e}tale with Galois group $\kappa^{\times}$ over
$S^{\mathrm{ord}},$ and totally (tamely) ramified along $S^{\mathrm{ao}}.$
It satisfies a universal property with respect to extracting a $p^{2}-1$
root from the section $h;$ \emph{cf. }loc. cit.\emph{ }From part
(ii) of the last proposition it follows that there is a canonical
map
\[
T\to T'.
\]
Since both source and target are $\kappa^{\times}$-torsors over $S^{\mathrm{ord}}$
and the map respects the $\kappa^{\times}$ action, this map is an
isomorphism of $T$ with the pre-image of $S^{\mathrm{ord}}$ in $T'.$
In this way we may identify $T'$ with a (partial) compactification
of $T.$ We then have the following.
\begin{prop}
\label{prop:Igusa formal parameters}(i) The morphism $\tau':T'\to S'$
is finite flat of degree $p^{2}-1$, Galois {\'e}tale with Galois group
$\kappa^{\times}$ over $S^{\mathrm{ord}},$ and totally (tamely)
ramified along $S^{\mathrm{ao}}.$

(ii) $T'$ is everywhere non-singular.

(iii) Let $x\in S^{\mathrm{ao}}(k)$ be a geometric point, and $y\in T'(k)$
the unique geometric point mapping to it. Then there are formal parameters
$u,v_{i}$ ($1\le i\le nm-1$) at $x$ such that $u=0$ is the infinitesimal
equation of $S^{\mathrm{ao}}$, and such that as formal parameters
on $T'$ at $y$ we can take $w,v_{i}$ where $w^{p^{2}-1}=u.$

(iv) $T'$ and $T=T_{1,1}^{1}$ are irreducible.
\end{prop}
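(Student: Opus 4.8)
The plan is to handle (i)--(iii) by a local, in fact formal, analysis, and to reduce (iv) to this together with one global input about the Hasse invariant. Part (i) is the Kummer-type construction of \cite{dS-G1}, $\S$2.4.2 (see also \cite{Go}): the section $h\in H^{0}(S',\mathcal L^{p^{2}-1})$ has zero divisor exactly $S^{\mathrm{ao}}$, and this divisor is reduced --- this is the content of the Hasse-matrix computation in the proof of Theorem~\ref{thm: =00005Cpsi(du)_has_a_zero}, where, after the chosen trivialisation of $\mathcal Q$, the section $h$ becomes the local parameter $u$. Consequently the fibre product $T'$ of (\ref{eq:T'}) is finite flat of degree $p^{2}-1$ over $S'$; it is \'etale away from the zero locus of $h$, hence over $S^{\mathrm{ord}}$, with Galois group $\kappa^{\times}$ acting through $\mu_{p^{2}-1}$; and since $p\nmid p^{2}-1$ the root extraction is tame along $S^{\mathrm{ao}}$, giving total tame ramification there.

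For (ii) and (iii) I would compute the completed local ring of $T'$ at a point of $S^{\mathrm{ao}}$. Fix $x\in S^{\mathrm{ao}}(k)$ and use the formal parameters produced in the proof of Theorem~\ref{thm: =00005Cpsi(du)_has_a_zero}: after the trivialisation of $\mathcal Q$, hence of $\mathcal L$, made there, one has $\widehat{\mathcal O}_{S',x}\cong k[[u,v_{1},\dots,v_{nm-1}]]$ with $u$ the trivialised Hasse invariant and $S^{\mathrm{ao}}=\{u=0\}$. In this chart $T'$ is cut out by $\lambda^{p^{2}-1}=u$, so the fibre over $x$ is $\Spec(k[\lambda]/(\lambda^{p^{2}-1}))$, a single point $y$ with residue field $k$, and
\[
\widehat{\mathcal O}_{T',y}\;\cong\;\widehat{\mathcal O}_{S',x}[\lambda]/(\lambda^{p^{2}-1}-u)\;\cong\;k[[w,v_{1},\dots,v_{nm-1}]],\qquad w:=\lambda,\quad w^{p^{2}-1}=u.
\]
This is a regular local ring, so $T'$ is non-singular along $S^{\mathrm{ao}}$; over $S^{\mathrm{ord}}$ the morphism $\tau'$ is \'etale onto the smooth scheme $S^{\mathrm{ord}}$, so $T'$ is non-singular there as well, proving (ii); and the displayed parameters are precisely the assertion of (iii).

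For (iv) I would argue one connected component of $S$ at a time; these coincide with the irreducible components, $S$ being smooth over $\kappa$. Let $S_{0}$ be such a component, so $S_{0}^{\mathrm{ord}}=S^{\mathrm{ord}}\cap S_{0}$ is dense open, hence irreducible, and $T'_{0}=(\tau')^{-1}(S_{0})$ is finite flat of degree $p^{2}-1$ over $S_{0}$, \'etale over $S_{0}^{\mathrm{ord}}$. The global input I need is that $S^{\mathrm{ao}}$ meets $S_{0}$: extending $h$ to the toroidal compactification $\overline S$, and using that the Hodge line bundle $\det(\mathcal P)\otimes\det(\mathcal Q)$ is, up to a power, ample on the minimal compactification while $\det(\mathcal P)\otimes\det(\mathcal Q)\cong\mathcal L^{2}$ by Lemma~\ref{lem:detP=00003DdetQ}, the line bundle $\mathcal L$ is big on $\overline S$, so $\mathcal L^{p^{2}-1}$ is non-torsion; since $h$ restricts to a nonzero section of it on the complete component $\overline S_{0}$ (nonzero because $S^{\mathrm{ord}}$ is dense), it must vanish somewhere, and by the proof of Corollary~\ref{cor:vanishing locus of Hasse} its zero locus contains no boundary component, hence meets $S_{0}$; that intersection is $S^{\mathrm{no}}\cap S_{0}$, of which $S^{\mathrm{ao}}\cap S_{0}$ is dense open (as $S^{\mathrm{ao}}$ is the unique codimension-one Ekedahl--Oort stratum), so non-empty. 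Pick $x\in S^{\mathrm{ao}}\cap S_{0}$. If $T'_{0}$ were reducible it would, being non-singular by (ii), be a disjoint union of two non-empty open-and-closed subschemes, each dominating $S_{0}$ since $\tau'$ is finite flat, hence each meeting the fibre over $x$ --- but by (iii) that fibre is the single point $y$, a contradiction. Thus $T'_{0}$ is irreducible, and so is $T'$ (component by component); and $T=T'|_{S^{\mathrm{ord}}}$, being the preimage of a dense open under a finite flat map and hence dense open in $T'$, is irreducible too.

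The main obstacle is the global step inside (iv): unlike the formal computations that settle (i)--(iii), showing that $S^{\mathrm{ao}}$ meets every component requires bringing in the bigness of the Hodge line bundle and the non-vanishing of $h$ along the boundary --- exactly the features that make the Hasse invariant behave like the section of an ample divisor.
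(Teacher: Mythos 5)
Parts (i)--(iii) of your proposal, and the ``totally ramified over a divisor, hence connected'' trick at the heart of (iv), are exactly the argument of \cite{dS-G1}, Proposition 2.16, which is all the paper's own proof invokes: the Kummer construction (\ref{eq:T'}), the reducedness of $\div(h)$ on $S'$ (read off, as you say, from the Hasse-matrix computation in the proof of Theorem \ref{thm: =00005Cpsi(du)_has_a_zero}, which shows $h\in\mathfrak{m}_{x}\smallsetminus\mathfrak{m}_{x}^{2}$ at a point of $S^{\mathrm{ao}}$), and the completed-local-ring computation giving $w^{p^{2}-1}=u$. These parts are correct.

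The genuine gap is in the global input you isolate for (iv), namely that $S^{\mathrm{ao}}$ meets every connected component. You deduce bigness of $\mathcal{L}$ on $\overline{S}$ from Lemma \ref{lem:detP=00003DdetQ} together with the ampleness of the Hodge bundle on the minimal compactification. But Lemma \ref{lem:detP=00003DdetQ} is proved only over $\mathcal{S}$ (hence over $S$): two line bundles on $\overline{S}$ that agree on $S$ may differ by a divisor supported on $\partial S$, so on $\overline{S}$ you only know $\mathcal{L}^{2}\cong\pi^{*}(\text{ample})\otimes\mathcal{O}(D)$ for some boundary divisor $D$ of unknown sign, and such a twist need not be big; consequently the non-triviality of $\mathcal{L}^{p^{2}-1}|_{\overline{S}_{0}}$, which is what your ``the section must vanish somewhere'' step actually uses, is not established. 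The fact you need is true and provable with what is at hand: if $h$ were nowhere zero on a component $S_{0}$, it would trivialize $\mathcal{L}^{p^{2}-1}|_{S_{0}}$, hence (by Lemma \ref{lem:detP=00003DdetQ}, legitimately applied on $S_{0}$) also $(\det\mathcal{P}\otimes\det\mathcal{Q})^{p^{2}-1}|_{S_{0}}$; since the Hodge bundle descends to an ample bundle on the normal minimal compactification $S_{0}^{*}$, whose boundary has codimension $n+m-1\geq2$, the trivializing section and its inverse extend to $S_{0}^{*}$, forcing a power of an ample line bundle to be trivial on a positive-dimensional projective variety, a contradiction. (Alternatively one can simply quote the non-emptiness of the non-$\mu$-ordinary locus, e.g.\ via \cite{V-W}; in \cite{dS-G1} the non-emptiness of the codimension-one stratum is classical for Picard surfaces, which is why this point is invisible there.) With this repair, the remainder of your (iv) --- each clopen piece of $T'_{0}$ is finite flat over $S'_{0}$, hence surjects onto it and must contain the unique point above a point of $S^{\mathrm{ao}}$ --- goes through and coincides with the intended proof.
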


\begin{proof}
The proof is the same as in the case of signature $(2,1),$ \emph{cf.
\cite{dS-G1}, }$\S$2.4.3, Proposition 2.16.
\end{proof}

\subsubsection{The main theorem for scalar-valued modular forms}

We can now prove the analytic continuation of $\Theta$ in characteristic
$p,$ when applied to \emph{scalar-valued} $p$-adic modular forms.
Recall that $\mathcal{L}=\det(\mathcal{Q})$.
\begin{thm}
\label{thm:holomorphic extension of theta m less n}Assume that $m<n.$
Consider the operator
\[
\Theta:H^{0}(S^{\mathrm{ord}},\mathcal{L}^{k})\to H^{0}(S^{\mathrm{ord}},\mathcal{L}^{k}\otimes\mathcal{Q}^{(p)}\otimes\mathcal{Q}).
\]
Then $\Theta$ extends holomorphically to an operator
\[
\Theta:H^{0}(S,\mathcal{L}^{k})\to H^{0}(S,\mathcal{L}^{k}\otimes\mathcal{Q}^{(p)}\otimes\mathcal{Q}).
\]
\end{thm}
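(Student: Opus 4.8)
The plan is to reduce the statement to a local analytic continuation question around the almost-ordinary stratum $S^{\mathrm{ao}}$, and there to exploit Theorem \ref{thm: =00005Cpsi(du)_has_a_zero} together with the ramification structure of the cover $T'\to S'$ constructed in Proposition \ref{prop:Igusa formal parameters}. Since $\Theta$ is already defined over $S^{\mathrm{ord}}\cup S^{\mathrm{ao}}=S'$ — because $\mathcal{P}_0,\mathcal{P}_\mu,\mathcal{Q}$ all extend there by the Proposition in \S\ref{subsec:basic} on $S^{\mathrm{ao}}$ — the only thing to check is that $\Theta(f)$, a priori a section of $\mathcal{L}^k\otimes\mathcal{Q}^{(p)}\otimes\mathcal{Q}$ over $S^{\mathrm{ord}}$, has no pole along the divisor $S^{\mathrm{ao}}$, and then that it extends across the (codimension $\ge2$) complement $S^{\mathrm{no}}\smallsetminus S^{\mathrm{ao}}$; the latter follows automatically from normality of $S$ (Hartogs / the fact that $S$ is smooth hence $S_2$) once holomorphicity along $S^{\mathrm{ao}}$ is established. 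So everything is local at a geometric point $x\in S^{\mathrm{ao}}(k)$.

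First I would work upstairs on the cover $T=T^1_{1,1}$, compactified to $T'$. Over $T$ the line bundle $\mathcal{L}$ is canonically trivialized by the section $a$ with $a^{p^2-1}=\tau^*h$, so a scalar form $f\in H^0(S^{\mathrm{ord}},\mathcal{L}^k)$ pulls back to $\tau^*f=g\cdot a^k$ for a function $g$ on $T$, and the unnormalized operator $\widetilde\Theta$ acts by $\widetilde\Theta(\tau^*f)=a^k\cdot dg + k g a^{k-1}\,da=a^k(dg+kg\,d\log a)$ — here I use that $\widetilde\Theta$ is the connection attached to the trivialization, under which $a$ is horizontal up to the logarithmic term coming from $\mathcal{L}$, or more simply compute in the trivialization directly. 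The point is then to push this down through $\mathrm{KS}^{-1}$ followed by $pr_\mu$. By Proposition \ref{prop:Igusa formal parameters}(iii) I have formal parameters $u,v_1,\dots,v_{nm-1}$ at $x$ with $S^{\mathrm{ao}}=\{u=0\}$, and parameters $w,v_i$ at $y\in T'$ with $w^{p^2-1}=u$; and by the Lemma, $h$ is (in the trivialization) exactly the coordinate $u$ up to a unit, so $a=w\cdot(\text{unit})$. Writing $dg$ and $d\log a=dw/w$ in these coordinates, I need to show the component of $\psi(\widetilde\Theta(f))$ — where $\psi=(pr_\mu\otimes1)\circ\mathrm{KS}^{-1}$ as in Theorem \ref{thm: =00005Cpsi(du)_has_a_zero} — has no pole in $u$.

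The key computation: $du=(p^2-1)w^{p^2-2}\,dw$, so $dw/w = \tfrac{1}{p^2-1}\,du/u$, which looks like a genuine pole; but $p^2-1\equiv -1$ is a unit in $\kappa$, and crucially Theorem \ref{thm: =00005Cpsi(du)_has_a_zero} says $\psi(du)$ \emph{vanishes along $S^{\mathrm{ao}}$}, i.e. $\psi(du)\in u\cdot(\mathcal{P}_\mu\otimes\mathcal{Q})$ locally, so $\psi(du/u)$ is regular. Meanwhile $dg$ is a regular differential (for $g$ a regular function on the normal scheme $T'$), its $\psi$-image is regular, and the term $kg\,d\log a$ contributes $kg\cdot\psi(dw/w)=\tfrac{kg}{p^2-1}\psi(du/u)$, again regular by the same vanishing. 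Hence $a^{-k}\psi(\widetilde\Theta(\tau^*f))$ is a regular section of $\mathcal{P}_\mu\otimes\mathcal{Q}$ over (the formal neighbourhood in) $T'$; multiplying back by $a^k$ and using $pr_\mu\circ V_{\mathcal{P}}$-identification $\mathcal{P}_\mu\simeq\mathcal{Q}^{(p)}$ shows $\Theta(f)$ extends to a regular section of $\mathcal{L}^k\otimes\mathcal{Q}^{(p)}\otimes\mathcal{Q}$ over $T'$ near $y$. Finally I descend: $\Theta(f)$ is $\kappa^\times$-invariant (it was defined over $S^{\mathrm{ord}}$ in the first place and the extension is unique), $\tau':T'\to S'$ is finite flat and generically Galois with group $\kappa^\times$ of order prime to $p$, so invariants descend, giving a regular section over $S'$; extending over the codimension-$\ge2$ remainder by normality of $S$ finishes the proof.

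The main obstacle — the real content — is the logarithmic-pole cancellation, i.e. making precise that $\psi(du/u)$ is regular: this is exactly where Theorem \ref{thm: =00005Cpsi(du)_has_a_zero} (the statement that $\psi(du)$ has a zero along $S^{\mathrm{ao}}$, equivalently that the foliation $\mathcal{T}S^+$ is tangent to $S^{\mathrm{ao}}$) does the heavy lifting, and one has to be careful that the ``unit'' relating $h$ to $u$ and $a$ to $w$ does not reintroduce a pole — it does not, because $d(\text{unit})/(\text{unit})$ is regular and gets hit by $\psi$ applied to a regular form. A secondary technical point is to carry out the descent and the Hartogs extension cleanly in the category of (possibly formal) schemes; both are standard once the pole along the divisor $S^{\mathrm{ao}}$ has been killed.
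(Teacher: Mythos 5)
Your overall strategy coincides with the paper's: reduce to the open set $S'=S^{\mathrm{ord}}\cup S^{\mathrm{ao}}$ using smoothness of $S$ and the fact that the complement of $S'$ has codimension $\ge 2$, pass to the partially compactified Igusa cover $\tau':T'\to S'$ with formal parameters $w,v_i$ at $y$ satisfying $w^{p^{2}-1}=u$, and kill the residual logarithmic pole along $S^{\mathrm{ao}}$ by Theorem \ref{thm: =00005Cpsi(du)_has_a_zero}. However, your central local computation contains a genuine error. You set $\tau^{*}f=g\cdot a^{k}$ and then assert that ``$dg$ is a regular differential (for $g$ a regular function on the normal scheme $T'$)''. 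This is false: $g=\tau^{*}f/a^{k}$ is regular on $T$ but not on $T'$ --- the section $a$ is a $(p^{2}-1)$-st root of $h$, and since the divisor of $h$ is $S^{\mathrm{ao}}$ (reduced, as $m<n$), $a$ vanishes to order one along $T^{\mathrm{ao}}=\tau'^{-1}(S^{\mathrm{ao}})$, so $g$ has a pole of order up to $k$ there and $dg$ is not regular. If $g$ were regular on $T'$ there would be almost nothing to prove, and Theorem \ref{thm: =00005Cpsi(du)_has_a_zero} would not be needed for that term at all. The correct treatment, which is exactly the paper's, is to expand $f/a^{k}=\sum_{r\ge -k}g_{r}(v)w^{r}$ in $\widehat{\mathcal{O}}_{T',y}$ and compute $\widetilde{\Theta}(f)=a^{k}d(f/a^{k})=a^{k}\sum_{r\ge-k}w^{r}dg_{r}(v)-a^{k}\sum_{r\ge-k}r\,g_{r}(v)w^{r}u^{-1}du$: the first sum is regular because $a^{k}w^{r}$ is regular for $r\ge -k$, and the second sum is a regular section times $u^{-1}du$, which becomes regular after applying $\psi$ precisely because $\psi(du)$ vanishes along $S^{\mathrm{ao}}$. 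So you have identified the right mechanism, but you apply it only to your extra $kg\,d\log a$ term rather than to the term that actually produces the dangerous pole.

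A second, related slip: by the definition (\ref{eq:pre_theta}) of $\widetilde{\Theta}$ through the Igusa trivialization, the trivializing section $a^{k}$ is horizontal, so on scalar forms $\widetilde{\Theta}(f)=a^{k}\,d(f/a^{k})$ exactly; your formula $a^{k}(dg+kg\,d\log a)$ adds a spurious term. Here it happens to be harmless (it is of the shape handled by the regularity of $\psi(u^{-1}du)$), but it reflects the same confusion about which objects are functions versus sections and where they are regular. Once the Laurent-expansion computation replaces the ``$dg$ is regular'' claim, the remaining steps of your argument --- descent from $T'$ to $S'$ along the finite cover and extension across the codimension-$\ge2$ complement --- are fine and agree with the paper.
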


\begin{rem*}
The analytic continuation of $\Theta$ to global modular forms is
a characteristic $p$ phenomenon and does not seem to extend to $S_{s}$
(i.e. modulo $p^{s}$) for $s>1$. Had it extended for all $s$, we
would have obtained, for any algebraic modular form $f$ of weight
$k$, a well-defined rigid analytic ``$\Theta(f)$'', of weight
$k+p+1$, on the whole rigid analytic space associated to $\mathcal{S}$.
By GAGA (and the K{\"o}cher principle) this $\Theta(f)$ would have been
algebraic. However, the Maass-Shimura operators in characteristic
0 do not preserve the space of classical modular forms.
\end{rem*}
\begin{proof}
Let $f\in H^{0}(S,\mathcal{L}^{k}).$ Then $\Theta(f)$ is a section
of $\mathcal{L}^{k}\otimes\mathcal{Q}^{(p)}\otimes\mathcal{Q}$ over
$S^{\mathrm{ord}}$ and we have to show that it extends holomorphically
to $S$. Since $S$ is non-singular, it is enough to show that it
extends holomorphically to $S'=S^{\mathrm{ord}}\cup S^{\mathrm{ao}}$,
an open set whose complement is of codimension $2$. Indeed, 
Zariski locally we may trivialize the vector bundles, and then any coordinate
of $\Theta(f)$ becomes a meromorphic function, whose polar set, if
non-empty, should have codimension 1.

Let $\tau':T'\to S'$ be the intermediate Igusa variety constructed
above. Over $T$ (the pre-image of $S^{\mathrm{ord}}$) we can write
the trivialization $\varepsilon$ of $\mathcal{L}$ as $f\mapsto f/a^{k}$.
This introduces a pole of order $k$, at most, along $T^{\mathrm{ao}}=\tau'^{-1}(S^{\mathrm{ao}}).$
Let $y\in T^{\mathrm{ao}}$ be a geometric point and $x=\tau'(y).$
Let $u,v_{i}$ be formal parameters at $x$ and $w,v_{i}$ formal
parameters at $y$ as in Proposition \ref{prop:Igusa formal parameters}.
Let
\[
f/a^{k}=\sum_{r=-k}^{\infty}g_{r}(v)w^{r}
\]
be the Taylor expansion of $f/a^{k}$ in $\mathcal{\widehat{O}}_{T',y}$,
where the $g_{r}(v)$ are power series in the $v_{i}$. Note that
$du=d(w^{p^{2}-1})=-w^{p^{2}-2}dw$. Thus,

\begin{equation*}
\begin{split}
d(f/a^{k})&=\sum_{r=-k}^{\infty}w^{r}dg_{r}(v)+\sum_{r=-k}^{\infty}rg_{r}(v)w^{r-1}dw
\\
&=\sum_{r=-k}^{\infty}w^{r}dg_{r}(v)-\sum_{r=-k}^{\infty}rg_{r}(v)w^{r-(p^{2}-1)}du
\\ &
=\sum_{r=-k}^{\infty}w^{r}dg_{r}(v)-\sum_{r=-k}^{\infty}rg_{r}(v)w^{r}u^{-1}du.
\end{split}
\end{equation*}
When we compute
\[
\widetilde{\Theta}(f)=a^{k}\left(\sum_{r=-k}^{\infty}w^{r}dg_{r}(v)-\sum_{r=-k}^{\infty}rg_{r}(v)w^{r}u^{-1}du\right),
\]
which we know descends to $S'$, we see that the first sum becomes
holomorphic ($a$ vanishes along $T^{\mathrm{ao}}$), while the second
sum retains a simple pole along $S^{\mathrm{ao}}.$ However, to get
$\Theta(f)$ we must still apply the vector-bundle homomorphism $\psi.$
Theorem \ref{thm: =00005Cpsi(du)_has_a_zero} says that $\psi(du)$
vanishes along $S^{\mathrm{ao}},$ hence the simple pole disappears
and $\Theta(f)$ is holomorphic at $x$. This being true at every
$x\in S^{\mathrm{ao}}$, we conclude that $\Theta(f)$ is everywhere
holomorphic.
\end{proof}

\subsection{Analytic continuation of $\Theta$ ($m=n$)}

We briefly indicate the modifications in the proof which are necessary
to deal with the case $m=n$. In this case $\mathrm{rk}(\ker(V_{\mathcal{P}}))$
changes when we move from $S^{\mathrm{ord}}$ to $S^{\mathrm{ao}},$
so $\mathcal{P}_{0}$ and $\mathcal{P}_{\mu}$ do not extend, with
the same definitions, to vector bundles over $S'=S^{\mathrm{ord}}\cup S^{\mathrm{ao}}.$
As such, we cannot extend $\Theta$ beyond $S^{\mathrm{ord}}$ using
$pr_{\mu}$. Instead, we apply $(V_{\mathcal{P}}\otimes1)\circ\mathrm{KS}^{-1}$
to $\Omega_{S/\kappa}$, a map that gives the same result as $(pr_{\mu}\otimes1)\circ\mathrm{KS}^{-1}$
over $S^{\mathrm{ord}}$ in characteristic $p,$ but does not make
sense over $S_{s}^{\mathrm{ord}}$ for $s>1$. Let $\mathcal{L}=\det(\mathcal{Q})$
as before.

\subsubsection{Preliminary results on the Igusa variety when $m=n$}

Let $T=T_{1,1}^{1}$ as before. Let $T'$ be defined by (\ref{eq:T'}).
As before, it is a partial compactification of $T$. Since the divisor
of the Hasse invariant is not reduced when $n=m$ (see $\S$\ref{subsec:basic}
and Lemma \ref{lem:h=00003Dh_Q^(p+1)}), the proof of the irreducibility
of $T$ as in \cite{dS-G1}, Proposition 2.16, breaks down.
\begin{prop}
(i) The morphism $T'\to S'$ is finite flat of degree $p^{2}-1,$
with Galois group $\kappa^{\times}$.

(ii) $T'$ is non-singular.

(iii) $T'$ and the Igusa variety $T$ decompose into $p+1$ irreducible
components $T'_{\zeta}$ (resp. $T_{\zeta}$) labeled by $\zeta$
such that $\zeta^{p+1}=1.$ More canonically,
\[
\pi_{0}(T)=\pi_{0}(T')\simeq\kappa^{\times}/\mathbb{F}_{p}^{\times}.
\]

(iv) The map $T'_{\zeta}\to S'$ is totally (tamely) ramified over
$S^{\mathrm{ao}}$ of degree $p-1$.
\end{prop}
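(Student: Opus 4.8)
The plan is to repeat the Kummer-theoretic construction of \cite{dS-G1}, \S 2.4, exactly as in the case $m<n$ treated above, but with one essential change forced by the fact that the divisor of the $\mu$-ordinary Hasse invariant $h$ on $S'=S^{\mathrm{ord}}\cup S^{\mathrm{ao}}$ is now the \emph{non-reduced} divisor $(p+1)S^{\mathrm{ao}}$: one works instead with the auxiliary section $h_{\mathcal{Q}}=\det(V_{\mathcal{Q}})\in H^{0}(S,\mathcal{L}^{p-1})$, whose divisor is $S^{\mathrm{ao}}$ with multiplicity one. Over an algebraic closure of $\kappa$, Lemma \ref{lem:h=00003Dh_Q^(p+1)} gives $h=h_{\mathcal{Q}}^{p+1}$, so the tautological $(p^{2}-1)$-st root $a$ of $(\tau')^{*}h$ on $T'$ satisfies $\bigl(a^{p-1}/(\tau')^{*}h_{\mathcal{Q}}\bigr)^{p+1}=1$. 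Hence $\zeta:=a^{p-1}/(\tau')^{*}h_{\mathcal{Q}}$ takes values in $\mu_{p+1}$; since $\mu_{p+1}\subset\kappa^{\times}$ it is a locally constant function, so $T'=\bigsqcup_{\zeta_{0}^{p+1}=1}T'_{\zeta_{0}}$ with $T'_{\zeta_{0}}=\{\zeta=\zeta_{0}\}$, and on $T'_{\zeta_{0}}$ one has $a^{p-1}=\zeta_{0}\,(\tau')^{*}h_{\mathcal{Q}}$. Thus $T'_{\zeta_{0}}$ is canonically the scheme extracting a $(p-1)$-st root of the section $\zeta_{0}h_{\mathcal{Q}}$ of $\mathcal{L}^{p-1}$. (When $m=n$ the fibre product (\ref{eq:T'}) is no longer normal over $S^{\mathrm{ao}}$; one takes for $T'$ its normalisation, which is precisely this disjoint union.)

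With this description, (i) is the same formal statement as before: $\tau':T'\to S'$ is a base change of the finite flat degree-$(p^{2}-1)$ power map $\mathcal{L}\to\mathcal{L}^{p^{2}-1}$, the group $\kappa^{\times}$ acts by scaling the $\mathcal{L}$-coordinate with quotient $S'$, and over $S^{\mathrm{ord}}$ the section $h$ is invertible, so $\tau'$ is a $\kappa^{\times}$-torsor there. For (ii) and (iv) I would argue locally. Over $S^{\mathrm{ord}}$ each $T'_{\zeta_{0}}$ is étale over the smooth scheme $S^{\mathrm{ord}}$. Near a geometric point $x\in S^{\mathrm{ao}}(k)$ one chooses, as in the proof of Theorem \ref{thm: =00005Cpsi(du)_has_a_zero} but with the Dieudonn\'e computation appropriate to $n=m$ in place of Proposition \ref{prop:DM_at_ao_points}, formal parameters $u,v_{1},\dots,v_{nm-1}$ with $u=0$ the local equation of $S^{\mathrm{ao}}$ and $h_{\mathcal{Q}}$ a unit times $u$. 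Then $T'_{\zeta_{0}}$ has the local model $\{w^{p-1}=(\mathrm{unit})\,u\}\cong\mathrm{Spec}\,k[[w,v_{1},\dots,v_{nm-1}]]$, which is regular, and $T'_{\zeta_{0}}\to S'$ is totally ramified of degree $p-1$ over $S^{\mathrm{ao}}$, tamely since $p\nmid p-1$. This gives (ii) and (iv).

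For (iii) it remains to see that each $T'_{\zeta_{0}}$, equivalently $T_{\zeta_{0}}=T'_{\zeta_{0}}\cap T$, is irreducible, and to match the index set with $\kappa^{\times}/\mathbb{F}_{p}^{\times}$. Irreducibility is exactly the point where the $m<n$ argument had to be replaced: $T'_{\zeta_{0}}$ is a torsor over $S^{\mathrm{ord}}$ under the subgroup $\mathbb{F}_{p}^{\times}\subset\kappa^{\times}$ of order $p-1$, and by the previous paragraph it is totally (tamely) ramified over the codimension-one stratum $S^{\mathrm{ao}}$ with ramification index $p-1$; hence its inertia at any component of $S^{\mathrm{ao}}$ is all of $\mathbb{F}_{p}^{\times}$, so the monodromy of the torsor is surjective and $T'_{\zeta_{0}}$, being normal and connected over each connected component of $S$, is irreducible. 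Finally, by (\ref{eq:Gamma_action}) and the analysis of the $\Delta_{t}$-action on the trivialisations, $c\in\kappa^{\times}$ acts on the section $a$ through $c\mapsto c^{\pm p}$ and trivially on $(\tau')^{*}h_{\mathcal{Q}}$ (which descends to $S$), hence on $\zeta$ through $c\mapsto c^{\pm p(p-1)}=c^{\mp(p-1)}$, using $p(p-1)\equiv-(p-1)\pmod{p^{2}-1}$. The $(p-1)$-st power map $\kappa^{\times}\to\mu_{p+1}$ is surjective with kernel $\mathbb{F}_{p}^{\times}$, so $\kappa^{\times}$ permutes the $T'_{\zeta_{0}}$ simply transitively through $\kappa^{\times}/\mathbb{F}_{p}^{\times}\overset{\sim}{\to}\mu_{p+1}$; this yields the canonical identification $\pi_{0}(T)=\pi_{0}(T')\simeq\kappa^{\times}/\mathbb{F}_{p}^{\times}$, and in particular $p+1$ components.

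The hard part will be the bookkeeping around the non-reducedness of $\div(h)$: one must check carefully that it is precisely the passage from $h$ to the reduced section $h_{\mathcal{Q}}$ that restores normality, i.e.\ that the fibre product (\ref{eq:T'}) fails to be normal over $S^{\mathrm{ao}}$ only along the transverse crossing of the $p+1$ smooth branches, whose normalisation is the clean disjoint union above. The second ingredient that needs to be supplied afresh is the $n=m$ Dieudonn\'e-space computation at points of $S^{\mathrm{ao}}$, certifying that the local infinitesimal equation of $S^{\mathrm{ao}}$ — equivalently the order of vanishing of $h_{\mathcal{Q}}$ — is exactly one there; the computations recorded in the text were all made under the standing hypothesis $n>m$, so they must be redone, though the calculation should be lighter since there is no isoclinic $\mathfrak{G}$-part when $n=m$.
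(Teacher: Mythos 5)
Your proposal is correct and follows essentially the same route as the paper, which likewise uses $h=h_{\mathcal{Q}}^{p+1}$ to write $T'=\bigsqcup_{\zeta^{p+1}=1}S'[\sqrt[p-1]{\zeta h_{\mathcal{Q}}}]$ and then, since $\mathrm{div}(h_{\mathcal{Q}})$ is reduced and equal to $S^{\mathrm{ao}}$, repeats verbatim the $n>m$ (and \cite{dS-G1}) arguments for regularity, total tame ramification of degree $p-1$, irreducibility of each branch, and the $\kappa^{\times}/\mathbb{F}_{p}^{\times}$-count of components. Your two flagged points are well taken but already within reach of the text: the fibre product (\ref{eq:T'}) indeed has its $p+1$ branches crossing over $S^{\mathrm{ao}}$, so reading $T'$ as its normalization (equivalently the displayed disjoint union) is the intended clarification, and the $n=m$ statement that $h_{\mathcal{Q}}$ vanishes to order exactly one along $S^{\mathrm{ao}}$ is supplied by the citation to \cite{Woo} and by the explicit $m=n$ Dieudonn\'e computation carried out in the proof of Theorem \ref{thm:m=00003Dn_holmorphicity} (note only that at almost-ordinary points with $n=m$ a supersingular part $D(\mathfrak{G}_{\Sigma}[p])\oplus D(\mathfrak{G}_{\overline{\Sigma}}[p])$ is still present).
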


\begin{proof}
The proof of (i) is the same as when $n>m.$ Our $T'$ is still obtained
from $S'$ by extracting a $p^{2}-1$ root of $h$. However, this
time $h=h_{\mathcal{Q}}^{p+1}$ where $h_{\mathcal{Q}}$ is in $H^{0}(S',\mathcal{L}^{p-1})$
so $T'=\bigsqcup T'_{\zeta}$ where $T'_{\zeta}=S'[\sqrt[p-1]{\zeta h_{\mathcal{Q}}}].$
As the divisor of $h_{\mathcal{Q}}$ is reduced and equal to $S^{\mathrm{ao}},$
the rest of the proof is similar to the case $n>m.$ 
\end{proof}

\subsubsection{The main theorem when $m=n$}
\begin{thm}
\label{thm:m=00003Dn_holmorphicity}Assume that $m=n$. The operator
\[
\Theta=(1\otimes V_{\mathcal{P}}\otimes1)\circ\mathrm{(1\otimes KS}^{-1})\circ\widetilde{\Theta}:H^{0}(S^{\mathrm{ord}},\mathcal{L}^{k})\to H^{0}(S^{\mathrm{ord}},\mathcal{L}^{k}\otimes\mathcal{Q}^{(p)}\otimes\mathcal{Q})
\]
extends holomorphically to an operator
\[
\Theta:H^{0}(S,\mathcal{L}^{k})\to H^{0}(S,\mathcal{L}^{k}\otimes\mathcal{Q}^{(p)}\otimes\mathcal{Q}).
\]
\end{thm}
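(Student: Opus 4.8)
The plan is to mimic the proof of Theorem \ref{thm:holomorphic extension of theta m less n}, with the single structural change forced by $m=n$: since $\mathcal{P}_{0}=\ker(V_{\mathcal{P}})$ jumps rank from $0$ to $1$ along $S^{\mathrm{ao}}$, one cannot use $pr_{\mu}$, and instead keeps the globally defined Verschiebung $V_{\mathcal{P}}\colon\mathcal{P}\to\mathcal{Q}^{(p)}$. First I would reduce, as there, to a local statement: $S$ is nonsingular, $S^{\mathrm{ao}}$ is the unique codimension-one Ekedahl--Oort stratum (\cite{Woo}), so $S\smallsetminus S'$ with $S'=S^{\mathrm{ord}}\cup S^{\mathrm{ao}}$ has codimension $\ge2$, and it suffices to prove that for $f\in H^{0}(S,\mathcal{L}^{k})$ the section $\Theta(f)$, a priori defined only over $S^{\mathrm{ord}}$, extends regularly across each geometric point $x\in S^{\mathrm{ao}}$.

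The key local input is the $m=n$ analogue of Theorem \ref{thm: =00005Cpsi(du)_has_a_zero}. Fix an algebraically closed $k\supseteq\kappa$ and $x\in S^{\mathrm{ao}}(k)$. By a Dieudonn\'e-module computation parallel to Proposition \ref{prop:DM_at_ao_points}, the polarized $\mathcal{O}_{E}$-Dieudonn\'e space $D_{x}=D(\mathcal{A}_{x}/k)$ of signature $(n,n)$ is an orthogonal sum of one ``almost-ordinary'' block---on which $V_{\mathcal{P}}$ is forced to vanish on the type-$\Sigma$ part of $\omega$---with $n-1$ ordinary pairs; I would then write down the universal deformation $\omega\subset R\otimes_{k}D_{x}$ over $R=\mathcal{O}_{S,x}/\mathfrak{m}_{S,x}^{2}$ exactly as in the two cases of Theorem \ref{thm: =00005Cpsi(du)_has_a_zero}. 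From this one reads off three things. (a) Computing the Hasse matrix $H=V_{\mathcal{P}}^{(p)}\circ V_{\mathcal{Q}}$ in the induced bases gives $\mathrm{diag}(u,1,\dots,1)$, so $h_{\mathcal{Q}}=\det(V_{\mathcal{Q}})$ vanishes to first order along $\{u=0\}=S^{\mathrm{ao}}\cap\mathrm{Spec}(R)$ and, by Lemma \ref{lem:h=00003Dh_Q^(p+1)} over $\overline{\kappa}$, $h=h_{\mathcal{Q}}^{p+1}$ vanishes to order $p+1$. (b) $\ker(V_{\mathcal{P}})|_{x}$ is one-dimensional. (c) Computing the Gauss--Manin connection on this deformation and hence the Kodaira--Spencer map, $\mathrm{KS}^{-1}(du)|_{x}$ lies in $\ker(V_{\mathcal{P}})|_{x}\otimes\mathcal{Q}_{x}$. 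Consequently, if $\psi=(V_{\mathcal{P}}\otimes1)\circ\mathrm{KS}^{-1}\colon\Omega_{S/\kappa}\to\mathcal{Q}^{(p)}\otimes\mathcal{Q}$ (a sheaf homomorphism over all of $S$) and $u=0$ is a local equation of $S^{\mathrm{ao}}$ on a Zariski open $U$, then $\psi(du)|_{x}=0$ for every $x\in S^{\mathrm{ao}}\cap U$; since that divisor is reduced and $S$ nonsingular, $\psi(du)$ is divisible by $u$, i.e.\ $\psi(du)$ vanishes along $S^{\mathrm{ao}}\cap U$.

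The second input is the pole computation on the compactified intermediate Igusa cover $T'$. Using the structure of $T'$ established above---finite flat of degree $p^{2}-1$ over $S'$, decomposing into components $T'_{\zeta}$ each totally tamely ramified of degree $p-1$ over $S^{\mathrm{ao}}$---fix a component $T'_{\zeta}$ and a geometric point $y\in T'_{\zeta}$ over $x$, with formal parameters $w,v_{1},\dots,v_{nm-1}$ at $y$ such that $w^{p-1}=u$. The canonical section $a$ of $\mathcal{L}$ over $T'_{\zeta}$, with $a^{p^{2}-1}=\tau^{*}h=(\tau^{*}h_{\mathcal{Q}})^{p+1}$, satisfies $a=(\mathrm{unit})\cdot w$, so trivializing $\mathcal{L}^{k}$ by $a^{k}$ we may write $f/a^{k}=\sum_{r\ge-k}g_{r}(v)\,w^{r}$ in $\widehat{\mathcal{O}}_{T'_{\zeta},y}$. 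Since $p-1\equiv-1$ in $\kappa$, $du=(p-1)w^{p-2}\,dw=-w^{p-2}\,dw$, whence $w^{r-1}\,dw=-w^{r}u^{-1}\,du$ and
\begin{equation*}
\widetilde{\Theta}(f)=a^{k}\,d(f/a^{k})=\sum_{r\ge-k}a^{k}w^{r}\,dg_{r}(v)-\sum_{r\ge-k}r\,g_{r}(v)\,a^{k}w^{r}\,u^{-1}\,du .
\end{equation*}
Here $a^{k}w^{r}=(\mathrm{unit})\cdot w^{k+r}$ with $k+r\ge0$, so the first sum is regular; after applying the $\mathcal{O}$-linear maps $(1\otimes\mathrm{KS}^{-1})$ and $(1\otimes V_{\mathcal{P}}\otimes1)$ it stays regular. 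In the second sum the factor $u^{-1}\,du$ contributes a simple pole along $S^{\mathrm{ao}}$, but applying $\psi$ to the differential and using that $\psi(du)$ is divisible by $u$ (previous paragraph), $u^{-1}\psi(du)$ is regular; hence $(\tau')^{*}\Theta(f)$ extends regularly across $(\tau')^{-1}(S^{\mathrm{ao}})$ near $y$. As $\tau'$ is finite flat (of degree prime to $p$) and $S'$ normal, $\Theta(f)$ itself extends regularly across $x$; running this over all $x\in S^{\mathrm{ao}}$ gives $\Theta(f)\in H^{0}(S',\mathcal{L}^{k}\otimes\mathcal{Q}^{(p)}\otimes\mathcal{Q})$, hence $\Theta(f)\in H^{0}(S,\mathcal{L}^{k}\otimes\mathcal{Q}^{(p)}\otimes\mathcal{Q})$.

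The hard part is step (c) of the first input: performing the Dieudonn\'e-module and deformation computation at an almost-ordinary point in the equal-signature case precisely enough to identify $\mathrm{KS}(\ker(V_{\mathcal{P}})\otimes\mathcal{Q})$ and to see that the differential $du$ of a local equation of $S^{\mathrm{ao}}$ lies inside it. The difficulty, relative to the $m<n$ case, is that $\mathcal{P}_{0}$ no longer extends to a subbundle over $S'$, so one must work pointwise at $x$ and at its first infinitesimal neighborhood and carefully track the replacement of $pr_{\mu}$ by $V_{\mathcal{P}}$ throughout; establishing (a), that $h_{\mathcal{Q}}$ is reduced along $S^{\mathrm{ao}}$ (so that $u=w^{p-1}$ and $a$ has a simple zero), is also needed, but follows from the same computation together with Lemma \ref{lem:h=00003Dh_Q^(p+1)}.
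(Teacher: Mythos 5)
Your proposal follows the paper's own proof essentially step for step: reduce to $S'=S^{\mathrm{ord}}\cup S^{\mathrm{ao}}$ by nonsingularity, replace $pr_{\mu}$ by $V_{\mathcal{P}}$, prove the $m=n$ analogue of Theorem \ref{thm: =00005Cpsi(du)_has_a_zero} by writing the Dieudonn\'e space at an almost-ordinary point as one local-local block $D(\mathfrak{G}_{\Sigma}[p])\oplus D(\mathfrak{G}_{\overline{\Sigma}}[p])$ plus $n-1$ ordinary pairs and computing the universal first-order deformation, and then redo the pole analysis on $T'_{\zeta}$ with $u=w^{p-1}$; the facts (b) and (c) you predict are exactly what the paper's explicit computation delivers. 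One local slip: for $m=n$ the composite Hasse matrix $H=V_{\mathcal{P}}^{(p)}\circ V_{\mathcal{Q}}$ is \emph{not} $\mathrm{diag}(u,1,\dots,1)$ over $R=\mathcal{O}_{S,x}/\mathfrak{m}_{S,x}^{2}$ (its first column vanishes, since $V^{(p)}e^{\sharp(p)}=0$, consistent with $h=h_{\mathcal{Q}}^{p+1}$ vanishing to order $p+1$); to see that $h_{\mathcal{Q}}$ has a simple zero along $S^{\mathrm{ao}}$ you must compute the matrix of $V_{\mathcal{Q}}$ itself (first row $(u,u_{1},\dots,u_{m-1})$, identity below), and likewise $V_{\mathcal{P}}$ with the $w_{i}$'s, which is precisely what the paper does and fits within the computation you sketch.
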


\begin{proof}
As before, let
\[
\psi=(V_{\mathcal{P}}\otimes1)\circ\mathrm{KS}^{-1}:\Omega_{S/\kappa}\to\mathcal{Q}^{(p)}\otimes\mathcal{Q}.
\]
Let $x\in S^{\mathrm{ao}}(k)$ be a geometric point. The Dieudonn{\'e}
module $D_{x}$ is now given by
\[
D_{x}=\mathrm{Span}_{k}\{\underline{e}_{i}^{\mu},\underline{f}_{i}^{\mu},e_{i}^{et},f_{i}^{et},\underline{e}^{\sharp},f^{\sharp},e^{\flat},\underline{f}^{\flat}\}_{1\le i\le m-1}
\]
where $\mathrm{Span}_{k}\{e^{\sharp},f^{\sharp}\}$ is isomorphic
to $D(\mathfrak{G}_{\Sigma}[p])$ and $\mathrm{Span}_{k}\{e^{\flat},f^{\flat}\}$
to $D(\mathfrak{G}_{\overline{\Sigma}}[p])$ (see \cite{Woo}, Proposition
3.5.6). The underlined vectors span $\omega_{x}$. As before, we let
$R=\mathcal{O}_{S,x}/\mathfrak{m}_{S,x}^{2}$ and $D=R\otimes_{k}D_{x}$.
The most general deformation of $\omega_{x}$ to $\omega\subset D$
compatible with the endomorphisms and the polarization is spanned
by
\begin{itemize}
\item $\widetilde{e}_{i}^{\mu}=e_{i}^{\mu}+w_{i}e^{\flat}+\sum_{j=1}^{m-1}w_{ij}e_{j}^{et}$\medskip{}
\item $\widetilde{e}^{\sharp}=e^{\sharp}+ue^{\flat}+\sum_{j=1}^{m-1}u_{j}e_{j}^{et}$\medskip{}
\item $\widetilde{f}_{i}^{\mu}=f_{i}^{\mu}+u_{i}f^{\sharp}+\sum_{j=1}^{m-1}w_{ji}f_{j}^{et}$\medskip{}
\item $\widetilde{f}^{\flat}=f^{\flat}+uf^{\sharp}+\sum_{j=1}^{m-1}w_{j}f_{j}^{et}.$
\end{itemize}

\vspace{0.1cm}

\noindent
The $m^{2}$ quantities $w_{i},w_{ij},u,u_{i}$ then form a system
of local (infinitesimal) parameters at $x$. The matrix of $V_{\mathcal{Q}}$
in the bases $\{\widetilde{f}^{\flat},\widetilde{f}_{i}^{\mu}\}$
of $\mathcal{Q}$ and $\{(\widetilde{e}^{\sharp})^{(p)},(\widetilde{e}_{i}^{\mu})^{(p)}\}$
of $\mathcal{P}^{(p)}$ is
\[
\left(\begin{array}{cccc}
u & u_{1} & \dots & u_{m-1}\\
 & 1\\
 &  & \ddots\\
 &  &  & 1
\end{array}\right).
\]
The infinitesimal equation of $S^{\mathrm{ao}}\cap\mathrm{Spec}(R)$
is $u=0$. As before we compute the Kodaira-Spencer homomorphism and
find out that
\[
\mathrm{KS}(e^{\sharp})=du\wedge e^{\flat}+\sum_{j=1}^{m-1}du_{j}\wedge e_{j}^{et},
\]
which means that
\[
\mathrm{KS}(e^{\sharp}\otimes\mathcal{Q}|_{x})=\mathrm{Span}_{k}\{du,du_{j}\}\subset\Omega_{S}|_{x}.
\]
This implies that $\mathrm{KS}^{-1}(du)\in e^{\sharp}\otimes\mathcal{Q}|_{x}.$
However, $V_{\mathcal{P}}$ is expressible in the same bases as above
by the matrix
\[
\left(\begin{array}{cccc}
u & w_{1} & \dots & w_{m-1}\\
 & 1\\
 &  & \ddots\\
 &  &  & 1
\end{array}\right),
\]
which means that $\ker(V_{\mathcal{P}})$ is 1-dimensional at $x$,
and spanned by $e^{\sharp}.$ Thus if $u=0$ is a local equation of
$S^{\mathrm{ao}}$, $\psi(du)$ vanishes along $S^{\mathrm{ao}}.$
This yields Theorem \ref{thm: =00005Cpsi(du)_has_a_zero} when $m=n.$

\medskip{}

Let $f\in H^{0}(S,\mathcal{L}^{k}).$ Then $\Theta(f)$ is a section
of $\mathcal{L}^{k}\otimes\mathcal{Q}^{(p)}\otimes\mathcal{Q}$ over
$S^{\mathrm{ord}}$ and we have to show that it extends holomorphically
to $S$. Since $S$ is non-singular, as in the case $n>m,$ it is
enough to show that it extends holomorphically to $S'=S^{\mathrm{ord}}\cup S^{\mathrm{ao}}$.

Let $\tau':T'\to S'$ be the intermediate Igusa variety constructed
above. Let $a$ be, as before, the tautological $p^{2}-1$ root of
$h$ over $T'$; it vanishes to order 1 along $T^{\mathrm{ao}}=\tau'^{-1}(S^{\mathrm{ao}}).$
Over $T$ (the pre-image of $S^{\mathrm{ord}}$), where $a$ does
not vanish, we can write the trivialization $\varepsilon$ of $\mathcal{L}$
as $f\mapsto f/a^{k}$. This introduces a pole of order $k$, at most,
along $T^{\mathrm{ao}}.$ Let $y\in T^{\mathrm{ao}}$ be a geometric
point and $x=\tau'(y).$ Let $\zeta$ be the $p+1$ root of $1$ such
that $y\in T'_{\zeta}.$ Let $u,v_{i}$ be formal parameters at $x$
and $w,v_{i}$ formal parameters at $y$ so that $u=0$ is a local
equation of $S^{\mathrm{ao}}$ and
\[
u=w^{p-1}.
\]
Let
\[
f/a^{k}=\sum_{r=-k}^{\infty}g_{r}(v)w^{r}
\]
be the Taylor expansion of $f/a^{k}$ in $\mathcal{\widehat{O}}_{T',y}$,
where the $g_{r}(v)$ are power series in the $v_{i}$. Note that
$du=d(w^{p-1})=-w^{p-2}dw$. Thus, similarly to the case $n>m$
\begin{equation*}
\begin{split}
d(f/a^{k})& =\sum_{r=-k}^{\infty}w^{r}dg_{r}(v)+\sum_{r=-k}^{\infty}rg_{r}(v)w^{r-1}dw
\\
& =\sum_{r=-k}^{\infty}w^{r}dg_{r}(v)-\sum_{r=-k}^{\infty}rg_{r}(v)w^{r-(p-1)}du
\\&=\sum_{r=-k}^{\infty}w^{r}dg_{r}(v)-\sum_{r=-k}^{\infty}rg_{r}(v)w^{r}u^{-1}du.
\end{split}
\end{equation*}
We conclude the proof as in the case $n>m.$
\end{proof}

\section{\label{sec:Theta-cycles}Theta cycles}

For the group $GL_{2}$, the application of the theta operator to
mod $p$ modular forms was linked to twisting Galois representations
by the cyclotomic character (see \cite{Se1} over $\mathbb{Q}$ and
\cite{A-G} over a totally real base field). The variation of the
weight filtration upon iteration of $\Theta$ was of much interest
in this context. While the connection to Galois representations in
the unitary case requires further study, our goal here is to present
a similar behavior on the level of $q$-expansions. We consider only
signature $(n,1)$, $n>1$, as signature $(1,1)$ is essentially the
case of modular curves.

In this section, let $S$ be a \emph{connected component} of the special
fiber of a unitary Shimura variety of signature $(n,1)$, so that
$\mathcal{P}_{\mu}\otimes\mathcal{Q}\simeq\mathcal{Q}^{(p)}\otimes\mathcal{Q}\simeq\mathcal{L}^{p+1}.$
The theta operator maps $\mathcal{L}^{k}$ to $\mathcal{L}^{k+p+1}$
and may be iterated. The index set $\check{H}$ for the FJ expansions
at a given level and a given rank-$1$ cusp may be identified with
$\mathbb{Z}$ so that $\check{H}^{+}$ is identified with the non-negative
integers. The effect of $\Theta$ on FJ expansions is
\begin{equation}
\Theta\left(\sum_{n\ge0}a(n)\cdot q^{n}\right)=\sum_{n\ge0}a(n)n\cdot q^{n}.\label{eq:Theta_FJ_(n,1)}
\end{equation}

Given the $q$-expansion principle and the irreducibility of the Igusa
variety $T_{1,1}^{1}$ (see Proposition \ref{prop:Igusa formal parameters}),
the proofs of the following results are verbatim as for signature
$(2,1),$ see \cite{dS-G1} $\S$$\S$3.1-3.3.
\begin{lem}
Let $\xi$ be a rank $1$ cusp on $S^{*}.$ Let $\ell$ be the non-zero
section of $\mathcal{L}$ used to trivialize $\mathcal{L}$ at a formal
neighborhood of $\xi$ as before. Consider the homomorphism
\[
FJ_{\xi}:\bigoplus_{k=0}^{\infty}H^{0}(S,\mathcal{L}^{k})\to\mathcal{FJ}_{\xi},
\]
where $\mathcal{FJ}_{\xi}$ is as in Proposition \ref{prop:FJ_expansions_functions}
and where we have identified formal sections of $\mathcal{L}^{k}$
near $\xi$ with elements of $\widehat{\mathcal{O}}_{S^{*},\xi}$
by dividing the sections by $\ell^{k}$. Then the kernel of $FJ_{\xi}$
is given by the ideal
\[
\ker(FJ_{\xi})=(h-1),
\]
where $h$ is the Hasse invariant.
\end{lem}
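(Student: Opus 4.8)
The plan is to prove the two inclusions $(h-1)\subseteq\ker(FJ_\xi)$ and $\ker(FJ_\xi)\subseteq(h-1)$ separately: the first is immediate, and the second reduces to the irreducibility of the Igusa variety already established in Proposition~\ref{prop:Igusa formal parameters}.

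For $(h-1)\subseteq\ker(FJ_\xi)$, first observe that, once sections of $\mathcal{L}^k$ near $\xi$ are divided by $\ell^k$, the map $FJ_\xi$ is a homomorphism of graded rings into $\widehat{\mathcal{O}}_{S^*,\xi}$, because the quotient of a product of sections is the product of the quotients. By the Proposition of $\S$\ref{subsec:FJ hasse}, the Fourier--Jacobi expansion of the Hasse invariant at a rank-$m$ cusp is $1$, and a rank-$1$ cusp is maximally degenerate since $m=1$; hence $FJ_\xi(h)=1=FJ_\xi(1)$, so $FJ_\xi(h-1)=0$.

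For the reverse inclusion I would show that the induced map $\overline{FJ}_\xi\colon R/(h-1)\to\widehat{\mathcal{O}}_{S^*,\xi}$ is injective, where $R=\bigoplus_{k\ge0}H^0(S,\mathcal{L}^k)$. The key step is to identify $R/(h-1)$ with the ring of global functions on an irreducible scheme. By the Koecher principle ($\dim S=n\ge2$) one may work on a smooth toroidal compactification $\overline{S}$; by Corollary~\ref{cor:vanishing locus of Hasse} the open set $\overline{S}^{\mathrm{ord}}$ is precisely the non-vanishing locus of $h$ and contains every rank-$1$ cusp, and, since $n>m$, $\div(h)$ is the reduced divisor $\overline{S}\smallsetminus\overline{S}^{\mathrm{ord}}$, lying in the linear system of $\mathcal{L}^{p^2-1}$. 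Because $\overline{S}$ is normal and $\div(h)$ is the zero divisor of $h$, one has $H^0(\overline{S}^{\mathrm{ord}},\mathcal{L}^k)=\varinjlim_m H^0(\overline{S},\mathcal{L}^{k+m(p^2-1)})h^{-m}$; feeding this into a routine computation with the grading on $R$ (note $R/(h-1)=R[h^{-1}]/(h-1)$ since $h\equiv 1$ is invertible) yields a canonical ring isomorphism of $R/(h-1)$ with $H^0(\overline{T},\mathcal{O}_{\overline{T}})$, where $\tau\colon\overline{T}\to\overline{S}^{\mathrm{ord}}$ is the finite étale $\kappa^{\times}$-cover obtained by adjoining a $(p^2-1)$-st root $a$ of $h$; under this isomorphism a section $f_k$ of $\mathcal{L}^k$ corresponds to the regular function $\tau^*f_k/a^k$. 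Since the restriction of $\overline{T}$ over $S^{\mathrm{ord}}$ is the Igusa variety $T_{1,1}^1$, which is irreducible by Proposition~\ref{prop:Igusa formal parameters}(iv), and $S^{\mathrm{ord}}$ is dense in $\overline{S}^{\mathrm{ord}}$, the scheme $\overline{T}$ is irreducible, and it is reduced because $\tau$ is étale over the reduced scheme $\overline{S}^{\mathrm{ord}}$; hence $H^0(\overline{T},\mathcal{O}_{\overline{T}})$ is an integral domain, which injects into the local ring at any point and a fortiori into its completion.

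It then remains to recognize $\overline{FJ}_\xi$, under this identification, as the restriction of a global function on $\overline{T}$ to the completed local ring at a suitable point $\widetilde{\xi}$ above $\xi$. This is the only place I expect genuine bookkeeping: because the Fourier--Jacobi expansion of $h$ equals $1$, the ratio $a/\ell$ is a $(p^2-1)$-st root of unity, hence a constant in $\kappa^{\times}$, near any point over $\xi$, so after moving $\widetilde{\xi}$ within its $\kappa^{\times}$-orbit one may assume $a=\ell$ there, whereupon $\tau^*f_k/a^k$ restricts to $f_k/\ell^k=FJ_\xi(f_k)$; tracking this through the $\Gamma$-action and the identification $\mathcal{FJ}_\xi\cong\widehat{\mathcal{O}}_{S^*,\xi}$ of Propositions~\ref{prop:FJ_expansions_functions} and \ref{prop:FJ_expansions} is carried out exactly as in \cite{dS-G1}, $\S$3.1. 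Granting it, a global function on the irreducible reduced scheme $\overline{T}$ that vanishes in the local ring at $\widetilde{\xi}$ vanishes identically, so $\overline{FJ}_\xi$ is injective and therefore $\ker(FJ_\xi)=(h-1)$.
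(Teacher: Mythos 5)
Your proof is correct and takes essentially the same route as the paper, which simply defers to the signature $(2,1)$ argument of \cite{dS-G1}, \S 3.1: the inclusion $(h-1)\subseteq\ker(FJ_{\xi})$ comes from the fact that the Fourier--Jacobi expansion of $h$ at a rank-$m$ cusp is $1$, and the reverse inclusion from identifying $\bigoplus_{k}H^{0}(S,\mathcal{L}^{k})/(h-1)$ with functions on the (partially compactified) Igusa variety $T_{1,1}^{1}$, whose irreducibility forces a function with vanishing expansion at a point above $\xi$ to vanish identically. Your bookkeeping steps (Koecher principle, the colimit description of sections over the non-vanishing locus of $h$, and normalizing $a=\ell$ at a suitable point of the fiber) are exactly the ingredients used there.
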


Given an element $f=f(q)\in\mathcal{FJ}_{\xi}$ of the form $FJ_{\xi}(g),$
$g\in H^{0}(S,\mathcal{L}^{k}),$ we denote by $\omega(f)$ the minimal
$k\ge0$ for which there exists such a $g.$ We call $\omega(f)$
the \emph{filtration }of $f$. By the previous Lemma, if $f$ arises
from $g$ of weight $k$ then $\omega(f)\equiv k\,\mod(p^{2}-1).$
\begin{prop}
Let $f\in H^{0}(S,\mathcal{L}^{k})$ be in the image of $\Theta,$
i.e. $f=\Theta(g).$

(i) We have $\Theta^{p-1}(f)=fh$ where $h$ is the Hasse invariant.

(ii) The sequence $\omega(\Theta^{i}(f))$ $i=0,1,2,\dots,p-1$ increases
by $p+1$ at each step, except for a single $i=i_{0}(f)<p-1$ for
which $\omega(\Theta^{i+1}(f))=\omega(\Theta^{i}(f))-p^{2}+p+2.$ 
\end{prop}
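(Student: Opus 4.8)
The plan is to work entirely on the level of Fourier-Jacobi expansions at a fixed rank-$1$ cusp $\xi$, using the $q$-expansion principle (valid here because $S$ is a connected component and the Igusa variety $T_{1,1}^{1}$ is irreducible) to transfer statements about $q$-expansions back to statements about sections. First, for part (i): write $f=\Theta(g)=\sum_{n\geq0}na(n)q^{n}$ where $g=\sum a(n)q^{n}$. Then by the iteration formula (\ref{eq:Theta_FJ_(n,1)}) one has $\Theta^{p-1}(f)=\sum_{n\geq0}n^{p-1}\cdot na(n)q^{n}$, and since $n^{p-1}=n^{p^{2}-1}$ as elements of $\mathbb{F}_{p^{2}}$-coefficient power series (both equal $1$ when $p\nmid n$ and $0$ when $p\mid n$, using $n^{p^{2}-1}=(n^{p-1})$ already), and since the Hasse invariant $h$ has FJ expansion $1$ at $\xi$ by the Proposition in \S\ref{subsec:FJ hasse}, we get that $\Theta^{p-1}(f)$ and $fh$ have the same FJ expansion at $\xi$. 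Both being global sections of $\mathcal{L}^{k+(p-1)(p+1)}=\mathcal{L}^{k+p^{2}-1}$, the $q$-expansion principle forces $\Theta^{p-1}(f)=fh$. Actually one must be slightly careful: $n^{p-1}\cdot n = n^{p}$, and $n^{p}\equiv n\pmod p$ but here we are in $\mathbb{F}_{p^{2}}$-coefficients, so $n^{p}$ need not equal $n$; the correct identity is that $n^{p-1}$ is the indicator of $p\nmid n$, so $n^{p-1}\cdot na(n)=na(n)$ for all $n$, giving $\Theta^{p-1}(f)=f$ on $q$-expansions up to the factor $h$ which contributes the weight shift; this is the standard $\Theta$-cycle computation and I would spell it out exactly as in \cite{dS-G1}.

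For part (ii), the strategy is the classical Jochnowitz argument adapted to this setting. Set $w_{i}=\omega(\Theta^{i}(f))$ for $i=0,\dots,p-1$. Since $\Theta$ raises the weight by $p+1$ as a \emph{map of sheaves}, we always have $w_{i+1}\leq w_{i}+p+1$. The filtration drops precisely when, writing the minimal-weight form $g_{i}$ with $FJ_{\xi}(g_{i})=\Theta^{i}(f)$, the form $\Theta(g_{i})$ (of weight $w_{i}+p+1$) is divisible by the Hasse invariant $h$, equivalently has FJ expansion supported on multiples of $p$ \emph{after} we account for one factor of $h$ (since $h$ has FJ expansion $1$, divisibility by $h$ is detected on $q$-expansions by the weight principle). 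The key combinatorial fact is that $\sum_{i=0}^{p-2}(w_{i+1}-w_{i})=w_{p-1}-w_{0}$, and from part (i) $w_{p-1}=\omega(fh)$; since $f$ is in the image of $\Theta$ its FJ expansion is supported on $n$ with $p\nmid n$ together with the trivial coefficient, so $w_{p-1}\leq w_{0}+ (p-1)(p+1) - (\text{the total drop})$, and one shows the total drop over the cycle is exactly $p^{2}-1-(p+1)=p^{2}-p-2$ — but this is forced to occur at a single step because once the filtration drops, the argument shows it cannot stay low, i.e. between consecutive drops it must increase by the maximal $p+1$. Concretely: if $w_{i+1}<w_{i}+p+1$ then $p\mid w_{i+1}$-type congruence constraints (the filtration mod $p^{2}-1$ is constrained, and the drop amount is rigidly determined by how $\Theta$ interacts with $h$) pin down the drop to be exactly $p^{2}-1-(p+1)=p^{2}-p-2$, wait — I should recompute: the stated drop is $-p^{2}+p+2$, i.e. $w_{i+1}=w_{i}-(p^{2}-p-2)$, and then $w_{i+1}-w_{i} = -(p^{2}-p-2)$; summing, $(p-2)(p+1) - (p^{2}-p-2) = (p^2-p-2)-(p^2-p-2)=0$, so $w_{p-1}=w_{0}$. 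This is consistent with (i) only if $\omega(fh)=\omega(f)$, i.e. $f$ is not divisible by $h$, which holds because $f=\Theta(g)$ and one checks $h\nmid\Theta(g)$ when $\Theta(g)\neq0$ (otherwise the FJ expansion would be supported on multiples of $p$, contradicting that $\Theta$ already killed those).

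The main obstacle is the \emph{uniqueness} of the drop location $i_{0}(f)$: showing that exactly one $i$ in the range is special, not zero and not two or more. This is where one needs the precise structure of the filtration modulo $p^{2}-1$ and the fact that the cycle closes up after exactly $p-1$ steps (from part (i)). The argument runs: the multiset $\{w_{i}\bmod(p^{2}-1)\}$ is $\{w_{0}, w_{0}+(p+1), w_{0}+2(p+1),\dots\}$, and since $\gcd(p+1,p^{2}-1)=p+1$ and $(p^{2}-1)/(p+1)=p-1$, as $i$ runs $0,\dots,p-1$ the residues $w_{0}+i(p+1)$ cycle through all $p-1$ residue classes congruent to $w_{0}$ mod $p+1$ and then repeat at $i=p-1$. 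A genuine drop changes the residue by $-(p^{2}-p-2)\equiv p+1 \pmod{p^2-1}$ — wait, $-(p^2-p-2) = -(p^2-1)+(p+1)$, so a drop is congruent mod $p^2-1$ to an increase of $p+1$; hence drops and non-drops are indistinguishable on residues, and the count of drops over the full cycle is determined purely by the requirement $w_{p-1}=w_{0}$ (actual integers, not residues): each non-drop contributes $+(p+1)$, each drop contributes $-(p^2-p-2)$; with $p-1$ steps and net zero, if $d$ is the number of drops, $(p-1-d)(p+1) - d(p^2-p-2)=0$, giving $d(p+1+p^2-p-2)=(p-1)(p+1)$, i.e. $d(p^2-1)=(p-1)(p+1)=p^2-1$, so $d=1$. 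That clean arithmetic is the crux; I would present it exactly this way, citing \cite{dS-G1} \S\S3.1--3.3 for the parts that are verbatim and supplying this counting argument in full.
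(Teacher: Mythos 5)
Your proposal takes the same route as the paper, which simply defers to \cite{dS-G1} \S\S 3.1--3.3: everything is computed on Fourier--Jacobi expansions at a rank-$1$ cusp via (\ref{eq:Theta_FJ_(n,1)}), using that $h$ has expansion $1$, the $q$-expansion principle, and the Lemma $\ker(FJ_{\xi})=(h-1)$, followed by the Jochnowitz-type count. Part (i) is fine as you finally state it: $n^{p-1}\cdot n\,a(n)=n^{p}a(n)=n\,a(n)$ in characteristic $p$, so $\Theta^{p-1}(f)$ and $fh$ are sections of the same bundle $\mathcal{L}^{k+p^{2}-1}$ with the same expansion at every rank-$1$ cusp, hence equal.

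In part (ii), however, one step is assumed rather than proved. You set up the count by declaring that every step is either exactly $+(p+1)$ or exactly $-(p^{2}-p-2)$, justifying the second alternative only by ``congruence constraints \dots pin down the drop''; as written this is circular, since a priori a non-maximal step could equal $w_{i}+(p+1)-j(p^{2}-1)$ for any $j\ge 2$ (all such values are $\equiv p+1 \pmod{p^{2}-1}$, so residues alone do not exclude them). The repair is to use the congruence \emph{before} the count: $\Theta^{i+1}(f)$ is the expansion of $\Theta(g_{i})$, a global form of weight $w_{i}+p+1$ (globality uses Theorem \ref{thm:holomorphic extension of theta m less n}), so by the Lemma $w_{i+1}\le w_{i}+p+1$ and $w_{i+1}\equiv w_{i}+p+1\pmod{p^{2}-1}$, i.e.\ $w_{i+1}-w_{i}=(p+1)-j_{i}(p^{2}-1)$ with integers $j_{i}\ge 0$. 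Moreover $w_{p-1}=w_{0}$ is automatic, because $\omega$ depends only on the $q$-expansion and $FJ_{\xi}(\Theta^{p-1}f)=FJ_{\xi}(f)=FJ_{\xi}(fh)$; your detour through ``$h\nmid\Theta(g)$'' is unnecessary (and not the right criterion anyway). Summing the $p-1$ steps gives $(p-1)(p+1)=(p^{2}-1)\sum_{i}j_{i}$, hence $\sum_{i}j_{i}=1$: exactly one index has $j_{i_{0}}=1$, which yields simultaneously the uniqueness of the drop and its exact size $-p^{2}+p+2$, with all other steps equal to $+(p+1)$. With this reordering your argument is complete and coincides with the one the paper cites.
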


The combinatorics of weights has some peculiarities not present in
the case of elliptic modular forms, see \cite{dS-G1}.

\end{document}